\renewcommand*{\backref}[1]{}
\renewcommand*{\backrefalt}[4]{%
    \ifcase #1 (Not cited.)%
    \or        (Cited on page~#2.)%
    \else      (Cited on pages~#2.)%
    \fi}
\newcommand{\arxiv}[1]{\href{http://arxiv.org/abs/#1}{{\tt arXiv:#1}}}
\numberwithin{equation}{section}
\theoremstyle{plain}
\newtheorem{theorem}{Theorem}[section]
\newtheorem{maintheorem}{Theorem}
\newtheorem{maintheoremprime}{Theorem}
\newtheorem{lemma}[theorem]{Lemma}
\newtheorem*{claim}{Claim}
\newtheorem{stepsa}{Step}
\newtheorem{stepsb}{Step}
\newtheorem{stepsc}{Step}
\theoremstyle{definition}
\newtheorem{asm}[theorem]{Assumption}
\newtheorem{defn}[theorem]{Definition}
\newenvironment{definition}[1][]{\begin{defn}[#1]\pushQED{\qed}}{\popQED \end{defn}}
\newtheorem{notn}[theorem]{Notation}
\theoremstyle{remark}
\newtheorem{rmk}[theorem]{Remark}
\newenvironment{remark}[1][]{\begin{rmk}[#1] \pushQED{\qed}}{\popQED \end{rmk}}
\newtheorem{eg}[theorem]{Example}
\newenvironment{example}[1][]{\begin{eg}[#1] \pushQED{\qed}}{\popQED \end{eg}}
\DeclareMathOperator{\Hom}{Hom}
\DeclareMathOperator{\End}{End}
\DeclareMathOperator{\Mor}{Mor}
\DeclareMathOperator{\coker}{coker}
\DeclareMathOperator{\Image}{Im}
\DeclareMathOperator{\GL}{GL}
\DeclareMathOperator{\EL}{EL}
\DeclareMathOperator{\oEL}{\overline{EL}}
\DeclareMathOperator{\SL}{SL}
\newcommand\R{\ensuremath{\mathbb{R}}}
\newcommand\C{\ensuremath{\mathbb{C}}}
\newcommand\Z{\ensuremath{\mathbb{Z}}}
\newcommand\Field{\ensuremath{\mathbb{F}}}
\DeclareMathOperator{\HH}{H}
\newcommand\RH{\ensuremath{\widetilde{\HH}}}
\DeclareMathOperator{\CC}{C}
\DeclareMathOperator{\KK}{K}
\newcommand\RC{\ensuremath{\widetilde{\CC}}}
\DeclareMathOperator{\rk}{rk}
\DeclareMathOperator{\Aut}{Aut}
\DeclareMathOperator{\Ind}{Ind}
\newcommand\Set[2]{\ensuremath{\left\{\text{#1 $|$ #2}\right\}}}
\newcommand\Figure[4]{
\begin{figure}[t]
\centering
\centerline{\psfig{file=#2,scale=#4}}
\caption{#3}
\label{#1}
\end{figure}}
\newcommand\cD{\ensuremath{\mathcal{D}}}
\newcommand\cF{\ensuremath{\mathcal{F}}}
\newcommand\cG{\ensuremath{\mathcal{G}}}
\newcommand\cH{\ensuremath{\mathcal{H}}}
\newcommand\cK{\ensuremath{\mathcal{K}}}
\newcommand\cL{\ensuremath{\mathcal{L}}}
\newcommand\cM{\ensuremath{\mathcal{M}}}
\newcommand\cO{\ensuremath{\mathcal{O}}}
\newcommand\fF{\ensuremath{\mathfrak{F}}}
\newcommand\fS{\ensuremath{\mathfrak{S}}}
\newcommand\fd{\ensuremath{\mathfrak{d}}}
\newcommand\fq{\ensuremath{\mathfrak{q}}}
\newcommand\bbX{\ensuremath{\mathbb{X}}}
\newcommand\bbY{\ensuremath{\mathbb{Y}}}
\newcommand\bbk{\ensuremath{\Bbbk}}
\newcommand\tsigma{\ensuremath{\widetilde{\sigma}}}
\newcommand\ttau{\ensuremath{\widetilde{\tau}}}
\newcommand\tast{\ensuremath{\widetilde{\ast}}}
\newcommand\on{\ensuremath{\overline{n}}}
\newcommand\ov{\ensuremath{\overline{v}}}
\newcommand\osigma{\ensuremath{\overline{\sigma}}}
\newcommand\uV{\ensuremath{\underline{V}}}
\newcommand\opi{\ensuremath{\overline{\pi}}}
\newcommand\ocF{\ensuremath{\overline{\cF}}}
\newcommand\ocK{\ensuremath{\overline{\cK}}}
\DeclareMathOperator{\FI}{\tt FI}
\DeclareMathOperator{\VIC}{\tt VIC}
\newcommand\ubbk{\ensuremath{\underline{\bbk}}}
\DeclareMathOperator{\Sim}{Sim}
\DeclareMathOperator{\OSim}{{\mathbb S}im}
\DeclareMathOperator{\SR}{SR}
\DeclareMathOperator{\sr}{sr}
\DeclareMathOperator{\ord}{ord}
\DeclareMathOperator{\Link}{Link}
\DeclareMathOperator{\FLink}{\overrightarrow{Link}}
\DeclareMathOperator{\cSet}{\tt Set}
\DeclareMathOperator{\Simp}{\tt Simp}
\DeclareMathOperator{\Bases}{\mathcal B}
\DeclareMathOperator{\OBases}{\mathbb B}
\DeclareMathOperator{\tSimp}{\widetilde{\Simp}}
\title[Twisted homological stability and congruence subgroups]{A new approach to twisted homological stability, with applications to congruence subgroups}
\author{Andrew Putman}
\address{Dept of Mathematics; University of Notre Dame; 255 Hurley Hall; Notre Dame, IN 46556}
\email{andyp@nd.edu}
\thanks{AP was supported in part by NSF grant DMS-1811210.}
\date{December 5, 2022}
\begin{document}

\newpage

\begin{abstract}
We introduce a new method for proving twisted homological stability, and use it to prove
such results for symmetric groups and general linear groups.  In addition to sometimes slightly improving
the stable range given by the traditional method (due to Dwyer), it is easier to adapt to nonstandard
situations.  As an illustration of this, we generalize to $\GL_n$ of many rings $R$
a theorem of Borel which says that passing from $\GL_n$ of a number
ring to a finite-index subgroup does not change the rational cohomology.  Charney proved this generalization
for trivial coefficients, and we extend it to twisted coefficients.
\end{abstract}

\maketitle

\section{Introduction}
\label{section:introduction}

A sequence of groups $\{G_n\}_{n=1}^{\infty}$ exhibits {\em homological stability}
if for each $k \geq 0$, the
value $\HH_k(G_n)$ is independent of $n$ for $n \gg k$.  There is a vast
literature on this, starting with early work of Nakaoka on symmetric groups \cite{Nakaoka} and unpublished work of Quillen dealing with
$\GL_n(\Field_q)$.  Many sequences of groups exhibit
homological stability: general linear groups over many rings \cite{VanDerKallen}, mapping
class groups \cite{HarerStability}, automorphism groups of free groups \cite{HatcherVogtmannCerf}, etc.  See \cite{RandalWilliamsWahl} for a general
framework that encompasses many of these results, as well as a survey of the literature.

\subsection{Twisted coefficients}

Dwyer \cite{DwyerTwisted} showed how to extend this to homology with certain
kinds of twisted coefficients.  For instance, it follows from his work that
if $\bbk$ is a field, then for all $m \geq 1$ and $k \geq 0$ we have
\[\HH_k(\GL_{n}(\bbk);(\bbk^{n})^{\otimes m}) \cong \HH_k(\GL_{n+1}(\bbk);(\bbk^{n+1})^{\otimes m}) \quad \text{for $n \gg 0$}.\]
This has been generalized to many different groups and coefficient systems.  The
most general result we are aware of is in \cite{RandalWilliamsWahl}, which
shows how to prove such a result for ``polynomial coefficient systems'' on
many classes of groups.

\subsection{New approach}

In this paper, we give an alternate approach to twisted homological stability
for polynomial coefficient systems.  Our new approach is actually closer
to the usual proof for constant coefficients, which uses spectral sequences
associated to group actions on carefully chosen simplicial complexes.  What
we do is encode our twisted coefficient systems using certain kinds
of ``coefficient systems'' on those simplicial complexes.
The high connectivity of the simplicial complexes that is
used in the constant coefficient proof is replaced by the vanishing of the homology
of these simplicial complexes with respect to our ``coefficient systems''.

A small advantage of our approach is that it sometimes gives better stability
ranges.  More importantly, our approach is more flexible than Dwyer's, which
only works well in settings where the traditional proof of homological
stability applies in its simplest form.  
For constant coefficients, the
standard proof of homological stability can give useful information even
in settings where it does not apply directly.  Examples of this from
the author's work include the proofs of \cite[Theorem 1.1]{AshPutmanSam}
and \cite[Theorem A]{PutmanStudenmund}.  Our approach to twisted coefficients
is similarly flexible.  We will illustrate this 
by proving Theorem \ref{maintheorem:congruence} below, which shows
how the homology of the general linear group changes when you pass
to a finite-index congruence subgroup.  With twisted coefficients, this
seems hard to prove using the traditional approach.

There is a tension between on the one hand developing abstract frameworks like the one in
\cite{RandalWilliamsWahl} that systematize the homological stability machine, and on the
other hand giving flexible tools that can be adapted to nonstandard situations,
including ones that are not necessarily about homological stability per se.
Both of these goals are important.  In writing this paper, we had the latter goal in mind, so our focus
will be on basic examples rather than a general abstract result.

\begin{remark}
We wrote this paper in part to provide tools for our paper
\cite{PutmanStableLevel}, which applies our machine to study
the homology of the moduli space of curves with level structures.
The main result of \cite{PutmanStableLevel} is about untwisted homology,
but the proof requires working with twisted homology as well.
\end{remark}

\begin{remark}
\label{remark:mpp}
Miller--Patzt--Petersen \cite{MillerPatztPetersen} have recently developed
an approach to twisted homological stability that shares some ideas with our paper,
though the technical details and intended applications are different.
Unlike us, they prove a general theorem in the spirit of \cite{RandalWilliamsWahl}.
Our work was done independently of theirs.\footnote{See \cite[Remark 1.2]{MillerPatztPetersen}.  We
apologize for taking so long to write our approach up.}
\end{remark}

\subsection{FI-modules}
\label{section:fimodules}

The easiest example to which our results apply is the symmetric group.
For these groups we will encode our twisted coefficient systems using Church--Ellenberg--Farb's
theory of $\FI$-modules \cite{ChurchEllenbergFarbFI}.  Let $\FI$ be the category whose
objects are finite sets and whose morphisms are injections.  For a commutative ring $\bbk$, an {\em $\FI$-module}
over $\bbk$ is a functor $M$ from $\FI$ to the category of $\bbk$-modules.  For $n \geq 0$, 
let\footnote{It is more common to write $[n]$ for $\{1,\ldots,n\}$, but later on when discussing
semisimplicial sets we will need to use the notation $[n]$ for $\{0,\ldots,n\}$.}
$\overline{n} = \{1,\ldots,n\}$.  Every object of $\FI$ is isomorphic to $\overline{n}$ for some $n \geq 0$, so
the data of an $\FI$-module $M$ consists of:
\begin{itemize}
\item a $\bbk$-module $M(\overline{n})$ for each $n \geq 0$, and
\item for each injection $f\colon \overline{n} \rightarrow \overline{m}$, an induced $\bbk$-module homomorphism
$f_{\ast}\colon M(\overline{n}) \rightarrow M(\overline{m})$.
\end{itemize}
In particular, the inclusions $\overline{n} \rightarrow \overline{n+1}$ induce a sequence of morphisms
\begin{equation}
\label{eqn:increasingfi}
M(\overline{0}) \rightarrow M(\overline{1}) \rightarrow M(\overline{2}) \rightarrow \cdots.
\end{equation}
The group of $\FI$-automorphisms of $\overline{n}$ is the symmetric group $\fS_n$.  This acts
on $M(\overline{n})$, making $M(\overline{n})$ into a $\bbk[\fS_n]$-module.  More generally,
for a finite set $S$ the group $\fS_S$ of bijections of $S$ acts on $M(S)$, making $M(S)$
into a $\bbk[\fS_S]$-module.

\begin{example}
\label{example:easyfi}
The group $\fS_n$ acts on $\bbk^n$ for each $n \geq 0$.  We can fit the increasing sequence
\[\bbk^0 \rightarrow \bbk^1 \rightarrow \bbk^2 \rightarrow \cdots\]
of $\fS_n$-representations into an $\FI$-module $M$ over $\bbk$ by defining
\[M(S) = \bbk^S \quad \text{for a finite set $S$}.\]
Here the notation $\bbk^S$ means the free $\bbk$-module with basis $S$, and an
injective map $f\colon S \rightarrow T$ between finite sets induces a map
$f_{\ast}\colon M(S) \rightarrow M(T)$ taking basis elements to basis elements.  As
a $\bbk[\fS_n]$-module, we have $M(\overline{n}) = \bbk^{\overline{n}} \cong \bbk^n$.
\end{example}

\subsection{Polynomial FI-modules}

For an $\FI$-module $M$ over $\bbk$, the inclusions \eqref{eqn:increasingfi} induce maps between homology groups
for each $k$:
\[\HH_k(\fS_0;M(\overline{0})) \rightarrow \HH_k(\fS_1;M(\overline{1})) \rightarrow \HH_k(\fS_2;M(\overline{2})) \rightarrow \cdots.\]
We would like to give conditions under which these stabilize.  We start with
the following.  Fix a functorial coproduct $\sqcup$ on $\FI$, which
we think of as ``disjoint union''.  Letting $\ast$ be a formal symbol,
for a finite set $S$ we then have the finite set $S \sqcup \{\ast\}$
of cardinality $|S|+1$.

\begin{definition}
\label{definition:derivedfi}
Let $\bbk$ be a commutative ring and let $M$ be an $\FI$-module over $\bbk$.  
\begin{itemize}
\item The {\em shifted} $\FI$-module of $M$, denoted $\Sigma M$, is the $\FI$-module over $\bbk$ defined via the formula
$\Sigma M(S) = M(S \sqcup \{\ast\})$ for a finite set $S$.
\item The {\em derived} $\FI$-module
of $M$, denoted $DM$, is the $\FI$-module over $\bbk$ defined via the formula
\[DM(S) = \frac{M(S \sqcup \{\ast\})}{\Image(M(S) \rightarrow M(S \sqcup \{\ast\}))} \quad \text{for a finite set $S$}.\qedhere\]
\end{itemize}
\end{definition}

\begin{remark}
Morphisms between $\FI$-modules over $\bbk$ are natural transformations between functors.  This makes the collection
of $\FI$-modules over $\bbk$ into an abelian category, where kernels and cokernel are computed pointwise.  With
these conventions, there is a morphism $M \rightarrow \Sigma M$, and $DM = \coker(M \rightarrow \Sigma M)$.
\end{remark}

The idea behind
the following definition goes back to Dwyer's work \cite{DwyerTwisted}, and has
been elaborated upon by many people; see \cite{RandalWilliamsWahl} for a
more complete history.  

\begin{definition}
\label{definition:polyfi}
Let $\bbk$ be a commutative ring and let $M$ be an $\FI$-module over $\bbk$.  We say that
$M$ is {\em polynomial} of degree $d \geq -1$ starting at $m \in \Z$ if it satisfies
the following inductive condition:
\begin{itemize}
\item If $d = -1$, then for finite sets $S$ with $|S| \geq m$ we require $M(S) = 0$.
\item If $d \geq 0$, then we require the following two conditions:
\begin{itemize}
\item For all injective maps $f\colon S \rightarrow T$ between finite sets with $|S| \geq m$,
the induced map $f_{\ast}\colon M(S) \rightarrow M(T)$ must be an injection.
\item The derived $\FI$-module $DM$ must be polynomial of degree $(d-1)$ starting at $(m-1)$.\qedhere
\end{itemize}
\end{itemize}
\end{definition}

\begin{remark}
\label{remark:exactsequence}
If the $\FI$-module $M$ over $\bbk$ is polynomial of degree $d$ starting at $m \in \Z$, then for all finite
sets $S$ with $|S| \geq m$ we have a short exact sequence
\begin{equation}
\label{eqn:polyfiseq}
0 \longrightarrow M(S) \longrightarrow \Sigma M(S) \longrightarrow DM(S) \longrightarrow 0
\end{equation}
of $\bbk[\fS_S]$-modules.  
\end{remark}

\begin{remark}
\label{remark:split}
There is also a notion of a polynomial $\FI$-module being {\em split}, which can slightly
improve the bounds on where stability begins.  See \cite{RandalWilliamsWahl} for the definition.
To avoid complicating our arguments, we will not incorporate this into our results.
\end{remark}

\begin{example}
\label{example:deg0fi}
An $\FI$-module $M$ over $\bbk$ is polynomial of degree $0$ starting at $m \in \Z$ if
it satisfies the following two conditions:
\begin{itemize}
\item For all injective maps $f\colon S \rightarrow T$ between finite sets with $|S| \geq m$, 
the induced map $f_{\ast}\colon M(S) \rightarrow M(T)$ must be an injection.
\item For all finite sets $S$ of with $|S| \geq m-1$, we must have
\[DM(S) = \frac{M(S \sqcup \{\ast\})}{\Image(M(S) \rightarrow M(S \sqcup \{\ast\}))} = 0.\]
In other words, the map $M(S) \rightarrow M(S \sqcup \{\ast\})$ must be surjective.  This
implies more generally that for all injective maps $f\colon S \rightarrow T$ between finite sets with
$|S| \geq m-1$, the induced map $f_{\ast}\colon M(S) \rightarrow M(T)$ must be a surjection.
\end{itemize}
Combining these two facts, we see that $M$ is polynomial of degree $0$ starting at $m \in \Z$ if
for all injective maps $f\colon S \rightarrow T$ between finite sets, the map $f_{\ast}\colon M(S) \rightarrow M(T)$
is an isomorphism if $|S| \geq m$ and a surjection if $|S| = m-1$. 
\end{example}

\begin{example}
The $\FI$-module $M$ in Example \ref{example:easyfi} is polynomial of degree $1$ starting
at $0$.  More generally, for $d \geq 1$ we can define an $\FI$-module $M^{\otimes d}$ via the formula
\[M^{\otimes d}(S) = \left(\bbk^S\right)^{\otimes d} \quad \text{for a finite set $S$}.\]
The $\FI$-module $M^{\otimes d}$ is polynomial of degree $d$ starting at $0$.
\end{example}

\begin{example}
There is a natural notion of an FI-module being generated and related in finite degree (see
\cite{ChurchEllenbergFarbFI} for the definition).
As was observed in \cite[Example 4.18]{RandalWilliamsWahl}, it follows from work of
Church--Ellenberg \cite{ChurchEllenbergHomology} that if $M$ is generated in degree $d$
and related in degree $r$, then $M$ is polynomial of degree $d$ starting at
$r+\min(d,r)$.  We remark that though $r+\min(d,r)$ is sharp in general, in practice
many such $\FI$-modules are polynomial of degree $d$ starting far earlier than $r+\min(d,r)$, often at $0$.
\end{example}

\subsection{Symmetric groups}

Our main theorem about the symmetric group is as follows:

\begin{maintheorem}
\label{maintheorem:sn}
Let $\bbk$ be a commutative ring and let $M$ be an $\FI$-module over $\bbk$ that is polynomial
of degree $d \geq -1$ starting at $m \geq 0$.  For each $k \geq 0$, the map
\[\HH_k(\fS_{n};M(\overline{n})) \rightarrow \HH_k(\fS_{n+1};M(\overline{n+1}))\]
is an isomorphism for $n \geq 2k+\max(d,m)+1$ and a surjection for $n=2k+\max(d,m)$.
\end{maintheorem}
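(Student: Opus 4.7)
The natural plan is to induct on the polynomial degree $d$ of $M$. The base case $d = -1$ is immediate: $M(\overline{n}) = 0$ for $n \geq m$, so both source and target of the stabilization map vanish.

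For the inductive step, the key algebraic input is the short exact sequence
$$0 \longrightarrow M(\overline{n}) \longrightarrow \Sigma M(\overline{n}) \longrightarrow DM(\overline{n}) \longrightarrow 0, \qquad n \geq m,$$
from Remark \ref{remark:exactsequence}, where $DM$ is polynomial of degree $d-1$ starting at $m-1$. Taking $\fS_n$-homology and comparing the resulting long exact sequences at levels $n$ and $n+1$ via a five-lemma argument reduces the theorem to two subproblems: (i) the analogous stability statement for $DM$, which follows from the outer induction; and (ii) understanding the natural map $\HH_k(\fS_n;\Sigma M(\overline{n})) \to \HH_k(\fS_{n+1}; M(\overline{n+1}))$. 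Using the identification $\Sigma M(\overline{n}) = M(\overline{n+1})$, with $\fS_n \hookrightarrow \fS_{n+1}$ embedded as the stabilizer of a marked point, this is essentially a question about restriction along the standard inclusion of symmetric groups.

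To handle (ii) I would construct a highly connected semisimplicial $\fS_{n+1}$-set $X_\bullet$ on which $\fS_{n+1}$ acts transitively in each degree with $p$-simplex stabilizers conjugate to $\fS_{n-p}$; the natural candidate is the complex of injective words, whose $p$-simplices are ordered $(p+1)$-tuples of distinct points of $\overline{n+1}$ with face maps by deletion, and whose connectivity is of order $n/2$ by classical results. Tensoring the augmented equivariant chain complex of $X_\bullet$ with $M(\overline{n+1})$ yields a spectral sequence converging in the range of connectivity to $\HH_*(\fS_{n+1};M(\overline{n+1}))$, with $E^1$-terms of the form $\HH_q(\fS_{n-p};M(\overline{n+1}))$ and an edge column recovering $\HH_*(\fS_n;\Sigma M(\overline{n}))$. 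Since $M(\overline{n+1}) \cong \Sigma^{p+1} M(\overline{n-p})$ as an $\fS_{n-p}$-module, a filtration built by iterating the short exact sequence above rewrites each $E^1$-term in terms of $\HH_q(\fS_{n-p}; D^j M(\overline{n-p}))$ for $j \leq p+1$, which the outer induction controls.

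The main obstacle will be the bookkeeping required to combine the connectivity of $X_\bullet$, the homological shifts introduced by the filtrations, and the inductive stability ranges for the $D^j M$ (polynomial of degree $d-j$ starting at $m-j$), so as to deduce precisely the range $n \geq 2k + \max(d,m-1) + 2$. The factor of $2$ should emerge from the $n/2$-scale connectivity of the semisimplicial complex, while the additive offset $\max(d,m-1)+2$ accumulates from the inductive budgets consumed by $d$ applications of the derived-FI reduction and the long-exact-sequence dimension shifts. A secondary delicate point is the edge case $d = 0$, which should reproduce Nakaoka's classical stability for $\fS_n$ with essentially trivial coefficients (cf.\ Example \ref{example:deg0fi}) and may require direct verification as an anchor for the induction.
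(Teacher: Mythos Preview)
Your outline is essentially the classical Dwyer/van der Kallen approach: act on the complex of injective words with the \emph{constant} coefficient module $M(\overline{n+1})$, obtain a spectral sequence with $E^1_{p,q}=\HH_q(\fS_{n-p};M(\overline{n+1}))$, and then peel apart each term using the filtration of $\Sigma^{p+1}M$ coming from iterated applications of the short exact sequence $0\to M\to\Sigma M\to DM\to 0$. This can be made to work, but it is exactly the method the paper is contrasting its new approach against. Two concrete issues with your write-up: first, the ``outer induction on $d$'' does not close by itself, since the $j=0$ piece of your filtration is $M$ again (same degree $d$), so the actual argument must be a nested induction on $k$ (or on $n$) with the polynomiality only entering to bound the depth of the filtration; second, carrying out the bookkeeping carefully along these lines yields a range of the form $n\geq 2k+2d+\text{const}$ (as in Randal-Williams--Wahl, and as in Theorem~\ref{maintheorem:snprime}), not the sharper $n\geq 2k+\max(d,m-1)+2$ of Theorem~\ref{maintheorem:sn}.

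The paper's proof is structurally different. Rather than placing the fixed module $M(\overline{n+1})$ on every simplex and filtering afterwards, it equips $\OSim_{n-1}$ with the \emph{varying} equivariant coefficient system $\cF_{M,n-1}$ assigning $M([n-1]\setminus\{i_0,\ldots,i_k\})$ to the simplex $(i_0,\ldots,i_k)$. With this choice the $E^1$-terms of the stability spectral sequence are already $\HH_q(\fS_{n-p-1};M(\overline{n-p-1}))$---no filtration is needed---so the argument reduces immediately to the standard untwisted stability machine (Theorem~\ref{theorem:stabilitymachine}). The entire burden of the polynomiality hypothesis is absorbed into a single acyclicity statement, the Vanishing Theorem (Theorem~\ref{theorem:vanishing}), which shows $\RH_k(\OSim_{n-1};\cF_{M,n-1})=0$ in the required range; the induction on $d$ lives inside that theorem, not in the stability argument itself. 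This is what buys the improved additive constant $\max(d,m-1)$ rather than $2d$.
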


In the key case $m=0$, the stabilization map is thus an isomorphism for $n \geq 2k+d+1$ and
a surjection for $n=2k+d$.  Theorem \ref{maintheorem:sn} is the natural output of the machine we develop, but
for $m \gg 0$ the following result sometimes gives better bounds (though often worse ones):

\begin{maintheoremprime}
\label{maintheorem:snprime}
Let $\bbk$ be a commutative ring and let $M$ be an $\FI$-module over $\bbk$ that is polynomial
of degree $d \geq -1$ starting at $m \geq 0$.  For each $k \geq 0$, the map
\begin{equation}
\label{eqn:snstabmap}
\HH_k(\fS_{n};M(\overline{n})) \rightarrow \HH_k(\fS_{n+1};M(\overline{n+1}))
\end{equation}
is an isomorphism for $n \geq \max(m,2k+2d+2)$ and a surjection for $n \geq \max(m,2k+2d)$.
\end{maintheoremprime}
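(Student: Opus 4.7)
The plan is to prove Theorem $B'$ by induction on the polynomial degree $d \geq -1$, using the short exact sequence
\[
0 \longrightarrow M \longrightarrow \Sigma M \longrightarrow DM \longrightarrow 0
\]
from Remark \ref{remark:exactsequence}, which is exact at every level $|S| \geq m$. The hypothesis $n \geq m$ in the statement is exactly what is needed to ensure that this sequence is exact at both levels $n$ and $n+1$, the two levels appearing in a single stabilization map. The base case $d = -1$ is vacuous since $M(\overline{n}) = 0$ for $n \geq m$, and the case $d = 0$ reduces to a stability result with essentially fixed coefficients, since $M(\overline{n}) \to M(\overline{n+1})$ is an $\fS_n$-equivariant isomorphism for $n \geq m$ (so after transporting, one is stabilizing group-homology with the single module $M(\overline{n+1})$ along the inclusion $\fS_n \hookrightarrow \fS_{n+1}$).

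For the inductive step, pass to $\fS_n$- and $\fS_{n+1}$-homology of the short exact sequence to obtain a commutative ladder of long exact sequences, and apply a five-lemma argument to the middle $M$-columns. The $DM$-column is controlled by the inductive hypothesis: $DM$ is polynomial of degree $d-1$ starting at $m-1$, so the stabilization map for $DM$ is an isomorphism for $n \geq \max(m-1, 2k+2d)$ and a surjection for $n \geq \max(m-1, 2k+2d-2)$, both of which are implied by $n \geq \max(m, 2k+2d+2)$ (with analogous statements in degree $k-1$). For the $\Sigma M$-column one wants to appeal to Theorem~A, using that $\Sigma M$ is polynomial of degree $d$ starting at $m - 1$.

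The main obstacle is that a direct application of Theorem~A to $\Sigma M$ only yields stability for $n \geq 2k + \max(d, m-2) + 2$, which for $m \gg d, k$ is far worse than the target range $n \geq m$. The key to bridging this gap is to replace Theorem~A in the $\Sigma M$-column by an alternate analysis that exploits the identification $\Sigma M(\overline{n}) = M(\overline{n+1})$: using the inclusion $\fS_n \hookrightarrow \fS_{n+1}$ fixing a point, together with the FI-structure map $M(\overline{n+1}) \to M(\overline{n+2})$, one can relate the $\Sigma M$-stabilization map at level $n$ to the $M$-stabilization map one level up, thereby closing the argument via a simultaneous induction on $d$ and $n$. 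The five-lemma then propagates the conclusion across the ladder. The bound $2k + 2d + 2$ (as opposed to the $2k + d + 2$ from Theorem~A) reflects the doubled cost incurred by having to juggle both the degree shift from $M$ to $DM$ and the level shift from $M$ to $\Sigma M$ at each step of the induction.
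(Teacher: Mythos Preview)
Your overall architecture is correct and matches the paper: use the short exact sequence $0 \to M \to \Sigma M \to DM \to 0$ (valid for $n \geq m$), pass to the ladder of long exact sequences in $\fS_n$- and $\fS_{n+1}$-homology, and apply the five-lemma. The $DM$ column is indeed handled by induction on $d$, and your base cases $d=-1$ and $d=0$ are fine (for $d=0$ one checks, as in Example~\ref{example:deg0fi}, that the $\fS_n$-action on $M(\overline{n})$ is trivial for $n \geq m$, so Nakaoka applies).

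The gap is in your treatment of the $\Sigma M$ column. You correctly observe that applying Theorem~\ref{maintheorem:sn} to $\Sigma M$ gives a range that is too weak when $m$ is large, but your proposed fix does not work. The $\Sigma M$-stabilization map at level $n$ is
\[
\HH_k(\fS_n; M(\overline{n+1})) \longrightarrow \HH_k(\fS_{n+1}; M(\overline{n+2})),
\]
while the $M$-stabilization map ``one level up'' is
\[
\HH_k(\fS_{n+1}; M(\overline{n+1})) \longrightarrow \HH_k(\fS_{n+2}; M(\overline{n+2})).
\]
These involve different symmetric groups in both source and target; there is no commutative square or Shapiro-type identity that lets you deduce one from the other. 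Consequently ``simultaneous induction on $d$ and $n$'' has no traction here: ascending induction on $n$ has no relevant base case, and descending induction has none either.

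The correct induction variable is $m$, not $n$. The paper runs a \emph{double} induction on $(d,m)$. Since $\Sigma M$ is polynomial of degree $d$ starting at $m-1$, the inductive hypothesis on $m$ applies directly to the $\Sigma M$ column and gives exactly the ranges needed for the five-lemma. The base case $m=0$ is where Theorem~\ref{maintheorem:sn} enters: for $m=0$ and $d \geq 1$ it already gives the (stronger) bound $n \geq 2k+d+2$, and for $m=0$, $d=0$ one invokes Nakaoka. Once you replace ``induction on $n$'' by ``induction on $m$'', your argument becomes the paper's proof.
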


\begin{remark}
Theorem \ref{maintheorem:snprime} will be derived from Theorem \ref{maintheorem:sn} using
\eqref{eqn:polyfiseq}.
\end{remark}

The first twisted homological stability result for the symmetric group was proved by
Betley \cite{BetleySymmetric}, who only considered split coefficient systems starting at $m=0$ (c.f.\ Remark \ref{remark:split})
but proved that \eqref{eqn:snstabmap} is an isomorphism for $n \geq 2k+d$.  
Randal-Williams--Wahl \cite[Theorem 5.1]{RandalWilliamsWahl}
showed how to deal with general polynomial coefficients, but in this level of generality could only prove that \eqref{eqn:snstabmap}
is an isomorphism for $n \geq \max(2m+1,2k+2d+2)$, though in the split case they could reduce this to
$n \geq \max(m+1,2k+d+2)$.

\subsection{VIC-modules}
\label{section:vicmodules}

We now turn to general linear groups.  Let $R$ be a ring, possibly noncommutative.  To encode our
coefficient systems on $\GL_n(R)$, we will use $\VIC(R)$-modules, which were introduced by the author
and Sam \cite{PutmanSam, PutmanSamNonCom}.  Define $\VIC(R)$ to be the following category:
\begin{itemize}
\item The objects of $\VIC(R)$ are finite-rank free right $R$-modules.\footnote{We use right $R$-modules since this
ensures that $\GL_n(R)$ acts on $R^n$ on the left.}  For a finite-rank free right $R$-module $A$, we
will sometimes write $[A]$ for the associated object of $\VIC(R)$ to clarify our statements.
\item For finite-rank free right $R$-modules $A_1$ and $A_2$, a $\VIC(R)$-morphism $[A_1] \rightarrow [A_2]$
is a pair $(f,C)$, where $f\colon A_1 \rightarrow A_2$ is an injection and $C \subset A_2$ is a
submodule\footnote{It would also be reasonable to require $C$ to be free with $\rk(C) = \rk(A_2) - \rk(A_1)$.  We will not
make this restriction.  For the rings we consider, this frequently holds automatically, at least when $\rk(A_2) - \rk(A_1)$ is large enough (see Lemma \ref{lemma:stabledecomp}).} such that $A_2 = f(A_1) \oplus C$.  The composition of $\VIC(R)$-morphisms $(f_1,C_1)\colon [A_1] \rightarrow [A_2]$
and $(f_2,C_2)\colon [A_2] \rightarrow [A_3]$ is the $\VIC(R)$-morphism $(f_2 \circ f_1, C_2 \oplus f_1(C_1))\colon [A_1] \rightarrow [A_3]$.
\end{itemize}
For a commutative ring $\bbk$, a $\VIC(R)$-module over $\bbk$ is a functor $M$ from $\VIC(R)$ to the category
of $\bbk$-modules.  Every object of $\VIC(R)$ is isomorphic to $R^n$ for some $n \geq 0$, so the
data of a $\VIC(R)$-module $M$ consists of:
\begin{itemize}
\item a $\bbk$-module $M(R^n)$ for each $n \geq 0$, and
\item for each $\VIC(R)$-morphism $(f,C)\colon [R^n] \rightarrow [R^m]$, an induced $\bbk$-module homomorphism
$(f,C)_{\ast}\colon M(R^n) \rightarrow M(R^m)$.
\end{itemize}
For each $n$, let $\iota_n\colon R^n \rightarrow R^{n+1}$ be the inclusion into the first $n$ coordinates.
We then have a $\VIC(R)$-morphism $(\iota_n,0 \oplus R)\colon [R^n] \rightarrow [R^{n+1}]$.  These
induce a sequence of morphisms
\begin{equation}
\label{eqn:increasingvic}
M(R^0) \rightarrow M(R^1) \rightarrow M(R^2) \rightarrow \cdots.
\end{equation}
The group of $\VIC(R)$-automorphisms of $[R^n]$ is $\GL_n(R)$.  This acts
on $M(R^n)$, making $M(R^n)$ into a $\bbk[\GL_n(R)]$-module.  More generally, for a finite-rank free
right $R$-module $A$ the group $\GL(A)$ acts on $M(A)$.

\begin{example}
\label{example:easyvic1}
We can fit the increasing sequence
\[\Z^0 \rightarrow \Z^1 \rightarrow \Z^2 \rightarrow \cdots\]
of $\GL_n(\Z)$-representations into a $\VIC(\Z)$-module $M$ over $\Z$ by defining
\[M(A) = A \quad \text{for a finite-rank free $\Z$-module $A$}.\]
For a $\VIC(\Z)$-morphism $(f,C)\colon [A_1] \rightarrow [A_2]$, the associated
map $(f,C)_{\ast}\colon M(A_1) \rightarrow M(A_2)$ is simply $f$.
\end{example}

\begin{example}
\label{example:easyvic2}
Example \ref{example:easyvic1} can be generalized as follows.  Let $R$ be a ring, let $\bbk$ be a commutative ring,
let $V$ be a $\bbk$-module, and let $\lambda\colon R \rightarrow \End_{\bbk}(V)$ be a ring homomorphism.
Example \ref{example:easyvic1} will correspond to $R = \bbk = V = \Z$ and 
\[\lambda(r)(v) = r v \quad \text{for $r \in R = \Z$ and $v \in V = \Z$}.\]
Another example would be $R = \bbk[G]$ for a group $G$ and $V$ a representation of $G$ over $\bbk$.
For each $n \geq 0$, the ring homomorphism $\lambda$ induces a group homomorphism
\[\GL_n(R) \rightarrow \GL_n(\End_{\bbk}(V)),\]
endowing the $\bbk$-module $V^{\oplus n}$ with the structure of a $\bbk[\GL_n(R)]$-module.  
We can fit the increasing sequence
\[V^{\oplus 0} \rightarrow V^{\oplus 1} \rightarrow V^{\oplus 2} \rightarrow \cdots\]
of $\GL_n(R)$-representations into a $\VIC(R)$-module $M$ by defining
\[M(A) = A \otimes_R V \quad \text{for a finite-rank free right $R$-module $A$}.\]
Here we use $\lambda$ to regard $V$ as a left $R$-module.  For a $\VIC(R)$-morphism
$(f,C)\colon [A_1] \rightarrow [A_2]$, the induced map $(f,C)_{\ast}\colon M(A_1) \rightarrow M(A_2)$
is $f \otimes \text{id}$.  As a $\bbk[\GL_n(R)]$-module, we have
\[M(R^n) = R^n \otimes_R V = V^{\oplus n}.\qedhere\]
\end{example}

\begin{example}
\label{example:easyvic3}
In Examples \ref{example:easyvic1} and \ref{example:easyvic2}, the $C$ in a $\VIC(R)$-morphism played no role.  For
an easy example of a $\VIC(R)$-module where it is important, consider the dual $M^{\ast}$ of the $\VIC(\Z)$-module
$M$ over $\Z$ from Example \ref{example:easyvic1}:
\[M^{\ast}(A) = \Hom(A,\Z) \quad \text{for a finite-rank free $\Z$-module $A$}.\]
For a $\VIC(\Z)$-morphism $(f,C)\colon [A_1] \rightarrow [A_2]$, the associated
map $(f,C)_{\ast}\colon M^{\ast}(A_1) \rightarrow M^{\ast}(A_2)$ takes $\phi \in \Hom(A_1,\Z)$ to the composition
\[A_2 \stackrel{/C}{\longrightarrow} f(A_1) \stackrel{f^{-1}}{\longrightarrow} A_1 \stackrel{\phi}{\longrightarrow} \Z,\]
where the first map is the projection $A_2 = f(A_1) \oplus C \rightarrow f(A_1)$.
\end{example}

\subsection{Polynomial VIC-modules}

For a $\VIC(R)$-module $M$ over $\bbk$, the inclusions \eqref{eqn:increasingvic} induce maps between homology groups
for each $k$:
\[\HH_k(\GL_0(R);M(R^0)) \rightarrow \HH_k(\GL_1(R);M(R^1)) \rightarrow \HH_k(\GL_2(R);M(R^2)) \rightarrow \cdots.\]
Just like for for $\FI$-modules, to make this stabilize we will need to impose a polynomiality condition.\footnote{For finite rings $R$,
the author and Sam proved in \cite{PutmanSam, PutmanSamNonCom} a homological stability result which replaces
the polynomiality condition by a much weaker ``finite generation'' condition.  That proof is very different from the one
we will give, and cannot possibly work for infinite rings like $R = \Z$.} The definitions are similar to those for $\FI$-modules:

\begin{definition}
\label{definition:derivedvic}
Let $R$ be a ring, $\bbk$ be a commutative ring, and $M$ be a $\VIC(R)$-module over $\bbk$.
\begin{itemize}
\item The {\em shifted} $\VIC(R)$-module of $M$, denoted $\Sigma M$, is the $\VIC(R)$-module over $\bbk$ defined via the formula
$\Sigma M(A) = M(A \oplus R^1)$ for a finite-rank free right $R$-module $A$.
\item The {\em derived} $\VIC(R)$-module
of $M$, denoted $DM$, is the $\VIC(R)$-module over $\bbk$ defined via the formula
\[DM(A) = \frac{M(A \oplus R^1)}{\Image(M(A) \rightarrow M(A \oplus R^1))} \quad \text{for a finite-rank free right $R$-module $A$}.\qedhere\]
\end{itemize}
\end{definition}

\begin{definition}
\label{definition:polyvic}
Let $R$ be a ring, $\bbk$ be a commutative ring, and $M$ be a $\VIC(R)$-module over $\bbk$.
We say that $M$ is {\em polynomial} of degree $d \geq -1$ starting at $m \in \Z$ if it satisfies
the following inductive condition:
\begin{itemize}
\item If $d = -1$, then for all finite-rank free right $R$-modules $A$ with $\rk(A) \geq m$ we require $M(A) = 0$.
\item If $d \geq 0$, then we require the following two conditions:
\begin{itemize}
\item For all $\VIC(R)$-morphisms $(f,C) \colon [A_1] \rightarrow [A_2]$ with $\rk(A_1) \geq m$,
the induced map $(f,C)_{\ast}\colon M(A_1) \rightarrow M(A_2)$ must be an injection.
\item The derived $\VIC(R)$-module $DM$ must be polynomial of degree $(d-1)$ starting at $(m-1)$.\qedhere
\end{itemize}
\end{itemize}
\end{definition}

\begin{example}
The $\VIC(R)$-modules in Examples \ref{example:easyvic1}, \ref{example:easyvic2}, and \ref{example:easyvic3} are all
polynomial of degree $1$ starting at $0$.  Letting $M$ be one of these, for $d \geq 0$ we can define another
$\VIC(R)$-module $M^{\otimes d}$ via the formula
\[M^{\otimes d}(A) = \left(M\left(A\right)\right)^{\otimes d} \quad \text{for a finite-rank free right $R$-module $A$}.\]
This is easily seen to be polynomial of degree $d$ starting at $0$.
\end{example}

\subsection{General linear groups}

We now turn to our stability theorem, which will concern the homology of $\GL_n(R)$.
We will need to impose a ``stable rank'' condition on $R$ called $(\SR_r)$ that was introduced
by Bass \cite{BassKTheory}.  See \S \ref{section:stablerank} below for the definition and a
survey.  Here we will simply say this condition is satisfied by
many rings; in particular, fields satisfy $(\SR_2)$ and PIDs satisfy $(\SR_3)$.  More generally
a ring $R$ that is finitely generated as a module over a Noetherian commutative
ring of Krull dimension $r$ satisfies $(\SR_{r+2})$.  Thus for instance if $\bbk$ is
a field and $G$ is a finite group, then the group ring $\bbk[G]$ satisfies $(\SR_2)$.

\begin{maintheorem}
\label{maintheorem:gl}
Let $R$ be a ring satisfying $(\SR_r)$, let $\bbk$ be a commutative ring, and let
$M$ be a $\VIC(R)$-module over $\bbk$ that is polynomial of degree $d \geq -1$
starting at $m \geq 0$.
For each $k \geq 0$, the map
\[\HH_k(\GL_n(R);M(R^n)) \rightarrow \HH_k(\GL_{n+1}(R);M(R^{n+1}))\]
is an isomorphism for $n \geq 2k+\max(2d+r-1,m,r)+1$ and a surjection for $n = 2k+\max(2d+r-1,m,r)$.
\end{maintheorem}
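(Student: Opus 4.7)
The plan is to mirror, in the $\VIC(R)$ setting, the machine that is developed for Theorem~\ref{maintheorem:sn}, using a highly connected semisimplicial complex on which $\GL_n(R)$ acts, together with the short exact sequence $0 \to M \to \Sigma M \to DM \to 0$ of $\VIC(R)$-modules (analogous to \eqref{eqn:polyfiseq}) for the induction on polynomial degree. The geometric input will be the ordered complex of partial bases $\OBases(R^n)$ (presumably introduced in the body of the paper), whose $p$-simplices are sequences $(v_0,\ldots,v_p)$ of vectors in $R^n$ that can be completed to a basis. The stable rank hypothesis $(\SR_r)$ is exactly what is needed (following Van der Kallen and Charney) to show that this complex is roughly $\tfrac{n-r-1}{2}$-connected; this is the source of the $r$ in the stability bound.

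Next, I would set up the equivariant homology spectral sequence for the action of $\GL_n(R)$ on $\OBases(R^n)$ with coefficients in $M(R^n)$. Because a $p$-simplex determines a splitting $R^n = \Span{v_0,\ldots,v_p} \oplus W$, its $\GL_n(R)$-stabilizer decomposes as the semidirect product of $\GL(W) \cong \GL_{n-p-1}(R)$ with a unipotent part, and the $\VIC(R)$ structure identifies the restriction of $M(R^n)$ to this stabilizer with $\Sigma^{p+1}M(R^{n-p-1})$. After the standard Shapiro-type identification, the $E^1$-page reads, up to the usual signs,
\[
E^1_{p,q} \;\cong\; \HH_q\!\left(\GL_{n-p-1}(R);\,\Sigma^{p+1}M(R^{n-p-1})\right),
\]
and high connectivity of $\OBases(R^n)$ makes the spectral sequence converge to $\HH_{p+q}(\GL_n(R);M(R^n))$ in the relevant range. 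Comparing this with the analogous spectral sequence for $\GL_{n+1}(R)$ via the stabilization map, the stability assertion becomes equivalent to showing that each map
\[
\HH_q\!\left(\GL_{n-p-1}(R);\,\Sigma^{p+1}M(R^{n-p-1})\right) \longrightarrow \HH_q\!\left(\GL_{n-p}(R);\,\Sigma^{p+1}M(R^{n-p})\right)
\]
is an isomorphism (or surjection) in the appropriate range.

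I would then run a double induction on $(k,d)$. The base case $d=-1$ is trivial since $M(R^n)=0$ for $n\geq m$, and the base case $k=0$ reduces to coinvariants, where the polynomial-degree-$0$ calculation of Example~\ref{example:deg0fi}'s $\VIC$-analog gives stability directly. For the inductive step, the short exact sequence $0 \to M \to \Sigma M \to DM \to 0$ produces a long exact sequence of $\GL_n(R)$-homologies; the outer terms are controlled by the inductive hypothesis for $M$ (lower $k$) and for $DM$ (degree $d-1$ starting at $m-1$), and the middle term $\Sigma M$ is the shift that appears on the $E^1$-page above. Arithmetic on the various connectivity and stability estimates, combined with the spectral sequence comparison, should give the asserted range $n \geq 2k+\max(2d+r,m+1)$ for isomorphism and $n = 2k+\max(2d+r-1,m)$ for surjection.

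The main obstacle, I expect, is twofold. First, the connectivity of $\OBases(R^n)$ under $(\SR_r)$ must be proved (or cited) with bounds sharp enough that the spectral sequence argument yields exactly $2d+r$ rather than something weaker; this is where the $(\SR_r)$ hypothesis is decisive, and matching constants in Charney's and Van der Kallen's connectivity estimates is delicate. Second, and this is where the new approach earns its keep, one must carefully arrange the interplay between the $\Sigma^{p+1}M$ appearing on stabilizers (which only knows its polynomial structure starts at $m-(p+1)$) and the induction on $d$ through $DM$, so that the inductive bounds do not degrade faster than the geometric connectivity improves with $n$; this bookkeeping is the substantive content beyond the symmetric-group case of Theorem~\ref{maintheorem:sn}.
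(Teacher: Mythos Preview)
Your outline is essentially the traditional Dwyer/Randal-Williams--Wahl approach, and it is \emph{not} the route the paper takes. The paper's proof of Theorem~\ref{maintheorem:gl} (via the more general Theorem~\ref{theorem:xl}) does not put the constant coefficient system $M(R^n)$ on the complex and then chase shifts $\Sigma^{p+1}M$ through a spectral sequence. Instead, the key innovation is to build a \emph{nonconstant} augmented coefficient system $\cG_{M,n,r}$ on the ordered complex of \emph{split} partial bases, assigning to a simplex $(x_0,\ldots,x_k;C)$ the value $M(C)$ rather than $M(R^n)$. One then proves a general ``vanishing theorem'' (Theorem~\ref{theorem:vanishing}): for a polynomial coefficient system of degree $d$ on a weakly Cohen--Macaulay complex, the reduced homology $\RH_k(\bbX;\cF)$ vanishes in an explicit range. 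The induction on $d$ lives entirely inside this vanishing theorem, and the short exact sequence $0\to M\to\Sigma M\to DM\to 0$ is \emph{not} used in the proof of Theorem~\ref{maintheorem:gl} at all (it appears only later, in deriving Theorem~\ref{maintheorem:glprime} from Theorem~\ref{maintheorem:gl}). With $\cG_{M,n,r}$ in hand, the stabilizer of $\sigma_k$ acts on $\cG_{M,n,r}(\sigma_k)=M(R^{n-k-1})$ itself, not on a shift, so the hypotheses (a)--(d) of the stability machine (Theorem~\ref{theorem:stabilitymachine}) are verified directly and the result drops out in one step.

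There is also a small muddle in your setup: you describe simplices as bare sequences $(v_0,\ldots,v_p)$ but then say such a simplex ``determines a splitting $R^n=\langle v_0,\ldots,v_p\rangle\oplus W$''. It does not; the complement $W$ is extra data, and the paper's complex $\Bases(R^n)$ records it explicitly (vertices are pairs $(x;C)$). Without the complement the stabilizer has a genuine unipotent radical and the identification with $\Sigma^{p+1}M(R^{n-p-1})$ is not clean. With the split complex the stabilizer is exactly $\GL_{n-p-1}(R)$, which is what both approaches ultimately need. As for what each approach buys: yours is the well-trodden path and would likely reproduce the Randal-Williams--Wahl range $n\ge 2k+2d+r+1$; the paper's coefficient-system method shaves one off this (to $n\ge 2k+2d+r$ when $m=0$) and, more importantly, packages the argument so that it can be re-run in nonstandard situations such as the congruence-subgroup theorem (Theorem~\ref{maintheorem:congruence}), where the Dwyer-style argument does not adapt easily.
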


In the key case $d \geq 1$\footnote{That is, for $M$ not constant.} and $m=0$,
the stabilization map is thus an isomorphism for $n \geq 2k+2d+r$
and a surjection for $n = 2k+2d+r-1$.  Just like for the symmetric group, we will derive from
Theorem \ref{maintheorem:gl} the following variant result which for $m \gg 0$ sometimes gives better bounds:

\begin{maintheoremprime}
\label{maintheorem:glprime}
Let $R$ be a ring satisfying $(\SR_r)$, let $\bbk$ be a commutative ring, and let
$M$ be a $\VIC(R)$-module over $\bbk$ that is polynomial of degree $d \geq -1$ starting at $m \geq 0$.
For each $k \geq 0$, the map
\begin{equation}
\label{eqn:glstabmap}
\HH_k(\GL_n(R);M(R^n)) \rightarrow \HH_k(\GL_{n+1}(R);M(R^{n+1}))
\end{equation}
is an isomorphism for $n \geq \max(m,2k+2d+r+2)$ and a surjection for $n \geq \max(m,2k+2d+r)$.
\end{maintheoremprime}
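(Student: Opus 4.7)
The plan is to derive Theorem~\ref{maintheorem:glprime} from Theorem~\ref{maintheorem:gl} by induction on the polynomial degree $d$, exactly paralleling the way Theorem~\ref{maintheorem:snprime} is deduced from Theorem~\ref{maintheorem:sn}. The key input is the $\VIC$-analogue of the short exact sequence from Remark~\ref{remark:exactsequence}: if $M$ is polynomial of degree $d$ starting at $m$, then for every finite-rank free right $R$-module $A$ with $\rk(A)\geq m$ there is a short exact sequence $0\to M(A)\to \Sigma M(A)\to DM(A)\to 0$ of $\bbk[\GL(A)]$-modules. The base case $d=-1$ of the induction is immediate because then $M(R^n)=0$ for all $n\geq m$, so both sides of the stabilization map vanish.

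For $d\geq 0$, I split the analysis according to the size of $m$. When $m\leq 2d+r$, the $\max$ inside Theorem~\ref{maintheorem:gl} equals $2d+r$, so that theorem already gives an isomorphism for $n\geq 2k+2d+r$ and a surjection for $n\geq 2k+2d+r-1$, which is stronger than what Theorem~\ref{maintheorem:glprime} asks; the statement is free in this regime. The substantive case is $m\geq 2d+r+1$, where Theorem~\ref{maintheorem:gl} only activates at $n\geq 2k+m+1$ but Theorem~\ref{maintheorem:glprime} claims stability as soon as $n\geq \max(m,2k+2d+r+1)$. For such $n\geq m$, the short exact sequence of $G_n := \GL_n(R)$-modules
\[
0\longrightarrow M(R^n)\longrightarrow \Sigma M(R^n)\longrightarrow DM(R^n)\longrightarrow 0,
\]
with $\Sigma M(R^n)=M(R^{n+1})$ regarded as a $G_n$-module through the standard embedding $G_n\hookrightarrow G_{n+1}$, induces a long exact sequence in $\HH_\ast(G_n;-)$. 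Writing the analogous LES for $n+1$ and joining the two by the stabilization maps produces a commutative ladder. Now $\Sigma M$ is polynomial of degree $d$ starting at $m-1$ and $DM$ is polynomial of degree $d-1$ starting at $m-1$; Theorem~\ref{maintheorem:gl} applied to $\Sigma M$ and the inductive hypothesis applied to $DM$ supply stability for the outer columns of the ladder in a strictly larger range, and a standard five-lemma chase then yields the iso/surjection range for $M$ asserted by Theorem~\ref{maintheorem:glprime}.

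The main obstacle is the bookkeeping: I have to verify that the stability ranges available for $\Sigma M$ (from Theorem~\ref{maintheorem:gl} with the smaller starting index $m-1$) and for $DM$ (from the inductive case, degree $d-1$, starting index $m-1$) really do line up with the desired thresholds $\max(m,2k+2d+r+1)$ and $\max(m,2k+2d+r-1)$, across the homological degrees $k+1,k,k-1$ that enter the five-lemma. A secondary subtlety is pinning down the stabilization map for $\Sigma M$: the identification $\Sigma M(R^n)=M(R^{n+1})$ carries a $G_n$-action via $G_n\hookrightarrow G_{n+1}$, so $\HH_k(G_n;\Sigma M(R^n))\to \HH_k(G_{n+1};\Sigma M(R^{n+1}))$ mixes a coefficient change with a group change, and one must check this map is compatible with the connecting homomorphisms of the two LES's so that the ladder's squares commute.
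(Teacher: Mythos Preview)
Your overall architecture---the short exact sequence $0\to M\to\Sigma M\to DM\to 0$ together with the five-lemma---is correct and matches the paper. However, your induction scheme has a genuine gap: inducting on $d$ alone and applying Theorem~\ref{maintheorem:gl} (rather than the theorem being proved) to $\Sigma M$ does not give a strong enough range.

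Concretely: $\Sigma M$ is polynomial of degree $d$ starting at $m-1$, so Theorem~\ref{maintheorem:gl} says the stabilization map $f_2$ for $\Sigma M$ in degree $k$ is an isomorphism only for
\[
n \;\geq\; 2k+\max\bigl(2d+r,\,(m-1)+1\bigr)\;=\;2k+\max(2d+r,m).
\]
In your ``substantive'' case $m\geq 2d+r+1$ this becomes $n\geq 2k+m$. But the range you are trying to establish for $f_1$ is $n\geq\max(m,2k+2d+r+1)$; when $m$ is large (say $m>2k+2d+r+1$) and $k\geq 1$, this target threshold is just $m$, whereas you only know $f_2$ is an isomorphism for $n\geq 2k+m>m$. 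The five-lemma therefore does not yield $f_1$ an isomorphism at $n=m$, and the argument collapses. (There is also a boundary slip in your ``easy'' case: for $m=2d+r$ the $\max(2d+r,m+1)$ in Theorem~\ref{maintheorem:gl} is $2d+r+1$, not $2d+r$.)

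The paper repairs this by doing a \emph{double} induction on $d$ and $m$, with base cases $d=-1$ (trivial) and $m=0$ (where Theorem~\ref{maintheorem:gl} gives the result directly). In the inductive step $m\geq 1,\,d\geq 0$, one then applies the \emph{inductive hypothesis}---that is, Theorem~\ref{maintheorem:glprime} itself for the smaller starting index $m-1$---to both $\Sigma M$ and $DM$. For $\Sigma M$ this gives $f_2$ an isomorphism for $n\geq\max(m-1,2k+2d+r+1)$, which is exactly what the five-lemma requires. Your proposal is only one line away from working: replace ``Theorem~\ref{maintheorem:gl} applied to $\Sigma M$'' with the inductive hypothesis on $m$, and add $m=0$ as a base case.
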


Dwyer \cite{DwyerTwisted} proved a version of this for $R$ a PID,
though he only worked with split coefficient systems starting at $m=0$ (c.f.\ Remark \ref{remark:split}).  He
did not identify a stable range.  Later van der Kallen \cite[Theorem 5.6]{VanDerKallen} extended this
to rings satisfying $(\SR_r)$, though again he only worked with split coefficient systems
starting at $m=0$.  He proved that \eqref{eqn:glstabmap} is an isomorphism for $n \geq 2k+d+r$.
Randal-Williams--Wahl \cite[Theorem 5.11]{RandalWilliamsWahl} then showed how to deal with
general polynomial modules, though they only stated a result for ones that started at $m=0$.
They proved that \eqref{eqn:glstabmap} is an isomorphism for $n \geq 2k+2d+r+1$.

\begin{remark}
The result we will prove is more general than Theorems \ref{maintheorem:gl} and \ref{maintheorem:glprime} and
applies to certain subgroups of $\GL_n(R)$ as well.\footnote{The work of van der Kallen also applies to these
subgroups.}  For instance, if $R$ is commutative then it applies
to $\SL_n(R)$.  See \S \ref{section:ktheory} for the definition of the groups we will consider
and Theorems \ref{theorem:xl} and \ref{theorem:xlprime} for the statement of our theorem.
\end{remark}

\subsection{Congruence subgroups}

Our final theorem illustrates how our machinery can be applied to prove a theorem that is not
(directly) about homological stability.  Borel \cite{BorelStability1, BorelStability2} proved
that if $\Gamma$ is a lattice in $\SL_n(\R)$ and $V$ is a rational representation of the algebraic
group $\SL_n$, then for $n \gg k$ the homology group $\HH_k(\Gamma;V)$ depends only on $k$ and $V$, not
on the lattice $\Gamma$.  In particular, it is unchanged if you pass from $\Gamma$ to a finite-index
subgroup.

We will prove a version of this for $\GL_n(R)$ for rings $R$ satisfying $(\SR_r)$.  The basic
flavor of the result will be that passing from $\GL_n(R)$ to appropriate finite-index subgroups
does not change rational homology, at least in some stable range.  The finite-index subgroups
we will consider are the finite-index congruence subgroups, which are defined as follows:

\begin{definition}
Let $R$ be a ring and let $\alpha$ be a two sided ideal of $R$.  The {\em level-$\alpha$ congruence
subgroup}\footnote{In many contexts it is common to call these {\em principal congruence subgroups}, and
to define a congruence subgroup as a subgroup that contains a principal congruence subgroup.  For general
rings $R$, this more general notion
of ``congruence subgroups'' can have some pathological properties.
For number rings $R$, all nontrivial ideals $\alpha$
of $R$ satisfy $|R/\alpha|<\infty$, so for number rings all principal congruence subgroups are finite-index.  
This is false for general rings.  Even worse, as Tom Church pointed out to me there might exist finite-index subgroups
$\Gamma < \GL_n(R)$ that contain principal congruence subgroups, but do not contain
finite-index principal congruence subgroups (though I do not know any concrete examples of this).}
of $\GL_n(R)$, denoted $\GL_n(R,\alpha)$, is the kernel of the natural group
homomorphism $\GL_n(R) \rightarrow \GL_n(R/\alpha)$.
\end{definition}

Our theorem is as follows:

\begin{maintheorem}
\label{maintheorem:congruence}
Let $R$ be a ring satisfying $(\SR_r)$, let $\bbk$ be a field of characteristic $0$, and let
$M$ be a $\VIC(R)$-module over $\bbk$ that is polynomial of degree $d \geq -1$ starting at $m \geq 0$.
Assume furthermore that $M(R^n)$ is a finite-dimensional vector space over $\bbk$ for all $n \geq 0$.
Then for all two sided ideals $\alpha$ of $R$ such that $|R/\alpha| < \infty$, the map
\[\HH_k(\GL_n(R,\alpha);M(R^n)) \rightarrow \HH_k(\GL_n(R);M(R^n))\]
is an isomorphism for $n \geq \max(m,2k+2d+2r-1)$.
\end{maintheorem}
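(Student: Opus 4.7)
The plan combines a transfer argument (to reduce to a question about a finite-group action) with an induction on $n$ using the paper's stability machinery, in the spirit of Charney's original argument in the constant-coefficient case.

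Since $R/\alpha$ is finite, the quotient $Q_n := \GL_n(R)/\GL_n(R,\alpha)$ is a finite subgroup of $\GL_n(R/\alpha)$, so $|Q_n|$ is invertible in the characteristic-zero field $\bbk$. The group ring $\bbk[Q_n]$ is therefore semisimple, and $\HH_p(Q_n; V) = 0$ for all $p > 0$ and all $\bbk[Q_n]$-modules $V$. The Hochschild--Serre spectral sequence for the extension $1 \to \GL_n(R,\alpha) \to \GL_n(R) \to Q_n \to 1$ with coefficients $M(R^n)$ therefore collapses onto its $p = 0$ row, yielding
\[\HH_k(\GL_n(R); M(R^n)) \cong \HH_k(\GL_n(R,\alpha); M(R^n))_{Q_n},\]
and the map in the theorem is the canonical surjection onto these coinvariants. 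It therefore suffices to prove that $Q_n$ acts trivially on $\HH_k(\GL_n(R,\alpha); M(R^n))$ whenever $n \ge \max(m, 2k+2d+2r)$, or equivalently that the inclusion-induced map is an isomorphism in this range.

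To establish this, I would compare the two spectral sequences obtained by having $\GL_n(R)$ and $\GL_n(R,\alpha)$ act on the same semi-simplicial complex $X_n$ that underlies the proof of Theorem \ref{maintheorem:gl}. The inclusion $\GL_n(R,\alpha) \hookrightarrow \GL_n(R)$ induces a map of spectral sequences, and each $E^1_{p,q}$ is a sum of homologies of stabilizers of $p$-simplices: for the standard sort of complex, these stabilizers are (up to a unipotent factor) of the form $\GL_{n-p-1}(R)$ and $\GL_{n-p-1}(R,\alpha)$ respectively. Via the $\VIC(R)$-structure, the restriction of $M(R^n)$ to each stabilizer carries a filtration whose subquotients are polynomial $\VIC(R)$-modules on the smaller-rank free module, of degrees and starting indices controlled by the shift and derived functors of Definition \ref{definition:polyvic}. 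An induction on $n$ then completes the argument: the inductive hypothesis applied subquotient-by-subquotient gives isomorphisms on the $E^1$-level in the relevant range, and spectral sequence comparison promotes this to an isomorphism on abutments. The slack in the bound $n \ge \max(m, 2k+2d+2r)$ versus the $n \ge \max(m, 2k+2d+r+1)$ of Theorem \ref{maintheorem:glprime} provides exactly the headroom needed to keep the induction variable $n-p-1$ inside hypothesis as $p$ ranges over the relevant values.

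The main obstacle is the careful $\VIC(R)$-bookkeeping throughout: one must verify that each subquotient appearing in the filtration of the restricted coefficients is a polynomial $\VIC(R)$-module of degree and starting index compatible with the inductive hypothesis, and that the gain of $r-1$ in the stable range (passing from $r+1$ in Theorem \ref{maintheorem:glprime} to $2r$ here) accounts for both the derived-module degree drops and the extra applications of the paper's stability result needed to run the induction cleanly.
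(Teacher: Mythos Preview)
Your reduction via the Hochschild--Serre spectral sequence to showing that $Q_n$ acts trivially on $\HH_k(\GL_n(R,\alpha);M(R^n))$ is correct and matches the paper's framing. The gap is in the spectral-sequence comparison you propose for the inductive step.

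The two spectral sequences you want to compare have $E^1$-pages of different shapes: $\GL_n(R)$ acts transitively on $p$-simplices of $\bbX_n$, so its $E^1_{p,q}$ is a single copy of $\HH_q(\GL_{n-p-1}(R);M(R^{n-p-1}))$, whereas $\GL_n(R,\alpha)$ has many orbits, so its $E^1_{p,q}$ is a direct sum of copies of $\HH_q(\GL_{n-p-1}(R,\alpha);M(R^{n-p-1}))$, one per orbit. Even if your inductive hypothesis makes each summand isomorphic to the target, the map from a large direct sum to a single copy is not an isomorphism. So the naive ``isomorphism on $E^1$, hence on abutment'' argument fails. The paper handles exactly this obstacle in Theorem~\ref{theorem:stabilitymachinefi}: one works only with the $G'_n$-spectral sequence, uses high connectivity of $\bbX_n/G'_n$ to control the extra orbits, and then needs the crucial condition~(h) --- that $G_n$ acts trivially on the image of the stabilization map $\HH_k(G'_{n-1};M_{n-1}) \to \HH_k(G'_n;M_n)$.

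Verifying condition~(h) is the heart of the proof, and your proposal does not touch it. The paper's argument (generalizing Charney) writes $\EL_n(R)$ as generated by two abelian subgroups $A,B$ of ``last row/column'' elementary matrices; for each, one factors the stabilization through an affine-type subgroup and reduces to showing $\HH_q(A_\alpha;M(R^n)) \cong \HH_q(A;M(R^n))$ for the finite-index inclusion $A_\alpha \subset A$. This last isomorphism requires that elementary matrices act \emph{unipotently} on $M(R^n)$ --- a nontrivial fact (Lemma~\ref{lemma:vicunipotent}) proved using Harman's theorem that for large $n$ the $\SL_n(\Z)$-action on a polynomial $\VIC$-module extends to a rational representation of the algebraic group $\SL_n$. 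Your sketch has no analogue of this ingredient, and without it there is no mechanism to conclude that the conjugation action of $\GL_n(R)$ on the stabilized homology is trivial. The filtration you allude to does appear in the paper, but only at the very end (Theorem~\ref{theorem:xlcongruence}), to pass from the special case $m=0$ with unipotence already established to the general case; it does not substitute for the unipotence argument itself.
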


We emphasize that Theorem \ref{maintheorem:congruence} is {\em not} a homological stability theorem:\footnote{Though in
light of Theorems \ref{maintheorem:gl} and \ref{maintheorem:glprime} it implies that $\HH_k(\GL_n(R,\alpha);M(R^n))$ stabilizes.}
rather than increasing $\GL_n(R)$ to $\GL_{n+1}(R)$, we are decreasing $\GL_n(R)$ by passing
to the finite-index subgroup $\GL_n(R,\alpha)$.  For untwisted rational coefficients,
something like Theorem \ref{maintheorem:congruence} is implicit in work of Charney \cite{CharneyCongruence}.
Though she did not state this, it can easily be derived from her work that for $R$ and $\bbk$ and
$\alpha$ as in Theorem \ref{maintheorem:congruence}, the map
\[\HH_k(\GL_n(R,\alpha);\bbk) \rightarrow \HH_k(\GL_n(R);\bbk)\]
is an isomorphism for $n \geq 2k+2r+4$.\footnote{The constant coefficients $\bbk$ fit into an $\FI$-module
of degree $0$ starting at $0$, so even in this case Theorem \ref{maintheorem:congruence} gives
the slightly better stable range $n \geq 2k+2r-1$.}  Later Cohen \cite{CohenCongruence} proved an
analogous result for the symplectic group, and more importantly for us showed how
to simplify Charney's argument.  Our proof of Theorem \ref{maintheorem:congruence} follows
the outline of Cohen's proof, but using our new approach to twisted homological stability.
It seems very hard to do this using the traditional proof of twisted homological stability.

\begin{remark}
The fact that the field $\bbk$ in Theorem \ref{maintheorem:congruence} has characteristic $0$ is essential.
This theorem is false over fields of finite characteristic or over more general rings like $\Z$.  It can fail
even when $M$ is polynomial of degree $0$, i.e., constant.  For instance, for $n \geq 3$ we have
$\HH_1(\GL_n(\Z);\Z) = \Z/2$ with the $\Z/2$ coming from the determinant, but Lee--Szczarba \cite{LeeSzczarbaCongruence} proved that\footnote{Their
theorem is actually about congruence subgroups of $\SL_n(\Z)$, but for $\ell \geq 3$ all elements of $\GL_n(\Z,\ell\Z)$ have
determinant $1$.}
\[\HH_1(\GL_n(\Z,\ell \Z);\Z) \cong \mathfrak{sl}_n(\Z/\ell) \quad \text{for $n,\ell \geq 3$},\]
where $\mathfrak{sl}_n(\Z/\ell)$ is the abelian group of $n \times n$ trace $0$ matrices over $\Z/\ell$.
\end{remark}

\begin{remark}
Just like for Theorems \ref{maintheorem:gl} and \ref{maintheorem:glprime}, we will actually
prove something more general than Theorem \ref{maintheorem:congruence} that will apply
to congruence subgroups of certain subgroups of $\GL_n(R)$, e.g., to $\SL_n(R)$ if $R$ is
commutative.  See Theorem \ref{theorem:xlcongruence}.
\end{remark}

\begin{remark}
The proof of Theorem \ref{maintheorem:congruence} also requires some recent
work of Harman \cite{HarmanVIC} classifying certain kinds of finitely generated $\VIC(\Z)$-modules.
\end{remark}

\begin{remark}
If the ring $R$ in Theorem \ref{maintheorem:congruence} is a {\em finite} ring, then we can take $\alpha = \{0\}$, so $\GL_n(R,\alpha)$ is trivial.
The case $k=0$ of the theorem thus implies that under its hypotheses, for $R$ a finite ring the action of $\GL_n(R)$ on
$M(R^n)$ is {\em trivial} for $n \geq \max(m,2d+2r)$.  It is enlightening to go through our proof and see
how it proves this special case.  This will also clarify to the reader how the work of Harman
discussed in the previous remark is used.
\end{remark}

\begin{remark}
Though I have not worked out the details, I expect that a version of Theorem \ref{maintheorem:congruence}
for the symplectic group can also be proved by these methods.  Indeed, as I mentioned above
our proof is inspired by work of Cohen \cite{CohenCongruence} on the symplectic group.
\end{remark}

\subsection{Outline}
We start with three sections of preliminary results in \S \ref{section:simplicial} -- \S \ref{section:coefficients}.  We then
discuss our twisted homological stability machine in \S \ref{section:stabilitymachine}.  To make this useful we also
need an accompanying ``vanishing theorem'', which is in \S \ref{section:vanishingtheorem}.  We then have
\S \ref{section:sn} on symmetric groups, which proves Theorems \ref{maintheorem:sn} and \ref{maintheorem:snprime}.
After that, we have four sections of background on rings and general linear groups: \S \ref{section:stablerank}
discusses the stable rank condition $(\SR_r)$, \S \ref{section:ktheory} has some K-theoretic background, 
and \S \ref{section:splitbases}--\S \ref{section:glncoefficient}
introduce some important simplicial complexes associated to $\GL_n(R)$.  We then have
\S \ref{section:glstability} on general linear groups, which proves Theorems \ref{maintheorem:gl} and
\ref{maintheorem:glprime}.  We finally turn our attention to congruence subgroups.  This requires some
preliminary results on unipotent representations that are discussed in \S \ref{section:congruenceunipotence}.  We
close with \S \ref{section:congruencestability} on congruence subgroups, which proves Theorem \ref{maintheorem:congruence}.

\subsection{Acknowledgments}
I would like to thank Nate Harman, Jeremy Miller, and Nick Salter for helpful conversations.  In particular, I would
like to thank Nate Harman for explaining how to prove Lemma \ref{lemma:vicunipotent} below.  I would also like to
thank Tom Church, Benson Farb, Peter Patzt, and Nathalie Wahl for providing comments on a previous draft of this paper.
Finally, I would like to thank the referee for an unusually thorough and detailed report.

\section{Background I: simplicial complexes}
\label{section:simplicial}

This section contains background material on simplicial complexes.  Its main purpose is to establish
notation.  See \cite{FriedmanSemisimplicial} for more details.

\subsection{Basic definitions}
A {\em simplicial complex} $X$ consists of the following data:
\begin{itemize}
\item A set $X^{(0)}$ called the {\em $0$-simplices} or {\em vertices}.
\item For each $k \geq 1$, a set $X^{(k)}$ of $(k+1)$-element subsets of the vertices called the {\em $k$-simplices}.  These
are required to satisfy the following condition:
\begin{itemize}
\item Consider $\sigma \in X^{(k)}$.  Then for $\sigma' \subseteq \sigma$ with $|\sigma'|>0$, we must have $\sigma' \in X^{(|\sigma'|-1)}$.
In this case, we call $\sigma'$ a {\em face} of $\sigma$.
\end{itemize}
\end{itemize}
A simplicial complex $X$ has a geometric realization $|X|$ obtained by gluing together
geometric $k$-simplices (one for each $k$-simplex in $X^{(k)}$) according to the face relation.
Whenever we talk about topological properties of $X$ (e.g., being connected), we are
referring to its geometric realization.

\subsection{Links and Cohen--Macaulay}
\label{section:cmcomplex}

Let $X$ be a simplicial complex.  The {\em link} of a simplex $\sigma$ of $X$, denoted
$\Link_X(\sigma)$, is the subcomplex of $X$ consisting of all simplices $\tau$
satisfying the following two conditions.
\begin{itemize}
\item The simplices $\tau$ and $\sigma$ are disjoint, i.e., have no vertices in common.
\item The union $\tau \cup \sigma$ is a simplex.
\end{itemize}
We say that $X$ is {\em weakly Cohen--Macaulay} of dimension $n \in \Z$
if it satisfies the following:
\begin{itemize}
\item The complex $X$ must be $(n-1)$-connected.  Here our convention is that a space is $(-1)$-connected if
it is nonempty, and all spaces are $k$-connected for $k \leq -2$.
\item For all $k$-simplices $\sigma$ of $X$, the complex $\Link_X(\sigma)$ must be $(n-k-2)$-connected.
\end{itemize}

\begin{example}
Let $X$ be a PL triangulation of an $n$-sphere.  We claim that $X$ is weakly Cohen--Macaulay
of dimension $n$.  There are two things to check:
\begin{itemize}
\item The simplicial complex $X$ is $(n-1)$-connected, which is clear.
\item For a $k$-simplex $\sigma$ of $X$, the complex $\Link_X(\sigma)$ is $(n-k-2)$-connected.
In fact, since our triangulation is PL the link of $\sigma$
is a PL triangulation of an $(n-k-1)$-sphere.\qedhere
\end{itemize}
\end{example}

\begin{remark}
\label{remark:weakmeaning}
The adjective ``weak'' is here since we do not require $X$ to be $n$-dimensional, nor do we require links
of $k$-simplices to be $(n-k-1)$-dimensional.  However, since our conventions imply that the empty set is
not $(-1)$-connected, if $X$ is weakly Cohen--Macaulay of dimension $n$ then $X$ is at least $n$-dimensional
and the links of $k$-simplices are at least $(n-k-1)$-dimensional.
\end{remark}

One basic property of weakly Cohen--Macaulay complexes is as follows:

\begin{lemma}
\label{lemma:linkcm}
Let $X$ be a simplicial complex that is weakly Cohen--Macaulay of dimension $n$ and let $\sigma$
be a $k$-simplex of $X$.  Then $\Link_X(\sigma)$ is weakly Cohen--Macaulay of dimension $(n-k-1)$.
\end{lemma}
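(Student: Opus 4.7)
The plan is to verify the two defining conditions for weakly Cohen--Macaulay of dimension $(n-k-1)$ applied to $L := \Link_X(\sigma)$. The first condition, that $L$ is $(n-k-2)$-connected, is immediate: it is exactly what the weakly Cohen--Macaulay hypothesis on $X$ asserts about the link of the $k$-simplex $\sigma$.

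For the second condition, I would pick an arbitrary $j$-simplex $\tau$ of $L$ and need to show that $\Link_L(\tau)$ is $((n-k-1) - j - 2) = (n-k-j-3)$-connected. The key step is the elementary identity
\[
\Link_{\Link_X(\sigma)}(\tau) \;=\; \Link_X(\sigma \cup \tau),
\]
which I would verify by unwinding the definition in \S\ref{section:cmcomplex}: a simplex $\rho$ lies on the left-hand side precisely when $\rho$ is disjoint from both $\sigma$ and $\tau$ and $\rho \cup \sigma \cup \tau$ is a simplex of $X$, and this is exactly the condition for $\rho$ to lie in the right-hand side. (The fact that $\tau$ lies in $L$ guarantees that $\sigma \cup \tau$ is a genuine simplex of $X$.)

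Granting this identity, $\sigma \cup \tau$ is a $(k+j+1)$-simplex of $X$, so by the weakly Cohen--Macaulay hypothesis on $X$ its link in $X$ is $\bigl(n - (k+j+1) - 2\bigr) = (n - k - j - 3)$-connected. This is precisely the connectivity bound required for $\Link_L(\tau)$, completing the verification.

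There is no real obstacle here; the only thing that needs any care is the bookkeeping with the identity $\Link_{\Link_X(\sigma)}(\tau) = \Link_X(\sigma \cup \tau)$, but this follows directly from chasing the two conditions in the definition of a link.
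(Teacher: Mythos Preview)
Your proof is correct and follows essentially the same approach as the paper: verify the connectivity of $\Link_X(\sigma)$ directly from the hypothesis, then use the identity $\Link_{\Link_X(\sigma)}(\tau) = \Link_X(\sigma \cup \tau)$ together with the fact that $\sigma \cup \tau$ is a $(k+j+1)$-simplex to get the required connectivity of links in $\Link_X(\sigma)$. The paper's proof is slightly terser but the argument is identical.
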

\begin{proof}
By definition, $\Link_X(\sigma)$ is $(n-k-2)$-connected.  Also, if $\tau$ is an $\ell$-simplex
of $\Link_X(\sigma)$, then 
$\tau \cup \sigma$ is a $(k+\ell+1)$-simplex of $X$ and
$\Link_{\Link_X(\sigma)}(\tau) = \Link_X(\tau \cup \sigma)$.
This is $n-(k+\ell+1)-2 = n-k-\ell-3$ connected by assumption.
\end{proof}

\section{Background II: semisimplicial sets and ordered simplicial complexes}
\label{section:semisimplicial}

The category of simplicial complexes has some undesirable features that make it awkward for homological
stability proofs.  For instance, if $X$ is a simplicial complex and $G$ is a group acting on $X$, then
one might expect the quotient $X/G$ to be a simplicial complex whose $k$-simplices are the $G$-orbits
of simplices of $X$.  Unfortunately, this need not hold.  In this section, we discuss the category
of semisimplicial sets, which does not have such pathologies.  See \cite{FriedmanSemisimplicial}\footnote{This reference
calls semisimplicial sets $\Delta$-sets} and \cite{EbertRandalWilliamsSemisimplicial} for
more details.

\subsection{Semisimplicial sets}
Let $\Delta$ be the category whose objects are the finite sets $[k] = \{0,\ldots,k\}$ with $k \geq 0$ and
whose morphisms $[\ell] \rightarrow [k]$ are order-preserving injections.  A {\em semisimplicial set}
is a contravariant functor $\bbX\colon \Delta \rightarrow \cSet$.  Unwinding this,
$\bbX$ consists of the following two pieces of data: 
\begin{itemize}
\item For each $k \geq 0$, a set $\bbX^k = \bbX([k])$ called the {\em $k$-simplices}.
\item For each order-preserving injection $\iota\colon [\ell] \rightarrow [k]$, a map
$\iota^{\ast}\colon \bbX^k \rightarrow \bbX^{\ell}$ called a {\em face map}.  For $\sigma \in \bbX^k$, the
image $\iota^{\ast}(\sigma) \in \bbX^{\ell}$ is called a {\em face} of $\sigma$.
\end{itemize}
A semisimplicial set $\bbX$ has a geometric realization $|\bbX|$ obtained by gluing geometric
$n$-simplices together (one for each $n$-simplex) according to the face maps.  See
\cite{FriedmanSemisimplicial} 
for more details.  Whenever we talk about topological properties of $\bbX$
(e.g., being connected), we are referring to its geometric realization.

\subsection{Morphisms}
Below we will compare semisimplicial sets with simplicial complexes, but first we describe
their morphisms.
A morphism $f\colon \bbX \rightarrow \bbY$ between semisimplicial sets $\bbX$ and $\bbY$ is a natural
transformation between the functors $\bbX$ and $\bbY$.  In other words,
$f$ consists of set maps $f_k\colon \bbX^k \rightarrow \bbY^k$ for each
$k \geq 0$ that commute with the face maps.  Such a morphism induces a continuous map
$|f|\colon |\bbX| \rightarrow |\bbY|$.  Using this definition, an action of a group
$G$ on a semisimplicial set $\bbX$ consists of actions of $G$ on each set $\bbX^k$
that commute with the face maps, and $\bbX/G$ is a semisimplicial set with
$(\bbX/G)^k = \bbX^k/G$ for each $k \geq 0$.

\subsection{Semisimplicial sets vs simplicial complexes}

Let $\bbX$ be a semisimplicial set.  The {\em vertices} of $\bbX$ are the elements of the set $\bbX^0$ of $0$-simplices.
For each $k$-simplex $\sigma \in \bbX^k$, we can define an ordered $(k+1)$-tuple $(v_0,\ldots,v_k)$ of vertices
via the formula
\[v_i = \iota_i^{\ast}(\sigma) \text{\ with $\iota_i\colon [0] \rightarrow [k]$ the map $\iota_i(0) = i$}.\]
We will call $v_0,\ldots,v_k$ the vertices of $\sigma$.  This is similar to a simplicial complex, whose vertices are $(k+1)$-element sets of vertices.  However, there
are three essential differences:
\begin{itemize}
\item The vertices of a simplex in a semisimplicial set have a natural ordering.
\item The vertices of a simplex in a semisimplicial set need not be distinct.
\item Simplices in a semisimplicial set are {\em not} determined by their vertices.
\end{itemize}

\subsection{Ordered simplicial complexes}

Ordered simplicial complexes are special kinds of semisimplicial sets where simplices are determined
by the vertices, which are also required to be distinct.  More precisely, an {\em ordered simplicial complex}
is a semisimplicial set $\bbX$ of the following form:
\begin{itemize}
\item The vertices $\bbX^0$ are an arbitrary set $V$.
\item For each $k \geq 1$, the set $\bbX^k$ of $k$-simplices is a subset of
\[\Set{$(v_0,\ldots,v_k)$}{the $v_i$ are distinct elements of $V$} \subset V^k.\]
\item For an order-preserving injection $\iota\colon [\ell] \rightarrow [k]$, the face
map $\iota^{\ast}\colon \bbX^k \rightarrow \bbX^{\ell}$ equals\footnote{Here we write $\iota^{\ast}(v_0,\ldots,v_k)$ rather than
the more precise but ugly $\iota^{\ast}((v_0,\ldots,v_k))$.  We will silently do this kind of thing throughout the paper.}
\[\iota^{\ast}(v_0,\ldots,v_k) = (v_{\iota(0)},\ldots,v_{\iota(\ell)}) \quad \text{for all $(v_0,\ldots,v_k) \in \bbX^k$}.\]
In particular, for $(v_0,\ldots,v_k) \in \bbX^k$, all tuples obtained by deleting some of the $v_i$ are also simplices.
\end{itemize}

\begin{remark}
Though many of the concrete spaces we will consider are ordered simplicial complexes, the quotient of an ordered
simplicial complex by a group action is not necessarily an ordered simplicial complex.  Because of this,
we will need general semisimplicial sets for the homological stability machine.
\end{remark}

\subsection{Small ordering}
Consider a simplicial complex $X$.  There are two natural ways to associate to $X$ an ordered simplicial
complex.  First, a {\em small ordering}
of $X$ is an ordered simplicial complex $\bbX$ with the following properties:
\begin{itemize}
\item[(a)] The vertices of $\bbX$ are the same as the vertices of $X$.
\item[(b)] The (unordered) set of vertices of a $k$-simplex of $\bbX$ is a $k$-simplex of $X$.
\item[(c)] The map $\bbX^k \rightarrow X^{(k)}$ taking a simplex to its set of vertices is a bijection.
\end{itemize}
The only difference between $\bbX$ and $X$ is thus that
the vertices of a simplex of $\bbX$ have a natural ordering, while the vertices of a simplex of $X$ are unordered.
It is clear that the geometric realizations $|\bbX|$ and $|X|$ are homeomorphic.

Small orderings always exist; for instance, if you choose a total ordering on the the vertices of $X$, then
you can construct a small ordering $\bbX$ of $X$ by letting
\[\bbX^k = \Set{$(v_0,\ldots,v_k) \in \left(X^{(0)}\right)^{k+1}$}{$\{v_0,\ldots,v_k\} \in X^{(k)}$ and $v_0 < \cdots < v_k$}\]
for all $k \geq 0$.
However, since this depends on the total ordering on $X^{(0)}$ it might have fewer symmetries than $X$.  The
example we will give of this will be used repeatedly, so we make a formal definition.

\begin{definition}
For $n \geq 0$, let $\Sim_n$ be the {\em $n$-simplex},\footnote{It would be natural to instead
denote this by $\Delta^n$ or $\Delta_n$, but we are already using $\Delta$ for the category of finite sets used to define
semisimplicial sets.} i.e., the simplicial complex whose vertex set is
$[n] = \{0,\ldots,n\}$ and whose $k$-simplices are all $(k+1)$-element subsets of $[n]$.  The geometric realization $|\Sim_n|$ is the usual
geometric $n$-simplex.
\end{definition}

\begin{example}
The symmetric group $\fS_{n+1}$ acts on $\Sim_n$ via its natural action on $[n] = \{0,\ldots,n\}$.  However, if we choose a total
ordering on $[n]$ and use this to define a small ordering $\bbX$ of $\Sim_n$ as above, then $\fS_{n+1}$ does
not act on $\bbX$ except in the degenerate case $n=0$.  Indeed, it is easy to see that the automorphism group of $\bbX$ is trivial.
\end{example}

\begin{remark}
If a group $G$ acts on a simplicial complex $X$, then $G$ also acts on the set $\cO(X)$ of all small orderings of $X$.  If
a small ordering $\bbX$ of $X$ is a fixed point for the action of $G$ on $\cO(X)$, then $G$ acts on $\bbX$.
\end{remark}

\begin{remark}
One way to construct a small ordering of a simplicial complex $X$ is to choose a relation $\prec$ on $X^{(0)}$ such that
the restriction of $\prec$ to each simplex of $X$ is a total ordering.  Since $\prec$ is only a relation (not even a partial ordering!), 
not all vertices need to be comparable, and indeed it is natural to only define $\prec$ on vertices $v$ and $w$ such that $\{v,w\}$ is an edge.
In many situations there is a ``natural'' choice of
such a $\prec$ that is preserved by the symmetries that are relevant for the problem at hand.  For instance, this is how
small orderings arise in our followup paper \cite{PutmanStableLevel}.
\end{remark}

\subsection{Large ordering}
If a sufficiently symmetric small ordering of a simplicial complex does not exist, there is another
construction that is often useful.  Let $X$ be a simplicial complex.  The {\em large ordering} of
$X$, denoted $X_{\ord}$, is the ordered simplicial complex $X_{\ord}$ with the same vertex set as $X$ whose $k$-simplices
$X_{\ord}^k$ are {\em all} ordered $(k+1)$-tuples $(v_0,\ldots,v_k)$ of distinct vertices of $X$
such that the unordered set $\{v_0,\ldots,v_k\}$ is a $k$-simplex of $X$.

\Figure{figure:largeordering}{LargeOrdering}{Left: the $2$-simplex $\Sim_2$.  Right: its large ordering $\OSim_2$.
The drawn portion is the $1$-skeleton.}{0.9}

The following example of this will be used several times, so we introduce notation for it.

\begin{definition}
For $n \geq 0$, let $\OSim_n$ be the large ordering $\left(\Sim_n\right)_{\ord}$ of the $n$-simplex
$\Sim_n$.  The $k$-simplices of $\OSim_n$ are thus ordered sequences $(i_0,\ldots,i_k)$ of distinct
elements of $[n]$.
\end{definition}

\begin{example}
\label{example:ordernsimplex}
Each $\Sim_n$ is contractible.  However, none of the $\OSim_n$ are contractible except
for $\OSim_0$.  For instance, $\OSim_1$ has two $0$-cells $0$ and $1$ and two $1$-cells $(0,1)$ and $(1,0)$, and
its geometric realization is homeomorphic to $S^1$.  For an even more complicated example, see
the picture of $\OSim_2$ in Figure \ref{figure:largeordering}.
\end{example}

The semisimplicial set $X_{\ord}$ is much larger than $X$; indeed, each $k$-simplex of $X$ corresponds to $(k+1)!$ simplices
of $X_{\ord}$, one for each total ordering of its vertices.  It is clear from its construction that if a group $G$ acts
on $X$, then $G$ also acts on $X_{\ord}$.  However, examples like Example \ref{example:ordernsimplex}
might lead one to think that there is no simple relationship between the topologies of $X$ and $X_{\ord}$.  This makes
the following theorem of Randal-Williams--Wahl \cite[Proposition 2.14]{RandalWilliamsWahl} somewhat surprising.  
See also \cite[Proposition 2.10]{HatcherVogtmannTethers}
for an alternate proof.

\begin{theorem}[{\cite[Theorem 2.14]{RandalWilliamsWahl}}]
\label{theorem:largeordering}
Let $X$ be a simplicial complex that is weakly Cohen--Macaulay of dimension $n$.  Then $X_{\ord}$ is $(n-1)$-connected.
\end{theorem}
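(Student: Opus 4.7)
My plan is to argue by induction on $n$. The base cases $n \leq 0$ are immediate, since being $(-1)$-connected just means nonempty, and $X_{\ord}$ is nonempty whenever $X$ is.

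For the inductive step, I would first apply a simplicial approximation theorem for semisimplicial sets (cf.\ \cite{EbertRandalWilliamsSemisimplicial, FriedmanSemisimplicial}) to reduce to showing the following combinatorial statement: for each $k \leq n-1$, every semisimplicial map $\phi \colon \Sigma \to X_{\ord}$ from a semisimplicial set $\Sigma$ with $|\Sigma| \cong S^k$ extends to a semisimplicial map from some $\widetilde{D}$ with $|\widetilde{D}| \cong D^{k+1}$ and $\partial \widetilde{D} = \Sigma$.

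The central input is Lemma \ref{lemma:linkcm}: for each vertex $w \in X$, the complex $\Link_X(w)$ is weakly Cohen--Macaulay of dimension $n-1$, so by the inductive hypothesis $|\Link_X(w)_{\ord}|$ is $(n-2)$-connected. I would then run a ``link argument'' in the tradition of Hatcher and Wahl: pick a vertex $v$ of $\Sigma$, set $w = \phi(v)$, use the connectivity of the appropriate link complex in $X_{\ord}$ to fill the restriction $\phi|_{\Link_\Sigma(v)}$, and replace $\Star_\Sigma(v)$ by this filling. Iterating reduces $\Sigma$ in complexity until the extension can be written down directly.

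The main technical obstacle I anticipate is that the link of a vertex $v \in X_{\ord}$, in any naive sense as a sub-semisimplicial set, does not coincide with $\Link_X(w)_{\ord}$: since ordered simplices of $X_{\ord}$ can have $w$ in any position and the semisimplicial face maps only delete specific positions, the relevant link complex near $v$ in $X_{\ord}$ is strictly larger than $\Link_X(w)_{\ord}$ and involves multiple orderings. Converting the $(n-2)$-connectedness of $|\Link_X(w)_{\ord}|$ into the needed connectivity statement for this larger complex likely requires filtering by the position of $w$ in the ordered tuples and combining strata via a spectral sequence or a direct shelling-type argument. This conversion is where the full strength of the weakly Cohen--Macaulay hypothesis---the connectivity of \emph{all} lower-dimensional links, not just the connectivity of $X$ itself---is needed.
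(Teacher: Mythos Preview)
The paper does not give its own proof of this theorem; it is quoted from \cite[Theorem~2.14]{RandalWilliamsWahl} and used as a black box. So there is no in-paper argument to compare against.

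On the merits of your outline: the inductive strategy and the link-argument framework are reasonable, and you have correctly located the genuine difficulty. But your proposal stops exactly where the work begins. Writing that the obstacle ``likely requires filtering by the position of $w$ \ldots\ and combining strata via a spectral sequence or a direct shelling-type argument'' is not a proof; it is a restatement of the problem. The entire content of the theorem lives in that conversion, and without carrying it out you have only rediscovered why the result is nontrivial.

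One concrete way to close the gap, in the spirit of your sketch, is to observe that the \emph{forward} link $\FLink_{X_{\ord}}(w)$ (those $\tau$ with $(w)\cdot\tau$ a simplex of $X_{\ord}$) is precisely $\Link_X(w)_{\ord}$, which is $(n-2)$-connected by Lemma~\ref{lemma:linkcm} together with the inductive hypothesis. The remaining task is then to homotope your sphere map so that whenever $w$ occurs in an image simplex, it occurs in position $0$; once that is arranged, the relevant star is a cone on the forward link and the standard excision step goes through. Achieving that ``position $0$'' normal form is itself a bad-simplex argument that uses the connectivity of links of higher-dimensional simplices of $X$, which is exactly where the full weakly Cohen--Macaulay hypothesis enters, as you anticipated. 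This is essentially the route taken in \cite{RandalWilliamsWahl}; your proposal would become a proof once this step is actually executed rather than gestured at.
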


\begin{example}
Since $\Sim_n$ is clearly weakly Cohen--Macaulay of dimension $n$, Theorem \ref{theorem:largeordering}
implies that $\OSim_n$ is $(n-1)$-connected.  The semisimplicial set $\OSim_n$ is also
called the {\em complex of injective words} on $(n+1)$ letters, and the fact that it
is $(n-1)$-connected was originally proved by Farmer \cite{FarmerWords}.
\end{example}

\begin{remark}
For a simplicial complex $X$, every small ordering of $X$ appears as a subcomplex of $X_{\ord}$.
\end{remark}

\subsection{Forward link and forward Cohen--Macaulay}
\label{section:forwardlink}
 
Let $\bbX$ be an ordered simplicial complex.  As notation, if
$\sigma = (v_0,\ldots,v_k)$ and $\tau = (w_0,\ldots,w_{\ell})$ are ordered
sequences of vertices of $\bbX$, then we will write $\sigma \cdot \tau$ for $(v_0,\ldots,v_k,w_0,\ldots,w_{\ell})$. Of course,
$\sigma \cdot \tau$ need not be a simplex; for instance, its vertices need not be distinct.

Given a simplex $\sigma$ of $\bbX$, the {\em forward link} of $\sigma$,
denoted $\FLink_{\bbX}(\sigma)$, is the ordered simplicial complex whose $\ell$-simplices
are $\ell$-simplices $\tau$ of $\bbX$ such that $\sigma \cdot \tau$ is a $(k+\ell+1)$-simplex of
$\bbX$.  We will say that $\bbX$ is {\em weakly forward Cohen--Macaulay} of dimension $n$
if $\bbX$ is $(n-1)$-connected and for all $k$-simplices $\sigma$ of $\bbX$, the forward
link $\FLink_{\bbX}(\sigma)$ is $(n-k-2)$-connected.  

One source of examples of this is as follows:

\begin{lemma}
\label{lemma:largerorderingcm}
Let $X$ be a simplicial complex that is weakly Cohen--Macaulay of dimension $n$.  Then
its large ordering $X_{\ord}$ is weakly forward Cohen--Macaulay of dimension $n$.
\end{lemma}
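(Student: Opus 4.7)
The plan is to combine the two results referenced just above the lemma statement. There are two conditions in the definition of weakly forward Cohen--Macaulay of dimension $n$: (i) $X_{\ord}$ is $(n-1)$-connected, and (ii) for every $k$-simplex $\sigma$ of $X_{\ord}$, the forward link $\FLink_{X_{\ord}}(\sigma)$ is $(n-k-2)$-connected. Condition (i) is immediate from Theorem \ref{theorem:largeordering} applied to $X$.

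For (ii), I would fix a $k$-simplex $\sigma = (v_0,\ldots,v_k)$ of $X_{\ord}$. By the definition of $X_{\ord}$, the $v_i$ are distinct and $\{v_0,\ldots,v_k\}$ is a $k$-simplex of $X$. The main step is the identification
\[
\FLink_{X_{\ord}}(\sigma) = \bigl(\Link_X(\{v_0,\ldots,v_k\})\bigr)_{\ord},
\]
which I would verify by unwinding definitions. An $\ell$-simplex of $\FLink_{X_{\ord}}(\sigma)$ is an ordered tuple $\tau = (w_0,\ldots,w_\ell)$ with $\sigma \cdot \tau$ a $(k+\ell+1)$-simplex of $X_{\ord}$. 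By the definition of $X_{\ord}$ this says that the $w_i$ are pairwise distinct, disjoint from the $v_j$, and that $\{v_0,\ldots,v_k,w_0,\ldots,w_\ell\}$ is a simplex of $X$. That last condition is equivalent to $\{w_0,\ldots,w_\ell\}$ being an $\ell$-simplex of $\Link_X(\{v_0,\ldots,v_k\})$, and the face maps on the two sides visibly agree.

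With this identification in hand, Lemma \ref{lemma:linkcm} implies that $\Link_X(\{v_0,\ldots,v_k\})$ is weakly Cohen--Macaulay of dimension $n-k-1$, and then Theorem \ref{theorem:largeordering} applied to this link shows that its large ordering is $(n-k-2)$-connected. This is precisely condition (ii). I do not anticipate any real obstacle here; the only subtlety to double-check is that $\{v_0,\ldots,v_k\}$ is a genuine $k$-simplex of $X$ rather than a set with repetitions, but this is built into the definition of $X_{\ord}$.
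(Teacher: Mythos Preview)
Your proposal is correct and is precisely the argument the paper has in mind: it states the lemma as an immediate consequence of Lemma \ref{lemma:linkcm} and Theorem \ref{theorem:largeordering}, and your identification $\FLink_{X_{\ord}}(\sigma) = \bigl(\Link_X(\{v_0,\ldots,v_k\})\bigr)_{\ord}$ is exactly the (unstated) bridge between the two.
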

\begin{proof}
Theorem \ref{theorem:largeordering} implies that $X_{\ord}$ is $(n-1)$-connected.  Let $\sigma$
be a $k$-simplex of $X_{\ord}$, and let $\osigma$ be the corresponding (unordered) $k$-simplex of
$X$.  Since $X$ is weakly Cohen--Macaulay of dimension $n$, the complex $\Link_X(\osigma)$ is
weakly Cohen--Macaulay of dimension $(n-k-1)$.  Since
\[\FLink_{\bbX}(\sigma) = \left(\Link_X\left(\osigma\right)\right)_{\ord},\]
Theorem \ref{theorem:largeordering} implies that $\FLink_{\bbX}(\sigma)$ is $(n-k-2)$-connected.
The lemma follows.
\end{proof}

\section{Background III: coefficient systems}
\label{section:coefficients}

In this section, we define coefficient systems on semisimplicial sets.  Informally, these are natural assignments
of abelian groups to each simplex.

\subsection{Simplex category}
To formalize this, we introduce the {\em simplex category} of a semisimplicial set $\bbX$, which is the following
category\footnote{This is related to the poset of simplices of $\bbX$, but is {\em not} always a poset since there can
be multiple morphisms between simplices in it.} $\Simp(\bbX)$:
\begin{itemize}
\item The objects of $\Simp(\bbX)$ are the simplices of $\bbX$.
\item For $\sigma,\sigma' \in \Simp(\bbX)$ with $\sigma \in \bbX^k$ and $\sigma' \in \bbX^{\ell}$, the morphisms from $\sigma$ to $\sigma'$ are
\[\Mor(\sigma,\sigma') = \Set{$\iota\colon [\ell] \rightarrow [k]$}{$\iota$ order-preserving injection with $\iota^{\ast}(\sigma) = \sigma'$}.\]
This is nonempty precisely when $\sigma'$ is a face of $\sigma$.
\end{itemize}
The {\em augmented simplex category} of $\bbX$, denoted $\tSimp(\bbX)$, is obtained by adjoining a terminal object
$\ast$ to $\Simp(\bbX)$ that we will call the {\em $(-1)$-simplex}.

\subsection{Coefficient systems}
Let $\bbk$ be a commutative ring.  A {\em coefficient system} over $\bbk$ on a semisimplicial set $\bbX$ is a covariant functor
$\cF$ from $\Simp(\bbX)$ to the category of $\bbk$-modules.  Unpacking this, $\cF$ consists of the following data:
\begin{itemize}
\item For each simplex $\sigma$ of $\bbX$, a $\bbk$-module $\cF(\sigma)$.
\item For $\sigma \in \bbX^k$ and $\sigma' \in \bbX^{\ell}$ and $\iota\colon [\ell] \rightarrow [k]$ an 
order-preserving injection with $\iota^{\ast}(\sigma) = \sigma'$,
a $\bbk$-module morphism $\iota^{\ast}\colon \cF(\sigma) \rightarrow \cF(\sigma')$.
\end{itemize}
These must satisfy the evident compatibility conditions.  Similarly, an {\em augmented coefficient system} on
$\bbX$ is a covariant functor $\cF$ from $\tSimp(\bbX)$ to the category of $\bbk$-modules.

\begin{example}
If $\bbX$ is a semisimplicial set and $\bbk$ is a commutative ring, then we have the constant
coefficient system $\ubbk$ on $\bbX$ with $\ubbk(\sigma) = \bbk$ for all simplices $\sigma$.  This can
be extended to an augmented coefficient system by setting $\ubbk(\ast) = \bbk$ for the $(-1)$-simplex $\ast$.
\end{example}

\begin{example}
\label{example:orderingsystem}
Recall that $\OSim_n$ is the large ordering of the $n$-simplex $\Sim_n$, and that we introduced
$\FI$-modules in \S \ref{section:fimodules}.
For an $\FI$-module $M$, we can define a coefficient system $\cF_{M,n}$ on $\OSim_n$ via the formula
\[\cF_{M,n}(i_0,\ldots,i_k) = M([n] \setminus \{i_0,\ldots,i_k\}) \quad \text{for a simplex $(i_0,\ldots,i_k)$ of $\OSim_n$}.\]
For an order-preserving injective map $\iota\colon [\ell] \rightarrow [k]$, the induced map 
\[\iota^{\ast}\colon \cF_{M,n}(i_0,\ldots,i_k) \rightarrow \cF_{M,n}(i_{\iota(0)},\ldots,i_{\iota(\ell)})\]
is the one induced by the inclusion
\[[n] \setminus \{i_0,\ldots,i_k\} \hookrightarrow [n] \setminus \{i_{\iota(0)},\ldots,i_{\iota(\ell)}\}.\]
This can be extended to an augmented coefficient system by setting
$\cF_{M,n}(\ast) = M([n])$ for the $(-1)$-simplex $\ast$.
\end{example}

The collection of coefficient systems (resp.\ augmented coefficient systems) over $\bbk$ on $\bbX$ forms an abelian category.

\subsection{Equivariant coefficient systems}
\label{section:equivariantcoefficient}

Let $G$ be a group, let $\bbX$ be a semisimplicial set on which $G$ acts, and let $\bbk$ be a commutative ring.  Consider
a (possibly augmented) coefficient system $\cF$ on $\bbX$.  We want to equip $\cF$ with an ``action'' of $G$ that
is compatible with the $G$-action on $\bbX$.  For $g \in G$ and a simplex $\sigma$ of $\bbX$, the data of such
an action should include isomorphisms $\cF(\sigma) \rightarrow \cF(g \cdot \sigma)$, and more generally for
$h \in G$ should include isomorphisms $\cF(h \cdot \sigma) \rightarrow \cF(gh \cdot \sigma)$.  Moreover, these
isomorphisms should be be compatible with $\cF$ in an appropriate sense.

We formalize this as follows.  For $h \in G$, let $\cF_h$ be the coefficient system over $\bbk$ on $\bbX$
defined via the formula
\[\cF_h(\sigma) = \cF(h \cdot \sigma) \quad \text{for a simplex $\sigma$ of $\bbX$}.\]
We say that $\cF$ is a {\em $G$-equivariant coefficient system} if for all $g,h \in G$ we are given
a natural transformation $\Phi_{g,h}\colon \cF_h \Rightarrow \cF_{gh}$.  These natural transformations
should satisfy the following two properties:
\begin{itemize}
\item For all $h \in G$, the natural transformation $\Phi_{1,h}\colon \cF_h \Rightarrow \cF_h$ should be
the identity natural transformation.
\item For all $g_1,g_2 \in G$ and all $h \in H$, we require the two natural transformations
\[\cF_h \xRightarrow{\Phi_{g_1 g_2,h}} \cF_{g_1 g_2 h} \quad \text{and} \quad
\cF_h \xRightarrow{\Phi_{g_2,h}} \cF_{g_2 h} \xRightarrow{\Phi_{g_1,g_2 h}} \cF_{g_1 g_2 h}\]
to be equal.
\end{itemize}

Let us unpack this a bit.  For all simplices $\sigma$ of $\bbX$ and all $g, h \in G$, the natural transformation 
$\Phi_{g,h}$ gives a homomorphism $\cF(h \cdot \sigma) \rightarrow \cF(gh \cdot \sigma)$.  This homomorphism
is an isomorphism whose inverse is given by the map $\cF(gh \cdot \sigma) \rightarrow \cF(h \cdot \sigma)$ induced
by $\Phi_{g^{-1},gh}$.  Moreover, it must be natural in the sense
that if $\sigma'$ is a face of $\sigma$ via some face map, then the diagram
\[\begin{CD}
\cF(h \cdot \sigma)  @>>> \cF(g h \cdot \sigma) \\
@VVV                        @VVV \\
\cF(h \cdot \sigma') @>>> \cF(g h \cdot \sigma')
\end{CD}\]
must commute.  The fact that the natural transformations respect the group law implies
that the stabilizer subgroup $G_{\sigma}$ acts on $\cF(\sigma)$, making it into a $\bbk[G_{\sigma}]$-module.
If $\sigma'$ is a face of $\sigma$ via some face map, the induced map $\cF(\sigma) \rightarrow \cF(\sigma')$
is a map of $\bbk[G_{\sigma}]$-modules, where $G_{\sigma}$ acts on $\cF(\sigma')$ via the inclusion
$G_{\sigma} \hookrightarrow G_{\sigma'}$.  

\begin{example}
\label{example:snequivariant}
Let $M$ be an $\FI$-module over $\bbk$ and let $\cF_{M,n}$ be the augmented coefficient system on $\OSim_n$
from Example \ref{example:orderingsystem} defined via the formula
\[\cF_{M,n}(i_0,\ldots,i_k) = M([n] \setminus \{i_0,\ldots,i_k\}) \quad \text{for a simplex $(i_0,\ldots,i_k)$ of $\OSim_n$.}\]
Recalling that $[n] = \{0,\ldots,n\}$, the symmetric group $\fS_{n+1}$ acts on $\OSim_n$.  The
augmented coefficient system $\cF_{M,n}$ can be endowed with the structure of an $\fS_{n+1}$-equivariant
augmented coefficient system in the following way.  Consider $g,h \in \fS_{n+1}$.  We then
define $\Phi_{g,h}$ to be the following natural transformation:
\begin{itemize}
\item For a simplex $(i_0,\ldots,i_k)$ of $\OSim_n$, let the induced map
\[\cF_{M,n}(h(i_0),\ldots,h(i_k)) \rightarrow \cF_{M,n}(g h(i_0),\ldots,g h(i_k))\]
be the map
\[M([n] \setminus \{h(i_0),\ldots,h(i_k)\}) \rightarrow M([n] \setminus \{g h(i_0),\ldots,g h(i_k)\}\]
induced by the bijection
\[[n] \setminus \{h(i_0),\ldots,h(i_k)\} \rightarrow [n] \setminus \{g h(i_0),\ldots,g h(i_k)\}\]
obtained by restricting $g \in \fS_{n+1}$ to $[n] \setminus \{h(i_0),\ldots,h(i_k)\}$.\qedhere
\end{itemize}
\end{example}

Another consequence of the fact that the natural
transformations respect the group law is that if $\cF$ is a $G$-equivariant coefficient system on $\bbX$, then for all $k \geq 0$ the direct sum
\[\bigoplus_{\sigma \in \bbX^k} \cF(\sigma)\]
is a $\bbk[G]$-module in a natural way, where the $G$-action restricts to the $G_{\sigma}$-action on
$\cF(\sigma)$ for each $\sigma \in \bbX^k$.  This $\bbk[G]$-module structure can be described
in terms of the quotient $\bbX/G$ as follows.  For each $\tau \in \bbX^k/G$, fix a lift
$\ttau \in \bbX^k$.  We then have
\[\bigoplus_{\sigma \in \bbX^k} \cF(\sigma) = \bigoplus_{\tau \in \bbX^k/G} \left(\bigoplus_{\sigma \in G \cdot \ttau} \cF(\sigma)\right).\]
Using \cite[Corollary III.5.4]{BrownCohomology}, we have a $\bbk[G]$-module isomorphism
\[\bigoplus_{\sigma \in G \cdot \ttau} \cF(\sigma) \cong \Ind_{G_{\ttau}}^G \cF(\ttau).\]
We conclude that
\begin{equation}
\label{eqn:decomposeinduced}
\bigoplus_{\sigma \in \bbX^k} \cF(\sigma) = \bigoplus_{\tau \in \bbX^k/G} \Ind_{G_{\ttau}}^G \cF(\ttau).
\end{equation}

\subsection{Chain complex and homology}
Let $\bbX$ be a semisimplicial set and let $\cF$ be a coefficient system on $\bbX$.  Define the
{\em simplicial chain complex} of $\bbX$ with coefficients in $\cF$ to be the chain complex
$\CC_{\bullet}(\bbX;\cF)$ defined as follows:
\begin{itemize}
\item For $k \geq 0$, we have
\[\CC_k(\bbX;\cF) = \bigoplus_{\sigma \in \bbX^k} \cF(\sigma).\]
\item The boundary map $d\colon \CC_k(\bbX;\cF) \rightarrow \CC_{k-1}(\bbX;\cF)$ is
$d = \sum_{i=0}^k (-1)^i d_i$, where the map $d_i\colon \CC_k(\bbX;\cF) \rightarrow \CC_{k-1}(\bbX;\cF)$ is
as follows.  Consider $\sigma \in \bbX^k$.  Let $\iota\colon [k-1] \rightarrow [k]$ be
the order-preserving map whose image omits $i$.  Then on the $\cF(\sigma)$ factor of $\CC_n(\bbX;\cF)$,
the map $d_i$ is
\[\cF(\sigma) \stackrel{\iota^{\ast}}{\longrightarrow} \cF(\iota^{\ast}(\sigma)) \hookrightarrow \bigoplus_{\sigma' \in \bbX^{k-1}} \cF(\sigma') = \CC_{k-1}(\bbX;\cF).\]
\end{itemize}
Define
\[\HH_k(\bbX;\cF) = \HH_k(\CC_{\bullet}(\bbX;\cF)).\]
For an augmented coefficient system $\cF$ on $\bbX$, define
$\RC_{\bullet}(\bbX;\cF)$ to be the augmented chain complex defined just like we did above but with
$\RC_{-1}(\bbX;\cF) = \cF(\ast)$ for the $(-1)$-simplex $\ast$ and define
\[\RH_k(\bbX;\cF) = \HH_k(\RC_{\bullet}(\bbX;\cF)).\]

\begin{example}
For a semisimplicial set $\bbX$ and a commutative ring $\bbk$, we have
\[\HH_k(\bbX;\ubbk) = \HH_k(|\bbX|;\bbk) \quad \text{and} \quad \RH_k(\bbX;\ubbk) = \RH_k(|\bbX|;\bbk).\qedhere.\]
\end{example}

\begin{remark}
With our definition, $\RH_{-1}(\bbX;\cF)$ is a quotient of $\cF(\ast)$.  This quotient can sometimes be
nonzero.  It vanishes precisely when the map
\[\bigoplus_{v \in \bbX^0} \cF(v) \rightarrow \cF(\ast)\]
is surjective.
\end{remark}

Note that if a group $G$ acts on $\bbX$ and $\cF$ is a $G$-equivariant coefficient system, then
\[\cdots \rightarrow \RC_{2}(\bbX;\cF) \rightarrow \RC_1(\bbX;\cF) \rightarrow \RC_0(\bbX;\cF) \rightarrow \RC_{-1}(\bbX;\cF) = \cF(\ast) \rightarrow 0\]
is a chain complex of $\bbk[G]$-modules, and each $\RH_k(\bbX;\cF)$ is a $\bbk[G]$-module.

\subsection{Long exact sequences}
Consider a short exact sequence
\[0 \longrightarrow \cF_1 \longrightarrow \cF_2 \longrightarrow \cF_3 \longrightarrow 0\]
of coefficient systems over $\bbk$ on $\bbX$.  For each simplex $\sigma$ of $\bbX$, we thus have a short
exact sequence
\[0 \longrightarrow \cF_1(\sigma) \longrightarrow \cF_2(\sigma) \longrightarrow \cF_3(\sigma) \longrightarrow 0\]
of $\bbk$-modules.  These fit together into a short exact sequence
\[0 \longrightarrow \CC_{\bullet}(\bbX;\cF_1) \longrightarrow \CC_{\bullet}(\bbX;\cF_2) \longrightarrow \CC_{\bullet}(\bbX;\cF_3) \longrightarrow 0\]
of chain complexes, and thus induce a long exact sequence in homology of the form
\[\cdots \longrightarrow \HH_k(\bbX;\cF_1) \longrightarrow \HH_k(\bbX;\cF_2) \longrightarrow \HH_k(\bbX;\cF_3) \longrightarrow \HH_{k-1}(\bbX;\cF_1) \longrightarrow \cdots.\]
A similar result holds for augmented coefficient systems and reduced homology.

\section{Stability I: stability machine}
\label{section:stabilitymachine}

In this section, we describe our machine for proving twisted homological stability.

\subsection{Classical homological stability}

An {\em increasing sequence of groups} is an indexed sequence of groups $\{G_n\}_{n=0}^{\infty}$ such that
\[G_0 \subseteq G_1 \subseteq G_2 \subseteq \cdots.\]
For each $k \geq 0$, we get maps
\[\HH_k(G_0) \rightarrow \HH_k(G_1) \rightarrow \HH_k(G_2) \rightarrow \cdots.\]
The classical homological stability machine gives conditions under which these stabilize, i.e., such that
the maps $\HH_k(G_{n-1}) \rightarrow \HH_k(G_n)$ are isomorphisms for $n \gg k$.  One version of it is as follows.

\begin{theorem}[Classical homological stability]
\label{theorem:classicalstabilitymachine}
Let $\{G_n\}_{n=0}^{\infty}$ be an increasing sequence of groups.
For each $n \geq 1$, let $\bbX_n$ be a semisimplicial set upon which $G_n$ acts.  Assume
for some $c \geq 2$ that the following hold for all $n \geq 1$:
\begin{itemize}
\item[(a)] For all $-1 \leq k \leq \left\lfloor\frac{n-2}{c}\right\rfloor$, we have $\RH_k(\bbX_n) = 0$.
\item[(b)] For all $0 \leq k < n$, the group $G_{n-k-1}$ is the $G_n$-stabilizer of a $k$-simplex of $\bbX_n$.
\item[(c)] For all $0 \leq k < n$, the group $G_n$ acts transitively on the $k$-simplices of $\bbX_n$.
\item[(d)] For all $n \geq 2$ and all $1$-simplices $e$ of $\bbX_n$ whose proper faces consist of $0$-simplices
$v$ and $v'$, there exists some $\lambda \in G_n$ with $\lambda(v) = v'$ such that
$\lambda$ commutes with all elements of $(G_n)_e$.
\end{itemize}
Then for all $k$ the map $\HH_k(G_{n-1}) \rightarrow \HH_k(G_n)$ is an isomorphism for
$n \geq ck+2$ and a surjection for $n = ck+1$.
\end{theorem}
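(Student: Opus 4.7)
The plan is the standard spectral sequence argument going back to Quillen and van der Kallen. Let $F_\bullet \to \bbZ$ be a free $\bbZ[G_n]$-resolution, and consider the double complex $F_\bullet \otimes_{\bbZ[G_n]} \RC_\bullet(\bbX_n)$, where $\RC_\bullet(\bbX_n)$ denotes the augmented simplicial chain complex, so that the $p = -1$ column of the resulting spectral sequence recovers $\HH_\bullet(G_n)$. Hypothesis (a) guarantees that $\RC_\bullet(\bbX_n)$ is exact in degrees $\leq \lfloor (n-2)/c \rfloor$, so the total complex is acyclic in that range of total degrees, and both spectral sequences of the double complex abut to zero there.

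Filtering by the $\bbX_n$-direction gives a spectral sequence with $E^1_{p,q} = \HH_q(G_n; \RC_p(\bbX_n))$. Using transitivity (c) and the stabilizer identification (b), Shapiro's lemma yields
\[
E^1_{p,q} \cong \HH_q(G_{n-p-1}; \bbZ_\sigma) \qquad \text{for } 0 \leq p < n,
\]
where $\bbZ_\sigma$ is the orientation character of $(G_n)_\sigma$ on a chosen representative $p$-simplex $\sigma$, while $E^1_{-1,q} = \HH_q(G_n)$. Condition (d) is the crucial ingredient for trivializing this orientation character and simultaneously identifying the face inclusions $(G_n)_\sigma \hookrightarrow (G_n)_{\partial_i \sigma}$ with the standard inclusion $G_{n-p-1} \hookrightarrow G_{n-p}$: for any edge $e$ with endpoints $v, v'$, the element $\lambda$ swapping $v$ and $v'$ and commuting with $(G_n)_e$ shows the two boundary inclusions induce the same map on group homology, and iterating along adjacent transpositions extends this to higher-dimensional simplices.

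With these identifications, the $d^1$ differentials become alternating sums of $p+1$ copies of a single stabilization map $\HH_q(G_{n-p-1}) \to \HH_q(G_{n-p})$, hence vanish for $p$ even and coincide with the stabilization map for $p$ odd. I would then induct on $k$, with inductive hypothesis that $\HH_{k'}(G_{m-1}) \to \HH_{k'}(G_m)$ is an isomorphism for $m \geq ck' + 2$ and a surjection for $m = ck' + 1$, whenever $k' < k$, and also for $k' = k$ with $m < n$. Under this hypothesis all $E^1$-entries on the anti-diagonals $p + q \in \{k-1, k\}$ with $p \geq 0$ are known, and convergence to zero in total degree $\leq \lfloor (n-2)/c \rfloor$ pins down $E^1_{-1,k} = \HH_k(G_n)$ via the stabilization $d^1 \colon E^1_{0,k} \to E^1_{-1,k}$, yielding surjectivity for $n = ck + 1$ and an isomorphism for $n \geq ck + 2$.

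The main obstacle is orchestrating condition (d) correctly. Although stated only for $1$-simplices, it must be leveraged to trivialize the orientation character on simplices of every dimension and to make every face differential coincide with a fixed stabilization map, not merely a conjugate of one. Both reductions rely on combining (d) with transitivity (c) and the fact that the symmetric group on the vertex set of a $p$-simplex is generated by adjacent transpositions: each transposition of neighboring vertices of $\sigma = (v_0, \ldots, v_p)$ can be realized by a $\lambda$ provided by (d) applied to the edge $(v_i, v_{i+1})$. A secondary issue is keeping the inductive bookkeeping tight enough that the exact boundary cases $n \geq ck+2$ and $n = ck+1$ fall out of the connectivity bound $\lfloor (n-2)/c \rfloor$ rather than something weaker.
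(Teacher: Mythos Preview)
Your outline is the standard spectral-sequence argument and is exactly what the paper does (the paper simply refers the reader to \cite{HatcherVogtmannTethers} and explains the indexing). Two small slips worth noting: because $\bbX_n$ is a \emph{semisimplicial} set and the $G_n$-action commutes with the face maps, the stabilizer of a $p$-simplex automatically fixes each of its ordered vertices, so there is no orientation character $\bbZ_\sigma$ to worry about---condition (d) is used solely to identify the face inclusions on homology, as you go on to describe; and your parity statement is reversed, since $\sum_{i=0}^{p}(-1)^i$ equals $1$ for $p$ even and $0$ for $p$ odd---but this does not affect the argument, as the two differentials you actually need ($p=0$ giving the stabilization map, $p=1$ vanishing) come out correctly.
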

\begin{proof}
This can be proved exactly like \cite[Theorem 1.1]{HatcherVogtmannTethers} -- the only major difference
between our theorem and \cite[Theorem 1.1]{HatcherVogtmannTethers} is that we assume a weaker connectivity
range on the $\bbX_n$, which causes stability to happen at a slower rate.\footnote{Another minor difference
is that we work with semisimplicial sets rather than simplicial complexes, so unlike \cite[Theorem 1.1]{HatcherVogtmannTethers}
we do not need to assume in (b) that the $G_n$-stabilizer of a simplex $\sigma$ of $\bbX_n$ stabilizes
$\sigma$ pointwise.  This is automatic for semisimplicial sets.}  We thus omit the details of the
proof, but to clarify our indexing conventions we make a few remarks.  

The proof is by induction on $k$.
The base case is $k \leq -1$, where the result is trivial since $\HH_{k}(G) = 0$ for all groups $G$ when $k$ is negative.  We could
also start with $k=0$ since $\HH_0(G) = \Z$, but later when we work with twisted coefficients even
the $\HH_0$ statement will be nontrivial.  In any case, to go from $\HH_{k-1}$ to $\HH_k$ two steps
are needed (which is why we require $c \geq 2$). 

The first step is to prove that the map $\HH_k(G_{n-1}) \rightarrow \HH_k(G_n)$ is surjective for
all $n$ sufficiently large, which requires that $\RH_i(\bbX_n) = 0$ for $-1 \leq i \leq k-1$.  Once this has been done, we then
prove that $\HH_k(G_{n-1}) \rightarrow \HH_k(G_{n})$ is also injective for $n$ one step larger than needed
for surjective stability.  This requires $\RH_i(\bbX_{n}) = 0$ for $-1 \leq i \leq k$.  We remark that condition (d) is used
for injective stability but not surjective stability, which is why we only assume it for $n \geq 2$.  See
Remark \ref{remark:conditiond} below for more discussion of condition (d).

This explains our indexing conventions:
\begin{itemize}
\item Surjective stability for $\HH_0$ starts with $\HH_0(G_0) \rightarrow \HH_0(G_1)$, so we need 
$\RH_{-1}(\bbX_1) = 0$.
\item Injective stability for $\HH_0$ starts with $\HH_0(G_1) \rightarrow \HH_0(G_2)$, so we need
$\RH_{-1}(\bbX_2) = \RH_{0}(\bbX_2) = 0$.\qedhere
\end{itemize}
\end{proof}

\subsection{Setup for twisted coefficients}
\label{section:twistedsetup}

We want to give a version of this with twisted coefficients.  Fix a commutative ring $\bbk$.
An {\em increasing sequence of groups and modules} is an indexed sequence of pairs $\{(G_n,M_n)\}_{n=0}^{\infty}$, where the
$G_n$ and the $M_n$ are as follows:
\begin{itemize}
\item The $\{G_n\}_{n=0}^{\infty}$ are an increasing sequence of groups.
\item Each $M_n$ is a $\bbk[G_n]$-module.
\item As abelian groups, the $M_n$ satisfy
\[M_0 \subseteq M_1 \subseteq M_2 \subseteq \cdots.\]
\item For each $n \geq 0$, the inclusion $M_n \hookrightarrow M_{n+1}$ is $G_n$-equivariant, where $G_n$
acts on $M_{n+1}$ via the inclusion $G_n \hookrightarrow G_{n+1}$.
\end{itemize}
Given an increasing sequence of groups and modules $\{(G_n,M_n)\}_{n=0}^{\infty}$, for each $k$ we get maps
\[\HH_k(G_0;M_0) \rightarrow \HH_k(G_1;M_1) \rightarrow \HH_k(G_2;M_2) \rightarrow \cdots\]
between the associated twisted homology groups.  We want to show that this stabilizes via a machine
similar to Theorem \ref{theorem:classicalstabilitymachine}.

We will incorporate the $M_n$ into our machine via a $G_n$-equivariant augmented coefficient system $\cM_n$
on the semisimplicial set $\bbX_n$ with $\cM_n(\ast) = M_n$ for the $(-1)$-simplex $\ast$.  Having done this, we will be forced to replace the
requirement that $\RH_k(\bbX_n) = 0$ in condition (a) by $\RH_k(\bbX_n;\cM_n) = 0$.  This
is not easy to check, but we will give a useful criterion in \S \ref{section:vanishingtheorem} below.
The twisted analogue of Theorem \ref{theorem:classicalstabilitymachine} is as follows.

\begin{theorem}[Twisted homological stability]
\label{theorem:stabilitymachine}
Let $\{(G_n,M_n)\}_{n=0}^{\infty}$ be an increasing sequence of groups and modules.
For each $n \geq 1$, let $\bbX_n$ be a semisimplicial set upon which $G_n$ acts and let
$\cM_n$ be a $G_n$-equivariant augmented coefficient system on $\bbX_n$.  Assume for some $c \geq 2$ that the following hold
for all $n \geq 1$:
\begin{itemize}
\item[(a)] For all $-1 \leq k \leq \left\lfloor\frac{n-2}{c}\right\rfloor$, we have $\RH_k(\bbX_n;\cM_n) = 0$.
\item[(b)] For all $-1 \leq k < n$, the group $G_{n-k-1}$ is the $G_n$-stabilizer of a $k$-simplex $\sigma_k$ of $\bbX_n$
with $\cM_n(\sigma_k) = M_{n-k-1}$.  In particular, $\cM_n(\ast) = M_n$.
\item[(c)] For all $0 \leq k < n$, the group $G_n$ acts transitively on the $k$-simplices of $\bbX_n$.
\item[(d)] For all $n \geq 2$ and all $1$-simplices $e$ of $\bbX_n$ whose proper faces consist of $0$-simplices
$v$ and $v'$, there exists some $\lambda \in G_n$ with $\lambda(v) = v'$ such that
$\lambda$ commutes with all elements of $(G_n)_e$ and fixes all elements of $\cM_n(e)$.
\end{itemize}
Then for $k \geq 0$ the map $\HH_k(G_{n-1};M_{n-1}) \rightarrow \HH_k(G_n;M_n)$ is an isomorphism for
$n \geq ck+2$ and a surjection for $n = ck+1$.
\end{theorem}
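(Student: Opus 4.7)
The plan is to adapt the standard spectral sequence argument used for Theorem \ref{theorem:classicalstabilitymachine}, carrying the augmented coefficient system $\cM_n$ through unchanged. I would induct on $k \geq 0$, proving surjective stability at stage $k$ before injective stability one step later.

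The central object is the equivariant homology spectral sequence for $G_n$ acting on the augmented chain complex $\RC_{\bullet}(\bbX_n;\cM_n)$. Concretely, let $P_{\bullet} \to \bbk$ be a projective resolution of $\bbk$ over $\bbk[G_n]$, form the double complex $P_{\bullet} \otimes_{\bbk[G_n]} \RC_{\bullet}(\bbX_n;\cM_n)$, and take the spectral sequence obtained by filtering in the semisimplicial direction. Hypothesis (3) (transitivity) together with Shapiro's lemma give
\[E^1_{p,q} = \HH_q\bigl((G_n)_{\sigma_p};\cM_n(\sigma_p)\bigr)\qquad (-1 \leq p \leq n-1),\]
and hypothesis (2) identifies this with $\HH_q(G_{n-p-1};M_{n-p-1})$, where $p=-1$ recovers $\HH_q(G_n;M_n)$. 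The $d^1$ differentials are alternating sums of stabilization maps, with the $G_n$-equivariant structure of $\cM_n$ ensuring compatibility with the identifications. The other filtration, combined with the vanishing hypothesis (1), shows that the total homology of the double complex vanishes in degrees $\leq \frac{n-2}{c}$.

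By the inductive hypothesis, for $q < k$ the stabilization maps on $\HH_q$ are already known to be isomorphisms in the relevant range, so in each row $q < k$ the differential $d^1$ alternates between isomorphisms and zero, forcing $E^2_{p,q} = 0$ for the $(p,q)$ that could interact with $E^r_{-1,k}$. The vanishing of the total homology in degree $k-1$ then forces $E^{\infty}_{-1,k} = 0$, which is exactly the cokernel of the stabilization map $\HH_k(G_{n-1};M_{n-1}) \to \HH_k(G_n;M_n)$; this yields surjective stability. Injective stability one value of $n$ larger requires $E^{\infty}_{0,k} = 0$, i.e.\ that $d^1\colon E^1_{1,k}\to E^1_{0,k}$ surjects onto the kernel of $d^1\colon E^1_{0,k}\to E^1_{-1,k}$.

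The main obstacle is identifying this last $d^1$, and this is exactly where hypothesis (4) enters. The element $\lambda \in G_n$ swapping the two vertices of the chosen edge $e$, commuting with $(G_n)_e$, and fixing $\cM_n(e)$ pointwise allows one to compare the two face maps $E^1_{1,k} \to E^1_{0,k}$: after conjugating by $\lambda$, the two face maps agree under the identification given by hypothesis (2), so the alternating sum computation matches the one in the untwisted case, and the inductive surjectivity for $\HH_k$ delivers the required image. This is the twisted analogue of the swap trick in \cite[Theorem 1.1]{HatcherVogtmannTethers}; the one new ingredient is that $\lambda$ must act trivially on $\cM_n(e)$, which is precisely the extra content of hypothesis (4). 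The remaining bookkeeping — comparing the constant $c \geq 2$ against the vanishing range $\tfrac{n-2}{c}$ — then pins down the quantitative thresholds $n \geq ck+2$ for isomorphism and $n = ck+1$ for surjection.
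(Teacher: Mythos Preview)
Your proposal is correct and follows essentially the same route as the paper. The paper does not give full details either: it refers to \cite[Theorem 1.1]{HatcherVogtmannTethers} for the classical argument and only spells out the construction of the spectral sequence (its Theorem~\ref{theorem:spectralsequence} and Example~\ref{example:standardexample}), which is exactly the double complex $P_\bullet \otimes_{\bbk[G_n]} \RC_\bullet(\bbX_n;\cM_n)$ you describe, with the same identification $E^1_{p,q}\cong \HH_q(G_{n-p-1};M_{n-p-1})$ via transitivity and Shapiro, and the same use of hypothesis~(a) on the other filtration to force $E^\infty_{p,q}=0$ in the relevant range. Your account of hypothesis~(4) as the twisted swap trick, with the extra requirement that $\lambda$ fix $\cM_n(e)$ pointwise, is also exactly how the paper frames the passage from the untwisted to the twisted setting.
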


The proof of Theorem \ref{theorem:stabilitymachine} is almost identical to that of Theorem \ref{theorem:classicalstabilitymachine} (for
which we referred to \cite[Theorem 1.1]{HatcherVogtmannTethers}).  We will therefore not give full details, but only describe
how to construct the key spectral sequence (see Example \ref{example:standardexample} below), which will explain the role played by condition (a).

\begin{remark}
\label{remark:conditiond}
One other potentially confusing thing is how condition (d) is used, and in particular why we have to assume
that the $\lambda$ in it fixes all elements of $\cM_n(e)$.  As we said above, the corresponding condition (d)
in the classical homological stability machine (Theorem \ref{theorem:classicalstabilitymachine}) is used
to prove injective stability.  It arises in the following way.  Let the notation be as in Theorem \ref{theorem:classicalstabilitymachine}.  
Let $e$ be an edge of $\bbX_n$ connecting vertices $v$ and $v'$.
The stabilizer $(G_n)_e$ is thus a subgroup of both $(G_n)_v$ and $(G_n)_{v'}$.  
If $\phi \in G_n$ satisfies $\phi(v) = v'$, then conjugation by $\phi$ gives an isomorphism
$\iota_{\phi}\colon (G_n)_{v} \rightarrow (G_n)_{v'}$.  We therefore get two different maps
\[\HH_i((G_n)_e) \rightarrow \HH_i((G_n)_{v'}) \quad \text{and} \quad \HH_i((G_n)_e) \rightarrow \HH_i((G_n)_{v}) \stackrel{(\iota_{\phi})_{\ast}}{\rightarrow} \HH_i((G_n)_{v'}).\]
To prove injective stability, it turns out that you need to ensure that these maps are {\em equal}.  The point here
is that their difference equals a differential in a spectral sequence, and one needs to prove that this differential vanishes.

Any choice of $\phi \in G_n$ with $\phi(v) = v'$ will work here.  Condition (d) from Theorem \ref{theorem:classicalstabilitymachine}
gives a $\lambda \in G_n$ with $\lambda(v) = v'$ such that $\lambda$ also commutes with all elements of $(G_n)_e$.  This
implies that the maps
\[(G_n)_e \rightarrow (G_n)_{v'} \quad \text{and} \quad (G_n)_e \rightarrow (G_n)_{v} \stackrel{\iota_{\lambda}}{\rightarrow} (G_n)_{v'}\]
are actually equal to each other, and therefore induce the same map on $\HH_i$.  We can thus take $\phi = \lambda$.

In the setting of Theorem \ref{theorem:stabilitymachine}, the $G_n$-equivariance of the coefficient system $\cM_n$ implies
that a $\phi \in G_n$ with $\phi(v) = v'$ induces an isomorphism $\phi_{\ast}\colon \cM_n(v) \rightarrow \cM_n(v')$.  The
conjugation isomorphism $\iota_{\phi}\colon (G_n)_v \rightarrow (G_n)_{v'}$ and $\phi_{\ast}\colon \cM_n(v) \rightarrow \cM_n(v')$
induce an isomorphism
\[(\iota_{\phi},\phi_{\ast})_{\ast}\colon \HH_i((G_n)_v;\cM_n(v)) \rightarrow \HH_i((G_n)_{v'};\cM_n(v')),\]
so we get two maps
\[\HH_i((G_n)_e;\cM_n(e)) \rightarrow \HH_i((G_n)_{v'};\cM_n(v'))\]
and
\[\HH_i((G_n)_e;\cM_n(e)) \rightarrow \HH_i((G_n)_{v};\cM_n(v)) \stackrel{(\iota_{\phi}, \phi_{\ast})_{\ast}}{\rightarrow} \HH_i((G_n)_{v'};\cM_n(v')),\]
and again we need these to be equal.  This will be ensured by taking $\phi = \lambda$ for the $\lambda \in G_n$
provided by condition (d) in Theorem \ref{theorem:stabilitymachine}, which not only commutes with all elements
of $(G_n)_e$ but also fixes all elements of $\cM_n(e)$.
\end{remark}

\begin{remark}
Theorem \ref{theorem:stabilitymachine} is related to \cite[Theorem D]{PatztCentralStability}.
\end{remark}

\subsection{Homology of stabilizers}
\label{section:stabhomology}

To construct the spectral sequence, we need the following construction.  Fix a commutative ring $\bbk$.
Let $G$ be a group acting on a semisimplicial set $\bbX$ and let
$\cM$ be a $G$-equivariant augmented coefficient system on $\bbX$ over $\bbk$.  For a simplex $\tsigma$ of $\bbX$, the value $\cM(\tsigma)$ is a
$\bbk[G_{\tsigma}]$-module.  For $q \geq 0$, define an augmented coefficient system $\cH_q(\cM)$ on
$\bbX/G$ as follows.
Consider a simplex $\sigma$ of $\bbX/G$, and let $\tsigma$ be a lift of $\sigma$ to $\bbX$.  We then define
\[\cH_q(\cM)(\sigma) = \HH_q(G_{\tsigma};\cM(\tsigma)).\]
To see that this is well-defined, let $\tsigma'$ be another lift of $\sigma$ to $\bbX$.  There exists some $g \in G$ with
$g \tsigma = \tsigma'$, so $g G_{\tsigma} g^{-1} = G_{\tsigma'}$ and $g \cdot \cM(\tsigma) = \cM(\tsigma')$.  Conjugation/multiplication
by $g$ thus induces an isomorphism
\[\HH_q(G_{\tsigma};\cM(\tsigma)) \cong \HH_q(G_{\tsigma'};\cM(\tsigma')).\]
What is more, since inner automorphisms induce the identity on homology (even with twisted coefficients; see \cite[Proposition III.8.1]{BrownCohomology}),
this isomorphism is independent of the choice of $g$, and thus is completely canonical.  That it is a coefficient system follows
immediately.

\begin{remark}
If $\ast$ is the $(-1)$-simplex of $\bbX/G$, then the only possible lift $\tast$ is the $(-1)$-simplex of $\bbX$, and by
definition its stabilizer is the entire group $G$.  It follows that $\cH_q(\cM)(\ast) = \HH_q(G;\cM(\tast))$.
\end{remark}

\subsection{Spectral sequence}

The spectral sequence that underlies Theorem \ref{theorem:stabilitymachine} is as follows.  In it, our convention is that
$\cH_q(\cM) = \underline{0}$ for $q < 0$.

\begin{theorem}[Spectral sequence]
\label{theorem:spectralsequence}
Let $G$ be a group acting on a semisimplicial set $\bbX$, let $\bbk$ be a commutative ring, let
$\cM$ be a $G$-equivariant augmented coefficient system over $\bbk$ on $\bbX$.  Assume that $\RH_k(\bbX;\cM) = 0$ for
$0 \leq k \leq r$.  Then there exists a spectral sequence $E^{\bullet}_{pq}$ with the following
properties:
\begin{itemize}
\item[(i)] We have $E^1_{pq} = \RC_p(\bbX/G;\cH_q(\cM))$, and the differential
$E^1_{pq} \rightarrow E^1_{p-1,q}$ is the differential on $\RC_{\bullet}(\bbX/G;\cH_q(\cM))$.  In particular,
$E^1_{pq} = 0$ if $p<-1$ or if $q < 0$.
\item[(ii)] For $p + q \leq r$, we have $E^{\infty}_{pq} = 0$.
\end{itemize}
\end{theorem}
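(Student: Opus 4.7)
The plan is to realize the spectral sequence as one of the two associated to a double complex of Cartan--Leray type. I would take a projective $\bbk[G]$-resolution $F_\bullet \to \bbk$ (e.g., the bar resolution) and form the double complex
\[
C_{p,q} = F_q \otimes_{\bbk[G]} \RC_p(\bbX;\cM), \qquad p \geq -1,\ q \geq 0,
\]
with horizontal differential coming from the augmented simplicial boundary on $\RC_\bullet(\bbX;\cM)$ and vertical differential inherited from $F_\bullet$. The two associated spectral sequences both converge to the homology $\bbH_\bullet$ of the total complex, and the strategy is to extract (i) from one filtration and (ii) by comparing with the other.

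For (i), I would filter by $p$ and take homology in the $q$-direction first. The $G$-equivariance of $\cM$ combined with a choice of orbit representatives gives a $\bbk[G]$-module decomposition
\[
\RC_p(\bbX;\cM) \cong \bigoplus_{[\sigma] \in (\bbX/G)^p} \bbk[G] \otimes_{\bbk[G_\sigma]} \cM(\sigma)
\]
for each $p \geq 0$. Since $\bbk[G]$ is free over any $\bbk[G_\sigma]$, $F_\bullet$ restricts to a projective $\bbk[G_\sigma]$-resolution of $\bbk$, so Shapiro's lemma applied to each orbit summand produces
\[
H_q\bigl(F_\bullet \otimes_{\bbk[G]} \RC_p(\bbX;\cM)\bigr) = \bigoplus_{[\sigma]} H_q(G_\sigma;\cM(\sigma)) = \RC_p(\bbX/G;\cH_q(\cM)).
\]
The $p = -1$ column works identically and produces $H_q(G;\cM(\ast)) = \cH_q(\cM)(\ast) = \RC_{-1}(\bbX/G;\cH_q(\cM))$. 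Inspection of the induced $d^1$-differential shows it is the simplicial boundary on $\RC_\bullet(\bbX/G;\cH_q(\cM))$, establishing (i).

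To prove (ii), I would instead filter by $q$. Flatness of each $F_q$ over $\bbk[G]$ yields in the second spectral sequence $E^1_{p,q} = F_q \otimes_{\bbk[G]} \RH_p(\bbX;\cM)$, which vanishes in the strip where $\RH_p(\bbX;\cM) = 0$. Hence $E^\infty_{p,q} = 0$ throughout this strip in the second spectral sequence, forcing $\bbH_n = 0$ for all $n$ such that every $(p,q)$ with $p + q = n$ lies in the vanishing strip. Since the spectral sequence of (i) also converges to $\bbH_\bullet$, this forces its $E^\infty_{p,q}$ to vanish for all such $(p,q)$, giving (ii). The most delicate step is the $E^1$-identification in the first spectral sequence, which requires unpacking the $G$-equivariance data relating $\cM(\sigma)$ and $\cM(g\cdot\sigma)$ carefully but introduces no fundamentally new ideas. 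A bookkeeping subtlety worth flagging: the comparison argument genuinely uses the vanishing of $\RH_p(\bbX;\cM)$ for $p = -1$ alongside the stated range $0 \leq p \leq r$, and this additional vanishing is automatically supplied in the intended applications by hypothesis (a) of Theorem \ref{theorem:stabilitymachine}.
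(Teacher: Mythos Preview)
Your approach is essentially identical to the paper's: form the double complex $F_\bullet \otimes_{\bbk[G]} \RC_\bullet(\bbX;\cM)$ from a free/projective resolution of $\bbk$, then read off (i) from the filtration whose $E^1$-page computes $\HH_q(G;\RC_p(\bbX;\cM))$ via Shapiro's lemma, and (ii) from the other filtration using that tensoring with a free/flat $\bbk[G]$-module preserves the exactness of $\RC_\bullet(\bbX;\cM)$ in the given range. Your observation about needing $\RH_{-1}(\bbX;\cM)=0$ is well taken: the paper's proof asserts that ``$\RC_\bullet(\bbX;\cM)$ is exact up to degree $r$,'' which tacitly uses this, and indeed the hypothesis $-1 \leq k$ is present in condition (a) of Theorem~\ref{theorem:stabilitymachine} where the spectral sequence is actually applied.
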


\begin{example}
\label{example:standardexample}
Let the notation and assumptions be as in Theorem \ref{theorem:stabilitymachine}, and apply Theorem
\ref{theorem:spectralsequence} to $G_n$ and $\bbX_n$ and $\cM_n$.  Since $G_n$ acts
transitively on the $k$-simplices of $\bbX_n$ for $0 \leq k < n$, there is a single
$k$-simplex in $\bbX_n/G_n$.  Since $G_{n-k-1}$ is the $G_n$ stabilizer of a $k$-simplex
$\sigma_k$ of $\bbX_n$ with $\cM_n(\sigma_k) = M_{n-k-1}$, we conclude that our spectral
sequence has
\[E^1_{pq} \cong \RC_p(\bbX_n/G_n;\cH_q(\cM_n)) = \HH_q(G_{n-p-1};M_{n-p-1}) \quad \text{for $-1 \leq p < n$}.\qedhere\]
\end{example}

\begin{proof}[Proof of Theorem \ref{theorem:spectralsequence}]
Let
\[\cdots \rightarrow F_2(G) \rightarrow F_1(G) \rightarrow F_0(G) \rightarrow \bbk\]
be a resolution of the trivial $\bbk[G]$-module $\bbk$ by free $\bbk[G]$-modules.  The action of $G$ on $\bbX$ makes $\RC_{\bullet}(\bbX;\cM)$ into
a chain complex of $\bbk[G]$-modules, so we can consider the double complex $C_{\bullet,\bullet}$ defined by
\begin{equation}
\label{eqn:doublecomplex}
C_{pq} = \RC_{p}(\bbX;\cM) \otimes_{G} F_{q}(G).
\end{equation}
The spectral sequence we are looking for converges to the homology of this double complex.  In fact,
there are two spectral sequences converging to the homology of a double complex, one arising by
filtering it ``horizontally'' and the other by filtering it ``vertically'' (see, e.g., \cite[\S VII.3]{BrownCohomology}).
We will use the horizontal filtration to show that the homology of \eqref{eqn:doublecomplex} vanishes
up to degree $r$ (conclusion (ii)), and then prove that the vertical filtration gives the spectral
sequence described in the theorem.

The spectral sequence arising from the horizontal filtration has
\[E^1_{pq} \cong \HH_p\left(\RC_{\bullet}\left(\bbX;\cM\right) \otimes_G F_q\left(G\right)\right).\]
Our assumptions imply that $\RC_{\bullet}(\bbX;\cM)$ is exact up to degree $r$, and since $F_q(G)$ is a free
$\Z[G]$-module it follows that $\RC_{\bullet}(\bbX;\cM) \otimes_G F_q(G)$ is also exact up to degree $r$.
It follows that $E^1_{pq} = 0$ for $p \leq r$.
We deduce that the homology of the double complex \eqref{eqn:doublecomplex} vanishes up to degree $r$.

The spectral sequence arising from the vertical filtration has
\[E^1_{pq} \cong \HH_q\left(\RC_{p}\left(\bbX;\cM\right) \otimes_G F_{\bullet}\left(G\right)\right).\]
This is the spectral sequence that is referred to in the theorem, and we must prove that it
satisfies (i).  Since $F_{\bullet}(G)$ is a free resolution of the trivial $\bbk[G]$-module
$\Z$, the above expression simplifies to
\[E^1_{pq} \cong \HH_q\left(G;\RC_{p}\left(\bbX;\cM\right)\right).\]
For each $p$-simplex $\sigma$ of $\bbX/G$, fix a lift $\tsigma$ to $\bbX$.  As we discussed in \S \ref{section:equivariantcoefficient} (see
\eqref{eqn:decomposeinduced}), as a $\bbk[G]$-module we have
\[\RC_p\left(\bbX;\cM\right) \cong \bigoplus_{\sigma \in \left(\bbX/G\right)^p} \Ind_{G_{\tsigma}}^G \cM\left(\tsigma\right).\]
Plugging this in, we get
\[E^1_{pq} \cong \bigoplus_{\sigma \in \left(\bbX/G\right)^p} \HH_q\left(G;\Ind_{G_{\tsigma}}^G \cM\left(\tsigma\right)\right).\]
Applying Shapiro's Lemma, the right hand side simplifies to
\[\bigoplus_{\sigma \in \left(\bbX/G\right)^p} \HH_q\left(G_{\tsigma};\cM\left(\tsigma\right)\right) = \RC_p(\bbX/G;\cH_q(\cM)).\]
That the differential is as described in (i) is clear.  The theorem follows.
\end{proof}

\subsection{Stability machine and finite-index subgroups}
We close this section by proving a variant of Theorem \ref{theorem:stabilitymachine} that we will use
to analyze congruence subgroups of $\GL_n(R)$.  To motivate its statement, consider a finite-index
normal subgroup $G'$ of a group $G$ and a $\bbk[G]$-module $M$ over a field $\bbk$ of characteristic $0$.
The conjugation action of $G$ on $G'$ induces an action of $G$ on each $\HH_k(G';M)$, and using
the transfer map (see \cite[Chapter III.9]{BrownCohomology}) one can show that $\HH_k(G;M)$ is
the $G$-coinvariants of the action of $G$ on $\HH_k(G';M)$.  From this, we see that the map
\[\HH_k(G';M) \rightarrow \HH_k(G;M)\]
induced by the inclusion $G' \hookrightarrow G$ is an isomorphism if and only if the action
of $G$ on $\HH_k(G';M)$ is trivial.  The point of the following is that under
the conditions of our stability machine, it is enough to check a weaker 
version of this triviality (condition (h)). 

\begin{theorem}
\label{theorem:stabilitymachinefi}
Let $\{(G_n,M_n)\}_{n=0}^{\infty}$ be an increasing sequence of groups and modules.
For each $n \geq 1$, let $\bbX_n$ be a semisimplicial set upon which $G_n$ acts and let
$\cM_n$ be a $G_n$-equivariant augmented coefficient system on $\bbX_n$.  Assume for some $c \geq 2$
that conditions (a)-(d) of Theorem \ref{theorem:stabilitymachine} hold, so by that theorem
$\HH_k(G_n;M_n)$ stabilizes.  Furthermore, assume that $\{G'_n\}_{n=0}^{\infty}$ is an
increasing sequence of groups such that each $G'_n$ is a finite-index normal subgroup
of $G_n$, and that the following hold:
\begin{itemize}
\item[(e)] Each $M_n$ is a vector space over a field $\bbk$ of characteristic $0$.
\item[(f)] For the $k$-simplex $\sigma_k$ of $\bbX_n$ from condition (b) whose
$G_n$-stabilizer is $G_{n-k-1}$, the $G'_n$-stabilizer of $\sigma_k$ is $G'_{n-k-1}$.
\item[(g)] The quotient $\bbX_n / G'_n$ is $\left\lfloor\frac{n-2}{c}\right\rfloor$-connected.
\item[(h)] For $k \geq 0$ and $n \geq ck+2$, the action of $G_n$ on $\HH_k(G'_n;M_n)$
induced by the conjugation action of $G_n$ on $G'_n$ fixes pointwise the image of the stabilization
map 
\begin{equation}
\label{eqn:gprimestab}
\HH_k(G'_{n-1};M_{n-1}) \rightarrow \HH_k(G'_n;M_n).
\end{equation}
\end{itemize}
Then for $n \geq ck+2$ the map $\HH_k(G'_n;M_n) \rightarrow \HH_k(G_n;M_n)$ induced
by the inclusion $G'_n \hookrightarrow G_n$ is an isomorphism.
\end{theorem}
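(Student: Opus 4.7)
The plan is to combine a transfer argument with the spectral sequence of Theorem \ref{theorem:spectralsequence} applied to $G'_n$, using condition (h) to control the first differential and condition (g) together with the induction to kill all higher differentials. Because $\bbk$ has characteristic $0$ (condition (e)) and $[G_n : G'_n] < \infty$, standard transfer arguments give $\HH_k(G_n; M_n) \cong \HH_k(G'_n; M_n)_{G_n/G'_n}$, so it suffices to show that $G_n/G'_n$ acts trivially on $\HH_k(G'_n; M_n)$ for $n \geq ck+2$. I would organize the proof as an induction on $k$ with the inductive hypothesis that for all $j < k$ and all $m \geq cj+2$, the group $G_m/G'_m$ acts trivially on $\HH_j(G'_m; M_m)$.

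For the inductive step, apply Theorem \ref{theorem:spectralsequence} to $G'_n$ acting on $\bbX_n$ with coefficient system $\cM_n$. Since condition (a) is a statement only about $\bbX_n$ and $\cM_n$, it remains in force, so $E^\infty_{p,q} = 0$ whenever $p+q \leq \frac{n-2}{c}$; in particular $E^\infty_{-1,k} = 0$ when $n \geq ck+2$. The $E^1$-page is $E^1_{p,q} = \RC_p(\bbX_n/G'_n; \cH_q(\cM_n))$. Condition (f) tells us that each $G'_n$-orbit of $p$-simplices in $\bbX_n$ has stabilizer $G_n$-conjugate to $G'_{n-p-1}$, so each summand of $E^1_{p,q}$ is abstractly $\HH_q(G'_{n-p-1}; M_{n-p-1})$. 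The whole spectral sequence carries a natural $G_n/G'_n$-action compatible with all differentials, and the goal is to show that this action is trivial on $\HH_k(G'_n; M_n) = E^1_{-1, k}$.

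The first differential $d^1 \colon E^1_{0,k} \to E^1_{-1,k}$ is the direct sum over vertex $G'_n$-orbits of the stabilization maps $\HH_k(G'_{n-1}; M_{n-1}) \to \HH_k(G'_n; M_n)$. Since $G_n$ acts transitively on the vertices of $\bbX_n$ by (c), the image of each summand is a $G_n$-translate of the image of the canonical stabilization map, which by condition (h) is pointwise fixed by $G_n$; thus the image of $d^1$ is itself pointwise fixed by $G_n/G'_n$. For the higher differentials $d^r \colon E^r_{r-1, k-r+1} \to E^r_{-1, k}$ with $r \geq 2$, I would show $E^2_{r-1, k-r+1} = 0$. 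For $q = k-r+1 < k$, the inductive hypothesis combined with Theorem \ref{theorem:stabilitymachine} applied to $G_n$ forces the stabilization maps $\HH_q(G'_{m-1}; M_{m-1}) \to \HH_q(G'_m; M_m)$ to be isomorphisms in the relevant range (both sides map isomorphically to the corresponding $G_n$-homology, which stabilizes by Theorem \ref{theorem:stabilitymachine}). Consequently $\cH_q(\cM_n)$ is isomorphic, on the appropriate truncation, to the constant coefficient system with value $\HH_q(G'_n; M_n)$, and condition (g) then implies that $\RC_\bullet(\bbX_n/G'_n; \cH_q(\cM_n))$ is acyclic at the relevant positions, giving $E^2_{r-1, k-r+1} = 0$.

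Combining these facts, since $E^\infty_{-1, k} = 0$ and $d^1$ is the only nontrivial differential hitting this position, the image of $d^1$ exhausts $\HH_k(G'_n; M_n)$. Because $G_n/G'_n$ acts trivially on that image, it acts trivially on the whole group, which by transfer closes the induction. The hard part will be verifying the vanishing of the higher differentials: the ``constant coefficient'' identification for $\cH_q(\cM_n)$ only holds after truncating $\bbX_n/G'_n$ to a range where the stabilization maps for $G'$ are known to be isomorphisms, and one must verify that for every $(p,q)$ on the anti-diagonal $p+q=k$ with $p \geq 1$ this truncation is large enough to force exactness of the $q$-row at position $p$. The arithmetic should be compatible because $c$ appears with the same scaling in both (a) and (g), so the slack between $n \geq ck+2$ and the range in which stabilization for $G'$ becomes an isomorphism is exactly what the argument requires.
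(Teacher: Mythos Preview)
Your proposal is correct and follows essentially the same route as the paper's proof: transfer reduces to showing the $G_n$-action on $\HH_k(G'_n;M_n)$ is trivial, induction on $k$ together with the spectral sequence of Theorem~\ref{theorem:spectralsequence} for $G'_n$ identifies $E^1_{-1,k}$ with the target, and one shows $d^1$ is surjective by proving $E^2_{p,q}=0$ for $q<k$ in the relevant range via comparison of $\cH_q(\cM_n)$ with the constant system $\underline{\HH_q(G_n;M_n)}$ using the inductive hypothesis, Theorem~\ref{theorem:stabilitymachine}, and condition (g). The paper carries out exactly this argument, including the arithmetic verification you flag at the end (showing the needed isomorphism for $-1\le p\le k-q$ and surjection for $p=k-q+1$), so your sketch matches it step for step.
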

\begin{proof}
The proof will be by induction on $k$.  The base case $k \leq -1$ is trivial, so assume that $k \geq 0$
and that the result is true for all smaller $k$.  Consider some $n \geq ck+2$.  As we discussed
before the theorem, because of (e) to prove that $\HH_k(G'_n;M_n) \cong \HH_k(G_n;M_n)$ it
is enough to prove that $G_n$ acts trivially on $\HH_k(G'_n;M_n)$.  Condition (h) implies
that to do this, it is enough to prove that $\HH_k(G'_n;M_n)$ is the span of the 
$G_n$-orbit of the image of the stabilization map \eqref{eqn:gprimestab}.

Condition (a) says that $\RH_i(\bbX_n;\cM_n) = 0$ for $-1 \leq i \leq \left\lfloor\frac{n-2}{c}\right\rfloor$.  Since
$n \geq ck+2$, this vanishing holds for $-1 \leq i \leq k$.  Applying Theorem \ref{theorem:spectralsequence},
we obtain a spectral sequence $E^{r}_{pq}$ with the following properties:
\begin{itemize}
\item[(i)] We have $E^1_{pq} = \RC_p(\bbX_n/G'_n;\cH_q(\cM_n))$, and the differential
$E^1_{pq} \rightarrow E^1_{p-1,q}$ is the differential on $\RC_{\bullet}(\bbX_n/G'_n;\cH_q(\cM_n))$.  In particular,
$E^1_{pq} = 0$ if $p<-1$ or if $q < 0$.
\item[(ii)] For $p + q \leq k$, we have $E^{\infty}_{pq} = 0$.
\end{itemize}
Unwinding the definition of $\cH_q(\cM_n)$, we see that for $\ast$ the $(-1)$-simplex of $\bbX_n$ we have
\[E^1_{-1,k} = \RC_{-1}(\bbX_n/G'_n;\cH_k(\cM_n)) = \HH_k((G'_n)_{\ast};\cM_n(\ast)) = \HH_k(G'_n;M_n).\]
For each simplex $\tau$ of $\bbX_n/G'_n$, fix a lift $\ttau$ to $\bbX_n$.  We then have
\begin{equation}
\label{eqn:e10k}
E^1_{0,k} = \RC_0(\bbX_n/G'_n;\cH_k(\cM_n)) = \bigoplus_{\tau \in (\bbX_n/G'_n)^0} \HH_k((G'_n)_{\ttau};\cM_n(\ttau)).
\end{equation}
By conditions (b) and (f), we have
\[\HH_k((G'_n)_{\sigma_0};\cM_n(\sigma_0)) = \HH_k(G'_{n-1};M_{n-1}).\]
Condition (c) says that $G_n$ acts transitively on the $0$-simplices of $\bbX_n$.  For each $\tau \in (\bbX_n/G'_n)^0$, there
thus exists some $g \in G_n$ such that $g \cdot \sigma_0 = \ttau$.  We thus have
\[g G'_{n-1} g^{-1} = (G'_n)_{\ttau} \quad \text{and} \quad g \cdot M_{n-1} = \cM_n(\ttau),\]
where the second equality uses the $G_n$-equivariant structure on $\cM_n$.

From these observations, we see that the image of the differential
\[E^1_{0,k} = \bigoplus_{\tau \in (\bbX_n/G'_n)^0} \HH_k((G'_n)_{\ttau};\cM_n(\ttau)) \rightarrow \HH_k(G'_n;M_n) = E^1_{-1,k}\]
is the span of the $G_n$-orbit of the image of the stabilization map
\[\HH_k(G'_{n-1};M_{n-1}) \rightarrow \HH_k(G'_n;M_n).\]
To see that $\HH_k(G'_n;M_n)$ is the span of the $G_n$-orbit of the image of this stabilization map,
it is therefore enough to prove that the differential $E^1_{0,k} \rightarrow E^1_{-1,k}$ is surjective.

The only nonzero differentials that can possibly hit $E^{r}_{-1,k}$ for some $r \geq 1$ are
\begin{align*}
E^1_{0,k}     &\rightarrow E^1_{-1,k}\\ 
E^2_{1,k-1}   &\rightarrow E^2_{-1,k}\\
              &\vdots \\
E^{k+1}_{k,0} &\rightarrow E^{k+1}_{-1,k}.
\end{align*}
By (ii) above we have $E^{\infty}_{-1,k} = 0$, so to prove that the differential $E^1_{0,k} \rightarrow E^1_{-1,k}$ is surjective
it is enough to prove that $E^{i+1}_{i,k-i} = 0$ for $1 \leq i \leq k$.  We will actually
prove the following more general result:

\begin{claim}
Fix some $0 \leq q \leq k-1$.  Then $E^2_{pq} = 0$ for $-1 \leq p \leq k-q$.
\end{claim}

The terms $E^1_{pq}$ with $-1 \leq p \leq k-q+1$ along
with the relevant $E^1$-differentials are
\[\RC_{-1}(\bbX_n/G'_n;\cH_q(\cM_n)) \leftarrow \RC_{0}(\bbX_n/G'_n;\cH_q(\cM_n)) \leftarrow \cdots \leftarrow \RC_{k-q+1}(\bbX_n/G'_n;\cH_q(\cM_n)).\]
To prove that the $E^2_{pq}$ with $-1 \leq p \leq k-q$ are all zero, we must prove that this chain complex is exact
except possibly at its rightmost term $\RC_{k-q+1}(\bbX_n/G'_n;\cH_q(\cM_n))$.

For a $p$-simplex $\tau$ of $\bbX_n/G'_n$, we have by definition
\[\cH_q(\cM_n)(\tau) = \HH_q((G'_n)_{\ttau};\cM_n(\ttau)).\]
Here $\ttau$ is our chosen lift of $\tau$ to $\bbX_n$.  Setting
$V = \HH_q(G_n;M_n)$, the inclusion $(G'_n)_{\ttau} \hookrightarrow G_n$
along with the map
\[\cM_n(\ttau) \rightarrow \cM_n(\ast) = M_n \quad \text{with $\ast$ the $(-1)$-simplex of $\bbX_n$}\]
coming from our coefficient system induce a map
\begin{equation}
\label{eqn:stabproj}
\cH_q(\cM_n)(\tau) = \HH_q((G'_n)_{\ttau};\cM_n(\ttau)) \rightarrow \HH_q(G_n;M_n) = V.
\end{equation}
Letting $\uV$ be the constant coefficient system on $\bbX_n/G'_n$ with value $V$, the maps
\eqref{eqn:stabproj} assemble to a map of coefficient systems $\cH_q(\cM_n) \rightarrow \uV$.

Letting $f_p\colon \RC_{p}(\bbX_n/G'_n;\cH_q(\cM_n)) \rightarrow \RC_{p}(\bbX_n/G'_n;\uV)$ be
the induced map on reduced chain complexes, we have a commutative diagram
\[\minCDarrowwidth10pt\begin{CD}
\RC_{-1}(\bbX_n/G'_n;\cH_q(\cM_n)) @<<< \RC_{0}(\bbX_n/G'_n;\cH_q(\cM_n)) @<<< \cdots @<<< \RC_{k-q+1}(\bbX_n/G'_n;\cH_q(\cM_n)) \\
@VV{f_{-1}}V                            @VV{f_0}V                              @.          @VV{f_{k-q+1}}V                      \\
\RC_{-1}(\bbX_n/G'_n;\uV)          @<<< \RC_{0}(\bbX_n/G'_n;\uV)          @<<< \cdots @<<< \RC_{k-q+1}(\bbX_n/G'_n;\uV).
\end{CD}\]
Condition (g) says that $\bbX_n / G'_n$ is $\left\lfloor\frac{n-2}{c}\right\rfloor$-connected.  Since $n \geq ck+2$, this means that
$\bbX_n / G'_n$ is $k$-connected.  Since $0 \leq q \leq k-1$, we deduce that the bottom chain complex
of this diagram is exact except possibly at its rightmost term.  To prove that the top
chain complex is exact except possibly at its rightmost term, it is thus enough to prove that
$f_p$ is an isomorphism for $-1 \leq p \leq k-q$ and a surjection for $p=k-q+1$.

Letting $\tau$ be a $p$-simplex of $\bbX_n/G'_n$, the map
$\cH_q(\cM_n)(\tau) \rightarrow \uV(\tau)$ is the map
\begin{equation}
\label{eqn:taustab}
\HH_q((G'_n)_{\ttau};\cM_n(\ttau)) \rightarrow \HH_q(G_n;M_n)
\end{equation}
We must prove that this is an isomorphism for $-1 \leq p \leq k-q$ and a surjection for $p=k-q+1$.
For $-1 \leq p \leq k-q+1$, the fact that $q \geq 0$ and $c \geq 2$ imply that
\[p \leq k-q+1 \leq k+1 \leq ck+2 \leq n.\]
Condition (c) thus says that for $-1 \leq p \leq k-q+1$, the group $G_n$ acts transitively on the $p$-simplices of $\bbX_n$.
In particular, if $p$ is in this range then the $p$-simplex $\ttau$ 
is in the same $G_n$-orbit as the $p$-simplex $\sigma_p$ from conditions
(b) and (f), which satisfies
\[\HH_q((G'_n)_{\sigma_p};\cM_n(\sigma_p)) = \HH_q(G'_{n-p-1};M_{n-p-1}).\]
Whether or not \eqref{eqn:taustab} is an isomorphism/surjection is invariant under the
$G_n$-action, so we deduce that it is enough to prove that
\[\HH_q(G'_{n-p-1};M_{n-p-1}) \rightarrow \HH_q(G_n;M_n)\]
is an isomorphism for $-1 \leq p \leq k-q$ and a surjection for $p=k-q+1$.
Factor this as
\[\HH_q(G'_{n-p-1};M_{n-p-1}) \rightarrow \HH_q(G_{n-p-1};M_{n-p-1}) \rightarrow \HH_q(G_n;M_n).\]
Condition (e) says that $M_{n-p-1}$ is a vector space over a field of characteristic $0$.
Since $G'_{n-p-1}$ is a finite-index subgroup of $G_{n-p-1}$, the transfer map (see \cite[Chapter III.9]{BrownCohomology})
therefore implies that the first map is always a surjection, and our inductive hypothesis says that it is an
isomorphism if $n-p-1 \geq cq+2$.  Also, Theorem \ref{theorem:stabilitymachine} says that the second map
is an isomorphism if $n-p \geq cq+2$ and a surjection if $n-p = cq+1$.  To prove the
theorem, it is thus enough to prove that $n-p-1 \geq cq+2$ if $-1 \leq p \leq k-q$ and
that $n-p \geq cq+1$ if $p = k-q+1$.
 
We check this as follows.  If $-1 \leq p \leq k-q$, then since $n \geq ck+2$ and $q \leq k-1$ and $c \geq 2$ we have
\begin{align*}
n-p-1 &\geq (ck+2) - (k-q) - 1 = (c-1)k+q+1 \\
      &\geq (c-1)(q+1)+q+1 = cq+c \\
      &\geq cq+2.
\end{align*}
If instead $p = k-q+1$, then we have
\begin{align*}
n-p &\geq (ck+2) - (k-q+1) = (c-1)k+q +1\\
    &\geq (c-1)(q+1) + q +1 = cq+c \\
    &\geq cq+1.\qedhere
\end{align*}
\end{proof}

\section{Stability II: the vanishing theorem}
\label{section:vanishingtheorem}

Let $\bbX$ be an ordered simplicial complex.\footnote{The results in this section work naturally
in this level of generality rather than for general semisimplicial sets.}  To
use Theorem \ref{theorem:stabilitymachine}, we need a way to prove that $\RH_k(\bbX;\cF) = 0$
in a range for an augmented coefficient system $\cF$.  This section contains a useful criterion for this
that applies in many situations.  Since we will be talking about augmented coefficient systems,
we will need to discuss the $(-1)$-simplex, which corresponds to the {\em empty} ordered tuple
of vertices, so we will write it as $\emptyset$.

\subsection{Polynomiality}
Our criterion applies to augmented coefficient systems $\cF$ that are {\em polynomial} of degree $d \geq -1$ up
to dimension $e \geq -1$.
This condition is inspired by the notion of polynomial $\FI$-modules (Definition \ref{definition:polyfi}).
It is defined inductively in $d$ as follows:
\begin{itemize}
\item A coefficient system $\cF$ is polynomial of degree $-1$ up to dimension $e$ if for all simplices\footnote{Including
the $(-1)$-simplex, so this has content even if $e=-1$.}
$\sigma$ of dimension at most $e$, we have $\cF(\sigma) = 0$.  In particular, $\cF(\emptyset)=0$.
\item A coefficient system $\cF$ is polynomial of degree $d \geq 0$ up to dimension $e$ if it satisfies
the following two conditions:
\begin{itemize}
\item If $\sigma$ is a simplex of dimension at most $e$, then the map $\cF(\sigma) \rightarrow \cF(\emptyset)$
is injective.\footnote{This implies that if $\sigma$ is a simplex of dimension at most $e$ and $\sigma'$ is a face
of $\sigma$, then the map $\cF(\sigma) \rightarrow \cF(\sigma')$ is injective.}
\item Let $w$ be a vertex of $\bbX$.  Let $D_w \cF$ be the coefficient
system on the forward link $\FLink_{\bbX}(w)$ defined by the formula
\[\quad\quad\quad\quad D_w \cF(\sigma) = \frac{\cF(\sigma)}{\Image\left(\cF\left(w \cdot \sigma\right) \rightarrow \cF\left(\sigma\right)\right)} \quad \text{for a simplex $\sigma$ of $\FLink_{\bbX}(w)$}.\]
Then $D_w \cF$ must be polynomial of degree $d-1$ up to dimension $e$.
\end{itemize}
\end{itemize}

\begin{remark}
\label{remark:degeneratepoly}
To give a sense as to what this definition means, consider the edge case where $\cF$ is polynomial of degree $d \geq 0$
up to dimension $(-1)$.  Unwinding the above definition, we see that it is equivalent to requiring that for all
$d$-simplices $(v_0,\ldots,v_d)$, the map
\[\bigoplus_{i=0}^d \cF(v_i) \rightarrow \cF(\emptyset)\]
is surjective.  As long as $\bbX$ is at least $d$-dimensional, this implies in particular that $\RH_{-1}(\bbX;\cF) = 0$.  Our vanishing theorem below
will generalize this.
\end{remark}

\subsection{Key example}
The following lemma provides motivation for this definition.

\begin{lemma}
\label{lemma:fisystempoly}
Let $M$ be an $\FI$-module that is polynomial of degree $d \geq -1$ starting at $m \geq 0$ (see Definition \ref{definition:polyfi}).
Fix\footnote{The purpose of the condition $n \geq m-1$ is simply to ensure that $n-m \geq -1$.  Here $n-m$ is the dimension we
are claiming that $\cF_{M,n}$ is polynomial of degree $d$ up to, and for this to make sense we need it to be at least $-1$.} some 
$n \geq \max(m-1,0)$, and let $\cF_{M,n}$ be the coefficient system on $\OSim_n$ discussed in Example \ref{example:orderingsystem}, so
\[\cF_{M,n}(i_0,\ldots,i_k) = M([n] \setminus \{i_0,\ldots,i_k\}) \quad \text{for all simplices $(i_0,\ldots,i_k)$ of $\OSim_n$}.\]
Then $\cF_{M,n}$ is polynomial of degree $d$ up to dimension $n-m$.
\end{lemma}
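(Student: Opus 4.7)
I would prove this by induction on $d$. Because the inductive step will pass to iterated derivatives and shifts of $M$, whose ``starting'' index drops by $1$ with each operation, I would first tacitly strengthen the lemma so that $m$ is allowed to be any integer in $\Z$; this is harmless since polynomiality is monotone in the starting index, and the strengthened statement will be applied to the sub-FI-modules that arise in the induction.

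The base case $d = -1$ is immediate: for any simplex $\sigma = (i_0, \ldots, i_k)$ of $\OSim_n$ of dimension $k \leq n - m$, the set $[n] \setminus \{i_0, \ldots, i_k\}$ has cardinality $n - k \geq m$, so $M$ vanishes on it by hypothesis and hence $\cF_{M,n}(\sigma) = 0$.

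For the inductive step $d \geq 0$, I would verify the two defining conditions of ``polynomial of degree $d$ up to dimension $n - m$''. The injectivity condition reduces to the statement that $M([n] \setminus \{i_0, \ldots, i_k\}) \to M([n])$ is injective whenever $k \leq n - m$, which follows from polynomiality of $M$ since the source has $n - k \geq m$ elements. For the forward-link condition, fix $\tau = (w_0, \ldots, w_\ell)$ of dimension $\ell \leq n - m$, and unpack the quotient coefficient system on $\bbY = \FLink_{\OSim_n}(\tau)$: for a simplex $\sigma = (u_0, \ldots, u_s)$ of $\bbY$,
\[
\cG(\sigma) = DM\bigl([n] \setminus \{w_\ell, u_0, \ldots, u_s\}\bigr),
\]
where the formal symbol $\ast$ in the definition of $DM$ is identified with $w_\ell$. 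Writing $W = \{w_0, \ldots, w_{\ell-1}\}$ and $V = [n] \setminus \{w_0, \ldots, w_\ell\}$, so that $[n] \setminus \{w_\ell\} = W \sqcup V$, this rewrites as $\cG(\sigma) = DM\bigl(W \sqcup (V \setminus \{u_0, \ldots, u_s\})\bigr)$. Define the FI-module $M''$ by $M''(S) = DM(S \sqcup W)$; since $|W| = \ell$ this is canonically isomorphic to the $\ell$-fold shift $\Sigma^\ell DM$. Choosing an order-preserving bijection $V \cong [n - \ell - 1]$ identifies $\bbY$ with $\OSim_{n - \ell - 1}$ and then identifies $\cG$ with $\cF_{M'', n - \ell - 1}$; face maps match because both come from set inclusions.

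Finally, I would invoke an auxiliary fact: shifting takes an FI-module polynomial of degree $d'$ starting at $m'$ to one polynomial of degree $d'$ starting at $m' - 1$. This is a short separate induction on $d'$ using the identity $D \Sigma = \Sigma D$ (a direct unraveling of definitions). Iterating gives $\Sigma^\ell DM$ polynomial of degree $d - 1$ starting at $m - \ell - 1$. Applying the strengthened inductive hypothesis to $\cF_{M'', n - \ell - 1}$ --- valid because the required $n - \ell - 1 \geq m - \ell - 1$ is just $n \geq m$ --- yields that $\cG$ is polynomial of degree $d - 1$ up to dimension $(n - \ell - 1) - (m - \ell - 1) = n - m$, which is at least as strong as the required bound $n - m - \ell$. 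The main obstacle is the bookkeeping that identifies $\cG$ as $\cF_{M'', n - \ell - 1}$: the $\ell$ ``frozen'' vertices $w_0, \ldots, w_{\ell-1}$ sit inside the argument of $DM$ but are \emph{not} vertices of $\bbY$, and this mismatch is precisely what forces the shift $\Sigma^\ell$ and the freedom to allow negative starting indices in the auxiliary statement (relevant in edge cases such as $m = 0$ with $\ell \geq 1$).
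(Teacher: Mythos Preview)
Your proof is correct and follows the same inductive skeleton as the paper's: induct on $d$, verify the injectivity condition by counting, and identify the quotient coefficient system $\cG$ on the forward link with a coefficient system of the form $\cF_{N,\,n-\ell-1}$ for a suitable derived/shifted $\FI$-module $N$.

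Where you differ from the paper is in the identification of $\cG$, and here your argument is in fact \emph{more} careful. The paper uses the $\fS_{n+1}$-symmetry to assume $\tau=(n-\ell,\ldots,n)$ and then asserts $\cG\cong\cF_{DM,\,n-\ell-1}$. But as your computation shows, for a simplex $\sigma=(u_0,\ldots,u_s)$ of the forward link one has
\[
\cG(\sigma)=DM\bigl([n-1]\setminus\{u_0,\ldots,u_s\}\bigr),
\]
and the set $[n-1]\setminus\{u_0,\ldots,u_s\}$ still contains the $\ell$ extra vertices $\{n-\ell,\ldots,n-1\}$ that are \emph{not} indices of $\OSim_{n-\ell-1}$. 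So the correct identification is with $\cF_{\Sigma^\ell DM,\,n-\ell-1}$, exactly as you write; the paper's stated isomorphism is literally correct only when $\ell=0$. Your auxiliary observation that $\Sigma$ preserves the degree while lowering the starting index by $1$ (via $D\Sigma\cong\Sigma D$) is precisely what is needed to close this gap, and it even yields the slightly stronger conclusion that $\cG$ is polynomial of degree $d-1$ up to dimension $n-m$ rather than $n-m-\ell$. Your remark about allowing $m\in\Z$ in the induction hypothesis is also well-placed, since $m-\ell-1$ can be negative; the paper leaves this implicit.
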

\begin{proof}
The proof will be by induction on $d$.
If $d=-1$, then consider a simplex $\sigma = (i_0,\ldots,i_k)$ with $k \leq n-m$.  We allow $k=-1$, in which case $\sigma = \emptyset$.  We then have
\[\cF_{M,n}(\sigma) = M([n] \setminus \{i_0,\ldots,i_k\}).\]
Since
\begin{equation}
\label{eqn:calccodim}
|[n] \setminus \{i_0,\ldots,i_k\}| = (n+1) - (k+1) = n-k \geq m,
\end{equation}
it follows from the fact that $M$ is polynomial of degree $-1$ starting at $m$ that $\cF_{M,n}(\sigma) = 0$, as desired.

Now assume that $d \geq 0$.  There are two things to check.  For the first, let $\sigma = (i_0,\ldots,i_k)$ be a simplex
with $k \leq n-m$.  We must prove that the map $\cF_{M,n}(\sigma) \rightarrow \cF_{M,n}(\emptyset)$ is injective, i.e., that
the map
\[M([n] \setminus \{i_0,\ldots,i_k\}) \rightarrow M([n])\]
is injective.  The calculation \eqref{eqn:calccodim} shows that this injectivity follows from the fact that
$M$ is polynomial of degree $d$ starting at $m$.

For the second, let $w$ be any vertex.  Let $D_w \cF_{M,n}$ be the coefficient
system on $\FLink_{\OSim_n}(w)$ defined by the formula
\[D_w \cF_{M,n}(\sigma) = \frac{\cF_{M,n}(\sigma)}{\Image\left(\cF_{M,n}\left(w \cdot \sigma\right) \rightarrow \cF_{M,n}\left(\sigma\right)\right)} \quad \text{for a simplex $\sigma$ of $\FLink_{\OSim_n}(w)$}.\]
We must prove that $D_w \cF_{M,n}$ is polynomial of degree $d-1$ up to dimension $n-m$.
We have $w \in [n] = \{0,\ldots,n\}$, and what we have to prove is invariant under the action
of the symmetric group on $[n]$.  Applying an appropriate element of this symmetric group,
we can assume without loss of generality that $w=n$, so $\FLink_{\OSim_n}(w) = \OSim_{n-1}$.

Recall that we defined the derived $\FI$-module $DM$ in
Definition \ref{definition:derivedfi}.  By construction, there is an isomorphism between
the coefficient systems $D_w \cF_{M,n}$ and $\cF_{DM,n-1}$ on $\OSim_{n-1}$.  Since $M$ is polynomial of
degree $d$ starting at $m$, the $\FI$-module $DM$ is
polynomial of degree $d-1$ starting at $m-1$.  By induction, $D_w \cF_{M,n}$ is polynomial of degree $d-1$ up to dimension
$(n-1)-(m-1)=n-m$, as desired.
\end{proof}

\subsection{Subcomplexes}
A subcomplex of an ordered simplicial complex is also an ordered simplicial complex.  Our definition
behaves well under restriction:

\begin{lemma}
\label{lemma:polynomialsubcomplex}
Let $\bbX$ be an ordered simplicial complex and let $\cF$ be an
augmented coefficient system on $\bbX$ that is polynomial of degree $d$ up to dimension $e$.  Let $\bbX'$
be a subcomplex of $\bbX$ and let $\cF'$ be the restriction of $\cF$ to $\bbX'$.  Then
$\cF'$ is polynomial of degree $d$ up to dimension $e$.
\end{lemma}
\begin{proof}
The proof is by induction on $d$.  The base case $d=-1$ is trivial: if $\sigma$ is a simplex of
$\bbX'$ of dimension at most $e$, then $\cF'(\sigma) = \cF(\sigma) = 0$.  Assume now that $d \geq 0$
and that the result is true for all smaller degrees.  We must check two conditions.

For the first, consider a simplex $\sigma$ of $\bbX'$ of dimension at most $e$.  We must prove that the map
$\cF'(\sigma) \rightarrow \cF'(\emptyset)$
is injective.  This is immediate from the corresponding property for the map $\cF(\sigma) \rightarrow \cF(\emptyset)$.

For the second, let $w$ be a vertex of $\bbX'$.  Let $D_w \cF'$ be the coefficient
system on $\FLink_{\bbX'}(w)$ defined by the formula
\[D_w \cF'(\sigma) = \frac{\cF'(\sigma)}{\Image\left(\cF'\left(w \cdot \sigma\right) \rightarrow \cF'\left(\sigma\right)\right)} \quad \text{for a simplex $\sigma$ of $\FLink_{\bbX'}(w)$}.\]
We must prove that $D_w \cF'$ is polynomial of degree $d-1$ up to dimension $e$.
Let $D_w \cF$ be the coefficient system on $\FLink_{\bbX}(w)$ defined by the formula
\[D_w \cF(\sigma) = \frac{\cF(\sigma)}{\Image\left(\cF\left(w \cdot \sigma\right) \rightarrow \cF\left(\sigma\right)\right)} \quad \text{for a simplex $\sigma$ of $\FLink_{\bbX}(w)$}.\]
By definition, $D_w \cF$ is polynomial of degree $d-1$ up to dimension $e$.  Since $\FLink_{\bbX'}(w)$ is a 
subcomplex of $\FLink_{\bbX}(w)$ and $D_w \cF'$ is the restriction
of $D_w \cF$ to $\FLink_{\bbX'}(w)$, the desired result follows from our inductive hypothesis.
\end{proof}

\subsection{Vanishing theorem}
Our vanishing theorem is as follows:\footnote{To connect this to Remark \ref{remark:degeneratepoly}, note that this
remark deals with the case $N=-1$.  In that case, Theorem \ref{theorem:vanishing} requires $\bbX$ to be
weakly forward Cohen--Macaulay of dimension $d$, and asserts that then $\RH_{-1}(\bbX;\cF) = 0$ if $\cF$ is polynomial
of degree $d$ up to dimension $(-1)$.  Using our convention that a space is $(-1)$-connected precisely when it is nonempty, the
assumption that $\bbX$ is weakly forward Cohen--Macaulay of dimension $d$ implies that $\bbX$ is at least $d$-dimensional (c.f. Remark \ref{remark:weakmeaning}),
as required in Remark \ref{remark:degeneratepoly} for $\RH_{-1}(\bbX;\cF) = 0$.}

\begin{theorem}
\label{theorem:vanishing}
For some $N \geq -1$ and $d \geq 0$, let $\bbX$ be an ordered simplicial complex that is weakly forward Cohen--Macaulay of dimension $N+d+1$
and let $\cF$ be an augmented coefficient system on $\bbX$
that is polynomial of degree $d$ up to dimension $N$.  Then $\RH_k(\bbX;\cF) = 0$ for $-1 \leq k \leq N$.
\end{theorem}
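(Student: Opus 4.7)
The plan is to prove the theorem by induction on $d$, with base case $d=0$. In the base case, polynomiality of degree $0$ combined with the vanishing of the degree-$(-1)$ derivative forces the face maps $\cF(\sigma)\to\cF(\emptyset)$ to be isomorphisms on all simplices $\sigma$ of dimension at most $N$: injectivity is part of the definition, and surjectivity follows from the fact that $D\cF(\sigma)$, which is $\cF(\sigma)$ modulo the image of the appropriate $\cF(w\cdot\sigma)\to\cF(\sigma)$, must vanish in the relevant range. Hence $\cF$ is isomorphic in the relevant range to the constant system with value $\cF(\emptyset)$, and the conclusion reduces to $\RH_k(\bbX;\ubbk)=0$ for $-1\le k\le N$. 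For the large-ordering case this is Theorem \ref{theorem:largeordering} applied to $X$ (weakly Cohen--Macaulay of dimension $N+1$); for the CM-small case it follows from the homotopy equivalence $|\bbX|\simeq|X|$ together with the CM hypothesis.

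For the inductive step, I would fix a vertex $v$ of $\bbX$ and consider the forward link $\bbY=\FLink_{\bbX}(v)$, which by Lemmas \ref{lemma:linkcm} and \ref{lemma:largerorderingcm} is weakly forward Cohen--Macaulay of dimension $N+d$. On $\bbY$ there is a short exact sequence of augmented coefficient systems
\[
0 \longrightarrow \cF_v \longrightarrow \cF\big|_{\bbY} \longrightarrow D_v\cF \longrightarrow 0,
\]
where $\cF_v(\sigma):=\cF(v\cdot\sigma)$ with augmentation $\cF(v)$, and $D_v\cF$ is the derivative quotient from the definition of polynomiality. Injectivity of the left map is forced by the polynomial hypothesis (since both sides inject into $\cF(\emptyset)$), and $D_v\cF$ is polynomial of degree $d-1$ up to dimension $N-1$. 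The inductive hypothesis therefore gives $\RH_k(\bbY;D_v\cF)=0$ for $-1\le k\le N-1$.

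To transfer the link-level vanishing back to $\bbX$ itself, I would set up a spectral sequence associated to a filtration of $\tilde C_\bullet(\bbX;\cF)$ by the position at which $v$ appears in a simplex; equivalently, one can use the long exact sequence for the pair $(\bbX,\bbX_{\not\ni v})$, where $\bbX_{\not\ni v}\subset\bbX$ is the subsemisimplicial set of simplices not containing $v$. The relative chains reorganize, via the ordering structure on $\bbX$, into pieces computed from $\tilde C_\bullet(\bbY;\cF_v)$, and combining this with the short exact sequence on $\bbY$ feeds the inductive vanishing of $D_v\cF$-homology into a vanishing statement for $\cF$-homology on $\bbX$, consuming exactly one unit of Cohen--Macaulay dimension for each unit of polynomial degree peeled off -- which matches the gap of $d+1$ between the CM dimension $N+d+1$ and the vanishing range $N$. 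The hardest part will be the combinatorial book-keeping of this spectral sequence, and in particular verifying that forward links remain of the right type (CM-small or large ordering) under the iteration, so that both cases of the hypothesis are handled uniformly.
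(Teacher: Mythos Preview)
Your base case is a reasonable alternative to the paper's (which uses the trivial $d=-1$ case), and it does go through, though you should be explicit about the top degree $k=N$: computing $\RH_N$ requires $(N+1)$-simplices, where the polynomial hypothesis only gives surjectivity (not injectivity) of $\cF(\sigma)\to\cF(\emptyset)$. That suffices, but it takes a sentence to say why.

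The genuine gap is in the inductive step. Fixing a single vertex $v$ and using the pair $(\bbX,\bbX_{\not\ni v})$ does not work: to run the long exact sequence you must control $\RH_*(\bbX_{\not\ni v};\cF)$, but $\bbX_{\not\ni v}$ is not the large (or CM-small) ordering of a weakly Cohen--Macaulay complex---removing a vertex destroys the connectivity hypotheses in general---so there is nothing to induct on there. Nor do the relative chains reorganize into $\RC_\bullet(\bbY;\cF_v)$ as you claim: a simplex containing $v$ can have $v$ in any position $i$, and $\cF(u_0,\ldots,u_{i-1},v,u_i,\ldots)$ depends on the whole simplex, not just on the tail. Your ``filtration by position of $v$'' does produce graded pieces identifiable with shifted $\RC_\bullet(\FLink_\bbX(\alpha\cdot v);\cF(\alpha\cdot v\cdot -))$, but these coefficient systems are still polynomial of degree $d$, not $d-1$, so the inductive hypothesis does not apply to them.

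The paper's argument avoids singling out a vertex. It filters the \emph{coefficient system} instead: set $\cF_n(v_0,\ldots,v_k)=\cF(v_{n+1},\ldots,v_k)$ (with value $\cF(\emptyset)$ once $n\ge k$), so $\cF_{-1}=\cF$ and $\cF_{N+1}$ is constant on the $(N{+}1)$-skeleton. The crucial observation is that the cokernel $\ocF_n=\coker(\cF_{n-1}\to\cF_n)$ splits as a direct sum $\bigoplus_{\tau\in\bbX^n}\ocF_{n,\tau}$ over \emph{all} $n$-simplices $\tau$, because a face map in $\ocF_n$ that alters the first $n+1$ vertices is automatically zero. Each summand $\ocF_{n,\tau}$ is supported on simplices starting with $\tau$, and its homology is a shift of $\RH_*(\FLink_\bbX(\tau);\cG)$ for a $\cG$ that genuinely has degree $d-1$---exactly the derivative you wrote down, but now at every $\tau$ at once. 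The moral: you must peel a vertex off the front of every simplex simultaneously, which is what the shift $\cF\mapsto\cF_0$ accomplishes, rather than excising a single vertex from the space.
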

\begin{proof}
The proof will be by induction on $d$.  For the base case $d=-1$, by definition
the coefficient system $\cF$ equals the constant coefficient system $\underline{0}$ on the $N$-skeleton
of $\bbX$.  This trivially implies that for $-1 \leq k \leq N$ we have $\RH_k(\bbX;\cF) = 0$.

Assume now that $d \geq 0$ and that the theorem is true for smaller $d$.  We divide the rest of the
proof into four steps.  The descriptions of the steps describe the objects that are introduced
during that step and what the step reduces the theorem to, with the fourth step proving the result.
The inductive hypothesis and the polynomiality assumption are only invoked in the fourth step.

\begin{stepsa}
For $n \geq -1$, we introduce the shifted coefficient systems $\cF_n$ and reduce the theorem to proving that
$\RH_k(\bbX;\cF_{n-1}) \cong \RH_k(\bbX;\cF_{n})$ for $-1 \leq k \leq N$ and $0 \leq n \leq N+1$.
\end{stepsa}

For a simplex $\sigma = (v_0,\ldots,v_k)$ of $\bbX$ and $n \geq -1$, define
\[\sigma_{>n} = \begin{cases}
(v_{n+1},\ldots,v_k) & \text{if $k>n$} \\
\emptyset            & \text{if $k \leq n$}.
\end{cases}\]
Observe that if $\sigma'$ is a face of $\sigma$, then $\sigma'_{>n}$ is a face of $\sigma_{>n}$.  We can thus
define an augmented coefficient system $\cF_n$ on $\bbX$ via the formula
\[\cF_n(\sigma) = \cF(\sigma_{>n}) \quad \text{for a simplex $\sigma$ of $\bbX$.}\]
The maps $\cF_n(\sigma) \rightarrow \cF_n(\sigma')$ when $\sigma'$ is a face of $\sigma$ are the ones induced
by $\cF$.  The augmented coefficient system $\cF_{N+1}$ equals the constant coefficient system $\underline{\cF(\emptyset)}$
on all simplices of dimension at most $(N+1)$.  The fact that $\bbX$ is weakly forward Cohen--Macaulay
of dimension $(N+d+1)$ implies that $\bbX$ is $N+d \geq N$ connected and hence
\[\RH_k(\bbX;\cF_{N+1}) = \RH_k(\bbX;\underline{\cF(\emptyset)}) = 0 \quad \text{for $-1 \leq k \leq N$}.\]
We want to prove a similar equality with $\cF_{N+1}$ replaced with $\cF$.
We can fit $\cF$ and $\cF_{N+1}$ into a chain of maps
\[\cF = \cF_{-1} \rightarrow \cF_0 \rightarrow \cF_1 \rightarrow \cdots \rightarrow \cF_{N+1}.\]
To prove the theorem, it is enough to prove that the map $\cF_{n-1} \rightarrow \cF_n$ induces
an isomorphism $\RH_k(\bbX;\cF_{n-1}) \cong \RH_k(\bbX;\cF_n)$ for $0 \leq n \leq N+1$
and $-1 \leq k \leq N$.

\begin{stepsa}
Fix some $0 \leq n \leq N+1$.  We introduce the quotiented shifted coefficient systems $\ocF_{n}$ and reduce the theorem
to proving that $\RH_k(\bbX;\ocF_{n}) = 0$ for $n \leq k \leq N+1$.
\end{stepsa}

Define
\begin{align*}
\ocF_n     &= \coker(\cF_{n-1} \rightarrow \cF_n), \\
\cF'_{n-1}     &= \Image(\cF_{n-1} \rightarrow \cF_n),\\
\cF''_{n-1} &= \ker(\cF_{n-1} \rightarrow \cF_n).
\end{align*}
We thus have short exact sequences of coefficient systems
\begin{equation}
\label{eqn:ses1}
0 \longrightarrow \cF'_{n-1} \longrightarrow \cF_n \longrightarrow \ocF_n \longrightarrow 0
\end{equation}
and
\begin{equation}
\label{eqn:ses2}
0 \longrightarrow \cF''_{n-1} \longrightarrow \cF_{n-1} \longrightarrow \cF'_{n-1} \longrightarrow 0.
\end{equation}
Both of these induce long exact sequences in homology.  Let us focus first on the one associated to
\eqref{eqn:ses2}, which contains segments of the form
\begin{equation}
\label{eqn:les2}
\RH_k(\bbX;\cF''_{n-1}) \longrightarrow \RH_k(\bbX;\cF_{n-1}) \longrightarrow \RH_k(\bbX;\cF'_{n-1}) \longrightarrow \RH_{k-1}(\bbX;\cF''_{n-1}).
\end{equation}
Since $\cF$ is polynomial of degree $d$ up to
dimension $N$, for all simplices $\sigma$ of dimension at most $N$ and all faces $\sigma'$ of $\sigma$
the map $\cF(\sigma) \rightarrow \cF(\sigma')$ is injective.  This implies that the map
$\cF_{n-1}(\sigma) \rightarrow \cF_n(\sigma)$ is injective as long as $\sigma$ has dimension
at most $N$, and thus that $\cF''_{n-1}(\sigma) = 0$.  It follows that $\RH_k(\bbX;\cF''_{n-1}) = 0$ for $k \leq N$.
Combining this with \eqref{eqn:les2}, we see that
\begin{equation}
\label{eqn:les2conclude}
\RH_k(\bbX;\cF_{n-1}) \cong \RH_k(\bbX;\cF'_{n-1}) \quad \text{for $-1 \leq k \leq N$}.
\end{equation}

We now turn to the long exact sequence associated to \eqref{eqn:ses1}, which contains the segment
\[\RH_{k+1}(\bbX;\ocF_{n}) \longrightarrow \RH_k(\bbX;\cF'_{n-1}) \longrightarrow \RH_k(\bbX;\cF_n) \longrightarrow \RH_k(\bbX;\ocF_{n}).\]
In light of \eqref{eqn:les2conclude}, this implies that to
prove that the map $\cF_{n-1} \rightarrow \cF_n$ induces an isomorphism $\RH_k(\bbX;\cF_{n-1}) \cong \RH_k(\bbX;\cF_n)$
for $-1 \leq k \leq N$, it is enough to prove that
\[\RH_k(\bbX;\ocF_{n}) = 0 \quad \text{for $-1 \leq k \leq N+1$}.\]
This can be simplified a little further: if $\sigma$ is a $k$-simplex with $k \leq n-1$, then
$\cF_{n-1}(\sigma) = \cF_n(\sigma) = \cF(\emptyset)$, so $\ocF_{n}(\sigma) = 0$.  The group
$\RH_k(\bbX;\ocF_{n})$ is thus automatically $0$ for $k \leq n-1$, so it is actually
enough to prove that
\[\RH_k(\bbX;\ocF_{n}) = 0 \quad \text{for $n \leq k \leq N+1$}.\]

\begin{stepsa}
Fix some $0 \leq n \leq N+1$.  For each $n$-simplex $\tau$, we construct a coefficient system $\ocF_{n,\tau}$ and prove that
$\ocF_{n}$ is the direct sum of the $\ocF_{n,\tau}$ as $\tau$ ranges over the $n$-simplices, reducing
the theorem to proving that $\RH_k(\bbX;\ocF_{n,\tau}) = 0$ for $n \leq k \leq N+1$.
\end{stepsa}

Consider a $k$-simplex $\sigma$ of $\bbX$ with $k \geq n$.  We claim that the following holds:
\begin{itemize}
\item Let $\sigma'$ be a face of $\sigma$.  Then the map $\ocF_n(\sigma) \rightarrow \ocF_n(\sigma')$ is the
zero map unless $\sigma'$ also has dimension at least $n$ and the first $(n+1)$ vertices of $\sigma$ and $\sigma'$ are equal.
\end{itemize}
To prove this, it is enough to prove the following special case:
\begin{itemize}
\item Let $(w_0,\ldots,w_n,v_0,\ldots,v_m)$ be a simplex of $\bbX$ and let $0 \leq i \leq n$.  Then
the map $\ocF_n(w_0,\ldots,w_n,v_0,\ldots,v_m) \rightarrow \ocF_n(w_0,\ldots,\widehat{w_i},\ldots,w_n,v_0,\ldots,v_m)$
is the zero map.
\end{itemize}
For this, observe that
\[\ocF_n(w_0,\ldots,w_n,v_0,\ldots,v_m) = \frac{\cF(v_0,\ldots,v_m)}{\Image\left(\cF\left(w_n,v_0,\ldots,v_m\right) \rightarrow \cF\left(v_0,\ldots,v_m\right)\right)}\]
and
\[\ocF_n(w_0,\ldots,\widehat{w_i},\ldots,w_n,v_0,\ldots,v_m) = \frac{\cF(v_1,\ldots,v_m)}{\Image\left(\cF\left(v_0,\ldots,v_m\right) \rightarrow \cF\left(v_1,\ldots,v_m\right)\right)}.\]
The map between these is visibly the zero map.

For each $n$-simplex $\tau$ of $\bbX$, this suggests defining a coefficient system $\ocF_{n,\tau}$ on $\bbX$ via the formula
\[\ocF_{n,\tau}(\sigma) = \begin{cases}
\ocF_{n}(\sigma) & \text{if $\sigma$ starts with $\tau$}\\
0 & \text{otherwise}
\end{cases}
\quad \quad \text{for a simplex $\sigma$ of $\bbX$}.\]
By the above, we have a decomposition
\[\ocF_{n} = \bigoplus_{\tau \in \bbX^n} \ocF_{n,\tau}\]
of coefficient systems.  To prove that
\[\RH_k(\bbX;\ocF_{n}) = 0 \quad \text{for $n \leq k \leq N+1$},\]
it is thus enough to prove that for all $n$-simplices $\tau$ we have
\[\RH_k(\bbX;\ocF_{n,\tau}) = 0 \quad \text{for $n \leq k \leq N+1$}.\]

\begin{stepsa}
Fix an $n$-simplex $\tau$ with $0 \leq n \leq N+1$.  We prove that
$\RH_k(\bbX;\ocF_{n,\tau}) = 0$ for $n \leq k \leq N+1$.
\end{stepsa}

Define an augmented coefficient system $\cH$ on $\FLink_{\bbX}(\tau)$ via the formula
\[\cH(\sigma) = \ocF_{n,\tau}(\tau \cdot \sigma) \quad \text{for a simplex $\sigma$ of $\FLink_{\bbX}(\tau)$}.\]
On the level of reduced chain complexes, up to multiplying the differentials by $(-1)^{n+1}$ we have
\[\RC_{\bullet}(\FLink_{\bbX}(\tau);\cH) \cong \RC_{\bullet+n+1}(\bbX;\ocF_{n,\tau}),\]
so $\RH_{k}(\FLink_{\bbX}(\tau);\cH) \cong \RH_{k+n+1}(\bbX;\ocF_{n,\tau})$.  It is thus enough to prove that
$\RH_k(\FLink_{\bbX}(\tau);\cH) = 0$ for $-1 \leq k \leq N-n$.

Write $\tau = (w_0,\ldots,w_n)$.  For a simplex $\sigma$ of $\FLink_{\bbX}(\tau)$, we have
\[\cH(\sigma) = \ocF_{n,\tau}((w_0,\ldots,w_n) \cdot \sigma) = \frac{\cF(\sigma)}{\Image\left(\cF\left(w_n \cdot \sigma\right) \rightarrow \cF\left(\sigma\right)\right)}.\]
Let $D_{w_n} \cF$ be the coefficient system on $\FLink_{\bbX}(w_n)$ defined via the formula
\[D_{w_n} \cF(\sigma) = \frac{\cF(\sigma)}{\Image\left(\cF\left(w_n \cdot \sigma\right) \rightarrow \cF\left(\sigma\right)\right)} \quad \text{for a simplex $\sigma$ of $\FLink_{\bbX}(w_n)$}.\]
By definition, $D_{w_n} \cF$ is polynomial of degree $(d-1)$ up to dimension $N$.  Since $\cH$ is the restriction of
$D_{w_n} \cF$ to $\FLink_{\bbX}(\tau)$, Lemma \ref{lemma:polynomialsubcomplex} implies that $\cH$ is also polynomial of degree $(d-1)$
up to dimension $N$.  In fact, all we will need is the weaker statement that it is polynomial of degree $(d-1)$
up to dimension $N-n$.

Since $\bbX$ is weakly forward Cohen--Macaulay of dimension $N+d+1$, the
forward link $\FLink_{\bbX}(\tau)$ of the $n$-simplex $\tau$ is weakly forward Cohen--Macaulay of dimension\footnote{This uses the
observation that if $\tau$ is a simplex of $\bbX$ and $\tau'$ is a simplex of $\FLink_{\bbX}(\tau)$, then
$\FLink_{\FLink_{\bbX}(\tau)}(\tau') = \FLink_{\bbX}(\tau \cdot \tau')$.}
$N+d-n$.
Since
\[N+d-n=(N-n)+(d-1)+1,\]
we can apply our inductive hypothesis to conclude that $\RH_k(\FLink_{\bbX}(\tau);\cH) = 0$ for $-1 \leq k \leq N-n$, as desired.
\end{proof}

\section{Stability for symmetric groups}
\label{section:sn}

We now turn to applications of our machinery, starting with Theorems \ref{maintheorem:sn}
and \ref{maintheorem:snprime}.

\begin{proof}[Proof of Theorem \ref{maintheorem:sn}]
We first recall the statement.  Let $\bbk$ be a commutative ring and let $M$ be an $\FI$-module over $\bbk$ that is polynomial
of degree $d \geq -1$ starting at $m \geq 0$.  For each $k \geq 0$, we must prove that the map
\[\HH_k(\fS_{n};M(\overline{n})) \rightarrow \HH_k(\fS_{n+1};M(\overline{n+1}))\]
is an isomorphism for $n \geq 2k+\max(d,m)+1$ and a surjection for $n=2k+\max(d,m)$.  This is trivial
for $d = -1$ since in that case these homology groups are $0$ in the indicated range, so we can assume that $d \geq 0$.

Recall that $\overline{n} = \{1,\ldots,n\}$ and $[n-1] = \{0,\ldots,n-1\}$.  The group $\fS_n$ acts
on both $M(\overline{n})$ and $M([n-1])$, and there is a $\bbk[\fS_n]$-module isomorphism
$M(\overline{n}) \cong M([n-1])$.  In light of this, it is enough to deal with the map
\[\HH_k(\fS_{n};M([n-1])) \rightarrow \HH_k(\fS_{n+1};M([n])),\]
which will fit into our framework a little better.

The group $\fS_n$ acts on $\OSim_{n-1}$.  Let $\cF_{M,n-1}$ be
the $\fS_n$-equivariant augmented system of coefficients on $\OSim_{n-1}$ from Example \ref{example:snequivariant}, so
\[\cF_{M,n-1}(i_0,\ldots,i_k) = M([n-1] \setminus \{i_0,\ldots,i_k\}) \quad \text{for a simplex $(i_0,\ldots,i_k)$ of $\OSim_{n-1}$}.\]
The following claim will be used to show that with an appropriate degree shift, this all satisfies
the hypotheses of Theorem \ref{theorem:stabilitymachine}.

\begin{claim}
Assume that $n \geq \max(m,1)$, so we can apply Lemma \ref{lemma:fisystempoly} to the coefficient
system $\cF_{M,n-1}$ on $\OSim_{n-1}$.  The following then hold:
\begin{enumerate}
\item[(A)] For all $-1 \leq k \leq n-\max(d,m-1)-2$, we have $\RH_k(\OSim_{n-1};\cF_{M,n-1}) = 0$.
\item[(B)] For all $-1 \leq k \leq n-1$, the group $\fS_{n-k-1}$ is the $\fS_n$-stabilizer of a $k$-simplex
$\sigma_k$ of $\OSim_{n-1}$ with $\cF_{M,n-1}(\sigma_k) = M([n-k-2])$.
\item[(C)] For all $0 \leq k \leq n-1$, the group $\fS_n$ acts transitively on the $k$-simplices of $\OSim_{n-1}$.
\item[(D)] For all $n \geq 2$ and all $1$-simplices $e$ of $\OSim_{n-1}$ of the form $e = (i_0,i_1)$, there
exists some $\lambda \in \fS_n$ with $\lambda(i_0) = i_1$ such that $\lambda$ commutes with all
elements of $(\fS_n)_e$ and fixes all elements of $\cF_{M,n-1}(e)$.
\end{enumerate}
\end{claim}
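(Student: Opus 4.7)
The plan is to verify conditions (a)-(d) in turn, with essentially all the mathematical content concentrated in (a); parts (b)-(d) are routine constructions and direct verifications. Part (a) will be a single application of the vanishing Theorem~\ref{theorem:vanishing} once the correct polynomial degree and dimension are identified using Lemma~\ref{lemma:fisystempoly}.

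For (a), I would first note that the $(n-1)$-simplex $\Sim_{n-1}$ is weakly Cohen--Macaulay of dimension $n-1$ (every simplex is), so by Lemma~\ref{lemma:largerorderingcm} its large ordering $\OSim_{n-1}$ is weakly forward Cohen--Macaulay of dimension $n-1$. Assuming $n-1 \geq m$, Lemma~\ref{lemma:fisystempoly} says that $\cF_{M,n-1}$ is polynomial of degree $d$ up to dimension $(n-1)-m$. Setting $N = n - \max(d, m-1) - 2$ and using the arithmetic identity $\max(d+2, m+1) = \max(d, m-1) + 2$, one obtains simultaneously $N + d + 1 \leq n-1$ (so $\Sim_{n-1}$ is Cohen--Macaulay of at least the required dimension) and $N \leq (n-1) - m$ (so the polynomial degree extends through dimension $N$). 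Theorem~\ref{theorem:vanishing} then yields $\RH_k(\OSim_{n-1};\cF_{M,n-1}) = 0$ for $-1 \leq k \leq N$, which is exactly the asserted range.

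For (b), I would take $\sigma_k = (n-1, n-2, \ldots, n-k-1)$; its $\fS_n$-stabilizer fixes each listed vertex pointwise and is therefore the symmetric group on the complement $\{0, 1, \ldots, n-k-2\} = [n-k-2]$, which is the standard $\fS_{n-k-1} \subset \fS_n$, and unwinding the definition gives $\cF_{M,n-1}(\sigma_k) = M([n-1] \setminus \{n-k-1,\ldots,n-1\}) = M([n-k-2])$. Part (c) is the standard transitivity of $\fS_n$ acting on ordered $(k+1)$-tuples of distinct elements of $[n-1]$ whenever $k+1 \leq n$. For (d), I would take the transposition $\lambda = (i_0\; i_1) \in \fS_n$: it swaps $i_0$ and $i_1$, fixes $[n-1] \setminus \{i_0, i_1\}$ pointwise (so it commutes with every element of $(\fS_n)_e$), and under the $\fS_n$-equivariant structure from Example~\ref{example:snequivariant} acts on $\cF_{M,n-1}(e) = M([n-1] \setminus \{i_0, i_1\})$ via the restriction of $\lambda$ to that complement, which is the identity. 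The main obstacle is the dimension bookkeeping in (a), where the Cohen--Macaulay dimension of $\Sim_{n-1}$ and the polynomiality range of $\cF_{M,n-1}$ must be balanced at the common value $N$; the constant $\max(d,m-1)$ is precisely what makes both constraints consistent. The low-$n$ edge cases where Lemma~\ref{lemma:fisystempoly} does not directly apply require separate attention, but in every such case the asserted range is either empty or reduces to the single degree $k = -1$.
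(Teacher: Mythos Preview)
Your proposal is correct and follows essentially the same approach as the paper: part (a) is handled via Lemma~\ref{lemma:fisystempoly} and Theorem~\ref{theorem:vanishing} with the same choice of $N = \min(n-m-1,\,n-d-2) = n-\max(d,m-1)-2$, and parts (b)--(d) use the same explicit $\sigma_k$ (up to reversing the order of the listed vertices, which is immaterial) and the same transposition $\lambda = (i_0\;i_1)$. Your treatment is in fact slightly more careful than the paper's, since you flag the low-$n$ edge case where Lemma~\ref{lemma:fisystempoly} does not literally apply; the paper silently relies on the subsequent index shift to avoid this.
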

\begin{proof}[Proof of claim]
For (A), Lemma \ref{lemma:fisystempoly} says that $\cF_{M,n-1}$ is a polynomial coefficient system of
degree $d$ up to dimension $n-m-1$.  Also, $\OSim_{n-1}$ is the large ordering of the $(n-1)$-simplex
$\Sim_{n-1}$ and $\Sim_{n-1}$ is weakly Cohen--Macaulay of dimension $(n-1)$, so by Lemma \ref{lemma:largerorderingcm}
the ordered simplicial complex $\OSim_{n-1}$ is weakly forward Cohen--Macaulay of dimension $(n-1)$.  Letting
\[N = \min(n-m-1,n-d-2) = n - \max(d,m-1)-2,\]
the above shows that $\Sim_{n-1}$ is weakly forward Cohen--Macaulay of dimension $N+d+1$ and
$\cF_{M,n-1}$ is a polynomial coefficient system of degree $d$ up to dimension $N$.  Theorem \ref{theorem:vanishing} thus
implies that $\RH_k(\OSim_{n-1};\cF_{M,n-1}) = 0$ for $-1 \leq k \leq N$.

For (B), the group $\fS_{n-k-1}$ is the $\fS_n$-stabilizer of the $k$-simplex 
\[\sigma_k = \begin{cases}
(\emptyset) & \text{if $k=-1$},\\
(n-k-1,n-k+1,\ldots,n-1) & \text{if $0 \leq k \leq n-1$}
\end{cases}\]
of $\OSim_{n-1}$, and by definition
\[\cF_{M,n-1}(\sigma_k) = M([n-1] \setminus \{n-k-1,n-k+1,\ldots,n-1\}) = M([n-k-2]).\]

Condition (C) is obvious.

For (D), simply let $\lambda$ be the transposition $(i_0,i_1)$, which acts trivially on
\[\cF_{M,n-1}(e) = M([n-1] \setminus \{i_0,i_1\})\]
by basic properties of $\FI$-modules.
\end{proof}

Let $e = \max(d,m)$, so since $m \geq 0$ we have $e \geq 0$.  For $n \geq 0$ define
\[G_n = \fS_{n+e} \quad \text{and} \quad M_n = M([n+e-1])\]
and for $n \geq 1$ define
\[\bbX_n = \OSim_{n+e-1} \quad \text{and} \quad \cM_n = \cF_{M,n+e-1}.\]
This makes sense since $n+e \geq n$.\footnote{Note that if $n+e = 0$ then
$[n+e-1] = [-1] = \emptyset$, which can be plugged into the $\FI$-module $M$.}  We then have
\[G_0 \subset G_1 \subset G_2 \subset \cdots,\]
and since $M$ is a polynomial $\FI$-module of degree $d$ starting at $m \geq 0$ the maps
$M_n \rightarrow M_{n+1}$ are injective for $n \geq 0$,\footnote{Remember, $[n+e-1]$ has
$n+e$ elements, and for $n \geq 0$ we have $n+e \geq e = \max(d,m) \geq m$.} so we have an increasing
sequence
\[M_0 \subset M_1 \subset M_2 \subset \cdots.\]
In other words, the pair $\{(G_n,M_n)\}_{n=0}^{\infty}$ is an increasing sequence
of groups and modules in the sense of \S \ref{section:twistedsetup}.  

The above claim verifies the conditions of Theorem \ref{theorem:stabilitymachine} with $c=2$.  
The shift by $e$ is needed for condition (a) of Theorem \ref{theorem:stabilitymachine}, which
requires that $\RH_k(\bbX_n;\cM_n) = 0$ for all $n \geq 1$ and $-1 \leq k \leq \left\lfloor\frac{n-2}{2}\right\rfloor$.  Conclusion (A)
of the Claim says that $\RH_k(\OSim_{n+e-1};\cF_{M,n+e-1}) = 0$ for $n+e \geq \max(m,1)$\footnote{Which holds 
for $n \geq 1$ since then $n+e = n+\max(d,m) \geq 1+m \geq \max(m,1)$.  Here we are using the fact that
$m \geq 0$.} and $-1 \leq k \leq (n+e)-\max(d,m-1)-2$, which implies
the desired range of vanishing for $\RH_k(\bbX_n;\cM_n)$ since
\[(n+e)-\max(d,m-1)-2 = n+\max(d,m) - \max(d,m-1) - 2 \geq n-2 \geq \left\lfloor\frac{n-2}{2}\right\rfloor\]
for $n \geq 1$.  Applying Theorem \ref{theorem:stabilitymachine}, we deduce that the map
\[\HH_k(\fS_{n+e-1};M([n+e-2])) \rightarrow \HH_k(\fS_{n+e};M([n+e-1]))\]
is an isomorphism for $n \geq 2k+2$ and a surjection for $n = 2k+1$, which implies
that
\[\HH_k(\fS_{n};M([n-1])) \rightarrow \HH_k(\fS_{n+1};M([n]))\]
is an isomorphism for $n \geq 2k+e+1$ and a surjection for $n = 2k+e$.
\end{proof}

\begin{proof}[Proof of Theorem \ref{maintheorem:snprime}]
We first recall the statement.  Let $\bbk$ be a commutative ring and let $M$ be an $\FI$-module over $\bbk$ that is polynomial
of degree $d \geq -1$ starting at $m \geq 0$.  For each $k \geq 0$, we must prove that the map
\begin{equation}
\label{eqn:snprimetoprove}
\HH_k(\fS_{n};M(\overline{n})) \rightarrow \HH_k(\fS_{n+1};M(\overline{n+1}))
\end{equation}
is an isomorphism for $n \geq \max(m,2k+2d+2)$ and a surjection for $n \geq \max(m,2k+2d)$.

The proof will be by double induction on $d$ and $m$.  There are two base cases:
\begin{itemize}
\item The first is where $m=0$ and $d \geq 0$.  Theorem \ref{maintheorem:sn}
says in this case that \eqref{eqn:snprimetoprove} is an isomorphism for
\[n \geq 2k+\max(d,m)+1 = 2k+\max(d,0)+1 = 2k+d+1\]
and a surjection for
\[n=2k+\max(d,m) = 2k+\max(d,0) = 2k+d.\]
Since $d \geq 0$, these bounds are even stronger than our purported bounds of
\[n \geq \max(m,2k+2d+2) = \max(0,2k+2d+2) = 2k+2d+2\]
for \eqref{eqn:snprimetoprove} to be an isomorphism and
\[n \geq \max(m,2k+2d) = \max(0,2k+2d) = 2k+2d\]
for \eqref{eqn:snprimetoprove} to be a surjection.
\item The second is where $m \geq 0$ and $d = -1$.  In this case, by the definition of an $\FI$-module
being polynomial of degree $-1$ starting at $m$ we have for $n \geq m$ that $M(\overline{n}) = 0$ and
hence $\HH_k(\fS_{n};M(\overline{n})) = 0$.  In other words, for $n \geq m$ the domain
and codomain of \eqref{eqn:snprimetoprove} are both $0$, so it is trivially an isomorphism.
\end{itemize}
 
Assume now that $m \geq 1$ and $d \geq 0$, and that the theorem is true for all such $(m',d')$ with $m' \leq m$ and $d' \leq d$ such that
either $m' < m$ or $d' < d$ (or both).
As in Definition \ref{definition:derivedfi}, let $\Sigma M$ be the shifted $\FI$-module and $D M$ be the derived
$\FI$-module.  For $n \geq m$, we have a short exact sequence
\begin{equation}
\label{eqn:fishiftseq}
0 \longrightarrow M(\on) \longrightarrow \Sigma M(\on) \longrightarrow DM(\on) \longrightarrow 0
\end{equation}
of $\bbk[\fS_n]$-modules (see Remark \ref{remark:exactsequence}).  The $\FI$-module $\Sigma M$ is polynomial
of degree $d$ starting at $(m-1)$, and the $\FI$-module $DM$ is polynomial of degree $(d-1)$ starting at $(m-1)$.

To simplify our notation, for all $r \geq 0$ and
all $\bbk[\fS_r]$-modules $N$, we will denote $\HH_k(\fS_r;N)$ by $\HH_k(N)$.
The long exact sequence in $\fS_n$-homology associated to \eqref{eqn:fishiftseq}
maps to the one in $\fS_{n+1}$-homology, so for $n \geq m$ and all $k$ we have a commutative diagram
\begin{center}
\scalebox{0.89}{$\minCDarrowwidth10pt\begin{CD}
\HH_{k+1}(\Sigma M(\on))            @>>> \HH_{k+1}(DM(\on))            @>>> \HH_k(M(\on))            @>>> \HH_k(\Sigma M(\on))            @>>> \HH_k(DM(\on)) \\
@VV{g_1}V                                @VV{g_2}V                          @VV{f_1}V                     @VV{f_2}V                            @VV{f_3}V \\
\HH_{k+1}(\Sigma M(\overline{n+1})) @>>> \HH_{k+1}(DM(\overline{n+1})) @>>> \HH_k(M(\overline{n+1})) @>>> \HH_k(\Sigma M(\overline{n+1})) @>>> \HH_k(DM(\overline{n+1}))
\end{CD}$}
\end{center}
with exact rows.  Our inductive hypothesis says the following about the $g_i$ and $f_i$:
\begin{itemize}
\item Since $\Sigma M$ is polynomial of degree $d$ starting at $(m-1)$, the map $f_2$ is an isomorphism
for $n \geq \max(m-1,2k+2d+2)$ and a surjection for $n \geq \max(m-1,2k+2d)$.  Also, the map
$g_1$ is an isomorphism for 
\[n \geq \max(m-1,2(k+1)+2d+2) = \max(m-1,2k+2d+4)\] 
and a surjection for $n \geq \max(m-1,2k+2d+2)$.
\item Since $DM$ is polynomial of degree $(d-1)$ starting at $(m-1)$, the map $f_3$ is an
isomorphism for 
\[n \geq \max(m-1,2k+2(d-1)+2) = \max(m-1,2k+2d)\]
and a surjection for $n \geq \max(m-1,2k+2d-2)$.  Also, the map $g_2$ is an isomorphism for
\[n \geq \max(m-1,2(k+1)+2(d-1)+2) = \max(m-1,2k+2d+2)\]
and a surjection for $n \geq \max(m-1,2k+2d)$.
\end{itemize}
For $n \geq \max(m,2k+2d+2)$, the maps $g_2$ and $f_2$ and $f_3$ are isomorphisms and the map $g_1$ is a surjection, so
by the five-lemma the map $f_1$ is an isomorphism.  For $n \geq \max(m,2k+2d)$, the maps $g_2$ and $f_2$ are surjections
and the map $f_3$ is an isomorphism, so by the five-lemma\footnote{Or, more precisely, one of the four-lemmas.} the
map $f_1$ is a surjection.  The claim follows.
\end{proof}

\section{The stable rank of rings}
\label{section:stablerank}

The rest of the paper is devoted to the general linear group and its congruence subgroups.
We begin with a discussion of a ring-theoretic condition called the stable
rank that was introduced by Bass \cite{BassKTheory}.  To make this paper a bit
more self-contained,\footnote{And to avoid depending on \cite{BassKTheory},
which is out of print.} we will include the proofs of the results we need (most of which are due
to Bass) if they are short.

\subsection{Unimodular vectors}

The starting point is the following.

\begin{definition}
Let $R$ be a ring.  A vector $v \in R^n$ is {\em unimodular} if there is a homomorphism
$\phi\colon R^n \rightarrow R$ of right $R$-modules such that $\phi(v) = 1$.
\end{definition}

\begin{example}
\label{example:unimodular}
The columns of matrices in $\GL_n(R)$ are unimodular.
\end{example}

The following lemma might clarify this definition.

\begin{lemma}
\label{lemma:unimodularsum}
Let $R$ be a ring and let $v = (c_1,\ldots,c_n) \in R^n$.  Then $v$ is unimodular if and only if
there exist $a_1,\ldots,a_n \in R$ such that $a_1 c_1 + \cdots + a_n c_n = 1$.
\end{lemma}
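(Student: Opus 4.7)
The plan is to prove both directions by directly unwinding the definition of a right $R$-module homomorphism $R^n \to R$ and identifying it with its values on the standard basis.

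First I would fix notation: let $e_1,\ldots,e_n \in R^n$ be the standard basis (viewed as column vectors), so that $v = e_1 c_1 + \cdots + e_n c_n$ as an element of the right $R$-module $R^n$. The key observation is that a right $R$-module homomorphism $\phi\colon R^n \to R$ is determined by, and can be freely prescribed on, the basis $e_1,\ldots,e_n$; for any choice of $a_i := \phi(e_i) \in R$, the homomorphism property forces
\[
\phi(e_1 r_1 + \cdots + e_n r_n) = a_1 r_1 + \cdots + a_n r_n
\]
for all $r_i \in R$, and conversely this formula defines a right $R$-module homomorphism.

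For the forward direction, given a $\phi$ with $\phi(v)=1$, I would set $a_i = \phi(e_i)$ and apply the displayed formula with $r_i = c_i$ to conclude $a_1 c_1 + \cdots + a_n c_n = 1$. For the converse, given $a_1,\ldots,a_n$ with $\sum a_i c_i = 1$, I would define $\phi$ by the displayed formula and verify $\phi(v) = 1$ by direct substitution, which exhibits $v$ as unimodular.

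I do not expect any real obstacle here: the content is essentially the canonical identification $\Hom_R(R^n,R) \cong R^n$ (as right $R$-modules on the source, left modules on the target) sending $\phi$ to the tuple $(\phi(e_1),\ldots,\phi(e_n))$. The only thing to keep straight is that $R^n$ is being treated as a right module, so scalars appear on the right of the basis vectors and the pairing with $(a_1,\ldots,a_n)$ takes the form $\sum a_i c_i$ rather than $\sum c_i a_i$; this is precisely what makes Example \ref{example:unimodular} consistent, since columns of a matrix in $\GL_n(R)$ are unimodular via the dual row of the inverse matrix.
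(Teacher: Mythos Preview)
Your proof is correct and takes essentially the same approach as the paper: both arguments simply use the fact that every right $R$-module homomorphism $R^n \to R$ has the form $(x_1,\ldots,x_n) \mapsto a_1 x_1 + \cdots + a_n x_n$ for some $a_i \in R$. You have just spelled out the details of this identification (via evaluation on the standard basis) that the paper leaves implicit.
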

\begin{proof}
Immediate from the fact that all right $R$-module morphisms $\phi\colon R^n \rightarrow R$ are of the
form $\phi(x_1,\ldots,x_n) = a_1 x_1 + \cdots + a_n x_n$ for some $a_1,\ldots,a_n \in R$.
\end{proof}

\subsection{Stable rank}
In light of Example \ref{example:unimodular}, one might hope that all unimodular vectors
in $R^n$ can appear as columns of matrices in $\GL_n(R)$.  This holds if $R$ is a field or
if $R = \Z$, but does not hold in general.  To clarify this, we make the following definition.

\begin{definition}
\label{definition:stablerank}
A ring $R$ satisfies {\em Bass's stable rank condition} $(\SR_r)$ if the following holds for all
$n \geq r$.  Let $(c_1,\ldots,c_n) \in R^n$ be a unimodular vector.  Then there
exist $b_1,\ldots,b_{n-1} \in R$
such that $(c_1+b_1 c_n,\ldots,c_{n-1}+b_{n-1} c_n) \in R^{n-1}$ is unimodular.
\end{definition}

\begin{example}
\label{example:srremark}
This condition only makes sense for $r \geq 2$.  It is easy to see that fields satisfy $(\SR_2)$ and that PIDs satisfy $(\SR_3)$.
More generally, if a ring $R$ is finitely generated as a module over a Noetherian commutative ring $S$ of
Krull dimension $r$, then $R$ satisfies $(\SR_{r+2})$ (see \cite[Theorem V.3.5]{BassKTheory}).
\end{example}

\subsection{Elementary matrices and their action on unimodular vectors}

Recall that an {\em elementary matrix} in $\GL_n(R)$ is a matrix
that differs from the identity matrix at exactly one off-diagonal position.  Let $\EL_n(R)$ be
the subgroup of $\GL_n(R)$ generated by elementary matrices.
One of the key properties of rings satisfying Bass's stable rank condition is the following lemma,
which implies in particular that under its hypotheses all unimodular vectors appear as
columns of matrices in $\GL_n(R)$.  In fact, they even appear as columns of matrices in $\EL_n(R)$.

\begin{lemma}
\label{lemma:unimodulartransitive}
Let $R$ be a ring satisfying $(\SR_r)$ and let $n \geq r$.  Then $\EL_n(R)$ acts
transitively on the set of unimodular vectors in $R^n$.
\end{lemma}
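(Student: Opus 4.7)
The plan is to show that every unimodular $v = (c_1,\ldots,c_n) \in R^n$ lies in the $\EL_n(R)$-orbit of the standard vector $e_1 = (1,0,\ldots,0)$, from which transitivity is immediate. Throughout, write $E_{ij}(x) \in \EL_n(R)$ for the elementary matrix with $x$ in position $(i,j)$; acting on a column vector, $E_{ij}(x) \cdot v$ replaces $v_i$ by $v_i + x v_j$ and leaves every other entry unchanged. The argument uses $(\SR_r)$ exactly once, at the very beginning, and is otherwise pure elementary-matrix bookkeeping.

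First I would apply $(\SR_r)$: since $n \geq r$, it produces $b_1,\ldots,b_{n-1} \in R$ such that the entries $c_i' := c_i + b_i c_n$ form a unimodular vector in $R^{n-1}$. Applying the commuting product $\prod_{i<n} E_{i,n}(b_i)$ sends $v$ to $(c_1',\ldots,c_{n-1}',c_n)$. By Lemma \ref{lemma:unimodularsum}, choose $a_1,\ldots,a_{n-1} \in R$ with $\sum_{i<n} a_i c_i' = 1$. Multiplying this identity on the left by $-c_n$ yields $\sum_{i<n}(-c_n a_i) c_i' = -c_n$, so applying $\prod_{i<n} E_{n,i}(-c_n a_i)$ fixes the first $n-1$ coordinates and replaces the last coordinate $c_n$ by $0$.

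From the resulting vector $(c_1',\ldots,c_{n-1}',0)$, the endgame no longer requires $(\SR_r)$. Using the same relation $\sum a_i c_i' = 1$ with the opposite sign, $\prod_{i<n} E_{n,i}(a_i)$ puts $1$ into the last coordinate, giving $(c_1',\ldots,c_{n-1}',1)$. This fresh $1$ serves as a pivot: $\prod_{i<n} E_{i,n}(-c_i')$ then zeroes out each $c_i'$, producing $e_n = (0,\ldots,0,1)$. Finally, $E_{1,n}(1)$ sends $e_n$ to $e_1 + e_n$, and $E_{n,1}(-1)$ sends this to $e_1$.

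The one point to watch is the noncommutativity of $R$: Lemma \ref{lemma:unimodularsum} supplies the relation $\sum a_i c_i' = 1$ with the $a_i$ on the \emph{left} of the $c_i'$, while the elementary matrix action $E_{ij}(x)\cdot v$ produces $x v_j$ with the matrix entry on the left of the vector entry, so the two left-multiplication conventions line up automatically and no transpose-type stable rank hypothesis is needed. I do not anticipate any further obstacle, and in particular no induction on $n$ is required after the single application of $(\SR_r)$.
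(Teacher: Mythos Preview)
Your proof is correct and follows essentially the same route as the paper's: apply $(\SR_r)$ once to make the first $n-1$ entries unimodular, use the resulting relation $\sum a_i c_i' = 1$ to force a $1$ into the last coordinate, then use that $1$ as a pivot to clear the rest. The only cosmetic differences are that the paper reaches the last-coordinate $1$ in a single move (adding $(1-c_n)a_i$ times the $i$th entry to the $n$th) where you pass through $0$ first, and the paper stops at $e_n$ rather than continuing to $e_1$; your extra remarks on the left/right multiplication conventions are a nice sanity check that the paper leaves implicit.
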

\begin{proof}
Let $v \in R^n$ be a unimodular vector.  It is enough to find some $M \in \EL_n(R)$
such that $M \cdot v = (0,\ldots,0,1)$.  Write $v = (c_1,\ldots,c_n)$.  Using elementary
matrices, for any distinct $1 \leq i,j \leq n$ and any $\lambda \in R$ we can
add $\lambda$ times the $i^{\text{th}}$ entry to the $j^{\text{th}}$ one, changing the entry
$c_j$ to $c_j + \lambda c_i$.  We will use a series of these moves to transform
$v$ into $(0,\ldots,0,1)$.

Applying $(\SR_r)$, we can add multiples of the last entry to the other ones to ensure
that the first $(n-1)$ entries of $v$ form a unimodular vector in $R^{n-1}$.  Using Lemma \ref{lemma:unimodularsum},
we can find $a_1,\ldots,a_{n-1} \in R$ such that $a_1 c_1 + \cdots + a_{n-1} c_{n-1} = 1$.  For each $1 \leq i \leq n-1$, add $(1-c_n) a_i$ times
the $i^{\text{th}}$ entry to the $n^{\text{th}}$ one.  This transforms $v$ into
$(c_1,\ldots,c_{n-1},1)$.  Finally, add appropriate multiples of the last entry
of $v$ to the other ones to transform it into $(0,\ldots,0,1)$.
\end{proof}

\subsection{Generation by elementary matrices}

If $\bbk$ is a field then $\EL_n(\bbk) = \SL_n(\bbk)$,
so $\GL_n(\bbk)$ is generated by $\EL_n(\bbk)$ and $\GL_1(\bbk)$, which is embedded in $\GL_n(\bbk)$ via the upper
left hand corner matrix embedding.  The following lemma shows that the stable rank condition implies
a similar type of result:

\begin{lemma}
\label{lemma:generategl}
Let $R$ be a ring satisfying $(\SR_r)$ and let $n \geq r-1$.  Then $\GL_n(R)$ is generated
by $\EL_n(R)$ and $\GL_{r-1}(R) \subset \GL_n(R)$.
\end{lemma}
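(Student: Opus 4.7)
The plan is to prove the lemma by induction on $n$, with the base case $n = r-1$ being trivial since $\GL_n(R) = \GL_{r-1}(R)$. For the inductive step, assume $n \geq r$ and fix $M \in \GL_n(R)$. The goal is to use elementary matrices (acting on the left) to reduce $M$ to a block-diagonal matrix of the form $\bigl(\begin{smallmatrix} A & 0 \\ 0 & 1 \end{smallmatrix}\bigr)$ with $A \in \GL_{n-1}(R)$, at which point the inductive hypothesis applied to $A$ (embedded via the upper-left corner, which sends $\EL_{n-1}(R)$ into $\EL_n(R)$ and preserves $\GL_{r-1}(R)$) finishes the argument.

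To carry out the reduction, I would first invoke Example \ref{example:unimodular} to note that the last column $v \in R^n$ of $M$ is unimodular. Since $n \geq r$, Lemma \ref{lemma:unimodulartransitive} provides some $E \in \EL_n(R)$ with $E \cdot v = (0,\ldots,0,1)^{\mathrm{T}}$. Then the last column of $EM$ is $e_n$, so
\[
EM = \begin{pmatrix} A & 0 \\ b & 1 \end{pmatrix}
\]
for some $A \in \Mat_{(n-1)\times(n-1)}(R)$ and some row vector $b = (b_1,\ldots,b_{n-1}) \in R^{n-1}$. Multiplying on the left by the elementary matrices $E_{n,i}(-b_i)$ for $i = 1,\ldots,n-1$ clears out $b$, so there is $E' \in \EL_n(R)$ with
\[
E' E M = \begin{pmatrix} A & 0 \\ 0 & 1 \end{pmatrix}.
\]
A direct matrix calculation using the existence of an inverse for $E'EM$ in $\GL_n(R)$ shows that $A$ must lie in $\GL_{n-1}(R)$.

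Applying the inductive hypothesis gives $A \in \langle \EL_{n-1}(R), \GL_{r-1}(R)\rangle$, and after block-embedding into the upper-left corner of $\GL_n(R)$ this yields
\[
\begin{pmatrix} A & 0 \\ 0 & 1 \end{pmatrix} \in \langle \EL_n(R), \GL_{r-1}(R)\rangle.
\]
Rearranging $E'EM = \bigl(\begin{smallmatrix} A & 0 \\ 0 & 1 \end{smallmatrix}\bigr)$ then places $M$ in the same subgroup, completing the induction.

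There is no serious obstacle: the only non-formal input is the transitivity of the $\EL_n(R)$-action on unimodular vectors, which is exactly the content of Lemma \ref{lemma:unimodulartransitive} and is the sole place where the hypothesis $n \geq r$ (rather than $n \geq r-1$) is needed. The only point requiring a little care is to verify that after clearing the last column and the last row, the residual upper-left block $A$ is genuinely invertible; this follows from writing out the inverse of $\bigl(\begin{smallmatrix} A & 0 \\ 0 & 1 \end{smallmatrix}\bigr)$ in block form and reading off that its upper-left block inverts $A$ on both sides.
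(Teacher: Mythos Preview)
Your inductive strategy is the same as the paper's, and the argument is essentially correct, but there is one slip in the row-clearing step. Left-multiplying $EM$ by $E_{n,i}(-b_i)$ adds $-b_i$ times row $i$ of $EM$ to row $n$; since row $i$ is $(A_{i1},\ldots,A_{i,n-1},0)$ rather than the $i$th standard basis vector, this does \emph{not} zero out $b$. The easy fix is to \emph{right}-multiply by the same matrices (column operations against the last column $e_n$ do kill $b$ with exactly the coefficients you wrote), or alternatively to first deduce $A\in\GL_{n-1}(R)$ directly from the invertibility of $EM$ (reading off a two-sided inverse from the block form of $(EM)^{-1}$) and then left-multiply by $\bigl(\begin{smallmatrix} I & 0 \\ -bA^{-1} & 1 \end{smallmatrix}\bigr)\in\EL_n(R)$. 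Either way the conclusion $M\in\langle \EL_n(R),\GL_{r-1}(R)\rangle$ follows.

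Modulo this, the only difference from the paper is in packaging: the paper appeals to the refined transitivity statement Lemma~\ref{lemma:splitunimodulartransitive} (which moves the pair $(M\cdot v_n,\,M\cdot C)$ to $(v_n,C)$ in one shot, landing $NM$ directly in $\GL_{n-1}(R)$), whereas you use only the cruder Lemma~\ref{lemma:unimodulartransitive} and then clear the remaining row by hand. Your route is a touch more elementary and avoids the forward reference to Lemma~\ref{lemma:splitunimodulartransitive}; the paper's is a line shorter once that lemma is in hand.
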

\begin{proof}
The proof will be by induction on $n$.  The base case $n=r-1$ is trivial, so assume that $n \geq r$ and that
the lemma is true for smaller $n$.
Let $\{v_1,\ldots,v_n\}$ be the standard basis for the right $R$-module $R^n$ and let $C = \oplus_{i=1}^{n-1} v_i \cdot R$.
Consider some $M \in \GL_n(R)$.  Applying Lemma \ref{lemma:splitunimodulartransitive} below, we can find some $N \in \EL_n(R)$
such that $N \cdot (M \cdot v_n) = v_n$ and $N \cdot (M \cdot C) = C$.  It follows that $N M \in \GL_{n-1}(R)$, so
by induction $N M$ lies in the subgroup generated by elementary matrices and $\GL_{r-1}(R)$.  We conclude that $M$
does as well.
\end{proof}

The above proof used the following lemma, which refines Lemma \ref{lemma:unimodulartransitive}.

\begin{lemma}
\label{lemma:splitunimodulartransitive}
Let $R$ be a ring satisfying $(\SR_r)$ and let $n \geq r$.  Let $x,y \in R^n$ be unimodular vectors and
$C,D \subset R^n$ be $R$-submodules such that 
$R^n = C \oplus x \cdot R$ and $R^n = D \oplus y \cdot R$.  Then
there exists some $M \in \EL_n(R)$ such that $M \cdot x = y$ and $M \cdot C = D$.
\end{lemma}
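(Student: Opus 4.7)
The plan is to first use Lemma \ref{lemma:unimodulartransitive} to reduce to the special case where $x = y$ equals the last standard basis vector $v_n$, and then to construct the required elementary matrix explicitly from a concrete description of complements to $v_n R$.

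For the reduction, choose $N_1, N_2 \in \EL_n(R)$ with $N_1 x = v_n$ and $N_2 y = v_n$, which exist by Lemma \ref{lemma:unimodulartransitive}.  Since $N_1$ and $N_2$ are $R$-module automorphisms of $R^n$, the submodules $N_1 C$ and $N_2 D$ are complements of $v_n R$ in $R^n$.  If we can produce $M' \in \EL_n(R)$ satisfying $M' v_n = v_n$ and $M'(N_1 C) = N_2 D$, then $M = N_2^{-1} M' N_1 \in \EL_n(R)$ satisfies $Mx = y$ and $MC = D$.

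For this special case, I would parameterize complements of $v_n R$ concretely.  Let $\pi\colon R^n \to R^{n-1}$ be the projection onto the first $n-1$ coordinates, so $\ker \pi = v_n R$.  A submodule $C' \subset R^n$ is a complement of $v_n R$ precisely when $\pi|_{C'}$ is an isomorphism, i.e., when $C'$ is the image of a section of $\pi$.  Such a section is determined by its values on the standard basis vectors, and hence by elements $a_1,\ldots,a_{n-1} \in R$, giving
\[C' = \{(x_1,\ldots,x_{n-1}, a_1 x_1 + \cdots + a_{n-1} x_{n-1})^T : x_1,\ldots,x_{n-1} \in R\}.\]

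Finally, write the parameters of $N_1 C$ as $(a_i)$ and those of $N_2 D$ as $(b_i)$, and set
\[M' = \prod_{i=1}^{n-1} \bigl(I + (b_i - a_i) E_{n,i}\bigr),\]
where $E_{n,i}$ is the matrix with $1$ in position $(n,i)$ and zeros elsewhere.  The factors commute pairwise (their off-diagonal entries all lie in row $n$), so $M'$ is genuinely a product of elementary matrices and lies in $\EL_n(R)$.  A direct computation shows $M' v_n = v_n$ and that $M'$ sends the complement with parameters $(a_i)$ to the one with parameters $(b_i)$; the relevant algebraic identity $(a_i + (b_i - a_i)) x_i = a_i x_i + (b_i - a_i) x_i$ holds over any (possibly noncommutative) ring by left distributivity.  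There is no serious obstacle once the reduction is made; the construction uses only the standard row-addition operations already available in $\EL_n(R)$.
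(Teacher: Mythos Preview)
Your proof is correct and follows essentially the same approach as the paper: reduce via Lemma~\ref{lemma:unimodulartransitive} to the case $x=y=v_n$, then build the required element of $\EL_n(R)$ from row-addition elementary matrices in the last row. The only cosmetic difference is that the paper reduces asymmetrically (taking $C$ to be the standard complement $\bigoplus_{i=1}^{n-1} v_i R$, so your $a_i$ are all zero) and parameterizes $D$ via the projection $\rho$ with $\ker\rho=D$ rather than via a section of $\pi$; these are dual descriptions of the same data and lead to the same elementary matrix.
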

\begin{proof}
Let $\{v_1,\ldots,v_n\}$ be the standard basis for the right $R$-module $R^n$.  It is enough to deal with
the case where $x = v_n$ and $C = \oplus_{i=1}^{n-1} v_i \cdot R$.  What is more, 
Lemma \ref{lemma:unimodulartransitive} says that $\EL_n(R)$ acts
transitively on unimodular vectors in $R^n$, so we can assume without loss of generality that $y = v_n$ as well.
Let $\rho\colon R^n \rightarrow R$ be the projection with $D = \ker(\rho)$ and $\rho(v_n) = 1$.
For $1 \leq i \leq n-1$, let $\lambda_i = \rho(v_i)$.  Define $M \in \GL_n(R)$ via the formula
\[M \cdot v_i = v_i - v_n \cdot \lambda_i \quad \text{for $1 \leq i \leq n-1$, and $M \cdot v_n = v_n$}.\]
It is easy to see that $M$ can be written as a product of $(n-1)$ elementary matrices, so $M \in \EL_n(R)$.
For $1 \leq i \leq n-1$, we have
\[\rho(M \cdot v_i) = \rho(v_i) - \rho(v_n) \lambda_i = \lambda_i - \lambda_i = 0,\]
so $M \cdot v_i \in D$.  We conclude that $M \cdot C = D$. 
\end{proof}

\subsection{Stable freeness}

For a general ring $R$, there can exist non-free $R$-modules $C$ that are stably free in the sense
that $C \oplus R^k \cong R^n$ for some $k \geq 1$.  The stable rank condition prevents this, at least
if $n-k$ (which should be thought of as the ``rank'' of $C$) is large enough:

\begin{lemma}
\label{lemma:stabledecomp}
Let $R$ be a ring satisfying $(\SR_r)$.  Let $C$ be a right $R$-module such that $C \oplus R^k \cong R^n$.
Assume that $n-k \geq r-1$.  Then $C \cong R^{n-k}$.
\end{lemma}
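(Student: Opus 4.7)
The plan is to induct on $k$, with the base case $k=1$ being the essential content and the inductive step being formal.

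For the base case $k=1$, we have $C \oplus R \cong R^n$ with $n \geq r$ (since $k=1$ and $k \leq n-r+1$). Fix such an isomorphism $\phi\colon C \oplus R \xrightarrow{\cong} R^n$, and let $x = \phi(0,1) \in R^n$. The image of the projection $R^n \xrightarrow{\phi^{-1}} C \oplus R \twoheadrightarrow R$ exhibits a left inverse for the inclusion $x \cdot R \hookrightarrow R^n$, so $x$ is unimodular, and moreover $R^n = \phi(C) \oplus x \cdot R$. Letting $\{v_1,\ldots,v_n\}$ denote the standard basis of $R^n$, the vector $y = v_n$ is also unimodular with $R^n = \left(\bigoplus_{i=1}^{n-1} v_i \cdot R\right) \oplus y \cdot R$. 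Applying Lemma \ref{lemma:splitunimodulartransitive} (which requires exactly the hypothesis $n \geq r$), we find $M \in \EL_n(R)$ with $M \cdot x = y$ and $M \cdot \phi(C) = \bigoplus_{i=1}^{n-1} v_i \cdot R \cong R^{n-1}$. Therefore $C \cong R^{n-1}$.

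For the inductive step with $k \geq 2$, write $R^k \cong R^{k-1} \oplus R$, so
\[
(C \oplus R^{k-1}) \oplus R \cong R^n.
\]
Since $k \leq n-r+1$ and $k \geq 2$, in particular $n \geq r+1 \geq r$, so the base case applies to the right $R$-module $C \oplus R^{k-1}$ and gives $C \oplus R^{k-1} \cong R^{n-1}$. The inductive hypothesis applies here because $k-1 \leq (n-1)-r+1$ is equivalent to $k \leq n-r+1$, so it yields $C \cong R^{(n-1)-(k-1)} = R^{n-k}$, completing the induction.

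The only real content is the base case, and the main thing to verify is that the hypotheses of Lemma \ref{lemma:splitunimodulartransitive} are met — specifically that the numerical bound $k \leq n-r+1$ with $k=1$ supplies the required $n \geq r$, and that the summand $\phi(0 \oplus R)$ is generated by a unimodular vector whose complementary direct summand is $\phi(C)$. Both are immediate from the definition of direct sum decomposition. There is no obstacle beyond bookkeeping.
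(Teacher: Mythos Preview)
Your proof is correct and follows essentially the same approach as the paper: both reduce to the case $k=1$ and then apply Lemma~\ref{lemma:splitunimodulartransitive} to move the unimodular generator of the $R$-summand onto a standard basis vector, thereby identifying $C$ with $R^{n-1}$. The paper leaves the reduction to $k=1$ implicit (writing only ``It is enough to deal with the case where $k=1$''), whereas you spell out the straightforward induction explicitly.
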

\begin{proof}
It is enough to deal with the case where $k=1$, so $n \geq r$.  Identify $C$ with a submodule of $R^n$ and
let $x \in R^n$ be a unimodular vector with $R^n = C \oplus x \cdot R$.  Letting $\{v_1,\ldots,v_n\}$
be the standard basis for $R^n$, Lemma \ref{lemma:splitunimodulartransitive} implies that there exists
some $M \in \GL_n(R)$ with $M \cdot x = v_n$ and $M \cdot C = \oplus_{i=1}^{n-1} v_i \cdot R$.  In particular,
$M \cdot C \cong R^{n-1}$, so $C \cong R^{n-1}$ as well.
\end{proof}

\subsection{Quotients of rings}

The following lemma shows that the stable rank condition is preserved by quotients:

\begin{lemma}
\label{lemma:stablequotient}
Let $R$ be a ring satisfying $(\SR_r)$ and let $\fq$ be a two-sided ideal in $R$.  Then $R/\fq$ satisfies
$(\SR_r)$.
\end{lemma}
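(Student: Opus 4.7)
The plan is to reduce the claim for $R/\fq$ to the known condition $(\SR_r)$ for $R$ by lifting any unimodular vector in $(R/\fq)^n$ to a unimodular vector in $R^n$. Once this lifting is accomplished, applying $(\SR_r)$ in $R$ and then reducing modulo $\fq$ yields the desired conclusion immediately, since adding a multiple of the last coordinate in $R$ descends to the same operation in $R/\fq$, and unimodularity is preserved under reduction (both $\bar a_i$'s and $\bar c_i$'s descend, so $\sum \bar a_i \bar c_i = 1$ descends to the identity).

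Fix $n \geq r$ and a unimodular vector $\bar v = (\bar c_1, \ldots, \bar c_n) \in (R/\fq)^n$. By Lemma \ref{lemma:unimodularsum} there exist $\bar a_1, \ldots, \bar a_n \in R/\fq$ with $\sum_{i=1}^n \bar a_i \bar c_i = 1$. Choose arbitrary lifts $c_i, a_i \in R$, and set
\[
s = 1 - \sum_{i=1}^n a_i c_i \in \fq.
\]
The key observation is that although $(c_1,\ldots,c_n)$ need not be unimodular in $R^n$, the \emph{extended} vector $(c_1,\ldots,c_n,s) \in R^{n+1}$ is unimodular, because $\sum a_i c_i + 1 \cdot s = 1$ (apply Lemma \ref{lemma:unimodularsum} again). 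Since $n+1 \geq r$, applying $(\SR_r)$ to this vector in $R^{n+1}$ produces $b_1, \ldots, b_n \in R$ such that
\[
(c_1 + b_1 s, \ldots, c_n + b_n s) \in R^n
\]
is unimodular. Because $s \in \fq$, this vector reduces modulo $\fq$ to $\bar v$, so we have produced a unimodular lift of $\bar v$ to $R^n$, as desired.

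Now apply $(\SR_r)$ in $R$ to the unimodular vector $(c_1 + b_1 s, \ldots, c_n + b_n s) \in R^n$, which is permitted since $n \geq r$. This gives elements $b'_1, \ldots, b'_{n-1} \in R$ such that
\[
\bigl(c_1 + b_1 s + b'_1(c_n + b_n s),\; \ldots,\; c_{n-1} + b_{n-1} s + b'_{n-1}(c_n + b_n s)\bigr) \in R^{n-1}
\]
is unimodular. Reducing modulo $\fq$ annihilates the $s$-terms and yields that $(\bar c_1 + \bar b'_1 \bar c_n, \ldots, \bar c_{n-1} + \bar b'_{n-1} \bar c_n) \in (R/\fq)^{n-1}$ is unimodular, establishing $(\SR_r)$ for $R/\fq$.

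The main obstacle, and really the only nontrivial step, is the lifting statement: a unimodular vector in $(R/\fq)^n$ may fail to lift to a unimodular vector in $R^n$ componentwise, so one has to produce an adjustment. The trick above uses the stable rank condition \emph{itself} (at level $n+1$) to perform the adjustment, which is the mild slickness in the argument; everything else is bookkeeping.
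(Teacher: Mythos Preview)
Your proof is correct and follows essentially the same approach as the paper: both reduce to lifting $\bar v$ to a unimodular vector in $R^n$ by extending an arbitrary lift to a unimodular vector in $R^{n+1}$ (your $s$ is the negative of the paper's $q$) and then applying $(\SR_r)$ to shorten. The only difference is that you spell out the final application of $(\SR_r)$ in $R$ and the reduction mod $\fq$, whereas the paper leaves this implicit in the phrase ``it is enough to prove that $\ov$ can be lifted.''
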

\begin{proof}
Let $n \geq r$ and let $\ov \in (R/\fq)^n$ be a unimodular vector.  It is enough to prove
that $\ov$ can be lifted to a unimodular vector in $R^n$.  Let $(c_1,\ldots,c_n) \in R^n$ be
any vector projecting to $\ov$.  Since $\ov$ is unimodular, Lemma \ref{lemma:unimodularsum} implies that
there exist $a_1,\ldots,a_n \in R$ and $q \in \fq$ such that $a_1 c_1 + \cdots + a_n c_n = 1+q$.  It
follows that $(c_1,\ldots,c_n,q) \in R^{n+1}$ is unimodular, so by $(\SR_r)$ we can find
$b_1,\ldots,b_n \in R$ such that $v = (c_1 + b_1 q,\ldots,c_n+b_n q) \in R^n$ is unimodular.  The
vector $v$ projects to $\ov$.
\end{proof}

\subsection{Stable rank modulo an ideal}

The following lemma gives a variant of the stable rank condition that takes into account an ideal.

\begin{lemma}
\label{lemma:stableideal}
Let $R$ be a ring satisfying $(\SR_r)$ and let $\fq$ be a two-sided ideal of $R$.  For some
$n \geq r$, let $(c_1,\ldots,c_n) \in R^n$ be a unimodular vector with $c_n \in \fq$.
Then there exists $b_1,\ldots,b_{n-1} \in \fq$ such that
$(c_1+b_1 c_n,\ldots,c_{n-1} + b_{n-1} c_n) \in R^{n-1}$ is unimodular.
\end{lemma}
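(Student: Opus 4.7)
The strategy is to reduce to the plain stable rank condition $(\SR_r)$ by replacing the last coordinate $c_n$ with a carefully crafted element $q\in\fq$ that itself factors as $q=\beta c_n$ for some $\beta\in\fq$. By Lemma~\ref{lemma:unimodularsum}, pick $a_1,\ldots,a_n\in R$ with $\sum a_ic_i = 1$, and set $q = c_na_nc_n$. Two-sidedness of $\fq$ makes $q\in\fq$, and we have the factorization $q=(c_na_n)c_n$ with $c_na_n\in\fq$.

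The first task is to show that the modified vector $(c_1,\ldots,c_{n-1},q)\in R^n$ is still unimodular. Using $a_nc_n=1-\sum_{i<n} a_ic_i$ and left-multiplying by $c_n$ lets one solve for $c_n$ in the form $c_n = q+\sum_{i<n}(c_na_i)c_i$; plugging this back into $\sum a_ic_i=1$ and regrouping yields an identity $\sum_{i<n}\alpha_ic_i + a_n q = 1$ with explicit $\alpha_i = (1+a_nc_n)a_i\in R$, which exhibits unimodularity via Lemma~\ref{lemma:unimodularsum}.

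With this auxiliary unimodular vector in hand and $n\geq r$, apply $(\SR_r)$ to produce $d_1,\ldots,d_{n-1}\in R$ such that $(c_1+d_1 q,\ldots,c_{n-1}+d_{n-1} q)\in R^{n-1}$ is unimodular. Set $b_i = d_ic_na_n$. Then $b_ic_n = d_ic_na_nc_n = d_iq$, so the unimodular vector is exactly $(c_i+b_ic_n)_{i<n}$; and each $b_i$ lies in $\fq$ because $d_ic_n\in R\fq\subseteq\fq$ by two-sidedness, and subsequent right-multiplication by $a_n$ keeps us in $\fq$.

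The main obstacle is finding the right $q$. The naive candidate $q=a_nc_n$ (the most obvious element of $\fq$ associated with the given data) produces modifications $d_iq=(d_ia_n)c_n$ whose $c_n$-coefficient $d_ia_n$ lies only in $R$, not in $\fq$. Inserting a second \emph{left} factor of $c_n$ into $q$ exploits two-sidedness of $\fq$ to force the $c_n$-coefficient of the modification to itself begin with $d_ic_n\in\fq$, at the price of a small noncommutative manipulation to verify that the auxiliary vector is still unimodular.
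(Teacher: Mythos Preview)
Your proof is correct and matches the paper's argument essentially verbatim: the paper also replaces $c_n$ by $c_n a_n c_n$, verifies unimodularity of $(c_1,\ldots,c_{n-1},c_n a_n c_n)$ via the same coefficients $\alpha_i=(1+a_nc_n)a_i=a_i+a_nc_na_i$, applies $(\SR_r)$, and sets $b_i=b'_i c_n a_n$. Your final paragraph explaining why the naive choice $q=a_nc_n$ fails is helpful commentary not present in the paper, but the mathematical core is identical.
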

\begin{proof}
Using Lemma \ref{lemma:unimodularsum}, we can find $a_1,\ldots,a_n \in R$ with $a_1 c_1+\ldots+a_n c_n = 1$.
We claim that $(c_1,\ldots,c_{n-1},c_n a_n c_n) \in R^n$ is unimodular.  Indeed, we have
\begin{align*}
&\left(a_1+a_n c_n a_1\right)c_1 + \cdots + \left(a_{n-1}+a_n c_n a_{n-1}\right) c_{n-1} + \left(a_n\right) c_n a_n c_n \\
&\ \ \ \ = \left(a_1 c_1 + \cdots + a_{n-1} c_{n-1}\right) + a_n c_n \left(a_1 c_1 + \cdots + a_n c_n\right) \\
&\ \ \ \ = \left(a_1 c_1 + \cdots + a_{n-1} c_{n-1}\right) + a_n c_n = 1,
\end{align*}
as desired.  Applying $(\SR_r)$, we can find $b'_1,\ldots,b'_{n-1} \in R$ such that
$(c_1 + b'_1 c_n a_n c_n,\ldots,c_{n-1} + b'_{n-1} c_n a_n c_n) \in R^{n-1}$ is unimodular.  Since
$c_n \in \fq$, so is $b_i = b'_i c_n a_n$.  The lemma follows.
\end{proof}

\subsection{Elementary congruence subgroups and their action on unimodular vectors}

Recall that if $\fq$ is an ideal in $R$, then $\GL_n(R,\fq)$ is the level-$\fq$ congruence subgroup
of $\GL_n(R)$, i.e., the kernel of the map $\GL_n(R) \rightarrow \GL_n(R/\fq)$.  An elementary
matrix lies in $\GL_n(R,\fq)$ precisely when its single off-diagonal entry lies in $\fq$.  Let
$\EL_n(R,\fq)$ be the subgroup of $\EL_n(R)$ that is {\bf normally} generated by elementary
matrices lying in $\GL_n(R,\fq)$.  

Lemma \ref{lemma:unimodulartransitive}
says that the stable rank condition implies that $\EL_n(R)$ acts transitively
on the set of unimodular vectors when $n$ is sufficiently large.  The following lemma strengthens this:

\begin{lemma}
\label{lemma:unimodulartransitivecongruence}
Let $R$ be a ring satisfying $(\SR_r)$ and let $\fq$ be an ideal in $R$.  For some
$n \geq r$, let $v,v' \in R^n$ be unimodular vectors that map to the same vector in $(R/\fq)^n$.
Then there exists some $M \in \EL_n(R,\fq)$ with $M \cdot v = v'$.
\end{lemma}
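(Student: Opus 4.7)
The plan is to reduce to $v'=(0,\dots,0,1)$ and then run a variant of the three-step reduction from the proof of Lemma~\ref{lemma:unimodulartransitive}, arranged so that every elementary operation has its off-diagonal entry in $\fq$. The key new ingredient is Lemma~\ref{lemma:stableideal}.

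To reduce, apply Lemma~\ref{lemma:unimodulartransitive} to choose $N\in\EL_n(R)$ with $Nv'=(0,\dots,0,1)$. Because $\EL_n(R,\fq)$ is normally generated in $\EL_n(R)$ and hence is a normal subgroup of $\EL_n(R)$, it suffices to find $M'\in\EL_n(R,\fq)$ with $M'(Nv)=(0,\dots,0,1)$, whereupon $M:=N^{-1}M'N$ sends $v$ to $v'$. So I may assume $v'=(0,\dots,0,1)$ and $v\equiv v'\pmod\fq$; writing $v=(c_1,\dots,c_{n-1},c_n)$, this means $c_i\in\fq$ for $i<n$ and $c_n=1+q$ for some $q\in\fq$.

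With this setup I would swap the roles of the first and last coordinate relative to the proof of Lemma~\ref{lemma:unimodulartransitive}, so as to exploit the fact that $c_1\in\fq$. By a trivial reindexing (apply the lemma to the permuted vector $(c_2,\dots,c_n,c_1)$, whose last entry $c_1$ is in $\fq$), Lemma~\ref{lemma:stableideal} yields $b_2,\dots,b_n\in\fq$ such that $(c_2+b_2c_1,\dots,c_n+b_nc_1)\in R^{n-1}$ is unimodular. The corresponding operations on $v$---for each $i\geq 2$, adding $b_i$ times entry $1$ to entry $i$---lie in $\EL_n(R,\fq)$ because $b_i\in\fq$. After this, pick $a_2,\dots,a_n\in R$ with $\sum_{i\geq 2}a_i(c_i+b_ic_1)=1$, and for each $i\geq 2$ add $-c_1a_i$ (which is in $\fq$ because $c_1\in\fq$) times entry $i$ to entry $1$; the new first coordinate becomes $c_1-c_1\cdot 1=0$. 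The vector is now $(0,c_2',\dots,c_n')$ with $(c_2',\dots,c_n')\in R^{n-1}$ unimodular and congruent to $(0,\dots,0,1)\pmod\fq$.

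The main obstacle is finishing the reduction on the tail. The cleanest approach is descending induction on $n\geq r$: the inductive hypothesis applied to $(c_2',\dots,c_n')$ via the natural inclusion $\EL_{n-1}(R,\fq)\hookrightarrow\EL_n(R,\fq)$ acting on the last $n-1$ coordinates brings the tail to $(0,\dots,0,1)$. This works whenever $n-1\geq r$; for the base case $n=r$ the tail lives in $R^{r-1}$, below the stable-range threshold, and one must close the argument directly---for instance by first stabilising the problem one dimension up in $R^{n+1}$, running the above argument there, and then restricting back. Once this base case is dispatched, the induction closes and the lemma follows.
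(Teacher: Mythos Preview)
Your reduction to $v'=(0,\dots,0,1)$ via conjugation is fine and matches the paper's Step~2 (just done first), and your first pass---using Lemma~\ref{lemma:stableideal} to make the tail $(c_2+b_2c_1,\dots,c_n+b_nc_1)$ unimodular with $\fq$-moves, then killing $c_1$ with $\fq$-moves---is correct. The problem is the finish.

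Your inductive scheme reduces the statement for $R^n$ to the statement for $R^{n-1}$, so it is ascending induction with base case $n=r$, and that base case is exactly where your ``stabilise one dimension up in $R^{n+1}$ and restrict back'' suggestion fails. Going up to $R^{r+1}$ requires the lemma there, which is not yet available; and even if it were, an element of $\EL_{r+1}(R,\fq)$ carrying $(v,0)$ to $(v',0)$ has no reason to be block-diagonal, so there is no mechanism to restrict it to $\EL_r(R,\fq)$. This is a genuine gap: the argument as written proves nothing for $n=r$, and hence the induction never starts.

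The paper avoids induction entirely by a conjugation trick that closes the argument for every $n\ge r$ at once. Working with $v'=(1,0,\dots,0)$ and $v=(1+q_1,q_2,\dots,q_n)$, after using Lemma~\ref{lemma:stableideal} and a unimodular combination of the first $n-1$ entries one arranges the last entry to equal $q_1$, obtaining $(1+q_1,q_2,\dots,q_{n-1},q_1)$. Now let $E\in\EL_n(R)$ subtract entry $n$ from entry $1$ (note $E\notin\GL_n(R,\fq)$): then $Ev=(1,q_2,\dots,q_{n-1},q_1)$, which a product $F\in\EL_n(R,\fq)$ of $\fq$-elementary moves sends to $(1,0,\dots,0)$. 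Since $E$ fixes $(1,0,\dots,0)$, the element $E^{-1}FE$ does the job, and it lies in $\EL_n(R,\fq)$ by normality. This trick is what replaces your missing base case.
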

\begin{proof}
We will prove this in two steps.

\begin{stepsb}
This is true if $v' = (1,0,\ldots,0)$.
\end{stepsb}

Since $v$ and $(1,0,\ldots,0)$ map to the same vector in $(R/\fq)^n$, we can write
$v = (1+q_1,q_2,\ldots,q_n)$ with $q_1,\ldots,q_n \in \fq$.  Recall that an elementary matrix
lies in $\GL_n(R,\fq)$ if its single off-diagonal entry lies in $\fq$.  Just like
in the proof of Lemma \ref{lemma:unimodulartransitive}, using such elementary
matrices, for any distinct $1 \leq i,j \leq n$ and any $\lambda \in \fq$ we can add
$\lambda$ times the $i^{\text{th}}$ entry to the $j^{\text{th}}$ one.  We will use
a series of these moves (plus one extra trick) to transform $v$ into $(1,0,\ldots,0)$.

Applying the relative version of $(\SR_r)$ given by Lemma \ref{lemma:stableideal},
we can add $\fq$-multiples of the last entry to the other ones to ensure
that the first $(n-1)$ entries of $v$ form a unimodular vector in $R^{n-1}$.
Using Lemma \ref{lemma:unimodularsum}, we can thus find $a_1,\ldots,a_{n-1} \in R$ such
that $a_1(1+q_1) + a_2 q_2 + \cdots + a_{n-1} q_{n-1} = 1$.  For each
$1 \leq i \leq n-1$, add $(q_1-q_n) a_i \in \fq$ times the
$i^{\text{th}}$ entry to the $n^{\text{th}}$ one.  This transforms $v$ into
$(1+q_1,q_2,\ldots,q_{n-1},q_1)$.

At this point, we will do something slightly tricky.  Let $E \in \EL_n(R)$ be the
elementary matrix that subtracts the $n^{\text{th}}$ row from the first one (notice
that $E$ does not lie in $\GL_n(R,\fq)$).  We thus
have
\[E \cdot v = E \cdot (1+q_1,q_2,\ldots,q_{n-1},q_1) = (1,q_2,\ldots,q_{n-1},q_1).\]
We can then find a product $F \in \EL_n(R,\fq)$ of elementary matrices such that
\[F \cdot (1,q_2,\ldots,q_{n-1},q_1) = (1,0,\ldots,0).\]
Indeed, $F$ first subtracts $q_2 \in \fq$ times the first entry from the $2^{\text{nd}}$, then
subtracts $q_3 \in \fq$ times the first entry from the $3^{\text{rd}}$, etc., and
finishes by subtracting $q_1$ times the first entry from the $n^{\text{th}}$.  Since $E$ fixes
$(1,0,\ldots,0)$, it follows that
\[E^{-1} F E \cdot v = (1,0,\ldots,0).\]
Since $\EL_n(R,\fq)$ is a normal subgroup of $\EL_n(R)$, we have $M = E^{-1} F E \in \EL_n(R,\fq)$, as desired.

\begin{stepsb}
This is true in general.
\end{stepsb}

Set $w = (1,0,\ldots,0)$.  Lemma \ref{lemma:unimodulartransitive} says that there exists some $A \in \EL_n(R)$ with $A \cdot v' = w$.
Applying the previous step to $A \cdot v$, we can find some $B \in \EL_n(R,\fq)$ with $B \cdot A \cdot v = w$, so
$A^{-1} B A \cdot v = A^{-1} \cdot w = v'$.  Since $\EL_n(R,\fq)$ is a normal subgroup of $\EL_n(R)$, it follows
that $M = A^{-1} B A$ lies in $\EL_n(R,\fq)$.
\end{proof}

\subsection{Action of congruence subgroups on split unimodular vectors}

The following lemma refines Lemma \ref{lemma:unimodulartransitivecongruence} in the same way
that Lemma \ref{lemma:splitunimodulartransitive} refines Lemma \ref{lemma:unimodulartransitive}:

\begin{lemma}
\label{lemma:splitunimodulartransitivecongruence}
Let $R$ be a ring satisfying $(\SR_r)$, let $\fq$ be an ideal in $R$, and let $n \geq r$.
Let $x,y \in R^n$ be unimodular vectors and $C,D \subset R^n$ be $R$-submodules such that
$R^n = C \oplus x \cdot R$ and $R^n = D \oplus y \cdot R$.  Assume that $x$ and $y$ (resp.\ $C$ and
$D$) map to the same vector (resp.\ submodule) in $(R/\fq)^n$.  Then there exists some $M \in \EL_n(R,\fq)$
with $M \cdot x = y$ and $M \cdot C = D$.
\end{lemma}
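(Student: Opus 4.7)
The plan is to mimic the proof of Lemma \ref{lemma:splitunimodulartransitive}, but using the congruence-refined transitivity statement Lemma \ref{lemma:unimodulartransitivecongruence} in place of Lemma \ref{lemma:unimodulartransitive}, and exploiting the fact that $\EL_n(R,\fq)$ is by definition a normal subgroup of $\EL_n(R)$.

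First I would reduce to a standard form. Let $\{v_1,\ldots,v_n\}$ be the standard basis for $R^n$. By Lemma \ref{lemma:splitunimodulartransitive}, there exists $A \in \EL_n(R)$ with $A \cdot x = v_n$ and $A \cdot C = \bigoplus_{i=1}^{n-1} v_i \cdot R$. Since $A$ has entries in $R$ and $x \equiv y$, $C \equiv D$ modulo $\fq$, the vector $A \cdot y$ and the submodule $A \cdot D$ are congruent modulo $\fq$ to $v_n$ and $\bigoplus_{i=1}^{n-1} v_i \cdot R$, respectively. If we can find $M' \in \EL_n(R,\fq)$ satisfying $M' \cdot v_n = A \cdot y$ and $M' \cdot \bigoplus_{i=1}^{n-1} v_i \cdot R = A \cdot D$, then $M := A^{-1} M' A$ lies in $\EL_n(R,\fq)$ (by normality) and does the job. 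So it suffices to treat the case $x = v_n$ and $C = \bigoplus_{i=1}^{n-1} v_i \cdot R$.

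Next I would handle the vector part. By Lemma \ref{lemma:unimodulartransitivecongruence}, there exists $B \in \EL_n(R,\fq)$ with $B \cdot v_n = y$. Since $B \equiv I \pmod{\fq}$ (entrywise), the submodule $D' := B^{-1} \cdot D$ still satisfies $R^n = D' \oplus v_n \cdot R$ and $D' \equiv C \pmod{\fq}$. It then remains to produce $N \in \EL_n(R,\fq)$ fixing $v_n$ and carrying $C$ to $D'$, for then $M' := B N$ works.

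For this last piece I would copy the explicit construction from the proof of Lemma \ref{lemma:splitunimodulartransitive}: let $\rho\colon R^n \to R$ be the projection with $\ker(\rho) = D'$ and $\rho(v_n) = 1$, and set $\lambda_i = \rho(v_i)$ for $1 \leq i \leq n-1$. The key observation is that the analogous projection with kernel $C$ sends $v_i$ to $0$ for $i < n$; since $D' \equiv C \pmod{\fq}$, this forces $\lambda_i \in \fq$. Then the matrix $N$ defined by $N \cdot v_i = v_i - v_n \cdot \lambda_i$ for $i < n$ and $N \cdot v_n = v_n$ is a product of $(n-1)$ elementary matrices whose off-diagonal entries lie in $\fq$, hence $N \in \EL_n(R,\fq)$; a direct check shows $\rho(N \cdot v_i) = 0$, so $N \cdot C = D'$. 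There is no real obstacle here: the congruence conditions are exactly what is needed to keep every elementary factor inside $\GL_n(R,\fq)$, and normality of $\EL_n(R,\fq)$ in $\EL_n(R)$ absorbs the conjugation by $A$ in the reduction step.
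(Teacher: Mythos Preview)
Your argument is correct and follows essentially the same strategy as the paper: use Lemma \ref{lemma:unimodulartransitivecongruence} to match up the vectors, then build an explicit product of elementary matrices with off-diagonal entries in $\fq$ to match the complements. The one organizational difference is that you first conjugate by $A \in \EL_n(R)$ into standard position and invoke normality of $\EL_n(R,\fq)$, whereas the paper works directly in a basis $\{v_1,\ldots,v_{n-1}\}$ for $C$ (obtained via Lemma \ref{lemma:stabledecomp}) together with $v_n = x = y$; your extra step has the virtue of making it completely transparent why the resulting matrix lies in $\EL_n(R,\fq)$, since all elementary factors are literally standard-basis elementary matrices with entries in $\fq$.
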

\begin{proof}
Using Lemma \ref{lemma:unimodulartransitivecongruence}, we can assume without loss of generality that $x = y$.
Let $\rho\colon R^n \rightarrow R$ be the projection with $D = \ker(\rho)$ and $\rho(x) = 1$.
Lemma \ref{lemma:stabledecomp} implies that $C$ and $D$ are both free of rank $(n-1)$.  Let
$\{v_1,\ldots,v_{n-1}\}$ be a free basis for $C$ and let $v_n = x = y$, so $\{v_1,\ldots,v_n\}$ is a free
basis for $R^n$.  For $1 \leq i \leq n-1$, let $\lambda_i = \rho(v_i)$.  
Since $C$ and $D$ map to the same submodule of $(R/\fq)^n$, we have $\lambda_i \in \fq$.  Define
$M \in \GL_n(R,\fq)$ via the formula
\[M \cdot v_i = v_i - v_n \cdot \lambda_i \quad \text{for $1 \leq i \leq n-1$, and $M \cdot v_n = v_n$}.\]
It is easy to write $M$ as a product of elementary matrices lying in $\GL_n(R,\fq)$, so $M \in \EL_n(R,\fq)$.
For $1 \leq i \leq n-1$, we have
\[\rho(M \cdot v_i) = \rho(v_i) - \rho(v_n) \lambda_i = \lambda_i - \lambda_i = 0,\]
so $M \cdot v_i \in D$.  We conclude that $M \cdot C = D$, as desired.
\end{proof}

\section{Subgroups from K-theory}
\label{section:ktheory}

Fix a ring $R$.  This section discusses subgroups of $\GL_n(R)$ arising from algebraic K-theory.  One
can view them as generalizations of the special linear group.

\subsection{Absolute K-groups}

Define
\[\GL(R) = \bigcup_{n=1}^{\infty} \GL_n(R) \quad \text{and} \quad \EL(R) = \bigcup_{n=1}^{\infty} \EL_n(R).\]
It turns out that $\EL(R) = [\GL(R),\GL(R)]$ (this is the ``Whitehead Lemma''; see \cite[Lemma 3.1]{MilnorKTheory}).
By definition, the first algebraic K-theory group of $R$ is the abelian group
\[\KK_1(R) = \HH_1(\GL(R)) = \GL(R) / \EL(R).\]
For a subgroup $\cK \subset \KK_1(R)$, we define $\GL_n^{\cK}(R)$ to be the preimage of $\cK$ under
the projection
\[\GL_n(R) \hookrightarrow \GL(R) \longrightarrow \KK_1(R).\]
It follows that $\EL_n(R) \subset \GL_n^{\cK}(R) \subset \GL_n(R)$.  

\begin{example}
Let $R$ be a ring.  For $\cK = \KK_1(R)$, we have $\GL_n^{\cK}(R) = \GL_n(R)$.
\end{example}

\begin{example}
Let $R$ be a commutative ring and let $\det\colon \GL(R) \rightarrow R^{\times}$
be the determinant map.  Since the determinant of an elementary matrix is $1$, the map
$\det$ induces a map
\[\fd\colon \KK_1(R) = \GL(R)/\EL(R) \rightarrow R^{\times}.\]
Letting $\cK = \ker(\fd)$, we then have $\GL_n^{\cK}(R) = \SL_n(R)$.  More generally, if $\cD \subset R^{\times}$
is a subgroup and $\cK = \fd^{-1}(\cD)$, then $\GL_n^{\cK}(R)$ is the subgroup of $\GL_n(R)$ consisting
of matrices $M$ with $\det(M) \in \cD$. 
\end{example}

\subsection{Relative K-groups}

Let $\alpha$ be a two-sided ideal of $R$.  Define
\[\GL(R,\alpha) = \bigcup_{n=1}^{\infty} \GL_n(R,\alpha) \quad \text{and} \quad \EL(R,\alpha) = \bigcup_{n=1}^{\infty} \EL_n(R,\alpha).\]
Just like in the absolute case, $\EL(R,\alpha)$ is a normal subgroup of $\GL(R,\alpha)$ with abelian
quotient, and this abelian quotient is the first relative K-theory group of $(R,\alpha)$:
\[\KK_1(R,\alpha) = \GL(R,\alpha) / \EL(R,\alpha).\]
See \cite[Lemma 4.2]{MilnorKTheory}.  There is also a relative version of the Whitehead lemma discussed above
that says that
\[\EL(R,\alpha) = [\GL(R,\alpha),\GL(R)].\]
See \cite[Lemma 4.3]{MilnorKTheory}.  In particular, $\EL(R,\alpha)$ is generally larger
than $[\GL(R,\alpha),\GL(R,\alpha)]$.

For a subgroup $\cK' \subset \KK_1(R,\alpha)$, we define $\GL_n^{\cK'}(R,\alpha)$  to
be the preimage of $\cK'$ under the composition
\[\GL_n(R,\alpha) \hookrightarrow \GL(R,\alpha) \longrightarrow \KK_1(R,\alpha).\]
Letting $\cK \subset \KK_1(R)$ be the image of $\cK$ under the map $\KK_1(R,\alpha) \rightarrow \KK_1(R)$, we have
$\GL_n^{\cK'}(R,\alpha) \subset \GL_n^{\cK}(R)$.  Just like in the above examples, by
choosing $\cK'$ appropriately we can have $\GL_n^{\cK'}(R,\alpha) = \GL_n(R,\alpha)$ or if $R$ is commutative
$\GL_n^{\cK'}(R,\alpha) = \SL_n(R,\alpha)$.

\subsection{Injective stability}

To clarify the nature of these groups, we need the following deep theorem of Vaserstein (``injective
stability for $K_1$''):

\begin{theorem}[{Vaserstein, \cite{Vaserstein}}]
\label{theorem:injectivek1}
Let $R$ be a ring satisfying $(\SR_r)$ and let $n \geq r$.  Then $\EL_n(R)$ is a normal
subgroup of $\GL_n(R)$, and the composition
\[\GL_n(R)/\EL_n(R) \longrightarrow \GL(R)/\EL(R) = \KK_1(R)\]
is an isomorphism.  Moreover, if $\alpha$ is a two-sided ideal of $R$, then
$\EL_n(R,\alpha)$ is a normal subgroup of $\GL_n(R,\alpha)$, and the composition
\[\GL_n(R,\alpha) / \EL_n(R,\alpha) \longrightarrow \GL(R,\alpha) / \EL(R,\alpha) = \KK_1(R,\alpha)\]
is an isomorphism.
\end{theorem}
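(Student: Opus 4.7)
The plan is to follow Vaserstein's original argument from \cite{Vaserstein}, which proceeds in three stages: normality of $\EL_n(R)$ in $\GL_n(R)$, surjectivity of the stabilization map $\GL_n(R)/\EL_n(R) \to \KK_1(R)$, and injectivity of this map. I focus on the absolute statement; the relative statement then follows by an entirely parallel argument that substitutes Lemmas \ref{lemma:unimodulartransitivecongruence} and \ref{lemma:splitunimodulartransitivecongruence} for their absolute counterparts, using additionally that $\GL_n(R,\alpha)$ is normal in $\GL_n(R)$, so normality of $\EL_n(R,\alpha)$ in the ambient $\GL_n(R)$ implies normality in $\GL_n(R,\alpha)$.

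For normality of $\EL_n(R)$ in $\GL_n(R)$ with $n \geq r$, I would use Lemma \ref{lemma:generategl} to reduce to the question of conjugating an elementary generator $e_{ij}(a)$ by an element $h \in \GL_{r-1}(R)$ embedded in the upper-left block. When $\max(i,j) \geq r$, a direct block computation shows that $h e_{ij}(a) h^{-1}$ is a product of commuting elementary matrices of the form $e_{i\ell}(\cdot)$ or $e_{\ell j}(\cdot)$. When $i,j < r$, I rewrite $e_{ij}(a) = [e_{ik}(a), e_{kj}(1)]$ with $k$ any index $\geq r$ (available since $n \geq r$) and invoke the previous case, noting that conjugation preserves commutators and that $\EL_n(R)$ is closed under multiplication and inversion.

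Surjectivity is the easier direction. Given a class in $\KK_1(R)$ represented by some $M \in \GL_m(R)$ with $m > n$, Example \ref{example:unimodular} shows the last column of $M$ is unimodular, so Lemma \ref{lemma:unimodulartransitive} produces $E \in \EL_m(R)$ with $E M \cdot e_m = e_m$; elementary row operations then clear the remainder of the last row, leaving a block-diagonal representative in $\GL_{m-1}(R) \oplus 1 \subset \GL_m(R)$. Iterating descends the representative into $\GL_n(R)$ modulo $\EL_m(R)$.

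Injectivity is the technical heart and the main obstacle. Suppose $M \in \GL_n(R)$ satisfies $M \oplus I_{m-n} \in \EL_m(R)$ for some $m > n$; I must deduce $M \in \EL_n(R)$. Descending induction on $m$ reduces this to the case $m = n+1$, so assume $M \oplus 1 \in \EL_{n+1}(R)$. Writing this as a word $E_1 \cdots E_s$ of elementary matrices in $\GL_{n+1}(R)$, the strategy is to manipulate the word so as to push all factors involving the $(n+1)$-st row or column toward one end and absorb them into the identity block. Each such manipulation hinges on extending or replacing a unimodular configuration, for which the stable rank hypothesis $(\SR_r)$ is the crucial input through Lemma \ref{lemma:splitunimodulartransitive}, supplemented by commutator identities among elementary matrices to rearrange the factors. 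The output is an equation $M \oplus 1 = M' \oplus 1$ with $M' \in \EL_n(R)$, forcing $M = M'$. This word combinatorics, constrained at every step by maintaining appropriate unimodularity conditions, is the delicate part, and the stable-rank bound $n \geq r$ is sharp precisely here.
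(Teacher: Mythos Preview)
The paper does not prove this theorem; it is stated with attribution to Vaserstein \cite{Vaserstein} and treated as an external input (the text calls it a ``deep theorem'' just before the statement).  Your outline is a reasonable high-level sketch of how Vaserstein's argument goes, and the normality and surjectivity steps are essentially as you describe; the injectivity step, which you correctly identify as the technical heart, requires substantially more than the word-rearrangement heuristic you give, but since the paper simply cites the result rather than reproving it, no proof is expected here.
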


This allows us to deal with another example:

\begin{example}
\label{example:elementaryk}
Let $R$ be a ring satisfying $(\SR_r)$ and let $n \geq r$.  Set $\cK = 0$.  Then by Theorem \ref{theorem:injectivek1}
we have $\GL_n^{\cK}(R) = \EL_n(R)$, and for a two-sided ideal $\alpha$ of $R$ we have
$\GL_n^{\cK}(R,\alpha) = \EL_n(R,\alpha)$.
\end{example}

It also allows us to prove the following useful result:

\begin{lemma}
\label{lemma:stablesubgroup}
Let $R$ be a ring satisfying $(\SR_r)$ and let $n \geq r$.  For all subgroups $\cK \subset \KK_1(R)$,
we have
\[\GL_{n+m}^{\cK}(R) \cap \GL_n(R) = \GL_n^{\cK}(R) \quad \text{for all $m \geq 1$}.\]
Similarly, for all two-sided ideals $\alpha$ of $R$ and for all subgroups $\cK' \subset \KK_1(R,\alpha)$,
we have
\[\GL_{n+m}^{\cK'}(R,\alpha) \cap \GL_n(R,\alpha) = \GL_n^{\cK}(R,\alpha) \quad \text{for all $m \geq 1$}.\]
\end{lemma}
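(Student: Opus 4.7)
The plan is to observe that the lemma is essentially a tautological consequence of the compatibility of the $\KK_1$-projection with the stabilization maps; no hard ingredient is needed beyond the definitions. I will handle the absolute case first and then note that the relative case follows by the identical argument.

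First I would introduce notation for the defining maps: let $\pi_n\colon \GL_n(R) \to \KK_1(R)$ denote the composition $\GL_n(R) \hookrightarrow \GL(R) \twoheadrightarrow \KK_1(R)$, so that by definition $\GL_n^{\cK}(R) = \pi_n^{-1}(\cK)$ and $\GL_{n+1}^{\cK}(R) = \pi_{n+1}^{-1}(\cK)$. Because $\GL(R) = \bigcup_{m} \GL_m(R)$ is the directed union along the block-embeddings $M \mapsto M \oplus 1$, the two maps $\GL_n(R) \hookrightarrow \GL(R)$ and $\GL_n(R) \hookrightarrow \GL_{n+1}(R) \hookrightarrow \GL(R)$ agree. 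Postcomposing with the projection to $\KK_1(R)$ gives $\pi_{n+1}|_{\GL_n(R)} = \pi_n$. Intersecting with $\GL_n(R)$ then yields
\[
\GL_{n+1}^{\cK}(R) \cap \GL_n(R) \;=\; \pi_{n+1}^{-1}(\cK) \cap \GL_n(R) \;=\; \pi_n^{-1}(\cK) \;=\; \GL_n^{\cK}(R),
\]
which is the desired equality.

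For the relative statement, the same argument applies verbatim: replace $\GL(R)$ by $\GL(R,\alpha) = \bigcup_m \GL_m(R,\alpha)$ and $\KK_1(R)$ by $\KK_1(R,\alpha)$, and note that the block-embedding $M \mapsto M \oplus 1$ sends $\GL_n(R,\alpha)$ into $\GL_{n+1}(R,\alpha)$ (so the congruence condition is preserved). The compatibility $\pi_{n+1}|_{\GL_n(R,\alpha)} = \pi_n$ is then immediate and yields the conclusion by exactly the same preimage calculation.

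There is no real obstacle here; the hypotheses $(\SR_r)$ and $n \geq r$ play no role in the argument above. They are presumably listed because they are the natural hypotheses under which Vaserstein's Theorem \ref{theorem:injectivek1} makes $\GL_n^{\cK}(R)$ a concrete finite-level object (namely the preimage of $\cK$ under the stable isomorphism $\GL_n(R)/\EL_n(R) \cong \KK_1(R)$), and this finite-level description is what the lemma will be applied to in subsequent sections. If one wanted to pursue a proof through that finite-level description instead, one would argue that $\GL_n^{\cK}(R)$ contains $\EL_n(R)$, use Vaserstein to identify the quotient with $\cK$, and then observe that the inclusion $\GL_n(R)/\EL_n(R) \hookrightarrow \GL_{n+1}(R)/\EL_{n+1}(R)$ is compatible with these identifications; but this simply recovers the tautological argument above.
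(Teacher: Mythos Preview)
Your proof is correct and is genuinely more elementary than the paper's. The paper proves this lemma by invoking Vaserstein's Theorem \ref{theorem:injectivek1} to say that both maps
\[
\GL_n(R)/\EL_n(R) \longrightarrow \GL_{n+1}(R)/\EL_{n+1}(R) \longrightarrow \KK_1(R)
\]
are isomorphisms, and then reads off the result from that. You instead work directly from the definition of $\GL_n^{\cK}(R)$ as the preimage of $\cK$ under $\pi_n\colon \GL_n(R)\hookrightarrow \GL(R)\to \KK_1(R)$ and observe that $\pi_{n+1}|_{\GL_n(R)}=\pi_n$, which makes the equality an immediate preimage computation. Your closing remark is on point: with the paper's definition of $\GL_n^{\cK}(R)$, the hypotheses $(\SR_r)$ and $n\ge r$ are not used in the proof of this particular lemma; they matter only for the finite-level identification $\GL_n(R)/\EL_n(R)\cong \KK_1(R)$ via Vaserstein, which is how the groups $\GL_n^{\cK}(R)$ get used later. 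The paper's route has the virtue of making that finite-level picture explicit, but your tautological argument is the cleaner proof of the lemma as stated.
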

\begin{proof}
It is enough to deal with the case $m=1$.  By Theorem \ref{theorem:injectivek1}, the maps
\[\GL_n(R) / \EL_n(R) \rightarrow \GL_{n+1}(R) / \EL_{n+1}(R) \rightarrow \KK_1(R)\]
are both isomorphisms.  This implies that
\[\GL_{n+1}^{\cK}(R) \cap \GL_n(R) = \GL_n^{\cK}(R).\]
The relative statement is proved similarly.
\end{proof}

\subsection{Elementary vs relative elementary subgroups}

Let $\alpha$ be a two-sided ideal of $R$ and let $\cK' \subset \KK_1(R,\alpha)$ be a subgroup.  Define
\[\oEL^{\cK'}_n(R,\alpha) = \GL^{\cK'}_n(R,\alpha) \cap \EL_n(R).\]
We thus have $\EL_n(R,\alpha) \subset \oEL^{\cK'}_n(R,\alpha)$.  This
need not be an equality:

\begin{lemma}
\label{lemma:elo}
Let $R$ be a ring satisfying $(\SR_r)$, let $n \geq r$, let $\alpha$ be a two-sided ideal of $R$,
and let $\cK' \subset \KK_1(R,\alpha)$ be a subgroup.  Let $\cL \subset \KK_1(R,\alpha)$
be the intersection of $\cK'$ with the kernel of the map $\KK_1(R,\alpha) \rightarrow \KK_1(R)$.  We
then have a short exact sequence
\[1 \longrightarrow \EL_n(R,\alpha) \longrightarrow \oEL^{\cK'}_n(R,\alpha) \longrightarrow \cL \longrightarrow 0.\]
\end{lemma}
\begin{proof}
By Theorem \ref{theorem:injectivek1}, the group $\oEL^{\cK'}_n(R,\alpha)$ is the kernel of the composition
\[\GL^{\cK'}_n(R,\alpha) \hookrightarrow \GL_n(R) \rightarrow \KK_1(R).\]
Theorem \ref{theorem:injectivek1} also implies that this composition can be factored as
\[\GL^{\cK'}_n(R,\alpha) \rightarrow \GL^{\cK'}_n(R,\alpha)/\EL_n(R,\alpha) \cong \cK' \hookrightarrow \KK_1(R,\alpha) \rightarrow \KK_1(R).\]
Combining these two facts, we deduce that
\[\oEL^{\cK'}_n(R,\alpha) / \EL_n(R,\alpha) \cong \ker\left(\cK' \hookrightarrow \KK_1(R,\alpha) \rightarrow \KK_1(R)\right) = \cL,\]
which is equivalent to the claimed short exact sequence.
\end{proof}

The point of introducing the group $\oEL_n^{\cK'}(R,\alpha)$ is the following lemma: 

\begin{lemma}
\label{lemma:bigdiagram}
Let $R$ be a ring satisfying $(\SR_r)$, let $n \geq r$, and let $\alpha$ be a two-sided ideal of $R$.
Let $\cK \subset \KK_1(R)$ and $\cK' \subset \KK_1(R,\alpha)$ be subgroups such that $\cK'$ maps
to a subgroup $\ocK'$ of $\cK$ under the map $\KK_1(R,\alpha) \rightarrow \KK_1(R)$.  We then
have a commutative diagram with exact rows and injective columns
\[\begin{tikzcd}
1 \arrow{r} & \oEL^{\cK'}_n(R,\alpha) \arrow{r} \arrow[hook]{d} & \GL_n^{\cK'}(R,\alpha) \arrow{r} \arrow[hook]{d} & \ocK' \arrow{r} \arrow[hook]{d} & 1  \\
1 \arrow{r} & \EL_n(R)                \arrow{r}                 & \GL_n^{\cK}(R)         \arrow{r}                 & \cK   \arrow{r}                 & 1
\end{tikzcd}\]
\end{lemma}
\begin{proof}
Immediate from Theorem \ref{theorem:injectivek1} and the definitions.
\end{proof}

\subsection{Finite-index subgroups}

Our next goal is to clarify when $\GL_n^{\cK'}(R,\alpha)$ is a finite-index subgroup
of $\GL_n^{\cK}(R)$.  This requires the following lemma:

\begin{lemma}
\label{lemma:relativek}
Let $R$ be a ring and let $\alpha$ be a two-sided ideal of $R$ with $|R/\alpha|<\infty$.
Then the map $\KK_1(R,\alpha) \rightarrow \KK_1(R)$ has finite kernel and cokernel.
\end{lemma}
\begin{proof}
Immediate from the following exact sequence \cite[Theorem 6.2]{MilnorKTheory}:
\[\KK_2(R/\alpha) \rightarrow \KK_1(R,\alpha) \rightarrow \KK_1(R) \rightarrow \KK_1(R/\alpha).\qedhere\]
\end{proof}

Our result is as follows:

\begin{lemma}
\label{lemma:finiteindex}
Let $R$ be a ring satisfying $(\SR_r)$, let $n \geq r$, and let $\alpha$ be a two-sided ideal of $R$ with
$|R/\alpha| < \infty$.  The following then hold:
\begin{itemize}
\item[(i)] $\EL_n(R,\alpha)$ is a finite-index subgroup of $\EL_n(R)$.
\item[(ii)] Let $\cK \subset \KK_1(R)$ and $\cK' \subset \KK_1(R,\alpha)$ be subgroups such that $\cK'$ maps
to a finite-index subgroup of $\cK$ under the map $\KK_1(R,\alpha) \rightarrow \KK_1(R)$.  Then
$\GL_n^{\cK'}(R,\alpha)$ is a finite-index subgroup of $\GL^{\cK}_n(R)$.
\end{itemize}
\end{lemma}
\begin{proof}
We start with (i).  Since $\GL_n(R,\alpha)$ is the kernel of the map 
$\GL_n(R) \rightarrow \GL_n(R/\alpha)$ and $|R/\alpha|<\infty$, the group $\GL_n(R,\alpha)$ 
is a finite-index subgroup of $\GL_n(R)$.  Letting
\[\oEL_n(R,\alpha) = \oEL_n^{\KK_1(R,\alpha)}(R,\alpha),\]
by definition $\oEL_n(R,\alpha)$ is the intersection of $\EL_n(R) \subset \GL_n(R)$ with the
finite-index subgroup $\GL_n(R,\alpha)$ of $\GL_n(R)$.  It follows that $\oEL_n(R,\alpha)$
is a finite-index subgroup of $\EL_n(R)$.  Finally, by 
Lemmas \ref{lemma:elo} and \ref{lemma:relativek}, the group $\EL_n(R,\alpha)$ is a
finite-index subgroup of $\oEL_n(R,\alpha)$.  Summarizing this all, both inclusions
\[\EL_n(R,\alpha) \subset \oEL_n(R,\alpha) \subset \EL_n(R)\]
are inclusions of finite-index subgroups, so $\EL_n(R,\alpha)$ is a finite-index subgroup of $\EL_n(R)$,
as desired.

We next prove (ii).  Let $\ocK'$ be the image of $\cK'$ in $\cK$.  Lemma \ref{lemma:bigdiagram}
gives a commutative diagram with exact rows and injective columns
\[\begin{tikzcd}
1 \arrow{r} & \oEL^{\cK'}_n(R,\alpha) \arrow{r} \arrow[hook]{d}{i_1} & \GL_n^{\cK'}(R,\alpha) \arrow{r} \arrow[hook]{d}{i_2} & \ocK' \arrow{r} \arrow[hook]{d}{i_3} & 1  \\
1 \arrow{r} & \EL_n(R)                \arrow{r}                 & \GL_n^{\cK}(R)         \arrow{r}                 & \cK   \arrow{r}                 & 1
\end{tikzcd}\]
The vertical maps $i_1,i_2,i_3$ are inclusions, and one of our assumptions is that $\ocK'$
is a finite-index subgroup of $\cK$.  Since $\EL_n(R,\alpha)$ is a finite-index
subgroup of $\EL_n(R)$ and
\[\EL_n(R,\alpha) \subset \oEL^{\cK'}_n(R,\alpha) \subset \EL_n(R),\]
the group $\oEL^{\cK'}_n(R,\alpha)$ is a finite-index subgroup of $\EL_n(R)$.  
Since both $i_1$ and $i_3$ are inclusions of finite-index subgroups, we conclude that $i_2$
is also an inclusion of a finite-index subgroup, as desired.
\end{proof}

\section{Complex of split partial bases}
\label{section:splitbases}

We now introduce a simplicial complex called the complex of split partial bases, which
is a tiny variant on one introduced by Charney \cite{CharneyCongruence}.

\begin{remark}
The lemmas we prove in this section are all essentially due to Charney \cite{CharneyCongruence}.  Many
of them are also proved in \cite[\S 5.3]{RandalWilliamsWahl}.  We include proofs because they are short, and also
because it is annoyingly nontrivial to match up our notation and indexing conventions with those papers.
\end{remark}

\subsection{Definition}
Let $R$ be a ring and let $M$ be a right $R$-module.  We say that elements $y_1,\ldots,y_m \in M$ 
are {\em linearly independent} if the map $R^m \rightarrow M$ taking $(\lambda_1,\ldots,\lambda_m) \in R^m$
to $y_1 \cdot \lambda_1 + \cdots + y_m \cdot \lambda_m$ is injective.  The {\em complex of split partial bases} for $M$, denoted
$\Bases(M)$, is the following simplicial complex:
\begin{itemize}
\item The vertices are pairs $(x;C)$ with $x \in M$ a linearly independent element\footnote{This means
that $x \cdot \lambda \neq 0$ for all nonzero $\lambda \in R$.} and
$C \subseteq M$ a submodule such that 
$M = C \oplus x \cdot R$.
\item A collection $\{(x_0;C_0),\ldots,(x_k;C_k)\}$ of vertices forms a $k$-simplex
if $x_i \in C_j$ for all distinct $0 \leq i,j \leq k$.
\end{itemize}
The group $\Aut_R(M)$ acts on $\Bases(M)$ on the left.  In particular, $\GL_n(R)$ acts on $\Bases(R^n)$.

\subsection{Splitting from a simplex}

The following lemma might clarify the definition of a simplex in $\Bases(M)$.

\begin{lemma}
\label{lemma:splitsimplex}
Let $R$ be a ring, let $M$ be a right $R$-module, and let $\{(x_0;C_0),\ldots,(x_k;C_k)\}$ be a $k$-simplex
of $\Bases(M)$.  Setting $C = C_0 \cap \cdots \cap C_k$, we then have
\begin{equation}
\label{eqn:splitsimplex}
M = C \oplus \bigoplus_{i=0}^k x_i \cdot R.
\end{equation}
In particular, the elements $x_0,\ldots,x_k$ are linearly independent.
\end{lemma}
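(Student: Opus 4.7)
The plan is to argue by induction on $k$, showing the stronger statement that for each $0 \le j \le k$, if we set $M_j = C_0 \cap \cdots \cap C_j$ then
\[M = M_j \oplus \bigoplus_{i=0}^{j} x_i \cdot R.\]
Taking $j = k$ gives \eqref{eqn:splitsimplex} since $M_k = C$. The base case $j = 0$ is immediate from the definition of a vertex, which gives $M = C_0 \oplus x_0 \cdot R$.

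For the inductive step, the key observation is that the defining condition $x_i \in C_j$ (for $i \neq j$) forces the submodules $x_i \cdot R$ with $i \neq j$ to lie entirely inside $C_j$. First I would prove that
\[C_j = M_j \oplus \bigoplus_{i=0}^{j-1} x_i \cdot R,\]
by intersecting the inductive decomposition $M = M_{j-1} \oplus \bigoplus_{i<j} x_i R$ with $C_j$: an element $m + \sum_{i<j} x_i r_i$ lies in $C_j$ iff $m \in C_j$ (since the $x_i r_i$ already lie in $C_j$), i.e., iff $m \in M_{j-1} \cap C_j = M_j$. Once this is established, substituting into the defining identity $M = C_j \oplus x_j \cdot R$ yields the desired decomposition at stage $j$.

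The directness of the sum is essentially built into this inductive set-up because each step adjoins an $x_j \cdot R$ to a direct-sum decomposition of $M$ along a complement $C_j$. Concretely, if $c + \sum x_i r_i = 0$ with $c \in C$, then projecting along $M = C_j \oplus x_j R$ uses that $c \in C \subseteq C_j$ and $x_i r_i \in C_j$ for $i \neq j$, forcing $x_j r_j = 0$ for each $j$ in turn, and finally $c = 0$. The only potential subtlety is the observation that $C_j$ decomposes as claimed, but this is a clean consequence of the inductive hypothesis together with the hypothesis $x_i \in C_j$ for $i \neq j$ — there is no real obstacle.
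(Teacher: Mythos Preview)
Your proof is correct and takes essentially the same approach as the paper's: both argue by induction on the number of vertices, and the key step in each is that intersecting the current decomposition with one more $C_j$ preserves directness because the remaining $x_i$ already lie in $C_j$. The only cosmetic difference is that the paper removes the last vertex and recurses inside $\Bases(C_k)$, while you add vertices one at a time inside $M$; the underlying computation is the same.
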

\begin{proof}
The proof is by induction on $k$.  The base case $k=0$ is trivial, so assume that $k>0$ and that
the lemma is true for all smaller $k$.  For $0 \leq i \leq k-1$, let $C_i' = C_i \cap C_k$.  For
these $i$, we claim that $C_k = C_i' \oplus x_i \cdot R$.  Since $x_i \in C_k$ and
$M = C_i \oplus x_i \cdot R$, it is enough to show that
for all $z \in C_k$, we can write $z = z' + x_i \cdot a$ for some
$z' \in C_i'$ and $a \in R$.  Since $M = C_i \oplus x_i \cdot R$, we can
write $z = z' + x_i \cdot a$ for some $z' \in C_i$ and $a \in R$, and
what we must show is that $z' \in C_i'$.  But since
$z,x_i \in C_k$ we have $z' = z-x_i \cdot a \in C_k$, so $z' \in C_i'$, as
desired.

It follows that $\{(x_0;C'_0),\ldots,(x_{k-1};C'_{k-1})\}$ is a $(k-1)$-simplex of
$\Bases(C_k)$.  Since
\[C = C_0 \cap \cdots \cap C_k = C'_0 \cap \cdots \cap C'_{k-1},\]
our inductive hypothesis implies that
\[C_k = C \oplus \bigoplus_{i=0}^{k-1} x_i \cdot R.\]
Since $M = C_k \oplus x_k \cdot R$, equation \eqref{eqn:splitsimplex} follows.
\end{proof}

This has the following useful consequence:

\begin{lemma}
\label{lemma:identifysimplex}
Let $R$ be a ring and let $M$ be a right $R$-module.  The $k$-simplices of $\Bases(M)$ are in bijection
with tuples $(\{x_0,\ldots,x_k\};C)$, where $\{x_0,\ldots,x_k\}$ is an unordered collection of $(k+1)$
linearly independent elements of $M$ and $C \subset M$ is a submodule such that
\[M = C \oplus \bigoplus_{i=0}^k x_i \cdot R.\]
\end{lemma}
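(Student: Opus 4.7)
The plan is to construct the bijection explicitly in both directions and check they are mutually inverse, using Lemma \ref{lemma:splitsimplex} as the key input.

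For the forward direction, given a $k$-simplex $\{(x_0;C_0),\ldots,(x_k;C_k)\}$ of $\Bases(M)$, I send it to the tuple $(\{x_0,\ldots,x_k\};C)$ where $C = C_0 \cap \cdots \cap C_k$. Lemma \ref{lemma:splitsimplex} already gives us the required splitting
\[M = C \oplus \bigoplus_{i=0}^k x_i \cdot R,\]
so this map is well-defined. (Note that the $x_i$ are automatically distinct because they are linearly independent in the above direct sum decomposition.)

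For the backward direction, given a tuple $(\{x_0,\ldots,x_k\};C)$ with $M = C \oplus \bigoplus_{i=0}^k x_i \cdot R$, I define for each $0 \leq i \leq k$ the submodule
\[C_i = C \oplus \bigoplus_{j \neq i} x_j \cdot R,\]
so that $M = C_i \oplus x_i \cdot R$ and $(x_i;C_i)$ is a vertex of $\Bases(M)$. Since $x_i \in C_j$ whenever $i \neq j$, the collection $\{(x_0;C_0),\ldots,(x_k;C_k)\}$ is a $k$-simplex of $\Bases(M)$.

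It remains to verify that these two constructions are inverse to each other. Starting from a tuple $(\{x_0,\ldots,x_k\};C)$, constructing the $C_i$'s and then intersecting them visibly returns $C$ on the nose. In the other direction, starting from a simplex $\{(x_i;C_i)\}$ with intersection $C = \bigcap_i C_i$, I need to show that the reconstructed $C_i' := C \oplus \bigoplus_{j \neq i} x_j \cdot R$ agrees with the original $C_i$. The inclusion $C_i' \subseteq C_i$ is immediate since $C \subseteq C_i$ and $x_j \in C_i$ for $j \neq i$; the reverse inclusion then follows from the fact that $M = C_i' \oplus x_i \cdot R = C_i \oplus x_i \cdot R$ forces equality. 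This is really the only subtle point, and it is a one-line observation once Lemma \ref{lemma:splitsimplex} is in hand, so I don't expect any serious obstacle in the proof.
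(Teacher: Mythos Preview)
Your proof is correct and follows essentially the same approach as the paper: define the forward map via $C = \bigcap_i C_i$ using Lemma~\ref{lemma:splitsimplex}, define the backward map via $C_i = C \oplus \bigoplus_{j \neq i} x_j \cdot R$, and check these are mutually inverse. Your write-up is in fact a bit more careful than the paper's, which simply asserts the maps are inverse without spelling out the complement argument you give for $C_i' = C_i$.
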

\begin{proof}
For a $k$-simplex $\{(x_0;C_0),\ldots,(x_k,C_k)\}$ of $\Bases(M)$, by Lemma \ref{lemma:splitsimplex} we can associate
the tuple 
\[(\{x_0,\ldots,x_k\};\bigcap_{i=0}^k C_i).\]
Conversely, consider such a tuple $(\{x_0,\ldots,x_k\};C)$.  For $0 \leq j \leq k$, define
\[C_j = C \oplus \bigoplus_{\substack{0 \leq i \leq k \\ i \neq j}} x_i \cdot R.\]
Then $\{(x_0;C_0),\ldots,(x_k;C_k)\}$ is a $k$-simplex of $\Bases(M)$.  It is clear that these
two operations are inverses to one another.
\end{proof}

Subsequently, we will freely move between the notations $\{(x_0;C_0),\ldots,(x_k;C_k)\}$ and
$(\{x_0,\ldots,x_k\};C)$ for the $k$-simplices of $\Bases(M)$.

\subsection{Split partial bases and unimodular vectors}

We now focus our attention on $\Bases(R^n)$.  The first observation about this
complex is as follows:

\begin{lemma}
\label{lemma:unimodular}
Let $R$ be a ring and let $x \in R^n$.  Then $x$ is unimodular if and only if
there exists some submodule $C \subset R^n$ such that $(x;C)$ is a vertex
of $\Bases(R^n)$.
\end{lemma}
\begin{proof}
If $x$ is unimodular, then by definition we can find a homomorphism
$\phi\colon R^n \rightarrow R$ of right $R$-modules such that $\phi(x) = 1$.
This implies that $x$ is linearly independent, and letting $C = \ker(\phi)$
the pair $(x;C)$ is a vertex of $\Bases(R^n)$.  Conversely, if $(x;C)$ is 
a vertex of $\Bases(R^n)$, then we can define $\phi\colon R^n \rightarrow R$
to be the composition of the projection
\[R^n = C \oplus x \cdot R \stackrel{\text{proj}}{\longrightarrow} x \cdot R\]
with the map $x \cdot R \rightarrow R$ taking $x$ to $1$.
\end{proof}

\subsection{Links in complex of split partial bases}

If $R$ satisfies a stable rank condition, then the links in $\Bases(R^n)$
are well-behaved:

\begin{lemma}
\label{lemma:splitbaseslink}
Let $R$ be a ring satisfying $(\SR_r)$.  For some $n$ and $k$ with $k \leq n-r$, let $\sigma$ be a $k$-simplex
of $\Bases(R^n)$.  We then have $\Link_{\Bases(R^n)}(\sigma) \cong \Bases(R^{n-k-1})$.
\end{lemma}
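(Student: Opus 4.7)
The plan is to use Lemma \ref{lemma:identifysimplex} to describe both $\sigma$ and the simplices of its link uniformly, producing a natural simplicial isomorphism $\Link_{\Bases(R^n)}(\sigma) \cong \Bases(C)$ where $C$ is the complementary submodule from the tuple description of $\sigma$, and then to apply Lemma \ref{lemma:stabledecomp} to identify $C \cong R^{n-k-1}$.

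First I would write $\sigma = (\{x_0,\ldots,x_k\}; C)$ using Lemma \ref{lemma:identifysimplex}, so that $R^n = C \oplus \bigoplus_{i=0}^k x_i \cdot R$. An $\ell$-simplex $\tau$ of $\Bases(R^n)$ lies in $\Link_{\Bases(R^n)}(\sigma)$ precisely when $\sigma$ and $\tau$ are disjoint and $\sigma \cup \tau$ is a $(k+\ell+1)$-simplex of $\Bases(R^n)$. Applying Lemma \ref{lemma:identifysimplex} to such a union, $\sigma \cup \tau$ corresponds to a tuple $(\{x_0,\ldots,x_k, y_0,\ldots,y_\ell\}; C')$ with
\[R^n = C' \oplus \bigoplus_{i=0}^k x_i \cdot R \oplus \bigoplus_{j=0}^\ell y_j \cdot R.\]
Cancelling the common summands, this is equivalent to the decomposition $C = C' \oplus \bigoplus_{j=0}^\ell y_j \cdot R$, which is the same as saying that $(\{y_0,\ldots,y_\ell\}; C')$ is an $\ell$-simplex of $\Bases(C)$. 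Sending $\tau$ to this tuple gives a bijection on simplices in each dimension. It is visibly compatible with the face inclusions, which on both sides correspond to discarding some of the $y_j$ (and absorbing their free summands into the complement), so the bijection defines a simplicial isomorphism $\Link_{\Bases(R^n)}(\sigma) \cong \Bases(C)$.

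Finally I would identify $C$ as a free module. Each $x_i$ is unimodular, so the map $R \to x_i \cdot R$ sending $r \mapsto x_i \cdot r$ admits a retraction (coming from any $\phi \in \Hom_R(R^n,R)$ with $\phi(x_i) = 1$) and is therefore an isomorphism; consequently $\bigoplus_{i=0}^k x_i \cdot R \cong R^{k+1}$. Thus $C \oplus R^{k+1} \cong R^n$, and the stable-rank bound on $k$ puts us in the range where Lemma \ref{lemma:stabledecomp} yields $C \cong R^{n-k-1}$, completing the identification $\Link_{\Bases(R^n)}(\sigma) \cong \Bases(R^{n-k-1})$. There is no real obstacle here: once Lemma \ref{lemma:identifysimplex} is used to repackage simplices as splitting data, the link description becomes a tautology and the rank assertion is exactly the content of Lemma \ref{lemma:stabledecomp}.
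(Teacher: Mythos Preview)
Your proof is correct and follows essentially the same route as the paper. The paper writes down the bijection explicitly as a pair of inverse maps $\phi$ and $\psi$ between $\Link_{\Bases(R^n)}(\sigma)$ and $\Bases(C)$ (sending $(\{y_0,\ldots,y_\ell\};D)$ to $(\{y_0,\ldots,y_\ell\};D\cap C)$ and back via $E \mapsto E \oplus \bigoplus_i x_i R$), which is exactly the bijection you obtain by passing through the tuple description of $\sigma\cup\tau$; your extra remark that each $x_i\cdot R \cong R$ via unimodularity makes explicit a step the paper leaves implicit when invoking Lemma~\ref{lemma:stabledecomp}.
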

\begin{proof}
Let $\sigma = (\{x_0,\ldots,x_k\};C)$.  Since
\[R^n = C \oplus \bigoplus_{i=0}^k x_i \cdot R,\]
Lemma \ref{lemma:stabledecomp} implies that $C \cong R^{n-k-1}$, so it is enough to
prove that $\Link_{\Bases(R^n)}(\sigma)$ and $\Bases(C)$ are isomorphic.  For this,
note that the maps $\phi\colon \Link_{\Bases(R^n)}(\sigma) \rightarrow \Bases(C)$ and
$\psi\colon \Bases(C) \rightarrow \Link_{\Bases(R^n)}(\sigma)$ defined on $\ell$-simplices via the formulas
\[\phi((\{y_0,\ldots,y_{\ell}\};D)) = (\{y_0,\ldots,y_{\ell}\};D \cap C)\]
and
\[\psi((\{z_0,\ldots,z_{\ell}\};E)) = 
(\{z_0,\ldots,z_{\ell}\};E \oplus \bigoplus_{i=0}^k x_i \cdot R)\]
are inverse isomorphisms.
\end{proof}

\subsection{General linear group action}

We now turn to the action of $\GL_n(R)$ and its elementary subgroup $\EL_n(R)$
on $\Bases(R^n)$.  Our main result will be as follows:

\begin{lemma}
\label{lemma:glbasesaction}
Let $R$ be a ring satisfying $(\SR_r)$.  Let
\[\{(x_0;C_0),\ldots,(x_k;C_k)\} \quad \text{and} \quad \{(y_0;D_0),\ldots,(y_k;D_k)\}\]
be $k$-simplices of $\Bases(R^n)$.  Assume that $k \leq n-r$.  The following then hold:
\begin{itemize}
\item[(a)] There exists $M \in \EL_n(R)$ such that $M \cdot (x_i;C_i) = (y_i;D_i)$ for $0 \leq i \leq k$.
\item[(b)] Let $\fq$ be a two-sided ideal of $R$ and let $\pi\colon R^n \rightarrow (R/\fq)^n$ be the projection.
Assume that $\pi(x_i) = \pi(y_i)$ and $\pi(C_i) = \pi(D_i)$ for $0 \leq i \leq k$.  Then the $M$ in
part (a) can be chosen to lie in $\EL_n(R,\fq)$.
\end{itemize}
\end{lemma}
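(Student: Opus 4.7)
The plan is to prove (a) and (b) simultaneously by induction on $k$, with the base case $k=0$ being exactly Lemmas \ref{lemma:splitunimodulartransitive} and \ref{lemma:splitunimodulartransitivecongruence}. Those lemmas apply because the hypothesis $k \leq n-r$ degenerates to $n \geq r$ when $k=0$. Lemma \ref{lemma:identifysimplex} and the assumption that we are dealing with a $k$-simplex mean we may freely use both notations $\{(x_i;C_i)\}$ and $(\{x_i\};C)$ with $C = \bigcap_i C_i$.

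For the inductive step with $k \geq 1$, I would first move the last vertex into position and then recurse on a link. By the base case applied to $(x_k;C_k)$ and $(y_k;D_k)$, there is $M_1 \in \EL_n(R)$ (respectively in $\EL_n(R,\fq)$ in part (b), since the single-vertex congruence hypotheses are satisfied there) with $M_1 \cdot (x_k;C_k) = (y_k;D_k)$. Applying $M_1$ to the first simplex reduces us to the case $(x_k;C_k) = (y_k;D_k)$; in case (b) the congruence conditions on the remaining vertices are preserved, since $M_1 \equiv \mathrm{id} \pmod \fq$. The key structural observation is that for each $i<k$ one has $x_i, y_i \in C_k$ and the splittings
\[
C_i = (C_i \cap C_k) \oplus x_k \cdot R, \qquad D_i = (D_i \cap C_k) \oplus x_k \cdot R.
\]
Indeed, $x_k \in C_i$ is forced by the simplex condition, giving $\supseteq$; for the reverse, any $z \in C_i$ decomposes uniquely as $z' + x_k \cdot a$ with $z' \in C_k$, and then $z' = z - x_k \cdot a$ lies in $C_i$ as well. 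It follows that $\{(x_i; C_i \cap C_k)\}_{i=0}^{k-1}$ and $\{(y_i; D_i \cap C_k)\}_{i=0}^{k-1}$ are $(k-1)$-simplices of $\Bases(C_k)$, and by Lemma \ref{lemma:stabledecomp} (using $n \geq r$) there is an identification $C_k \cong R^{n-1}$.

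To finish, I would fix such an identification and invoke the inclusion $\Aut_R(C_k) \hookrightarrow \GL_n(R)$ sending an automorphism of $C_k$ to the automorphism of $R^n = C_k \oplus x_k \cdot R$ that fixes $x_k$; this identifies $\EL(C_k) = \EL_{n-1}(R)$ with a subgroup of $\EL_n(R)$, and sends relative elementary matrices for $\fq \cap C_k$ into $\EL_n(R,\fq)$. The two $(k-1)$-simplices inside $\Bases(C_k)$ satisfy $k-1 \leq (n-1)-r$, so the inductive hypothesis produces $M_2 \in \EL(C_k)$ (respectively $\EL(C_k, \fq \cap C_k)$) carrying the first $(k-1)$-simplex to the second. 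Since $M_2$ fixes $x_k$ and sends $C_i \cap C_k$ to $D_i \cap C_k$, the splitting displayed above forces $M_2 \cdot (x_i; C_i) = (y_i; D_i)$ for $i<k$, while $M_2$ fixes $(x_k;C_k)$. The composition $M_2 M_1$ is the required element. For (b) the same argument applies, once one checks that the relations $\pi(C_i) = \pi(D_i)$ together with the displayed splittings and $\pi(x_k)=\pi(y_k)$ imply $\pi(C_i \cap C_k) = \pi(D_i \cap C_k)$ inside $\pi(C_k)$, which is immediate by intersecting with $\pi(C_k)$.

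The main obstacle is not deep but requires care: one must verify that the ``fix $x_k$'' inclusion $\Aut_R(C_k) \hookrightarrow \GL_n(R)$ really carries elementary matrices to elementary matrices and relative elementary matrices to relative elementary matrices, so that the induction stays inside $\EL_n(R)$ in (a) and inside $\EL_n(R,\fq)$ in (b). Once this bookkeeping is in place, everything else is a formal consequence of Lemmas \ref{lemma:splitsimplex}, \ref{lemma:stabledecomp}, \ref{lemma:splitunimodulartransitive}, and \ref{lemma:splitunimodulartransitivecongruence}.
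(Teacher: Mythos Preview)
Your proof is correct and follows exactly the paper's approach: induction on $k$, with the base case given by Lemmas \ref{lemma:splitunimodulartransitive} and \ref{lemma:splitunimodulartransitivecongruence} and the inductive step carried out in the link (which the paper packages as Lemma \ref{lemma:splitbaseslink}). The bookkeeping issue you flag is most cleanly handled by applying the $k=0$ case once more to move the common vertex $(x_k;C_k)=(y_k;D_k)$ to the standard $(v_n;R^{n-1})$ via some $N\in\EL_n(R)$; then the inclusion $\EL_{n-1}(R)\hookrightarrow\EL_n(R)$ is the literal corner embedding, and for part (b) the composite $N^{-1}M_2 N$ lies in $\EL_n(R,\fq)$ simply because $\EL_n(R,\fq)$ is by definition normal in $\EL_n(R)$.
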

\begin{proof}
Lemma \ref{lemma:unimodular} implies that the $x_i$ and $y_j$ are unimodular.  For $k=0$, the lemma
is now immediate from Lemmas \ref{lemma:splitunimodulartransitive} and \ref{lemma:splitunimodulartransitivecongruence}.
The general case can be deduced from this by induction using Lemma \ref{lemma:splitbaseslink}.
\end{proof}

\subsection{Topology}

Recall that we defined what it means for a simplicial complex to be weakly Cohen-Macaulay in
\S \ref{section:cmcomplex}.  The complex of split partial bases has this property:

\begin{theorem}
\label{theorem:basescm}
Let $R$ be a ring satisfying $(\SR_r)$ and let $n \geq r-1$.  Then $\Bases(R^n)$ is weakly Cohen-Macaulay of
dimension $\left\lfloor\frac{n-r+1}{2}\right\rfloor$.
\end{theorem}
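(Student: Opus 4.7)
The plan is to argue by induction on $n$. The base case is $n=r$, where the statement reduces to $\Bases(R^r)$ being nonempty (the demanded connectivity is $N-1 = -\tfrac{1}{2}$, interpreted as $-1$-connected, and all link conditions are vacuous); nonemptiness holds since $(e_1;\, e_2 R \oplus \cdots \oplus e_r R)$ is a vertex. For the inductive step, fix $n > r$ and assume the result for all smaller values; I must verify both defining conditions for being weakly Cohen--Macaulay of dimension $N := \frac{n-r+1}{2}$.

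The link condition is immediate from Lemma~\ref{lemma:splitbaseslink}. For a $k$-simplex $\sigma$ with $k \leq n-r+1$, we have $\Link_{\Bases(R^n)}(\sigma) \cong \Bases(R^{n-k-1})$, which by induction is weakly Cohen--Macaulay of dimension $\frac{n-k-r}{2}$ and hence $\frac{n-k-r-2}{2}$-connected. A direct calculation shows $\frac{n-k-r-2}{2} - (N-k-2) = \frac{k+1}{2} \geq 0$, so the required $(N-k-2)$-connectivity is achieved with a small slack. For $k > n-r+1$ the required connectivity is negative and the condition is vacuous.

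The main step is to show $\Bases(R^n)$ is itself $(N-1)$-connected. Here I would use the standard ``bad simplex'' argument of Maazen and van der Kallen, as adapted by Charney to this setting. Fix the vertex $v_0 = (e_1;\, e_2 R \oplus \cdots \oplus e_n R)$, whose closed star is contractible. Given a simplicial map $f\colon S^k \to \Bases(R^n)$ from a PL triangulation with $k \leq N-1$, the goal is to homotope $f$ into $\Star(v_0)$, after which it extends across $D^{k+1}$ by coning. Call a simplex $\tau$ of $S^k$ \emph{bad} if $f(\tau) \cup \{v_0\}$ fails to be a simplex of $\Bases(R^n)$. Eliminate bad simplices by downward induction on dimension: for a bad $\tau$ of maximal dimension $d$, its link in $S^k$ is a $(k-d-1)$-sphere whose image under $f$ lies in $\Link_{\Bases(R^n)}(f(\tau)) \cong \Bases(R^{n-d-1})$. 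Cap this sphere inside the subcomplex of the link consisting of vertices that additionally join $v_0$, then redefine $f$ on $\Star(\tau)$ using the cap. The supply of vertices in $\Link(v_0) \cap \Link(f(\tau))$ is guaranteed by Lemma~\ref{lemma:glbasesaction} together with the stable rank hypothesis $(\SR_r)$.

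The main obstacle is the numerical bookkeeping for the capping step: the $(k-d-1)$-sphere we must fill lies in a complex of split partial bases of a module of rank $n-d-1$, whose connectivity by the inductive hypothesis is $\frac{n-d-r-2}{2}$, and the additional constraint of landing in $\Link(v_0)$ reduces the relevant rank by one more, which must be absorbed by the slack identified in the link computation above. Since $k \leq \frac{n-r-1}{2}$, one checks $k-d-1 \leq \frac{n-d-r-2}{2}$ whenever $d \geq -1$, so every intermediate fill is possible. The factor of $\tfrac{1}{2}$ in $N$ is precisely calibrated to make these inequalities balance---each removal of a bad simplex drops both the rank inside the relevant link and the dimension of the sphere to be filled, but at a 2-to-1 ratio---which is what forces the weak Cohen--Macaulay dimension to take the specific form $\frac{n-r+1}{2}$.
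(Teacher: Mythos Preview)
The paper's proof is much shorter: it invokes Theorem~\ref{theorem:basesconnected} (the connectivity of $\Bases(R^n)$, essentially due to Charney and stated as a black box) and then deduces the link condition from Lemma~\ref{lemma:splitbaseslink} plus another application of Theorem~\ref{theorem:basesconnected}. No induction on $n$ is used. Your verification of the link condition is fine and matches this.

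Your sketch of the connectivity argument, however, has two genuine problems. First, your notion of ``bad'' is upward-closed: if $f(\tau)\cup\{v_0\}$ fails to be a simplex and $\tau\subset\tau'$, then $f(\tau')\cup\{v_0\}\supseteq f(\tau)\cup\{v_0\}$ also fails, so every coface of a bad simplex is bad and the maximal bad simplices are all top-dimensional. The downward induction never gets started. The usual convention is the opposite---a simplex is bad when \emph{all} of its vertices map outside $\Link(v_0)$---so that badness is downward-closed and the link of a maximal bad simplex consists of good vertices.

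Second, and more substantively, even after fixing the definition, the step ``landing in $\Link(v_0)$ reduces the relevant rank by one more'' is precisely where the real work lies, and it does not go through as stated. Since $\tau$ is bad, $f(\tau)\cup\{v_0\}$ is \emph{not} a simplex, so $\Link_{\Bases(R^n)}(f(\tau))\cap\Link_{\Bases(R^n)}(v_0)$ is not the link of $f(\tau)\cup\{v_0\}$ and has no reason to be isomorphic to $\Bases(R^{n-d-2})$. Bounding the connectivity of the correct subcomplex is the actual content of the Charney/van der Kallen argument; it uses a different notion of ``bad'' (typically a coordinate-projection condition, so that $(\SR_r)$ can be applied to manufacture a replacement vertex in the link) rather than ``fails to join a fixed vertex''. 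Your appeal to Lemma~\ref{lemma:glbasesaction}, which is a transitivity statement, does not supply this.
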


The key to the proof of Theorem \ref{theorem:basescm} is the following theorem.  It is essentially due to Charney \cite[Theorem 3.5]{CharneyCongruence},
though she works with a slightly different complex, so one needs to adapt her proof.  For a complete proof of exactly
the statement below, see \cite[Lemma 5.10]{RandalWilliamsWahl} (or rather its proof -- the lemma in the reference deals with the large
ordering of $\Bases(R^n)$, but proves this theorem along the way):

\begin{theorem}
\label{theorem:basesconnected}
Let $R$ be a ring satisfying $(\SR_r)$.  Then for all $n \geq 1$, the complex $\Bases(R^n)$ is $\left\lfloor\frac{n-r-1}{2}\right\rfloor$-connected.
\end{theorem}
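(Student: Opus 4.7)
The plan is induction on $n$, with the inductive step executed via a \emph{bad simplex} argument in the style of Quillen, Charney, and van der Kallen. The base case $n \leq r-1$ is vacuous, since then $(n-r-1)/2 \leq -1$ and the claim reduces to nonemptiness when relevant; and for $n \geq r$ nonemptiness follows from Lemma \ref{lemma:unimodulartransitive}. Assuming the claim for all $n' < n$, we must show that for $k \leq (n-r-1)/2$ every simplicial map $f : \Sigma \to \Bases(R^n)$ from a combinatorial $k$-sphere $\Sigma$ extends to a simplicial map $F : D \to \Bases(R^n)$ on some combinatorial $(k+1)$-disk $D$ with $\partial D = \Sigma$.

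First I would fix a distinguished vertex $v_0 := (e_n; \langle e_1, \ldots, e_{n-1}\rangle) \in \Bases(R^n)$. By Lemma \ref{lemma:splitbaseslink}, $\Link_{\Bases(R^n)}(v_0) \cong \Bases(R^{n-1})$, and $\Star(v_0)$ is contractible. It therefore suffices to homotope $f$ into $\Star(v_0)$, after which we can extend over $D$ by coning onto $v_0$. To achieve this, I would take an arbitrary simplicial extension $F_0 : D \to \Bases(R^n)$ (for instance by triangulating $D$ and extending vertex-by-vertex using nonemptiness of appropriate links) and call a simplex $\tau$ of $D$ \emph{bad} if $F_0(\tau) \cup \{v_0\}$ is not a simplex of $\Bases(R^n)$. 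Processing the bad simplices of $D$ in order of decreasing dimension, I would successively modify $F_0$ on the star of each maximal bad simplex so as to eliminate the badness without producing new bad simplices higher up.

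The key local move: for a maximal bad simplex $\tau$ of dimension $j$, the restriction $F_0|_{\Link_D(\tau)}$ takes values in the auxiliary subcomplex $Y_\tau \subset \Bases(R^n)$ of vertices compatible both with $F_0(\tau)$ and with $v_0$. Via Lemma \ref{lemma:splitbaseslink} applied to the simplex $F_0(\tau) \cup \{v_0\}$ after an appropriate change of basis, $Y_\tau$ is isomorphic to a complex of the form $\Bases(R^{n-j-1})$, so the induction hypothesis gives that it is $\left(\tfrac{n-j-r-2}{2}\right)$-connected. Since $\dim \Link_D(\tau) \leq k-j$, the requisite inequality $k-j \leq (n-j-r-2)/2$ rearranges to $2k \leq n-r-2+j$ and holds for $j \geq 1$ using $k \leq (n-r-1)/2$; the $j=0$ case must be handled separately by a direct transitivity argument based on Lemma \ref{lemma:glbasesaction}(a). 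Granted the connectivity, we can fill $F_0|_{\Link_D(\tau)}$ by a disk in $Y_\tau$ and redefine $F_0$ on $\Star_D(\tau)$ as the join of this filling with any single vertex of $Y_\tau$, rendering $\tau$ good.

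The main obstacle is the half-rate connectivity $(n-r-1)/2$, which leaves very little slack in the induction: the numerical inequality governing the fill of $\Link_D(\tau)$ inside $Y_\tau$ is tight for small $j$, and the $j=0$ case genuinely requires a different (transitivity-based) argument rather than the generic link fill. One also has to check that processing bad simplices in decreasing-dimension order really does terminate without creating new maximal bad simplices; this is a standard but careful piece of combinatorial bookkeeping. Once these technicalities are in place, iterating the local move empties the set of bad simplices, places $f$ inside $\Star(v_0)$, and completes the induction.
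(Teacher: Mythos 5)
The paper does not actually prove this theorem; it cites Charney's Theorem 3.5 (adapted) and Randal-Williams--Wahl's Lemma 5.10 for the proof. Your proposal therefore attempts to supply a proof that the paper itself omits, and the general family of techniques you invoke (bad-simplex/link arguments in the style of Charney and van der Kallen) is indeed how this connectivity result is proved in those references. However, as written, your argument has several genuine gaps.

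First, the setup is circular. You ``take an arbitrary simplicial extension $F_0 \colon D \to \Bases(R^n)$.'' But the existence of such an extension is exactly the statement that $[f] = 0 \in \pi_k(\Bases(R^n))$, which is what you are trying to prove. The correct version of the homotope-into-the-star argument is applied directly to $f \colon \Sigma \to \Bases(R^n)$ (one modifies $f$ on stars of simplices in the domain sphere, not on a hypothetical filling disk), or alternatively one works with a map of pairs $(D^{k+1}, S^k) \to (X, \Star(v_0))$ and proves the pair is highly connected. The phrase ``extending vertex-by-vertex using nonemptiness of appropriate links'' does not produce a simplicial map on all of $D$ and does not rescue this.

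Second, and more seriously, your notion of ``bad'' is set up in the wrong direction. Since $\Bases(R^n)$ is a flag complex, the condition ``$F_0(\tau) \cup \{v_0\}$ is not a simplex'' is equivalent to ``some vertex of $\tau$ maps to a vertex not compatible with $v_0$,'' and this condition is inherited by every coface of $\tau$. Hence a \emph{maximal} bad simplex in your sense is necessarily a facet of $D$, whose link in $D$ is empty; the statement ``$F_0|_{\Link_D(\tau)}$ takes values in $Y_\tau$'' is then vacuous and the induction never gets off the ground. The standard argument uses the dual definition: a vertex is bad if it is not compatible with $v_0$, and a simplex is bad if \emph{all} of its vertices are bad. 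With that definition bad simplices form a subcomplex, ``maximal bad'' simplices have nontrivial links, and the machinery works---but that is not what you wrote.

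Third, even waving past the above, the identification of $Y_\tau$ is wrong. You propose to apply Lemma \ref{lemma:splitbaseslink} to ``the simplex $F_0(\tau) \cup \{v_0\}$''---but by the very definition of $\tau$ being bad, $F_0(\tau) \cup \{v_0\}$ is \emph{not} a simplex, so Lemma \ref{lemma:splitbaseslink} does not apply to it. What $Y_\tau$ actually is (with either definition of bad) is an intersection of the form $\Link(F_0(\tau)) \cap \Link(v_0)$, and there is no reason this should be isomorphic to a complex of the form $\Bases(R^m)$; getting a handle on its connectivity is precisely where the real work lies (this is one of the places where Charney's and van der Kallen's arguments are delicate). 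Relatedly, even if it were isomorphic to $\Bases(R^m)$, the off-by-one in the rank should be checked: a $(j+1)$-simplex in $\Bases(R^n)$ has link $\Bases(R^{n-j-2})$, not $\Bases(R^{n-j-1})$.

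Your numerical bookkeeping (the inequality $2k \leq n - r - 2 + j$ for $j \geq 1$, and flagging $j=0$ as a tight case needing a separate transitivity argument) is correct as far as it goes, and does reflect the genuine tightness of the half-rate connectivity. But the structural issues above need to be repaired before the arithmetic is relevant. Concretely: use the ``all vertices bad'' notion of bad simplex, apply the bad-simplex machinery to the sphere $\Sigma$ (or to a pair), and make a careful argument for the connectivity of the relevant link intersections rather than asserting they are of the form $\Bases(R^m)$. For a version of the argument where all of this is carried out, see the proof of Lemma 5.10 in \cite{RandalWilliamsWahl}, which the paper cites.
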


\begin{remark}
Here is a guide for connecting our notation and indexing conventions to those of \cite[Lemma 5.10]{RandalWilliamsWahl}.
Let $R$ be a ring satisfying $(\SR_r)$.  The paper \cite{RandalWilliamsWahl} defines
\[s = \sr(R) = \min\Set{$k$}{$R$ satisfies $(\SR_{k+1})$} = r-1.\]
Our complex $\Bases(R^n)$ is the complex $S_{n-s}(R^s,R)$ of \cite{RandalWilliamsWahl}.  The
proof of \cite[Lemma 5.10]{RandalWilliamsWahl} shows that $S_{n-s}(R^s,R)$ is 
\[\left\lfloor\frac{(n-s)-2}{2}\right\rfloor = \left\lfloor\frac{n-(r-1)-2}{2}\right\rfloor = \left\lfloor\frac{n-r-1}{2}\right\rfloor\]
connected.  The complex $W_{n-s}(R^s,R)$ in the statement of \cite[Lemma 5.10]{RandalWilliamsWahl} is the
large ordering of $S_{n-s}(R^s,R)$.
\end{remark}

\begin{proof}[Proof of Theorem \ref{theorem:basescm}]
Theorem \ref{theorem:basesconnected} says that $\Bases(R^n)$ is 
\[\left\lfloor\frac{n-r+1}{2}\right\rfloor-1=\left\lfloor\frac{n-r-1}{2}\right\rfloor\]
 connected, so
what we must prove is that for all $k$-simplices $\sigma$ of $\Bases(R^n)$, the complex
$\Link_{\Bases(R^n)}(\sigma)$ is $\left(\left\lfloor\frac{n-r-1}{2}\right\rfloor-k-1\right)$-connected.  This is a non-tautological condition
precisely when
\begin{equation}
\label{eqn:meaningfulk}
\frac{n-r-1}{2}-k-1 \geq -1, \quad \text{i.e., when } k \leq \frac{n-r-1}{2}.
\end{equation}
The connectivity condition we want will follow from Theorem \ref{theorem:basesconnected} if
$\Link_{\Bases(R^n)}(\sigma) \cong \Bases(R^{n-k-1})$, which by Lemma \ref{lemma:splitbaseslink} holds
if
\begin{equation}
\label{eqn:goodk}
k \leq n-r.
\end{equation}
So we must prove that \eqref{eqn:meaningfulk} implies \eqref{eqn:goodk}, i.e., that
\[\frac{n-r-1}{2} \leq n-r.\]
This holds precisely when $n-r \geq -1$, which is exactly our assumption $n \geq r-1$.
\end{proof}

\section{The large ordering of the complex of split partial bases}
\label{section:glncoefficient}

Let $R$ be a ring.  Recall that we introduced $\VIC(R)$-modules in \S \ref{section:vicmodules}.
We now discuss the large ordering of $\Bases(R^n)$ and show how to use a $\VIC(R)$-module
$M$ to define a $\GL_n(R)$-equivariant coefficient system on it (or, rather, on an appropriate subcomplex).

\subsection{Large ordering and shifted large ordering}
For $n \geq 0$, define $\OBases(R^n)$ to be the large ordering of $\Bases(R^n)$.  As we discussed
in Lemma \ref{lemma:identifysimplex}, the $k$-simplices of $\Bases(R^n)$ can be identified with
tuples $(\{x_0,\ldots,x_k\};C)$, where $x_0,\ldots,x_k \in R^n$ are linearly independent elements and $C \subset R^n$
is a submodule such that
\[R^n = C \oplus \bigoplus_{i=0}^k x_i \cdot R.\]
The $k$-simplices of $\OBases(R^n)$ are obtained by imposing an ordering on the $x_i$, so can
be identified with ordered tuples $(x_0,\ldots,x_k;C)$ where the $x_i$ and $C$ are as above.  It
will be technically annoying that sometimes $C$ is not a free module.  To fix this, assume
that $R$ satisfies $(\SR_r)$, and define $\OBases(R^{n,r})$ to be the semisimplicial subset
of $\OBases(R^{n+r})$ consisting of simplices of dimension at most $n$.  The group
$\GL_{n+r}(R)$ acts on $\OBases(R^{n,r})$, and the following lemma shows that 
$\OBases(R^{n,r})$ avoids some of the annoying features of $\OBases(R^n)$.  Recall
that we defined groups $\GL_n^{\cK}(R)$ in \S \ref{section:ktheory}.

\begin{lemma}
\label{lemma:obasesgood}
Let $R$ be a ring satisfying $(\SR_r)$ and let $n \geq 0$.  The following hold:
\begin{itemize}
\item[(i)] For each $k$-simplex $(x_0,\ldots,x_k;C)$ of $\OBases(R^{n,r})$, we have
$C \cong R^{n+r-k-1}$.
\item[(ii)] For all $k$ and all subgroups $\cK \subset \KK_1(R)$, the group $\GL_{n+r}^{\cK}(R)$ acts transitively on the $k$-simplices
of $\OBases(R^{n,r})$.
\item[(iii)] For all two-sided ideals $\alpha$ of $R$, we have 
\[\OBases(R^{n,r}) / \EL_{n+r}(R,\alpha) = \OBases((R/\alpha)^{n,r}).\]
\item[(iv)] The semisimplicial set $\OBases(R^{n,r})$ is the large ordering of a weakly
Cohen--Macaulay complex of dimension $\left\lfloor\frac{n+1}{2}\right\rfloor$.
\end{itemize}
\end{lemma}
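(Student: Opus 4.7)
The plan is to dispatch the four parts in order, each reducing to results already established.

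Part (i) follows directly from Lemma \ref{lemma:stabledecomp}: since a $k$-simplex of $\OBases(R^{n,r})$ has $k \leq n$, the defining splitting $R^{n+r} = C \oplus \bigoplus_{i=0}^{k} x_i \cdot R$ exhibits $C$ as stably free with $k+1 \leq n+1 = (n+r) - r + 1$, so that lemma yields $C \cong R^{n+r-k-1}$. For Part (ii), the constraint $k \leq n = (n+r) - r$ is precisely the hypothesis of Lemma \ref{lemma:glbasesaction}(a), which produces an element of $\EL_{n+r}(R)$ carrying any ordered $k$-simplex of $\OBases(R^{n,r})$ to any other; since $\EL_{n+r}(R) \subseteq \GL_{n+r}^{\cK}(R)$ for every subgroup $\cK \subseteq \KK_1(R)$, transitivity under $\GL_{n+r}^{\cK}(R)$ follows.

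Part (iv) reduces to Theorem \ref{theorem:basescm}. From the definitions, $\OBases(R^{n,r})$ is the large ordering of the $n$-skeleton $Y := \Bases(R^{n+r})^{(\leq n)}$, so it suffices to show $Y$ is weakly Cohen--Macaulay of dimension $\tfrac{n+1}{2}$. Theorem \ref{theorem:basescm} makes $\Bases(R^{n+r})$ weakly Cohen--Macaulay of dimension $\tfrac{n+1}{2}$, in particular $\tfrac{n-1}{2}$-connected; passage to the $n$-skeleton preserves this connectivity since $\tfrac{n-1}{2} \leq n-1$ for $n \geq 1$. For the link of a $k$-simplex $\sigma$ in $Y$, I identify it with the $(n-k-1)$-skeleton of $\Link_{\Bases(R^{n+r})}(\sigma) \cong \Bases(R^{n+r-k-1})$ via Lemma \ref{lemma:splitbaseslink}; for $k \leq n-1$ the latter is $\tfrac{n-k-2}{2}$-connected by Theorem \ref{theorem:basescm}, and this connectivity survives truncation since $\tfrac{n-k-2}{2} \leq n-k-2$, while the edge case $k = n$ produces an empty link that meets the required $\leq -2$ connectivity vacuously. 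The numerical check $\tfrac{n-k-2}{2} \geq \tfrac{n+1}{2} - k - 2$ is then immediate.

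The main obstacle is Part (iii), where I need to verify that the reduction-mod-$\alpha$ map $\pi\colon \OBases(R^{n,r}) \to \OBases((R/\alpha)^{n,r})$ identifies the quotient $\OBases(R^{n,r}) / \EL_{n+r}(R,\alpha)$ with $\OBases((R/\alpha)^{n,r})$ simplex by simplex. Well-definedness of $\pi$ on simplices comes from tensoring the splitting $R^{n+r} = C \oplus \bigoplus x_i \cdot R$ with $R/\alpha$, noting that each $\overline{x_i} \cdot (R/\alpha) \cong R/\alpha$ is nonzero so vertices map to vertices; the map is $\EL_{n+r}(R,\alpha)$-invariant by construction. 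For surjectivity on simplices I invoke Lemma \ref{lemma:stablequotient} to get that $R/\alpha$ satisfies $(\SR_r)$, then apply Part (ii) to $R/\alpha$ with $\cK = 0$ to see that $\EL_{n+r}(R/\alpha)$ acts transitively on $k$-simplices of $\OBases((R/\alpha)^{n,r})$; since each elementary matrix in $\EL_{n+r}(R/\alpha)$ lifts trivially to $\EL_{n+r}(R)$, applying any such lift to a fixed standard simplex of $\OBases(R^{n,r})$ produces the required preimage. Finally, if two simplices of $\OBases(R^{n,r})$ have the same image under $\pi$, they satisfy the hypotheses of Lemma \ref{lemma:glbasesaction}(b) (using $k \leq n = (n+r) - r$), which furnishes the needed element of $\EL_{n+r}(R,\alpha)$; compatibility of the resulting bijection with face maps is then formal.
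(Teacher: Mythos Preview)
Your proof is correct and follows essentially the same approach as the paper's: parts (i), (ii), (iii) invoke Lemmas \ref{lemma:stabledecomp}, \ref{lemma:glbasesaction}, and \ref{lemma:stablequotient} in the same way, and part (iv) rests on Theorem \ref{theorem:basescm}. The only difference is expository: for (iv) the paper simply notes that $n \geq \tfrac{n+1}{2}$ and asserts that passing to the $n$-skeleton preserves the weakly Cohen--Macaulay property, whereas you spell out the link computation explicitly via Lemma \ref{lemma:splitbaseslink} and Theorem \ref{theorem:basesconnected}.
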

\begin{proof}
Conclusions (i) and (ii) follow from Lemmas \ref{lemma:stabledecomp} and \ref{lemma:glbasesaction} along
with the fact that we have restricted the simplices of $\OBases(R^{n,r})$ to have dimension at most $n$.
Here for (ii) we are using the fact that $\EL_{n+r}(R) \subset \GL_{n+r}^{\cK}(R)$.

For (iii), the projection
\[\pi\colon \OBases(R^{n,r}) \longrightarrow \OBases((R/\alpha)^{n,r})\]
is $\GL_{n+r}(R)$-equivariant, where the group $\GL_{n+r}(R)$ acts on $\OBases((R/\alpha)^{n,r})$ via
the projection $\GL_{n+r}(R) \rightarrow \GL_{n+r}(R/\alpha)$.  Lemma \ref{lemma:stablequotient} says
that $R/\alpha$ satisfies $(\SR_r)$, so by Lemma \ref{lemma:glbasesaction} the group $\EL_{n+r}(R/\alpha)$ acts
transitively on the $k$-simplices of $\OBases((R/\alpha)^{n,r})$ for all $k$.  Since
$\EL_{n+r}(R)$ maps surjectively onto $\EL_{n+r}(R/\alpha)$ and $\pi$ is $\EL_{n+r}(R)$-equivariant,
it follows that $\pi$ is surjective.  Since $\GL_{n+1}(R,\alpha)$ is the kernel of
the map $\GL_{n+r}(R) \rightarrow \GL_{n+r}(R/\alpha)$, it acts trivially on
$\OBases((R/\alpha)^{n,r})$.  It follows that its subgroup $\EL_{n+r}(R,\alpha)$ also
acts trivially on $\OBases((R/\alpha)^{n,r})$, so 
the map $\pi$ descends to a map
\[\opi \colon \OBases(R^{n,r}) / \EL_{n+r}(R,\alpha) \rightarrow \OBases((R/\alpha)^{n,r}).\]
The map $\opi$ is surjective since $\pi$ is.  Lemma \ref{lemma:glbasesaction} implies
that two simplices of $\OBases(R^{n,r})$ that map to the
same simplex of $\OBases((R/\alpha)^{n,r})$ differ by an element of $\EL_{n+r}(R,\alpha)$, and
thus are identified with the same simplex of $\OBases(R^{n,r}) / \EL_{n+r}(R,\alpha)$.  It follows
that $\opi$ is injective, and hence an isomorphism, as desired.

For (iv), Theorem \ref{theorem:basescm} says that $\Bases(R^{n+r})$ is weakly Cohen--Macaulay
of dimension $\left\lfloor\frac{n+1}{2}\right\rfloor$.  Since $n \geq 0$, we have $n \geq \left\lfloor\frac{n+1}{2}\right\rfloor$, 
so the $n$-skeleton
of $\Bases(R^{n+r})$ is also weakly Cohen--Macaulay of dimension $\left\lfloor\frac{n+1}{2}\right\rfloor$, as desired.
\end{proof}

\subsection{Coefficient system}
\label{section:glcoef}
Let $R$ be a ring satisfying $(\SR_r)$, let $\bbk$ be a commutative ring, and let $M$ be a
$\VIC(R)$-module over $\bbk$.  For each $n \geq 0$, let $\cG_{M,n,r}$ be the augmented coefficient system
on $\OBases(R^{n,r})$ defined via the formula
\[\cG_{M,n,r}(x_0,\ldots,x_k;C) = M(C) \quad \text{for a simplex $(x_0,\ldots,x_k;C)$ of $\OBases(R^{n,r})$},\]
where our convention is that the $-1$-simplex of $\OBases(R^{n,r})$ is $(;R^{n+r})$.  For this
to make sense, we need $C$ to be a free $R$-module, which is ensured by Lemma \ref{lemma:obasesgood}.(i).
To see how $\cG_{M,n,r}$ behaves under face maps, note that for a simplex $\sigma = (x_0,\ldots,x_k;C)$ of
$\OBases(R^{n,r})$, the faces of $\sigma$ are of the form
\[\sigma' = (x_{i_0},\ldots,x_{i_{\ell}};C') \quad \text{with} \quad  C' = C \oplus \bigoplus_{j \notin \{i_0,\ldots,i_{\ell}\}} x_j \cdot R\]
for increasing sequences $0 \leq i_0 < \cdots < i_{\ell} \leq k$.  Letting $\iota\colon C \rightarrow C'$ be the natural
inclusion and letting
\[D = \bigoplus_{j \notin \{i_0,\ldots,i_{\ell}\}} x_j \cdot R,\]
the induced map $\cG_{M,n,r}(\sigma) \rightarrow \cG_{M,n,r}(\sigma')$ is the one induced by the $\VIC(R)$-structure
on $M$ as follows:
\[\cG_{M,n,r}(\sigma) = M(C) \stackrel{(\iota,D)_{\ast}}{\longrightarrow} M(C') = \cG_{M,n,r}(\sigma').\]
Just like in Example \ref{example:snequivariant}, the augmented coefficient system $\cG_{M,n,r}$ is
$\GL_{n+r}(R)$-equivariant.  It satisfies the following lemma:

\begin{lemma}[{c.f.\ Lemma \ref{lemma:fisystempoly}}]
\label{lemma:vicsystempoly}
Let $R$ be a ring satisfying $(\SR_r)$, let $\bbk$ be a commutative ring, and 
let $M$ be a $\VIC(R)$-module over $\bbk$ that is polynomial of degree $d \geq -1$ starting at $m \geq 0$ (see
Definition \ref{definition:polyvic}).
Fix\footnote{The purpose of the condition $n \geq m-r$ is simply to ensure that $n+r-m-1 \geq -1$.  Here $n+r-m-1$ is the dimension we
are claiming that $\cG_{M,n,r}$ is polynomial of degree $d$ up to, and for this to make sense we need it to be at least $-1$.} some 
$n \geq \max(m-r,0)$, and let
$\cG_{M,n,r}$ be the coefficient system on $\OBases(R^{n,r})$ discussed above.
Then $\cG_{M,n,r}$ is polynomial of 
degree $d$ up to dimension $n+r-m-1$.
\end{lemma}
\begin{proof}
The proof will be by induction on $d$. If $d=-1$, then consider a simplex $\sigma = (x_0,\ldots,x_k;C)$ with
$k \leq n+r-m-1$.  Lemma \ref{lemma:obasesgood}.(i) says that $C \cong R^{n+r-k-1}$, and since
\begin{equation}
\label{eqn:calccodim2}
n+r-k-1 \geq n+r-(n+r-m-1)-1 = m
\end{equation}
the fact that $M$ is polynomial of degree $-1$ starting at $m$ implies that $\cG_{M,n,r}(\sigma) = M(C) = 0$, as desired.

Now assume that $d \geq 0$.  There are two things to check.  For the first, let $\sigma = (x_0,\ldots,x_k;C)$ be a simplex
with $k \leq n+r-m-1$.  We must prove that the map $\cG_{M,n,r}(\sigma) \rightarrow \cG_{M,n,r}(\emptyset)$ is injective, i.e., that
the map
\[M(C) \rightarrow M(R^{n+r})\]
is injective.  The calculation \eqref{eqn:calccodim2} shows that this injectivity follows from the fact that
$M$ is polynomial of degree $d$ starting at $m$.

For the second, let $w$ be any vertex.
Let $D_{w} \cG_{M,b,r}$ be the coefficient system on the forward link $\FLink_{\OBases(R^{n,r})}(w)$ defined by the formula
\[D_{w} \cG_{M,b,r}(\sigma) = \frac{\cG_{M,n,r}(\sigma)}{\Image\left(\cG_{M,n,r}\left(w \cdot \sigma\right) \rightarrow \cG_{M,n,r}\left(\sigma\right)\right)} \quad \text{for a simplex $\sigma$ of $\FLink_{\OBases(R^{n,r})}(w)$}.\]
We must prove that $D_{w} \cG_{M,b,r}$ is polynomial of degree $d-1$ up to dimension $n+r-m-1$.
This condition is invariant under the action of $\GL_{n+r}(R)$.  Letting $\{v_1,\ldots,v_{n+r}\}$ be the standard
basis for $R^{n+r}$, Lemma \ref{lemma:obasesgood}.(ii) says that by applying an appropriate
element of $\GL_{n+r}(R)$ to $w$ we can assume without loss of generality that $w = (v_{n+r};R^{n+r-1})$.
This implies that
\[\FLink_{\OBases(R^{n,r})}(w) = \OBases(R^{n-1,r}).\]
Recall that we defined the derived $\VIC(R)$-module $DM$ in Definition \ref{definition:derivedvic}.
By construction, there is an isomorphism between
the coefficient systems $D_{w} \cG_{M,b,r}$ and $\cG_{DM,n-1,r}$ on $\OBases(R^{n-1,r})$.  Since $M$ is polynomial of
degree $d$ starting at $m$, the $\VIC(R)$-module $DM$ is
polynomial of degree $d-1$ starting at $m-1$.  By induction, $D_{w} \cG_{M,b,r}$ is polynomial of degree $d-1$ starting at
\[(n-1+r)-(m-1)-1=n+r-m-1,\]
as desired.
\end{proof}

\section{Stability for general linear groups}
\label{section:glstability}

We now are in a position to prove Theorems \ref{maintheorem:gl} and \ref{maintheorem:glprime}.  In
fact, we will prove more general results that also deal with the groups $\GL_n^{\cK}(R)$ discussed
in \S \ref{section:ktheory}.  The following generalizes Theorem \ref{maintheorem:gl}.

\begin{theorem}
\label{theorem:xl}
Let $R$ be a ring satisfying $(\SR_r)$, let $\cK \subset \KK_1(R)$ be a subgroup, let $\bbk$ be a commutative ring, and let
$M$ be a $\VIC(R)$-module over $\bbk$ that is polynomial of degree $d \geq -1$ starting at $m \geq 0$.
For each $k \geq 0$, the map
\[\HH_k(\GL_n^{\cK}(R);M(R^n)) \rightarrow \HH_k(\GL_{n+1}^{\cK}(R);M(R^{n+1}))\]
is an isomorphism for $n \geq 2k+\max(2d+r-1,m,r)+1$ and a surjection for $n = 2k+\max(2d+r-1,m,r)$.
\end{theorem}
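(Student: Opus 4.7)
The plan is to apply the twisted homological stability machine (Theorem \ref{theorem:stabilitymachine}) with $c=2$, using the large orderings of the complexes of split partial bases from Section \ref{section:glncoefficient}. Setting $s = \max(2d+r-1, m)$, I will take
\[G_n = \GL_{n+s}^{\cK}(R), \quad M_n = M(R^{n+s}), \quad \bbX_n = \OBases(R^{n+s-r,\,r}), \quad \cM_n = \cG_{M,n+s-r,r},\]
where $\cG_{M,\bullet,r}$ is the augmented $\GL_{n+s}(R)$-equivariant coefficient system from Section \ref{section:glcoef}. Under this reindexing, the stabilization map $G_{n-1} \to G_n$ is the inclusion $\GL_{n+s-1}^{\cK}(R) \hookrightarrow \GL_{n+s}^{\cK}(R)$ on the coefficient $M(R^{n+s-1}) \to M(R^{n+s})$, and the machine's conclusion of isomorphism for $n \geq 2k+2$ (resp.\ surjection for $n = 2k+1$) translates to isomorphism for the original index $\geq 2k+s+1 = 2k+\max(2d+r, m+1)$ (resp.\ surjection at $2k+s$), matching the stated bounds.

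The verification of the four hypotheses of Theorem \ref{theorem:stabilitymachine} proceeds as follows. For condition (a), I combine Lemma \ref{lemma:obasesgood}(iv), which says $\bbX_n$ is a large ordering of a simplicial complex weakly Cohen--Macaulay of dimension $(n+s-r+1)/2$, with Lemma \ref{lemma:vicsystempoly}, which says $\cM_n$ is polynomial of degree $d$ up to dimension $n+s-m-1$. Setting $N = (n+s-r-2d-1)/2$, one has weak CM dimension $N+d+1$ and $N \leq n+s-m-1$ (using $s \geq m$), so Theorem \ref{theorem:vanishing} gives $\RH_k(\bbX_n; \cM_n) = 0$ for $-1 \leq k \leq N$; the choice $s \geq 2d+r-1$ makes $N \geq (n-2)/2$. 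For condition (b), taking the canonical simplex $\sigma_k = (v_{n+s-k}, \ldots, v_{n+s};\, \bigoplus_{i=1}^{n+s-k-1} v_i R)$, its $\GL_{n+s}(R)$-stabilizer is $\GL_{n+s-k-1}(R)$ (upper-left corner embedding); intersecting with $\GL_{n+s}^{\cK}(R)$ and invoking Lemma \ref{lemma:stablesubgroup} yields $G_{n-k-1} = \GL_{n+s-k-1}^{\cK}(R)$, and by construction $\cM_n(\sigma_k) = M(R^{n+s-k-1}) = M_{n-k-1}$. Condition (c) is immediate from Lemma \ref{lemma:obasesgood}(ii). For condition (d), given an edge $e = (v_0, v_1; C)$, I take $\lambda$ to be the Weyl-type element acting as $\left(\begin{smallmatrix} 0 & -1 \\ 1 & 0 \end{smallmatrix}\right)$ on $v_0 R \oplus v_1 R$ and as the identity on $C$; by the Whitehead lemma this lies in $\EL_{n+s}(R) \subseteq \GL_{n+s}^{\cK}(R)$, it sends $v_0$ to $v_1$, commutes with $(G_n)_e = \GL(C)$ (which is supported on a complementary summand), and acts trivially on $\cM_n(e) = M(C)$ through the $\VIC(R)$-structure since it restricts to the identity on $C$.

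The subtlest point is condition (d) in the presence of the $\cK$-constraint: a naive permutation matrix swapping $v_0$ and $v_1$ has determinant $-1$ and need not lie in $\GL_{n+s}^{\cK}(R)$, which forces the use of the elementary-matrix Weyl substitute above. A secondary worry is the regime where $s < r$ (i.e., $d=0$ and $m<r$), where Lemma \ref{lemma:stablesubgroup} requires $n+s-k-1 \geq r$; but this inequality is automatic in the range $n \geq 2k + \max(2d+r, m+1)$ that the theorem asserts, so the stabilizer identification in condition (b) is valid precisely where the conclusion is claimed. Once all four conditions are in hand, Theorem \ref{theorem:stabilitymachine} delivers the stated ranges verbatim after unwinding the shift by $s$.
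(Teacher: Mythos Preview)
Your proof is correct and follows essentially the same route as the paper: after the shift $s=\max(2d+r-1,m)$ (which equals the paper's $e+r$ with $e=\max(2d-1,m-r)$), you feed exactly the same data into Theorem~\ref{theorem:stabilitymachine}, verify (a) via Theorem~\ref{theorem:vanishing} together with Lemmas~\ref{lemma:obasesgood}(iv) and~\ref{lemma:vicsystempoly}, verify (b) via Lemma~\ref{lemma:stablesubgroup}, verify (c) via Lemma~\ref{lemma:obasesgood}(ii), and verify (d) with the elementary ``rotation'' $\left(\begin{smallmatrix}0&-1\\1&0\end{smallmatrix}\right)$ on $x_0R\oplus x_1R$.

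One small imprecision: in your verification of (a) you set $N=(n+s-r-2d-1)/2$ and assert $N\le n+s-m-1$ ``using $s\ge m$,'' but that inequality is not a consequence of $s\ge m$ alone (it can fail for small $n$ when $m$ is large). The clean fix, which is what the paper does, is to take $N=\min\bigl(\tfrac{n+s-r-2d-1}{2},\,n+s-m-1\bigr)$ and observe that \emph{each} term is $\ge(n-2)/2$: the first because $s\ge 2d+r-1$, and the second because $s\ge m$ gives $n+s-m-1\ge n-1\ge(n-2)/2$. Your ``secondary worry'' about Lemma~\ref{lemma:stablesubgroup} is in fact a non-issue: $\GL_n^{\cK}(R)$ is \emph{defined} as the preimage of $\cK$ under $\GL_n(R)\hookrightarrow\GL(R)\to\KK_1(R)$, so $\GL_{n+1}^{\cK}(R)\cap\GL_n(R)=\GL_n^{\cK}(R)$ holds tautologically for every $n$, with no stable-range hypothesis needed.
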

\begin{proof}
The group $\GL_{n+r}^{\cK}(R)$ acts on $\OBases(R^{n,r})$.  Let $\cG_{M,n,r}$ be
the $\GL_{n+r}^{\cK}(R)$-equivariant augmented system of coefficients on $\OBases(R^{n,r})$ discussed
in \S \ref{section:glcoef}, so
\[\cG_{M,n,r}(x_0,\ldots,x_k;C) = M(C) \quad \text{for a simplex $(x_0,\ldots,x_k;C)$ of $\OBases(R^{n,r})$}.\]
The following claim will be used to show that with an appropriate degree shift, this all satisfies
the hypotheses of Theorem \ref{theorem:stabilitymachine}.

\begin{claim}
Assume that $n \geq \max(m-r,0)$, so we can apply Lemma \ref{lemma:vicsystempoly}.  The following then hold:
\begin{enumerate}
\item[(A)] For all $-1 \leq k \leq \min(\left\lfloor\frac{n-2d-1}{2}\right\rfloor,n+r-m-1)$, we have $\RH_k(\OBases(R^{n,r});\cG_{M,n,r}) = 0$.
\item[(B)] For all $-1 \leq k < n$, the group $\GL_{n+r-k-1}^{\cK}(R)$ is the $\GL_{n+r}^{\cK}(R)$-stabilizer of a $k$-simplex
$\sigma_k$ of $\OBases(R^{n,r})$ with $\cG_{M,n,r}(\sigma_k) = M(R^{n+r-k-1})$.
\item[(C)] For all $0 \leq k \leq n$, the group $\GL_{n+r}^{\cK}(R)$ acts transitively on the $k$-simplices of $\OBases(R^{n,r})$.
\item[(D)] For all $n \geq 2$ and all $1$-simplices $e$ of $\OBases(R^{n,r})$ of the form $e = ((x_0;C_0),(x_1;C_1))$, there
exists some $\lambda \in \GL_{n+r}^{\cK}(R)$ with $\lambda(x_0;C_0) = (x_1;C_1)$ such that $\lambda$ commutes with all
elements of $(\GL_{n+r}^{\cK}(R))_e$ and fixes all elements of $\cG_{M,n,r}(e)$.
\end{enumerate}
\end{claim}
\begin{proof}[Proof of claim]
For (A), Lemma \ref{lemma:vicsystempoly} says that $\cG_{M,n,r}$ is a polynomial coefficient system of
degree $d$ up to dimension $n+r-m-1$.  Also, by Lemma \ref{lemma:obasesgood}.(iv) the semisimplicial set
$\OBases(R^{n,r})$ is the large ordering of a simplicial complex that is weakly
Cohen--Macaulay of dimension $\left\lfloor\frac{n+1}{2}\right\rfloor$.
Letting
\[N = \min(\left\lfloor\frac{n+1}{2}\right\rfloor-d-1,n+r-m-1) = \min(\left\lfloor\frac{n-2d-1}{2}\right\rfloor,n+r-m-1),\]
the complex $\OBases(R^{n,r})$ is the large ordering of a simplicial
complex that is weakly Cohen--Macaulay of dimension $N+d+1$ and $\cG_{M,n,r}$ is a polynomial
coefficient system of degree $d$ up to dimension $N$.  Theorem \ref{theorem:vanishing} thus
implies that $\RH_k(\OBases(R^{n,r});\cG_{M,n,r}) = 0$ for $-1 \leq k \leq N$.

For (B), let $\{v_1,\ldots,v_{n+r}\}$ be the standard basis for $R^{n+r}$.  
By Lemma \ref{lemma:stablesubgroup}, the group $\GL_{n+r-k-1}^{\cK}(R)$ is the $\GL_{n+r}^{\cK}(R)$-stabilizer of the $k$-simplex
\[\sigma_k = \begin{cases}
(R^{n+r}) & \text{if $k=-1$},\\
(v_{n+r-k},v_{n+r-k+1},\ldots,v_{n+r}; R^{n+r-k-1}) & \text{if $0 \leq k < n$}
\end{cases}\]
of $\OBases(R^{n,r})$, and by definition
\[\cG_{M,n,r}(\sigma_k) = M(R^{n+r-k-1}).\]

Condition (C) is Lemma \ref{lemma:obasesgood}.(ii).  

For (D), define $\lambda\colon R^{n+r} \rightarrow R^{n+r}$ to be the $R$-module homomorphism
defined via the formulas
\[\lambda(x_0) = x_1 \quad \text{and} \quad \lambda(x_1) = -x_0 \quad \text{and} \quad \lambda|_{C_0 \cap C_1} = \text{id}.\]
This lies in $\EL_{n+r}(R) \subset \GL_{n+r}^{\cK}(R)$; indeed, the group $\SL_2(\Z)$ is generated by elementary matrices and
\[\left(\begin{matrix} 0 & -1 \\ 1 & 0 \end{matrix}\right) \in \SL_2(\Z),\]
so $\lambda$ can be written as a product of elementary matrices.  Here we are using the fact that $n+r \geq r$, so
by Theorem \ref{theorem:injectivek1} the group $\EL_{n+r}(R)$ is a normal subgroup of $\GL_{n+r}(R)$.
In particular, the fact that something can be written
as a product of elementary matrices is independent of the choice of basis.  The map $\lambda$ acts trivially on
\[\cG_{M,n,r}(e) = M(C_0 \cap C_1)\]
by basic properties of $\VIC(R)$-modules.
\end{proof}

Let $e = \max(2d-1,m-r,0) \geq 0$.  For $n \geq 0$ define
\[G_n = \GL_{n+e+r}^{\cK}(R) \quad \text{and} \quad M_n = M(R^{n+e+r})\]
and for $n \geq 1$ define
\[\bbX_n = \OBases(R^{n+e,r}) \quad \text{and} \quad \cM_n = \cG_{M,n+e,r}.\]
This makes sense since $n+e \geq n \geq 0$ for $n \geq 0$.
We then have
\[G_0 \subset G_1 \subset G_2 \subset \cdots,\]
and since $M$ is a polynomial $\VIC(R)$-module of degree $d$ starting at $m \geq 0$ the maps
$M_n \rightarrow M_{n+1}$ are injective for $n \geq 0$,\footnote{The point here is that for $n \geq 0$
we have $n+e+r \geq n+\max(2d-1,m-r,0)+r \geq n+m \geq m$.} so we have an increasing
sequence
\[M_0 \subset M_1 \subset M_2 \subset \cdots.\]
In other words, the pair $\{(G_n,M_n)\}_{n=0}^{\infty}$ is an increasing sequence
of groups and modules in the sense of \S \ref{section:twistedsetup}.

The above claim verifies the conditions of Theorem \ref{theorem:stabilitymachine} with $c=2$.
The shift by $e+r$ is needed for condition (a) of Theorem \ref{theorem:stabilitymachine}, which
requires that $\RH_k(\bbX_n;\cM_n) = 0$ for all $n \geq 1$ and $-1 \leq k \leq \left\lfloor\frac{n-2}{2}\right\rfloor$.  Conclusion (A)
of the Claim says that $\RH_k(\OBases(R^{n+e,r});\cG_{M,n+e,r}) = 0$ for 
$n+e \geq \max(m-r,0)$\footnote{Which holds for $n \geq 0$ since
then $n+e = n + \max(2d-1,m-r,0) \geq \max(m-r,0)$.}
and $-1 \leq k \leq \min(\left\lfloor\frac{(n+e)-2d-1}{2}\right\rfloor,(n+e)+r-m-1)$,
which implies the desired range of vanishing for $\RH_k(\bbX_n;\cM_n)$ since
\[\left\lfloor\frac{(n+e)-2d-1}{2}\right\rfloor \geq \left\lfloor\frac{(n+2d-1)-2d-1}{2}\right\rfloor = \left\lfloor\frac{n-2}{2}\right\rfloor \quad \text{for all $n$}\]
and
\[(n+e)+r-m-1 \geq n+(m-r)+r-m-1 = n-1 \geq \left\lfloor\frac{n-2}{2}\right\rfloor \quad \text{for all $n \geq 0$},\]
so
\[\left\lfloor\frac{n-2}{2}\right\rfloor \leq \min(\left\lfloor\frac{(n+e)-2d-1}{2}\right\rfloor, (n+e)+r-m-1) \quad \text{for all $n \geq 0$}.\]
Applying Theorem \ref{theorem:stabilitymachine}, we deduce that the map
\[\HH_k(\GL_{n+e+r-1}^{\cK}(R);M(R^{n+e+r-1})) \rightarrow \HH_k(\GL_{n+e+r}^{\cK}(R);M(R^{n+e+r}))\]
is an isomorphism for $n \geq 2k+2$ and a surjection for $n = 2k+1$, which implies
that
\[\HH_k(\GL_{n}^{\cK}(R);M(R^{n})) \rightarrow \HH_k(\GL_{n+1}^{\cK}(R);M(R^{n+1}))\]
is an isomorphism for 
\[n \geq 2k + (e+r-1) + 2 = 2k+\max(2d-1,m-r,0)+r+1 = 2k+\max(2d+r-1,m,r)+1\]
and a surjection for
\[n = 2k+\max(2d+r-1,m,r).\qedhere\]
\end{proof}

The following theorem generalizes Theorem \ref{maintheorem:glprime}:

\begin{theorem}
\label{theorem:xlprime}
Let $R$ be a ring satisfying $(\SR_r)$, let $\cK \subset \KK_1(R)$ be a subgroup, let $\bbk$ be a commutative ring, and let
$M$ be a $\VIC(R)$-module over $\bbk$ that is polynomial of degree $d \geq -1$ starting at $m \geq 0$.
For each $k \geq 0$, the map
\begin{equation}
\label{eqn:glprimetoprove}
\HH_k(\GL_n^{\cK}(R);M(R^n)) \rightarrow \HH_k(\GL_{n+1}^{\cK}(R);M(R^{n+1}))
\end{equation}
is an isomorphism for $n \geq \max(m,2k+2d+r+2)$ and a surjection for $n \geq \max(m,2k+2d+r)$.
\end{theorem}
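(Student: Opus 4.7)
The plan is to mirror exactly the derivation of Theorem \ref{maintheorem:snprime} from Theorem \ref{maintheorem:sn}, with Theorem \ref{theorem:xl} playing the role of the input stability theorem. The argument will be a double induction on $d$ and $m$, whose engine is the short exact sequence
\[0 \longrightarrow M \longrightarrow \Sigma M \longrightarrow DM \longrightarrow 0\]
of $\VIC(R)$-modules valid on $R^n$ for $n \geq m$. This is the $\VIC$-analog of Remark \ref{remark:exactsequence}: injectivity on the left comes from the injectivity clause in Definition \ref{definition:polyvic}, and the cokernel is $DM$ by Definition \ref{definition:derivedvic}. Crucially, $\Sigma M$ is polynomial of degree $d$ starting at $m-1$ and $DM$ is polynomial of degree $d-1$ starting at $m-1$, so both are covered by the inductive hypothesis.

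The base cases are handled without induction. For $m=0$ and any $d \geq 0$, Theorem \ref{theorem:xl} already gives iso for $n \geq 2k+\max(2d+r,1)$ and surj for $n \geq 2k+\max(2d+r-1,0)$; since $r \geq 2$ these reduce to $n \geq 2k+2d+r$ and $n \geq 2k+2d+r-1$ respectively, which are at least as strong as the claimed $\max(m,2k+2d+r+1)$ and $\max(m,2k+2d+r-1)$. For $d=-1$ and any $m \geq 0$, we have $M(R^n) = 0$ whenever $n \geq m$, so both sides of \eqref{eqn:glprimetoprove} vanish.

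For the inductive step, assuming $m \geq 1$ and $d \geq 0$, I would apply $\HH_k(\GL_n^{\cK}(R);-)$ and $\HH_k(\GL_{n+1}^{\cK}(R);-)$ to the short exact sequence, using the identification $\Sigma M(R^n) \cong M(R^{n+1})$ as $\bbk[\GL_n^{\cK}(R)]$-modules (where $\GL_n^{\cK}(R)$ acts via the stabilization inclusion). Connecting the two resulting six-term long exact sequences by the stabilization maps yields a ladder to which one applies the five-lemma. The inductive hypothesis bounds on $\Sigma M$ (polynomial of degree $d$, starting at $m-1$) and $DM$ (polynomial of degree $d-1$, starting at $m-1$) give iso/surj for the outer four vertical maps precisely in the ranges $n \geq \max(m-1,2k+2d+2),\, \max(m-1, 2k+2d),\, \max(m-1,2k+2d+4)$, etc.; these are arranged so that for $n \geq \max(m,2k+2d+r+1)$ all relevant maps are isos and the five-lemma gives iso on $M(R^n)$, and for $n \geq \max(m,2k+2d+r-1)$ the weaker hypotheses suffice for surjectivity via a four-lemma.

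I do not expect a genuine difficulty here: the whole step is bookkeeping, essentially identical to the end of the proof of Theorem \ref{maintheorem:snprime}. The only mild care needed is checking that the ladder genuinely commutes $G_n^{\cK}(R)$-equivariantly, i.e.\ that the map $\Sigma M(R^n) \to \Sigma M(R^{n+1})$ agrees with $M(R^{n+1}) \to M(R^{n+2})$ under the identification above, which is immediate from functoriality of $M$ on $\VIC(R)$. No new geometric or combinatorial input beyond Theorem \ref{theorem:xl} is required.
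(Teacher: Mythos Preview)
Your proposal is correct and matches the paper's proof essentially line-for-line: double induction on $(d,m)$, base cases $m=0$ via Theorem~\ref{theorem:xl} and $d=-1$ trivially, inductive step via the five-/four-lemma applied to the ladder of long exact sequences coming from $0 \to M \to \Sigma M \to DM \to 0$. One small slip: the intermediate bounds you list ($\max(m-1,2k+2d+2)$, etc.) are the symmetric-group ones and are missing the $r$; the correct bounds are $\max(m-1,2k+2d+r+1)$, $\max(m-1,2k+2d+r-1)$, $\max(m-1,2k+2d+r+3)$, etc., but your stated final ranges are right and the bookkeeping goes through.
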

\begin{proof}
The proof will be by double induction on $d$ and $m$.  There are three base cases:
\begin{itemize}
\item The first is where $m=0$ and $d = 0$.  Theorem \ref{theorem:xl}
says in this case that \eqref{eqn:glprimetoprove} is an isomorphism for
\[n \geq 2k+\max(2d+r-1,m,r)+1 = 2k+\max(r-1,0,r)+1 = 2k+r+1\]
and a surjection for 
\[n    = 2k+\max(2d+r-1,m,r)   = 2k+\max(r-1,0,r)   = 2k+r.\]
These bounds are even stronger than our purported bounds of
\[n \geq \max(m,2k+2d+r+2) = \max(0,2k+r+2) = 2k+r+2\]
for \eqref{eqn:glprimetoprove} to be an isomorphism and
\[n =    \max(m,2k+2d+r)   = \max(0,2k+r) = 2k+r\]
for \eqref{eqn:glprimetoprove} to be a surjection.
\item The second is where $m=0$ and $d \geq 1$.  Theorem \ref{theorem:xl}
says in this case that \eqref{eqn:glprimetoprove} is an isomorphism for
\[n \geq 2k+\max(2d+r-1,m,r)+1 = 2k+\max(2d+r-1,0,r)+1 = 2k+2d+r\]
and a surjection for
\[n    = 2k+\max(2d+r-1,m,r)   = 2k+\max(2d+r-1,0,r) = 2k+2d+r-1.\]
These bounds are even stronger than our purported bounds of
\[n \geq \max(m,2k+2d+r+2) = \max(0,2k+2d+r+2) = 2k+2d+r+2\]
for \eqref{eqn:glprimetoprove} to be an isomorphism and
\[n =    \max(m,2k+2d+r)   = \max(0,2k+2d+r) = 2k+2d+r\]
for \eqref{eqn:glprimetoprove} to be a surjection.
\item The third is where $m \geq 0$ and $d = -1$.  In this case, by the definition of a $\VIC(R)$-module
being polynomial of degree $-1$ starting at $m$ we have for $n \geq m$ that $M(R^n) = 0$ and
hence $\HH_k(\GL_{n}^{\cK}(R);M(R^n)) = 0$.  In other words, for $n \geq m$ the domain
and codomain of \eqref{eqn:glprimetoprove} are both $0$, so it is trivially an isomorphism.
\end{itemize}

Assume now that $m \geq 1$ and $d \geq 0$, and that the theorem is true for all such $(m',d')$ with $m' \leq m$ and $d' \leq d$ such that
either $m' < m$ or $d' < d$ (or both).
As in Definition \ref{definition:derivedvic}, let $\Sigma M$ be the shifted $\VIC(R)$-module and $D M$ be the derived
$\VIC(R)$-module.  For $n \geq m$, we have a short exact sequence
\begin{equation}
\label{eqn:vicshiftseq}
0 \longrightarrow M(R^n) \longrightarrow \Sigma M(R^n) \longrightarrow DM(R^n) \longrightarrow 0
\end{equation}
of $\bbk[\GL_n^{\cK}(R)]$-modules.  The $\VIC(R)$-module $\Sigma M$ is polynomial
of degree $d$ starting at $(m-1)$, and the $\VIC(R)$-module $DM$ is polynomial of degree $(d-1)$ starting at $(m-1)$.

To simplify our notation, for all $s \geq 0$ and
all $\bbk[\GL_s^{\cK}(R)]$-modules $N$, we will denote $\HH_k(\GL_s^{\cK}(R);N)$ by $\HH_k(N)$.
The long exact sequence in $\GL_n^{\cK}(R)$-homology associated to \eqref{eqn:vicshiftseq}
maps to the one in $\GL_{n+1}^{\cK}(R)$-homology, so for $n \geq m$ and all $k$ we have a commutative diagram
\begin{center}
\scalebox{0.89}{$\minCDarrowwidth10pt\begin{CD}
\HH_{k+1}(\Sigma M(R^n))            @>>> \HH_{k+1}(DM(R^n))            @>>> \HH_k(M(R^n))            @>>> \HH_k(\Sigma M(R^n))            @>>> \HH_k(DM(R^n)) \\
@VV{g_1}V                                @VV{g_2}V                          @VV{f_1}V                     @VV{f_2}V                            @VV{f_3}V \\
\HH_{k+1}(\Sigma M(R^{n+1})) @>>> \HH_{k+1}(DM(R^{n+1})) @>>> \HH_k(M(R^{n+1})) @>>> \HH_k(\Sigma M(R^{n+1})) @>>> \HH_k(DM(R^{n+1}))
\end{CD}$}
\end{center}
with exact rows.  Our inductive hypothesis says the following about the $g_i$ and $f_i$:
\begin{itemize}
\item Since $\Sigma M$ is polynomial of degree $d$ starting at $(m-1)$, the map $f_2$ is an isomorphism
for $n \geq \max(m-1,2k+2d+r+2)$ and a surjection for $n \geq \max(m-1,2k+2d+r)$.  Also, the map
$g_1$ is an isomorphism for
\[n \geq \max(m-1,2(k+1)+2d+r+2) = \max(m-1,2k+2d+r+4)\]
and a surjection for $n \geq \max(m-1,2k+2d+r+2)$.
\item Since $DM$ is polynomial of degree $(d-1)$ starting at $(m-1)$, the map $f_3$ is an
isomorphism for
\[n \geq \max(m-1,2k+2(d-1)+r+2) = \max(m-1,2k+2d+r)\]
and a surjection for $n \geq \max(m-1,2k+2d+r-2)$.  Also, the map $g_2$ is an isomorphism for
\[n \geq \max(m-1,2(k+1)+2(d-1)+r+2) = \max(m-1,2k+2d+r+2)\]
and a surjection for $n \geq \max(m-1,2k+2d+r)$.
\end{itemize}
For $n \geq \max(m,2k+2d+r+2)$, the maps $g_2$ and $f_2$ and $f_3$ are isomorphisms and the map $g_1$ is a surjection, so
by the five-lemma the map $f_1$ is an isomorphism.  For $n \geq \max(m,2k+2d+r)$, the maps $g_2$ and $f_2$ are surjections
and the map $f_3$ is an isomorphism, so by the five-lemma\footnote{Or, more precisely, one of the four-lemmas.} the
map $f_1$ is a surjection.  The claim follows.
\end{proof}

\section{Unipotence and its consequences}
\label{section:congruenceunipotence}

We now turn our attention to congruence subgroups.  Before we can prove Theorem \ref{maintheorem:congruence}, we
need some preliminary results about unipotent representations.

\subsection{Unipotent representations}
Let $\bbk$ be a field and let $V$ be a vector space over $\bbk$.  A {\em unipotent operator}
on $V$ is a linear map $f\colon V \rightarrow V$ that can be written as
$f = \text{id}_V + \phi$ where $\phi\colon V \rightarrow V$ is nilpotent, i.e., there exists
some $k \geq 1$ such that $\phi^k = 0$.  If $G$ is a group and $V$ is a $\bbk[G]$-module,
then we say that $V$ is a {\em unipotent representation} of $G$ if all elements of $G$
act on $V$ via unipotent operators.  We will mostly be interested in abelian $G$, where
this can be checked on generators:

\begin{lemma}
\label{lemma:unipotentbygen}
Let $G$ be an abelian group generated by a set $S$, let $\bbk$ be a field, and
let $V$ be a $\bbk[G]$-module.  Assume that each $s \in S$ acts on $V$
via a unipotent operator.  Then $V$ is a unipotent representation of $G$.
\end{lemma}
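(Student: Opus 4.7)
The plan is to reduce the lemma to two elementary closure properties of the class of unipotent operators on $V$: closure under inversion, and closure under multiplication of pairs that commute. Granting these, every $g \in G$ can be written as a finite product $s_1^{\epsilon_1} \cdots s_n^{\epsilon_n}$ with $s_i \in S$ and $\epsilon_i \in \{\pm 1\}$, and since $G$ is abelian the factors pairwise commute; by iteration the action of $g$ on $V$ is then unipotent.

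First I would handle inversion. If $s = \mathrm{id}_V + \phi$ with $\phi^k = 0$, then the formal geometric series collapses to a finite sum
\[s^{-1} = \mathrm{id}_V - \phi + \phi^2 - \cdots + (-1)^{k-1}\phi^{k-1},\]
so $s^{-1} - \mathrm{id}_V$ is a polynomial in $\phi$ with no constant term; its $k$-th power is divisible by $\phi^k = 0$, so $s^{-1}$ is unipotent (with the same nilpotent exponent $k$).

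Next I would handle commuting products. Suppose $s = \mathrm{id}_V + \phi$ and $t = \mathrm{id}_V + \psi$ with $st = ts$, $\phi^k = 0$, and $\psi^\ell = 0$. Expanding $st = ts$ gives $\phi\psi = \psi\phi$. Writing $st = \mathrm{id}_V + \chi$ with $\chi = \phi + \psi + \phi\psi$, the multinomial expansion of $\chi^N$ (valid because $\phi$ and $\psi$ commute) consists of terms of the form $\phi^{a+c}\psi^{b+c}$ with $a + b + c = N$. Such a term vanishes unless $a + c < k$ and $b + c < \ell$, which forces $N = a + b + c < k + \ell$. Hence $\chi^{k+\ell} = 0$, so $st$ is unipotent.

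Finally I would assemble the lemma. Given $g \in G$, write it as a word $g = s_1^{\epsilon_1}\cdots s_n^{\epsilon_n}$ in the generators. By the first step each $s_i^{\epsilon_i}$ acts unipotently, and since $G$ is abelian these operators pairwise commute on $V$. An easy induction on $n$ using the second step then shows $g$ acts unipotently on $V$, completing the proof. There is no serious obstacle here — the only point requiring any care is verifying that the nilpotent exponent remains finite at each step, which is exactly what the binomial-type bound $k + \ell$ provides.
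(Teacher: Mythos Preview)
Your proof is correct and follows essentially the same approach as the paper: reduce to showing that the product of two commuting unipotent operators is unipotent, via $f_1 f_2 = \mathrm{id}_V + (\phi_1 + \phi_2 + \phi_1\phi_2)$ with the parenthesized term nilpotent. You are slightly more explicit than the paper in two respects---you handle inverses directly via the truncated geometric series (the paper tacitly assumes this standard fact), and you give an explicit nilpotence bound $k+\ell$ rather than just asserting nilpotence---but the core idea is identical.
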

\begin{proof}
It is enough to prove that if $g_1,g_2 \in G$ are elements that both act
on $V$ via unipotent operators, then $g_1 g_2$ also acts via a unipotent operator.
Let $g_i$ act on $V$ via the linear map $f_i\colon V \rightarrow V$, and write
$f_i = \text{id}_V + \phi_i$ with $\phi_i$ nilpotent.  We thus have
\[f_1 f_2 = \text{id}_V + \phi_1 + \phi_2 + \phi_1 \phi_2.\]
Since the $g_i$ commute, the $\phi_i$ also commute.  This implies that
$\phi_1+\phi_2+\phi_1 \phi_2$ is nilpotent, so $f_1 f_2$ is a unipotent operator.
\end{proof}

\subsection{Unipotence and VIC(R)-modules}
The following shows how to find many unipotent operators within a $\VIC(R)$-module.  We thank Harman
for explaining its proof to us.

\begin{lemma}
\label{lemma:vicunipotent}
Let $R$ be a ring, let $\bbk$ be a field of characteristic $0$, and let $M$ be a
$\VIC(R)$-module over $\bbk$ that is polynomial of degree $d$ starting at $m$.
Assume that $M(R^n)$ is a finite-dimensional vector space over $\bbk$ for all $n$.
Then there exists some $u \geq 0$ such that for $n \geq u$, all elementary
matrices in $\GL_n(R)$ act on $M(R^n)$ via unipotent operators.
\end{lemma}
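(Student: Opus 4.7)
The plan is to induct on the polynomial degree $d$, proving the following quantitatively stronger claim: there exists $u = u(M) \geq 0$ such that for $n \geq u$, for every pair $i \neq j$ in $\{1, \ldots, n\}$, and for every tuple $r_1, \ldots, r_{d+1} \in R$,
\[
\prod_{k=1}^{d+1} \bigl(E_{ij}(r_k) - \mathrm{id}\bigr) = 0 \quad \text{on } M(R^n).
\]
This product is unambiguous because $\{E_{ij}(s) : s \in R\}$ is an abelian subgroup of $\GL_n(R)$ isomorphic to $(R, +)$. Specializing $r_1 = \cdots = r_{d+1} = r$ gives $(E_{ij}(r) - \mathrm{id})^{d+1} = 0$, so $E_{ij}(r)$ is unipotent; Lemma \ref{lemma:unipotentbygen} then upgrades this to unipotence of every elementary matrix. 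The base case $d = -1$ is immediate since $M(R^n) = 0$ for $n \geq m$, so take $u = m$.

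For the inductive step, $DM$ is polynomial of degree $d-1$ starting at $m-1$, so the inductive hypothesis applied to $DM$ yields a threshold $u_D$ such that for $n \geq u_D$, any product of $d$ commuting operators $(E_{ij}(r_k) - \mathrm{id})$ vanishes on $DM(R^n)$. For $n \geq \max(m, u_D)$, the short exact sequence of Remark \ref{remark:exactsequence}
\[
0 \longrightarrow M(R^n) \longrightarrow M(R^{n+1}) \longrightarrow DM(R^n) \longrightarrow 0
\]
is equivariant for the block inclusion $\GL_n(R) \hookrightarrow \GL_{n+1}(R)$ (and $\GL_n(R)$ preserves the submodule $M(R^n)$). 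The inductive hypothesis on $DM$ therefore tells us that $\prod_{k=1}^d (E_{ij}(r_k) - \mathrm{id})$ sends $M(R^{n+1})$ into $M(R^n)$.

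To push this vanishing down onto $M(R^n)$ itself, I would combine this with a complementary $\GL_n(R)$-invariant filtration of $M(R^n)$ built from the $\VIC(R)$-structure: let $F_j M(R^n) = \sum_{(\iota, C)\colon [R^{n-j}] \to [R^n]} \Image\bigl((\iota, C)_*\bigr)$, summed over all $\VIC(R)$-morphisms. Polynomiality of degree $d$ should force this ascending filtration to terminate at $M(R^n)$ after $\lesssim d+1$ steps (for $n$ large), and the key point is that an elementary matrix $E = E_{ij}(r)$ acts on each $F_j$ by permuting the morphisms $(\iota, C) \mapsto (E\iota, EC)$, while the \emph{difference} $(E\iota, EC)_* - (\iota, C)_*$ factors through a derived module of one lower polynomial degree (a phenomenon foreshadowed by Example \ref{example:easyvic3}, where changing the complement by $r$ shifts by an element of $DM$). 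Thus $(E - \mathrm{id})$ shifts $F_j$ down by one step, and the induction closes.

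The main obstacle is rigorously establishing that the difference between the $\VIC(R)$-morphisms with the same injection but different complements factors through the derivative functor $D$ in a manner compatible with the filtration $F_\bullet$. This is precisely where the $\VIC(R)$ structure (rather than mere $\VI(R)$) is essential, and where the finite-dimensionality of $M(R^n)$ over $\bbk$ of characteristic $0$ is used to convert the polynomial identity into genuine unipotence on each finite-dimensional vector space $M(R^n)$.
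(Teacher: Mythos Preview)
Your inductive strategy is genuinely different from the paper's argument, and the gap you flag at the end is real and not easily closed. The short exact sequence $0 \to M(R^n) \to M(R^{n+1}) \to DM(R^n) \to 0$ is $\GL_n(R)$-equivariant, so knowing that $\prod_{k=1}^{d}(E_{ij}(r_k)-\mathrm{id})$ kills $DM(R^n)$ tells you only that this operator sends $M(R^{n+1})$ into the submodule $M(R^n)$. It tells you nothing new about the action on $M(R^n)$ itself, since that submodule is already preserved. Your proposed filtration $F_j M(R^n)$ does not repair this: the assertion that $(E-\mathrm{id})$ drops $F_j$ to $F_{j-1}$ would require that the difference $(E\iota,EC)_*(y)-(\iota,C)_*(y)$ lies in the image of some $M(R^{n-j+1})$, and there is no mechanism in the $\VIC(R)$ axioms that forces this. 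Even in the degree-$0$ case, where all the maps $M(R^n)\to M(R^{n+1})$ are isomorphisms, showing that elementary matrices eventually act trivially is not a formal consequence of the polynomial condition alone.

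The paper takes an entirely different route. It restricts $M$ along the ring map $\Z\to R$ to view it as a $\VIC(\Z)$-module, still polynomial of the same degree, and then invokes a theorem of Harman: for $n$ large, the $\SL_n(\Z)$-action on $M(R^n)$ extends to a rational representation of the algebraic group $\SL_n$. Unipotence of the integral elementary matrices $e_{ik}^1$ is then automatic from algebraic-group theory. Finally, a commutator identity $e_{ij}^r=[e_{ik}^1,e_{kj}^r]$ is manipulated to exhibit $e_{ij}^r$ as a product of two commuting unipotent operators, giving unipotence for arbitrary $r\in R$ via Lemma~\ref{lemma:unipotentbygen}. The point is that Harman's result supplies exactly the structural input your induction is missing, and the paper explicitly acknowledges that this lemma was explained to the author by Harman. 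An elementary degree induction of the type you sketch is not known to suffice.

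A small side remark: your appeal to Lemma~\ref{lemma:unipotentbygen} to ``upgrade to every elementary matrix'' is superfluous, since every elementary matrix is already some $E_{ij}(r)$; that lemma is only needed (as in the paper) when one wants unipotence of a \emph{product} of commuting elementary matrices.
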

\begin{proof}
Whether or not an operator is unipotent is unchanged by field extensions, so without
loss of generality we can assume that $\bbk$ is algebraically closed.  Via the ring
homomorphism $\Z \rightarrow R$, regard $M$ as a $\VIC(\Z)$-module.  This does
not change the fact that it is polynomial of degree $d$ starting at $m$.  We
can thus appeal to a theorem of Harman \cite[Proposition 4.4]{HarmanVIC}\footnote{The
statement of Harman's theorem requires $\bbk = \C$, but the proof just uses
the fact that $\bbk$ is algebraically closed and has characteristic $0$.  Harman's theorem also requires $M$ to
be a finitely generated $\VIC(\Z)$-module.  This turns out to be a consequence of the
fact that $M$ is polynomial of degree $d$ and each $M(\Z^n)$ is finite-dimensional.  Indeed, since
each $M(\Z^n)$ is assumed to be finite-dimensional it is enough to prove that there exists some $N \geq 0$ such that $M$ is generated in
degree $N$, i.e., for all $n \geq N$, the map
\[\bigoplus_{\substack{(f,C)\colon [\Z^{n}] \rightarrow [\Z^{n+1}] \\ \text{$\VIC(\Z)$-morphism}}} M(\Z^{n}) \rightarrow M(\Z^{n+1})\]
is surjective.  This can be proved by induction on the polynomial degree $d$.  The base case $d=-1$ is trivial since
in that case $M(\Z^n)=0$ for $n \gg 0$,
so assume that $d \geq 0$ and that the result is true for smaller degrees.  The derived $\VIC(\Z)$-module $DM$ is
polynomial of degree $d-1$.  By induction, we can thus 
choose $N \geq 0$ such that $DM$ is generated in degree $N$, so for $n \geq N$ the map
\[\bigoplus_{\substack{(f,C)\colon [\Z^{n}] \rightarrow [\Z^{n+1}] \\ \text{$\VIC(\Z)$-morphism}}} DM(\Z^{n}) \rightarrow DM(\Z^{n+1})\]
is surjective.  Since
\[DM(\Z^m) = \frac{M(\Z^m \oplus \Z)}{\Image(M(\Z^m) \rightarrow M(\Z^{m} \oplus \Z)} \quad \text{for all $m \geq 0$},\]
we deduce that for $n \geq N$ the map
\[M(\Z^{n+1}) \oplus \left(\bigoplus_{\substack{(f,C)\colon [\Z^{n}] \rightarrow [\Z^{n+1}] \\ \text{$\VIC(\Z)$-morphism}}} M(\Z^{n} \oplus \Z) \right) \rightarrow M(\Z^{n+1} \oplus \Z) = M(\Z^{n+2})\]
is surjective, so $M$ is generated in degree $N+1$.}
saying
that there exists some $u \geq 0$ such that for all $n \geq u$, the action of
$\SL_n(\Z)$ on $M(\Z^n) = M(R^n)$ extends to a rational representation of the
algebraic group $\SL_n$.

Increasing $u$ if necessary, we can assume that $u \geq 3$.
Consider some $n \geq u$.  For distinct $1 \leq i,j \leq n$ and $r \in R$, let 
$e_{ij}^r \in \GL_n(R)$ be the elementary
matrix obtained from the identity by putting $r$ at position $(i,j)$.
We must check that each $e_{ij}^r$ acts on $M(R^n)$ as a unipotent
operator.

Since the action of $\SL_n(\Z)$ on $M(R^n)$ extends to a
rational representation of $\SL_n$, each elementary matrix in $\SL_n(\Z)$ acts as a unipotent
operator \cite[Theorem I.4.4]{BorelAlgebraic}.  For distinct $1 \leq i,j,k \leq n$,
we have the Steinberg relation
\[e_{ij}^r = [e_{ik}^1,e_{kj}^r] = e_{ik}^1 e_{kj}^r \left(e_{ik}^1\right)^{-1} \left(e_{kj}^r\right)^{-1}.\]
Manipulating this, we get
\[\left(e_{ik}^1\right)^{-1} e_{ij}^r = e_{kj}^r \left(e_{ik}^1\right)^{-1} \left(e_{kj}^r\right)^{-1}.\]
Since $e_{ik}^1$ acts on $M(R^n)$ as a unipotent operator and the class of unipotent operators
is closed under conjugation and inversion, the right hand side of this expression acts on $M(R^n)$
as a unipotent operator.  The matrices
\[e_{ik}^1 \quad \text{and} \quad \left(e_{ik}^1\right)^{-1} e_{ij}^r\]
commute and act on $M(R^n)$ as unipotent operators, so by Lemma \ref{lemma:unipotentbygen} their
product $e_{ij}^r$ acts on $M(R^n)$ as a unipotent operator, as desired.
\end{proof}

\subsection{Stability of invariants}
If $V$ is a module over a commutative ring $\bbk$ and $f\colon V \rightarrow V$ is
a module homomorphism, then let
\[V^f = \Set{$v \in V$}{$f(v) = v$}\]
denote the submodule of invariants.  One basic property of unipotent operators on vector spaces of characteristic $0$ is as
follows:

\begin{lemma}
\label{lemma:stabilityinvariants}
Let $V$ be a finite-dimensional vector space over a field $\bbk$ of characteristic $0$ and let $f\colon V \rightarrow V$
be a unipotent operator.  Then for all $n \geq 1$ we have $V^f = V^{f^n}$.
\end{lemma}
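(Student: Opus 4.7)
The plan is to reduce the statement to a simple fact about the kernel of a nilpotent operator. Since $f$ is unipotent, I would write $f = \mathrm{id}_V + \phi$ where $\phi\colon V \rightarrow V$ is nilpotent, so that $V^f = \ker(f - \mathrm{id}_V) = \ker(\phi)$. The entire content of the lemma is therefore that $V^{f^n} = \ker(\phi)$ for every $n \geq 1$.

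To compute $V^{f^n} = \ker(f^n - \mathrm{id}_V)$, I would expand with the binomial theorem:
\[
f^n - \mathrm{id}_V = (\mathrm{id}_V + \phi)^n - \mathrm{id}_V = \sum_{k=1}^{\infty} \binom{n}{k} \phi^k = \phi \cdot g(\phi),
\]
where $g(x) = n + \binom{n}{2} x + \binom{n}{3} x^2 + \cdots$ is a polynomial whose application to $\phi$ is a finite sum because $\phi$ is nilpotent. The key observation is that $g(\phi) = n \cdot \mathrm{id}_V + (\text{nilpotent})$; since $\bbk$ has characteristic $0$ and $n \geq 1$, the constant $n$ is a unit in $\bbk$, so $g(\phi)$ is a unit plus a nilpotent and hence invertible on $V$.

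Since $g(\phi)$ commutes with $\phi$ (it is a polynomial in $\phi$) and is invertible, we get
\[
V^{f^n} = \ker(\phi \cdot g(\phi)) = \ker(\phi) = V^f,
\]
which completes the proof. There is no real obstacle here; the whole argument hinges on the single fact that $n$ is invertible in $\bbk$, which is exactly the role played by the characteristic-$0$ hypothesis. In particular, the statement genuinely uses that hypothesis: in characteristic $p$, taking $f$ to be a nontrivial unipotent of order $p$ would give $f^p = \mathrm{id}_V$, hence $V^{f^p} = V \neq V^f$.
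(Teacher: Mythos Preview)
Your proof is correct and follows essentially the same approach as the paper: both write $f=\mathrm{id}_V+\phi$ and factor $f^n-\mathrm{id}_V$ as $\phi$ times an invertible operator of the form $n\cdot\mathrm{id}_V+(\text{nilpotent})$, using that $n\neq 0$ in characteristic $0$. The only cosmetic difference is that the paper concludes via a dimension count together with the obvious inclusion $V^f\subset V^{f^n}$, whereas you deduce $\ker(\phi\cdot g(\phi))=\ker(\phi)$ directly from the invertibility of $g(\phi)$ and its commuting with $\phi$; your version is slightly more direct.
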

\begin{proof}
Since $V^f \subset V^{f^n}$, it is enough to prove that $\dim(V^f) = \dim(V^{f^n})$.
Write $f = \text{id}_V + \phi$ with $\phi$ nilpotent.  Set $c_i = \binom{n}{i}$, so
\[f^n = \text{id}_V + \phi' \quad \text{with} \quad \phi' = \sum_{i=1}^n c_i \phi^i.\]
We have
\[V^f = \ker(\phi) \quad \text{and} \quad V^{f^n} = \ker(\phi'),\]
so our goal is to prove that $\ker(\phi)$ and $\ker(\phi')$ have the same dimension.
Since $\bbk$ has characteristic $0$, we have $c_1 = n \neq 0$.
This allows us to write 
\[\phi' = c_1 \phi \circ \left(\text{id}_V + \sum_{i=1}^{n-1} \frac{c_{i+1}}{c_1} \phi^i\right) = c_1 \phi \circ\left(\text{id}_V + \phi''\right) \quad \text{with} \quad \phi'' = \sum_{i=1}^{n-1} \frac{c_{i+1}}{c_1} \phi^i.\]
The linear map $\phi''$ is nilpotent, so $\text{id}_V + \phi''$ is invertible.  It follows that
\[\dim \ker(\phi') = \dim \ker(c_1 \phi \circ (\text{id}_V + \phi'')) = \dim \ker(c_1 \phi) = \dim \ker(\phi),\]
as desired.
\end{proof}

\subsection{Stability of homology}
Lemma \ref{lemma:stabilityinvariants} is the key input to the following result.

\begin{lemma}
\label{lemma:stabilityhomology}
Let $G$ be an abelian group and let $V$ be a finite-dimensional unipotent
representation of $G$ over a field $\bbk$ of characteristic $0$.  Then for
all finite-index subgroups $G' < G$, the inclusion map $G' \hookrightarrow G$
induces an isomorphism $\HH_k(G';V) \cong \HH_k(G;V)$ for all $k \geq 0$.
\end{lemma}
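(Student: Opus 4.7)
The plan is twofold: first, reduce to the case $V = \bbk$ with trivial $G$-action by building a $G$-filtration of $V$ with trivial quotients; second, handle the trivial-coefficient case via the Lyndon--Hochschild--Serre spectral sequence for $G' \lhd G$.

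For the reduction, the key claim is that any nonzero finite-dimensional unipotent representation $V$ of an abelian group $G$ satisfies $V^G \neq 0$. Indeed, $V^G = \bigcap_{g \in G} V^g$ is an intersection of subspaces of a finite-dimensional space, so it equals some finite sub-intersection $V^{g_1} \cap \cdots \cap V^{g_m}$. Each $V^{g_i}$ is $G$-invariant because $G$ is abelian (any commuting operator preserves the fixed subspace of $g_i$), and a unipotent operator restricted to a nonzero invariant subspace is again unipotent and hence has a nonzero fixed vector. An induction on $i$ then gives $V^{g_1} \cap \cdots \cap V^{g_i} \neq 0$ for all $i$. Since $V / V^G$ is again a finite-dimensional unipotent $G$-representation of strictly smaller dimension, iterating produces a filtration $0 = V_0 \subsetneq V_1 \subsetneq \cdots \subsetneq V_n = V$ of $G$-submodules with each $V_i / V_{i-1}$ a trivial $G$-module (hence a direct sum of copies of $\bbk$). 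The long exact sequences in $G$- and $G'$-homology associated to $0 \to V_{i-1} \to V_i \to V_i/V_{i-1} \to 0$ fit into a ladder connected by the inclusion-induced maps, and a five-lemma induction on $i$ reduces the lemma to the case $V = \bbk$ with trivial $G$-action.

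For that case, $G'$ is automatically normal in $G$ with finite abelian quotient $Q := G/G'$, and $|Q|$ is invertible in $\bbk$ since $\bbk$ has characteristic $0$. The Lyndon--Hochschild--Serre spectral sequence $E^2_{p,q} = \HH_p(Q; \HH_q(G'; \bbk)) \Rightarrow \HH_{p+q}(G; \bbk)$ therefore collapses onto its $p = 0$ column (as $\HH_p(Q; N) = 0$ for $p \geq 1$ on any $\bbk[Q]$-module $N$), yielding an edge isomorphism $\HH_k(G'; \bbk)_Q \cong \HH_k(G; \bbk)$ through which the corestriction $\HH_k(G'; \bbk) \to \HH_k(G; \bbk)$ induced by $G' \hookrightarrow G$ factors. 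The $Q$-action on $\HH_k(G'; \bbk)$ is induced by pairs $(c_g, g \cdot)$ for $g \in G$, where $c_g$ is conjugation on $G'$ and $g \cdot$ is the $G$-action on the coefficients; both are trivial here (the first by commutativity of $G$, the second because $\bbk$ is the trivial module), so $\HH_k(G'; \bbk)_Q = \HH_k(G'; \bbk)$ and the corestriction is an isomorphism. The main obstacle is the clean verification that $V^G \neq 0$ without a finite-generation hypothesis on $G$; this is handled by reducing the intersection of fixed subspaces to a finite sub-intersection, as above, together with the standard Engel-style induction.
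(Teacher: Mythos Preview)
Your proof is correct and takes a genuinely different route from the paper's.  The paper keeps $V$ general and instead reduces on the structure of $G$: first $G$ infinite cyclic (handled via Lemma~\ref{lemma:stabilityinvariants} and Poincar\'e duality for $\Z$), then $G$ finite abelian (where unipotence forces $V$ to be trivial), then $G$ finitely generated abelian (splitting off a cyclic factor and running Hochschild--Serre), and finally arbitrary abelian $G$ by a direct-limit argument.  You do the opposite: you reduce on the structure of $V$, using an Engel-type filtration with trivial successive quotients to pass to $V=\bbk$, and then dispatch the trivial-coefficient case in one stroke via the collapse of the Hochschild--Serre spectral sequence for the finite quotient $G/G'$ over a characteristic-$0$ field.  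Your approach is shorter and bypasses Lemma~\ref{lemma:stabilityinvariants}, Poincar\'e duality, and the four-step case analysis entirely; the paper's approach, on the other hand, isolates the single-operator statement $V^f = V^{f^n}$ as the conceptual core, which is a nice way to see where unipotence in characteristic~$0$ enters.
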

\begin{proof}
We divide the proof into several steps.

\begin{stepsc}
This holds if $G$ is an infinite cyclic group.
\end{stepsc}

We thus have $G = \Z$ and $G' = n \Z$ for some $n \geq 1$.  Recall that the
zeroth homology group is the coinvariants, which is isomorphic to the 
invariants of the dual.  The dual representation $V^{\ast}$ is also a unipotent
representation of $G$, so by Lemma \ref{lemma:stabilityinvariants} we
have
\[\HH_0(G;V) = \left(V^{\ast}\right)^G = \left(V^{\ast}\right)^{G'} = \HH_0(G';V).\]
For the first homology group, since the circle is a classifying space for $G = \Z$,
we can apply Poincar\'{e} duality and use Lemma \ref{lemma:stabilityinvariants} to see that
\[\HH_1(G;V) \cong \HH^0(G;V) = V^G = V^{G'} = \HH^0(G';V) \cong \HH_1(G';V).\]
Finally, for $k \geq 2$, we have $\HH_k(G;V) = \HH_k(G';V) = 0$.

\begin{stepsc}
This holds if $G$ is a finite abelian group.
\end{stepsc}

In this case, we claim that $V$ is a trivial representation of $G$.  Indeed, since
$G$ is finite it is enough to prove that every nontrivial unipotent operator
$f\colon V \rightarrow V$ has infinite order.  Write $f = \text{id}_V + \phi$
with $\phi \neq 0$ a nilpotent operator.  For all $n \geq 1$, we then have
\[f^n = \text{id}_V + \sum_{i=1}^n \binom{n}{i} \phi^i.\]
Since $\bbk$ has characteristic $0$, we have $\binom{n}{1} = n \neq 0$, so
the coefficient of $\phi$ in this expression is nonzero.  Letting $r$ be the order
of $\phi$, the operators $\{\text{id}_V, \phi, \phi^2,\ldots \phi^{r-1}\}$
are linearly independent in the vector space of linear operators, so 
$f^n \neq \text{id}_V$, as claimed.

From this, we see that
\[\HH_k(G';V) = \HH_k(G;V) = \begin{cases} V & \text{if $k=0$}, \\ 0 & \text{if $k \geq 1$}.\end{cases}\]

\begin{stepsc}
This holds if $G$ is a finitely generated abelian group.
\end{stepsc}

We can find a chain of subgroups
\[G = G_1 \supset G_2 \supset \cdots \supset G_n = G'\]
where for each $1 \leq i < n$ the group $G_{i+1}$ is an index-$p_i$ subgroup of $G_i$ for some prime $p_i$.
From this, we see that we can assume without loss of generality that $G'$ is an index-$p$ subgroup of $G$
for some prime $p$.

Since $G'$ is an index-$p$ subgroup of the finitely generated abelian group $G$, we can
write $G = C \oplus G''$ and $G' = C' \oplus G''$ with $C$ a cyclic subgroup of $G$ and
$C'$ an index-$p$ subgroup of $C$.  We thus have a commutative diagram of short exact sequences
\[\begin{CD}
1 @>>> C'   @>>> G'   @>>> G''     @>>> 1 \\
@.     @VVV      @VVV      @VV{=}V      @.\\
1 @>>> C    @>>> G    @>>> G''     @>>> 1.\end{CD}\]
This induces a map between the Hochschild--Serre spectral sequences computing $\HH_{\bullet}(G';V)$ and
$\HH_{\bullet}(G;V)$.  On the $E^2$-page, this morphism takes the form
\begin{equation}
\label{eqn:abelianspectralsequence}
\HH_p(G'';\HH_q(C';V)) \rightarrow \HH_p(G'';\HH_q(C;V)).
\end{equation}
The previous two steps imply that the inclusion $C' \hookrightarrow C$ induces isomorphisms
$\HH_q(C';V) \cong \HH_q(C;V)$ for all $q$, so the map \eqref{eqn:abelianspectralsequence}
is an isomorphism for all $p$ and $q$.  The spectral sequence comparison theorem now implies that
$\HH_k(G';V) \cong \HH_k(G;V)$ for all $k$, as desired.

\begin{stepsc}
This holds if $G$ is an arbitrary abelian group.
\end{stepsc}

Let $\fF$ be the set of finitely generated subgroups of $G$.  We thus have
\[G = \lim_{\substack{\longrightarrow \\ H \in \fF}} H \quad \text{and} \quad G' = \lim_{\substack{\longrightarrow \\ H \in \fF}} H \cap G'.\]
Since homology commutes with direct limits, this reduces us to the previous case.
\end{proof}

\section{Stability for congruence subgroups}
\label{section:congruencestability}

We now turn to the proof of Theorem \ref{maintheorem:congruence}.  We will
actually prove three increasingly stronger results, with the third a generalization
of Theorem \ref{maintheorem:congruence}.

\subsection{Elementary matrices and unipotence}
The first is as follows, which we regard as the heart of the whole proof.
The differences between this result and Theorem \ref{maintheorem:congruence}
are as follows:
\begin{itemize}
\item It assumes that elementary matrices act unipotently on $M(R^n)$.
\item It concerns the elementary congruence subgroup $\EL_n(R,\alpha)$
rather than $\GL_n(R,\alpha)$.
\item Finally, it has a better range of stability.
\end{itemize}

\begin{theorem}
\label{theorem:congruenceweak}
Let $R$ be a ring satisfying $(\SR_r)$, let $\bbk$ be a field of characteristic $0$, and let
$M$ be a $\VIC(R)$-module over $\bbk$ that is polynomial of degree $d \geq -1$ starting at $m \geq 0$.
For each $n \geq 0$, assume that $M(R^n)$ is a finite-dimensional vector space over $\bbk$
and that each elementary matrix in $\GL_n(R)$ acts unipotently on $M(R^n)$.
Then for all two sided ideals $\alpha$ of $R$ such that $|R/\alpha| < \infty$, the map
\[\HH_k(\EL_n(R,\alpha);M(R^n)) \rightarrow \HH_k(\EL_n(R);M(R^n))\]
is an isomorphism for $n \geq 2k+\max(2d+r-1,m,r)+1$.
\end{theorem}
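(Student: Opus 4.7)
The plan is to apply the stability machine for finite-index subgroups, Theorem \ref{theorem:stabilitymachinefi}, reusing the framework from the proof of Theorem \ref{theorem:xl} specialized to the trivial subgroup $\cK = 0 \subset \KK_1(R)$. By Theorem \ref{theorem:injectivek1}, this specialization identifies $\GL_n^{\cK}(R)$ and $\GL_n^{\cK}(R,\alpha)$ with $\EL_n(R)$ and $\EL_n(R,\alpha)$ respectively in the stable range. With $c = 2$ and the shift $e = 2d-1$ from that proof (valid here since $m = 0$ and $r \geq 2$ force $\max(2d-1,-r) = 2d-1$), I would set
\[
G_n = \EL_{n+e+r}(R), \;\; G'_n = \EL_{n+e+r}(R,\alpha), \;\; \bbX_n = \OBases(R^{n+e,r}), \;\; \cM_n = \cG_{M,n+e,r}, \;\; M_n = M(R^{n+e+r}).
\]
The target range $N := n+e+r \geq 2k+2d+r+1$ then corresponds to $n \geq 2k+2 = ck+2$ in the reindexed setup.

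First I would dispose of the easy conditions. Conditions (a)--(d) of Theorem \ref{theorem:stabilitymachine} are verified verbatim from the proof of Theorem \ref{theorem:xl}. Condition (e) is the characteristic-$0$ hypothesis. Condition (f) follows by iterating Lemma \ref{lemma:stablesubgroup} (and specializing to $\cK = 0$): the intersection $\EL_N(R,\alpha) \cap \EL_{N-k-1}(R)$ equals $\EL_{N-k-1}(R,\alpha)$. Condition (g) follows from Lemma \ref{lemma:obasesgood}(iii), which identifies $\bbX_n/G'_n$ with $\OBases((R/\alpha)^{n+e,r})$ (using that $R/\alpha$ satisfies $(\SR_r)$ by Lemma \ref{lemma:stablequotient}); combined with Lemma \ref{lemma:obasesgood}(iv) and Theorem \ref{theorem:largeordering}, this gives the required $(n-2)/2$-connectivity since $(n+e-1)/2 \geq (n-2)/2$.

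The main obstacle is verifying condition (h): for $N$ in the stable range, the conjugation action of $\EL_N(R)$ on $\HH_k(\EL_N(R,\alpha); M(R^N))$ must pointwise fix the image of the stabilization map from $\HH_k(\EL_{N-1}(R,\alpha); M(R^{N-1}))$. Since inner automorphisms act trivially, this action factors through $\EL_N(R)/\EL_N(R,\alpha)$, and it suffices to check invariance under each generator $e_{ij}^s$. The central strategy is to exploit the unipotence hypothesis via Lemma \ref{lemma:stabilityhomology}. The elementary matrices $\{e_{iN}^s : s \in R\}$ for fixed $i < N$ commute with each other and each act unipotently on $M(R^N)$, so by Lemma \ref{lemma:unipotentbygen} they generate an abelian unipotent subgroup $U \cong R^{N-1}$ with $U \cap \EL_N(R,\alpha) \cong \alpha^{N-1}$ of finite index (as $R/\alpha$ is finite); likewise for the ``lower'' abelian subgroup generated by $\{e_{Nj}^s\}$.

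The hard part will be leveraging Lemma \ref{lemma:stabilityhomology} on these abelian unipotent subgroups to force pointwise fixation of the stab image in the ambient non-abelian homology. I anticipate a Hochschild--Serre spectral sequence argument for extensions $1 \to \EL_N(R,\alpha) \to H \to Q \to 1$, where $H$ is built by enlarging $\EL_N(R,\alpha)$ by one such abelian subgroup and $Q$ is a finite unipotent quotient; since unipotent actions of finite groups in characteristic $0$ are trivial (as in Step 2 of the proof of Lemma \ref{lemma:stabilityhomology}), the spectral sequence should degenerate and yield $\HH_k(\EL_N(R,\alpha); M(R^N)) \cong \HH_k(H; M(R^N))$, giving invariance under the corresponding generators. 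Combining these isomorphisms across the ``new'' elementary subgroups, together with an outer induction on $N$ handling the elementary matrices $e_{ij}^s$ with $i,j \leq N-1$ (for which the stabilization map is $\EL_{N-1}(R)$-equivariant, propagating triviality from level $N-1$), shows that all of $\EL_N(R)$ fixes the image. The delicate point will be organizing this chain of reductions so that the coefficient-level unipotence translates into pointwise fixation in $\HH_k$ despite the non-abelian ambient group.
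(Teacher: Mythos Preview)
Your setup and verification of conditions (a)--(g) match the paper's proof exactly. The gap is in your argument for condition (h).

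You propose to use the Hochschild--Serre spectral sequence for the extension $1 \to \EL_N(R,\alpha) \to H \to Q \to 1$ with $\EL_N(R,\alpha)$ as kernel and finite quotient $Q$. For this to give $\HH_k(\EL_N(R,\alpha);M(R^N)) \cong \HH_k(H;M(R^N))$, you need $Q$ to act trivially on each $\HH_q(\EL_N(R,\alpha);M(R^N))$. You argue that $Q$ is a quotient of your abelian $U$, which acts unipotently on $M(R^N)$, and then invoke the fact that finite unipotent actions in characteristic $0$ are trivial. But unipotence of $U$ on the \emph{coefficient module} $M(R^N)$ does not imply unipotence of the induced action on the \emph{homology group} $\HH_q(\EL_N(R,\alpha);M(R^N))$: that action also involves the conjugation action of $U$ on the non-abelian group $\EL_N(R,\alpha)$, and there is no mechanism to propagate unipotence through this. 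So the key step ``the spectral sequence should degenerate'' is exactly the assertion you are trying to prove, and the argument is circular.

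The paper's approach (following Charney) avoids this by turning the extension around. One first notes, via the commutator identity $e_{ij}^r = [e_{ik}^r, e_{kj}^1]$, that $\EL_N(R)$ is generated by the last-row subgroup $A$ together with the last-column subgroup $B$, so no separate treatment of $e_{ij}^s$ with $i,j \leq N-1$ (and hence no outer induction on $N$) is needed. For $A$, the stabilization map factors through the intermediate group $\Gamma_\alpha = A_\alpha \rtimes \EL_{N-1}(R,\alpha)$, which $A$ normalizes. One then compares $\Gamma_\alpha$ with the larger $\Gamma'_\alpha = A \rtimes \EL_{N-1}(R,\alpha)$, in which $A$ acts by \emph{inner} automorphisms and hence trivially on homology. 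The Hochschild--Serre spectral sequences for these two groups have the \emph{abelian} groups $A_\alpha$ and $A$ as kernels and the \emph{same} quotient $\EL_{N-1}(R,\alpha)$; on the $E^2$-page the comparison map is
\[
\HH_p\bigl(\EL_{N-1}(R,\alpha);\HH_q(A_\alpha;M(R^N))\bigr) \longrightarrow \HH_p\bigl(\EL_{N-1}(R,\alpha);\HH_q(A;M(R^N))\bigr),
\]
and now Lemma \ref{lemma:stabilityhomology} applies directly to the inner coefficients $\HH_q(A_\alpha;M(R^N)) \to \HH_q(A;M(R^N))$, since these are homology groups of abelian groups acting unipotently on $M(R^N)$. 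The crucial point you are missing is that the abelian unipotent subgroup must sit in the \emph{kernel} of the extension, not the quotient, so that the coefficient-level unipotence hypothesis can be consumed by Lemma \ref{lemma:stabilityhomology} rather than needing to be lifted to an unknown action on group homology.
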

\begin{proof}
Letting $\cK = 0$, Example \ref{example:elementaryk} says that for $n \geq r$ we have
$\GL_n^{\cK}(R) = \EL_n(R)$.  In light of this, Theorem \ref{theorem:xl} implies that the stabilization map
\[\HH_k(\EL_n(R);M(R^n)) \rightarrow \HH_k(\EL_{n+1}(R);M(R^{n+1}))\]
is an isomorphism for $n \geq 2k+\max(2d+r-1,m,r)+1$.
We proved this by
verifying the conditions of our stability machine (Theorem \ref{theorem:stabilitymachine}) with
the parameter $c=2$ (corresponding to the multiple $2$ in front of $k$ in $n \geq 2k+\max(2d+r-1,m,r)+1$).
Letting 
\[e = \max(2d-1,m-r,0) \geq 0,\]
the inputs to Theorem \ref{theorem:stabilitymachine} were
\[G_n = \EL_{n+e+r}(R) \quad \text{and} \quad M_n = M(R^{n+e+r}) \quad \text{for $n \geq 0$}\]
and
\[\bbX_n = \OBases(R^{n+e,r}) \quad \text{and} \quad \cM_n = \cG_{M,n+e,r} \quad \text{for $n \geq 1$}.\]
For $n \geq r$, Lemma \ref{lemma:finiteindex} implies that $\EL_n(R,\alpha)$ is a finite-index
normal subgroup of $\EL_n(R)$.  To prove that the map
\[\HH_k(\EL_n(R,\alpha);M(R^n)) \rightarrow \HH_k(\EL_n(R);M(R^n))\]
is an isomorphism for $n \geq 2k+\max(2d+r-1,m,r)+1$, it is enough to verify the additional
hypotheses of Theorem \ref{theorem:stabilitymachinefi} for
\[G'_n = \EL_{n+e+r}(R,\alpha).\]
These additional hypotheses are numbered (e)-(h), and we verify each of them in turn.

Condition (e) says that each $M_n$ is a vector space over a field $\bbk$ of characteristic $0$,
which is one of our hypotheses.

Condition (f) says that for the $k$-simplex $\sigma_k$ of $\bbX_n$ from condition (b) of Theorem
\ref{theorem:stabilitymachine} whose $G_n$-stabilizer is $G_{n-k-1}$, the $G'_n$-stabilizer of 
$\sigma_k$ is $G'_{n-k-1}$.  Looking back at our proof of Theorem \ref{theorem:xl}, the
simplex $\sigma_k$ is as follows.  Let $\{v_1,\ldots,v_{n+e+r}\}$ be the standard basis for $R^{n+e+r}$.
We then have
\[\sigma_k = \begin{cases}
(R^{n+e+r}) & \text{if $k=-1$},\\
(v_{n+e+r-k},v_{n+e+r-k+1},\ldots,v_{n+e+r}; R^{n+e+r-k-1}) & \text{if $0 \leq k < n$}.
\end{cases}\]
Thus the $\GL_{n+e+r}(R)$-stabilizer of $\sigma_k$ is $\GL_{n+e+r-k-1}(R)$, and
by Lemma \ref{lemma:stablesubgroup} the $G_n = \EL_{n+e+r}(R)$ stabilizer of
$\sigma_k$ is $G_{n-k-1} = \EL_{n+e+r-k-1}(R)$.  Another application of
Lemma \ref{lemma:stablesubgroup} says that the $G'_n = \EL_{n+e+r}(R,\alpha)$
stabilizer of $\sigma_k$ is $G'_{n-k-1} = \EL_{n+e+r-k-1}(R,\alpha)$, as desired.

Condition (g) says that the quotient $\bbX_n / G'_n$ is $\left\lfloor\frac{n-2}{2}\right\rfloor$-connected.
Conclusion (iii) of Lemma \ref{lemma:obasesgood} says that
\[\bbX_n / G'_n = \OBases(R^{n+e,r}) / \EL_{n+e+r}(R,\alpha) \cong \OBases((R/\alpha)^{n+e,r}).\]
Lemma \ref{lemma:stablequotient} says that $R/\alpha$ satisfies $(\SR_r)$, so we
can apply Conclusion (iv) of Lemma \ref{lemma:obasesgood} to see that
$\OBases((R/\alpha)^{n+e,r})$ is the large ordering of a weakly Cohen--Macaulay complex
of dimension $\left\lfloor\frac{n+e+1}{2}\right\rfloor$.  By Theorem \ref{theorem:largeordering}, this implies that
$\OBases((R/\alpha)^{n+e,r})$ is
\[\left\lfloor\frac{n+e+1}{2}\right\rfloor - 1 = \left\lfloor\frac{n+e-1}{2}\right\rfloor \geq \left\lfloor\frac{n-2}{2}\right\rfloor\]
connected.  Here we use the fact that $e = \max(2d-1,m-r,0)$ is nonnegative.

Finally, the key condition (h) says that for $k \geq 0$ and $n \geq 2k+2$, the action
of $G_n$ on $\HH_k(G'_n;M_n)$
induced by the conjugation action of $G_n$ on $G'_n$ fixes pointwise the image of the stabilization
map
\[\HH_k(G'_{n-1};M_{n-1}) \rightarrow \HH_k(G'_n;M_n).\]
This is the content of the following claim, which is the heart of our proof.  Note
that this claim is even stronger since it holds for all $G_n = \EL_{n+e+r}(R)$, not
just those where $n \geq 2k+2$.

\begin{claim}
For all $n \geq 1$, the group $\EL_n(R)$ acts trivially on the image of the stabilization map
\begin{equation}
\label{eqn:elstab}
\HH_k(\EL_{n-1}(R,\alpha);M(R^{n-1})) \rightarrow \HH_k(\EL_n(R,\alpha);M(R^n)).
\end{equation}
\end{claim}

The proof of this claim generalizes a beautiful argument of Charney from
\cite{CharneyCongruence}.  For distinct $1 \leq i,j \leq n$ and $r \in R$, let $e_{ij}^r \in \EL_n(R)$
denote the elementary matrix obtained from the identity by putting $r$ at position
$(i,j)$.  Define $A$ to be the subgroup of $\EL_n(R)$ generated by
$\Set{$e_{nj}^r$}{$1 \leq j \leq n-1$, $r \in R$}$ and let $B$ be the subgroup
generated by $\Set{$e_{in}^r$}{$1 \leq i \leq n-1$, $r \in R$}$.  Using the 
Steinberg relation
\[e_{ij}^r = [e_{ik}^r,e_{kj}^1] \quad \text{for distinct $1 \leq i,j,k \leq n$ and $r \in R$},\]
we see that $\EL_n(R)$ is generated by $A \cup B$.  It is thus enough to prove
that $A$ and $B$ act trivially on the image of \eqref{eqn:elstab}.  The arguments
for $A$ and $B$ are similar, so we will give the details for $A$ and leave $B$
to the reader.

The group $A$ is the abelian subgroup of $\GL_n(R)$ consisting of matrices that differ
from the identity only in their $n^{\text{th}}$ row.  In particular, $A$ is isomorphic to the
abelian group $R^{n-1}$.  Moreover, letting $\Gamma$ be the subgroup of $\EL_n(R)$ generated
by $A$ and $\EL_{n-1}(R)$, the group $\Gamma$ is a sort of ``affine group'' and in particular
\begin{equation}
\label{eqn:affine}
\Gamma = A \rtimes \EL_{n-1}(R).
\end{equation}
We would like to imitate this for $\EL_n(R,\alpha)$.

Define $A_{\alpha}$ to be the subgroup of $\EL_n(R,\alpha)$
generated by $\Set{$e_{nj}^r$}{$1 \leq j \leq n-1$, $r \in \alpha$}$.  As
an abelian group, we have $A_{\alpha} \cong \alpha^{n-1}$.  Define $\Gamma_{\alpha}$
to be the subgroup of $\EL_n(R,\alpha)$ generated by $A_{\alpha}$ and $\EL_{n-1}(R,\alpha)$,
so just like \eqref{eqn:affine} we have
\begin{equation}
\label{eqn:affine2}
\Gamma_{\alpha} = A_{\alpha} \rtimes \EL_{n-1}(R,\alpha).
\end{equation}
The stabilization map \eqref{eqn:elstab} factors through the map
\[\HH_k(\Gamma_{\alpha};M(R^n)) \rightarrow \HH_k(\EL_n(R,\alpha);M(R^n))\]
induced by the inclusion $\Gamma_{\alpha} \hookrightarrow \EL_n(R,\alpha)$.

The conjugation action of $A$ on $\EL_n(R,\alpha)$ takes $\Gamma_{\alpha}$ to itself.
It is thus enough to prove that the conjugation action of $A$ on
$\HH_k(\Gamma_{\alpha};M(R^n))$ is trivial.  Define $\Gamma'_{\alpha}$ to be the subgroup
of $\EL_n(R)$ generated by $A$ and $\EL_{n-1}(R,\alpha)$.  Since inner
automorphisms act trivially on homology (even with twisted coefficients; see 
\cite[Proposition III.8.1]{BrownCohomology}), the conjugation action of
$A$ on $\HH_k(\Gamma'_{\alpha};M(R^n))$ is trivial.  It is thus enough to prove
that the inclusion $\Gamma_{\alpha} \hookrightarrow \Gamma'_{\alpha}$ induces an
isomorphism
$\HH_k(\Gamma_{\alpha};M(R^n)) \cong \HH_k(\Gamma'_{\alpha};M(R^n))$.

For this, observe that we have a commutative diagram
\[\begin{CD}
1  @>>> A_{\alpha} @>>> \Gamma_{\alpha}  @>>> \EL_{n-1}(R,\alpha) @>>> 1 \\
@.      @VVV            @VVV             @VV{=}V                  @.\\
1  @>>> A          @>>> \Gamma'_{\alpha} @>>> \EL_{n-1}(R,\alpha) @>>> 1
\end{CD}\]
with exact rows.  The first row is the split exact sequence corresponding
to \eqref{eqn:affine2}, and the second row is the split exact sequence
corresponding to the similar semidirect product decomposition of $\Gamma'_{\alpha}$.
This induces a morphism between the Hochschild--Serre spectral sequences
computing the homology of $\Gamma_{\alpha}$ and $\Gamma'_{\alpha}$ with coefficients
in $M(R^n)$.  On the $E^2$-page, this map of spectral sequences takes the form
\begin{equation}
\label{eqn:affiness}
\HH_p(\EL_{n-1}(R,\alpha);\HH_q(A_{\alpha};M(R^n))) \rightarrow \HH_p(\EL_{n-1}(R,\alpha);\HH_q(A;M(R^n))).
\end{equation}
The abelian group $A_{\alpha}$ is a finite-index subgroup of the abelian group $A$, and
since we assumed that elementary matrices act unipotently on $M(R^n)$ we can appeal
to Lemma \ref{lemma:unipotentbygen} to see that the action of $A$ on $M(R^n)$ is a unipotent
action.  Lemma \ref{lemma:stabilityhomology} therefore implies that the inclusion
$A_{\alpha} \hookrightarrow A$ induces isomorphisms
\[\HH_q(A_{\alpha};M(R^n)) \cong \HH_q(A;M(R^n)) \quad \text{for all $q$}.\]
We deduce that \eqref{eqn:affiness} is an isomorphism for all $p$ and $q$.  The
spectral sequence comparison theorem therefore implies that
the inclusion $\Gamma_{\alpha} \hookrightarrow \Gamma'_{\alpha}$ induces an
isomorphism
\[\HH_k(\Gamma_{\alpha};M(R^n)) \cong \HH_k(\Gamma'_{\alpha};M(R^n)),\]
as desired.
\end{proof}

\subsection{Non-elementary subgroups}
We now generalize Theorem \ref{theorem:congruenceweak} by extending it to subgroups other than the elementary ones.
This causes us to have a worse range of stability.

\begin{theorem}
\label{theorem:xlcongruenceweak}
Let $R$ be a ring satisfying $(\SR_r)$, let $\cK \subset \KK_1(R)$ be a subgroup, let $\bbk$ be a field of characteristic $0$, and let
$M$ be a $\VIC(R)$-module over $\bbk$ that is polynomial of degree $d \geq -1$ starting at $m \geq 0$.
Assume furthermore that $M(R^n)$ is a finite-dimensional vector space over $\bbk$ for all $n \geq 0$
and that each elementary matrix in $\GL_n(R)$ acts unipotently on $M(R^n)$.
Then for all two sided ideals $\alpha$ of $R$ such that $|R/\alpha| < \infty$ and all subgroups 
$\cK' \subset \KK_1(R,\alpha)$ that map to finite-index subgroups of $\cK$
under the map $\KK_1(R,\alpha) \rightarrow \KK_1(R)$, the map
\[\HH_k(\GL_n^{\cK'}(R,\alpha);M(R^n)) \rightarrow \HH_k(\GL_n^{\cK}(R);M(R^n))\]
is an isomorphism for $n \geq 2k+\max(2d+r-1,m,r)+r-1$.
\end{theorem}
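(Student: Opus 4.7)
The plan is to apply Theorem~\ref{theorem:stabilitymachinefi} with
\[
G_n = \GL_{n+e+r}^{\cK}(R),\quad G'_n = \GL_{n+e+r}^{\cK'}(R,\alpha),\quad \bbX_n = \OBases(R^{n+e,r}),\quad \cM_n = \cG_{M,n+e,r},
\]
where $e = 2d-1$, exactly as in the proofs of Theorem~\ref{theorem:xl} and Theorem~\ref{theorem:congruenceweak}.  Conditions (a)--(d) of Theorem~\ref{theorem:stabilitymachine} have already been verified in the proof of Theorem~\ref{theorem:xl}, so only the additional conditions (e)--(h) of Theorem~\ref{theorem:stabilitymachinefi} need to be checked, together with the prerequisite that $G'_n$ is a finite-index normal subgroup of $G_n$.

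Condition (e) is immediate from the hypothesis on $\bbk$, and (f) is verified using Lemma~\ref{lemma:stablesubgroup} just as in Theorem~\ref{theorem:congruenceweak}.  For (g), Lemma~\ref{lemma:obasesgood}(iii) identifies $\bbX_n/\EL_{n+e+r}(R,\alpha)$ with $\OBases((R/\alpha)^{n+e,r})$; since elements of $\GL_{n+e+r}(R,\alpha)$ reduce to the identity modulo $\alpha$, they act trivially on this mod-$\alpha$ complex, so in fact $\bbX_n/G'_n \cong \OBases((R/\alpha)^{n+e,r})$, and the requisite connectivity follows from Lemma~\ref{lemma:obasesgood}(iv) together with Theorem~\ref{theorem:largeordering}.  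The normality of $G'_n$ in $G_n$ follows from the fact (a consequence of Theorem~\ref{theorem:injectivek1}) that conjugation by $\GL_{n+e+r}(R)$ acts trivially on $\KK_1(R,\alpha)$ and hence preserves $\cK'$; finiteness of the index will emerge from the decomposition below, combined with the fact that $\EL_m(R/\alpha)$ and $\ker(\KK_1(R,\alpha)\to\KK_1(R))$ are both finite (the latter being a quotient of the finite group $\KK_2(R/\alpha)$).

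The heart of the argument is condition (h).  The crucial observation is the decomposition, valid for $n \geq r$,
\[
\GL_n^{\cK}(R) \;=\; \EL_n(R) \cdot \GL_n^{\cK'}(R,\alpha):
\]
given $g \in \GL_n^{\cK}(R)$, the hypothesis that $\cK'$ maps onto $\cK$, combined with Theorem~\ref{theorem:injectivek1}, produces an $h \in \GL_n^{\cK'}(R,\alpha)$ whose class in $\KK_1(R,\alpha)$ projects to $[g] \in \cK \subseteq \KK_1(R)$, so that $gh^{-1}$ has trivial K-class in $\KK_1(R)$ and hence lies in $\EL_n(R)$.  Using this decomposition, the factor $\GL_n^{\cK'}(R,\alpha)$ acts on $\HH_k(\GL_n^{\cK'}(R,\alpha); M(R^n))$ by inner automorphisms, hence trivially.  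It therefore suffices to prove that $\EL_n(R)$ fixes the image of the stabilization map, and this is done by transcribing the Charney-style argument from the proof of Theorem~\ref{theorem:congruenceweak}, with $\GL_{n-1}^{\cK'}(R,\alpha)$ in place of $\EL_{n-1}(R,\alpha)$ throughout.  Namely, $\EL_n(R)$ is generated by the last-row and last-column elementary subgroups $A$ and $B$; for $A$ (and similarly $B$), one forms the semidirect products $A_\alpha \rtimes \GL_{n-1}^{\cK'}(R,\alpha) \subseteq \GL_n^{\cK'}(R,\alpha)$ and $A \rtimes \GL_{n-1}^{\cK'}(R,\alpha) \subseteq \GL_n^{\cK}(R)$ and compares their Hochschild--Serre spectral sequences, using that $A$ is abelian, contains $A_\alpha$ as a finite-index subgroup (because $R/\alpha$ is finite), and acts unipotently on $M(R^n)$ by hypothesis and Lemma~\ref{lemma:unipotentbygen}, so that Lemma~\ref{lemma:stabilityhomology} makes the comparison an isomorphism.

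The main obstacle will be verifying (h): extracting the decomposition $\GL_n^{\cK}(R) = \EL_n(R) \cdot \GL_n^{\cK'}(R,\alpha)$ from the K-theoretic hypothesis on $\cK'$, and then faithfully adapting Charney's Hochschild--Serre comparison with the substitution $\EL_{n-1}(R,\alpha) \rightsquigarrow \GL_{n-1}^{\cK'}(R,\alpha)$ (verifying that the relevant semidirect product subgroups are well-defined and that the inner-automorphism triviality argument carries over).  The stated range $n \geq 2k+2d+2r$ will then come out of Theorem~\ref{theorem:stabilitymachinefi} with the shift $e+r$ together with the slack introduced by working with the larger subgroup $\GL_n^{\cK'}(R,\alpha)$.
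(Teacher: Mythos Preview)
Your approach is correct but takes a genuinely different route from the paper's.  The paper does \emph{not} re-run Theorem~\ref{theorem:stabilitymachinefi} with $G'_n = \GL_{n+e+r}^{\cK'}(R,\alpha)$.  Instead it bootstraps from Theorem~\ref{theorem:congruenceweak} via a Hochschild--Serre comparison of the two extensions
\[
1 \to \EL_n(R,\alpha) \to \GL_n^{\cK'}(R,\alpha) \to \cK' \to 1
\quad\text{and}\quad
1 \to \EL_n(R) \to \GL_n^{\cK}(R) \to \cK \to 1.
\]
Theorem~\ref{theorem:congruenceweak} handles the $E^2$-coefficients $\HH_q(\EL_n(R,\alpha);M) \cong \HH_q(\EL_n(R);M)$; the paper then shows the $\cK$-action on these is trivial by destabilizing $r-1$ steps (via Theorem~\ref{theorem:xlprime}) and using that $\GL_{r-1}(R)$ surjects onto $\KK_1(R)$ (Lemma~\ref{lemma:generategl}); finally it uses that $\cK' \to \cK$ has finite kernel and cokernel (via the $K$-theory long exact sequence and finiteness of $R/\alpha$) to identify $\HH_p(\cK';\bbk) \cong \HH_p(\cK;\bbk)$.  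This destabilization by $r-1$ is exactly what costs the extra $r-1$ in the range $n \geq 2k+2d+2r$.

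Your approach avoids that destabilization entirely: the decomposition $\GL_n^{\cK}(R) = \EL_n(R)\cdot \GL_n^{\cK'}(R,\alpha)$ reduces condition~(h) directly to the Charney argument for $\EL_n(R)$, and the replacement $\EL_{n-1}(R,\alpha) \rightsquigarrow \GL_{n-1}^{\cK'}(R,\alpha)$ in the semidirect products goes through because conjugating $\begin{pmatrix} h & 0 \\ 0 & 1\end{pmatrix}$ with $h \equiv I \pmod{\alpha}$ by an element of $A$ lands back in $A_\alpha \cdot \GL_{n-1}^{\cK'}(R,\alpha)$.  Consequently your method actually yields the stronger range $n \geq 2k+2d+r+1$, the same as in Theorem~\ref{theorem:congruenceweak}; your final sentence about ``slack introduced by working with the larger subgroup'' is misleading, since there is none.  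The paper's approach, by contrast, is more modular (it treats Theorem~\ref{theorem:congruenceweak} as a black box) at the cost of that extra $r-1$ in the bound.
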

\begin{proof}
Consider some $n \geq 2k+\max(2d+r-1,m,r)+r-1$.  Using Theorem \ref{theorem:injectivek1}, we have a commutative diagram\footnote{The first two columns
are injections, but the third is not.  If we wanted all the columns to be injective we could have used Lemma \ref{lemma:bigdiagram}, but
at the cost of having to deal with the slightly more complicated group $\oEL^{\cK'}_n(R,\alpha)$.  Since our spectral sequence argument
does not need the columns to be injective, this is unnecessary.}
\[\begin{CD}
1  @>>> \EL_n(R,\alpha) @>>> \GL_n^{\cK'}(R,\alpha) @>>> \cK' @>>> 1  \\
@.      @VVV                 @VVV                        @VVV      @. \\
1  @>>> \EL_n(R)        @>>> \GL_n^{\cK}(R)         @>>> \cK  @>>> 1
\end{CD}\]
with exact rows.  This induces a map between the Hochschild--Serre spectral sequences computing the homology
of $\GL_n^{\cK'}(R,\alpha)$ and $\GL_n^{\cK}(R)$ with coefficients in $M(R^n)$.  On the $E^2$-page, this
map of spectral sequences is of the form
\begin{equation}
\label{eqn:sstoanalyze}
\HH_p(\cK';\HH_q(\EL_n(R,\alpha);M(R^n))) \rightarrow \HH_p(\cK;\HH_q(\EL_n(R);M(R^n))).
\end{equation}
We analyze this via a sequence of two claims.

\begin{claim}
For all $q \leq k$, the map $\HH_q(\EL_n(R,\alpha);M(R^n)) \rightarrow \HH_q(\EL_n(R);M(R^n))$
is an isomorphism.
\end{claim}
\begin{proof}[Proof of claim]
Since $r \geq 2$ (see Example \ref{example:srremark}), we have
$2k+\max(2d+r-1,m,r)+r-1 \geq 2k+\max(2d+r-1,m,r)+1$.  The claim is now
immediate from Theorem \ref{theorem:congruenceweak}.
\end{proof}

\begin{claim}
For all $q \leq k$, the action of the group $\cK$ (resp.\ $\cK'$) on $\HH_q(\EL_n(R);M(R^n))$ (resp.\ $\HH_q(\EL_n(R,\alpha);M(R^n))$)
is trivial.
\end{claim}
\begin{proof}[Proof of claim]
The previous claim implies that it is enough to prove that all of $\KK_1(R)$ 
acts trivially on $\HH_q(\EL_n(R);M(R^n))$.  Since
\[n-(r-1) \geq (2k+\max(2d+r-1,m,r)+r-1)-(r-1) = 2k+\max(2d+r-1,m,r),\]
Theorem \ref{theorem:xl} implies that the map\footnote{Here we are applying
Theorem \ref{theorem:xl} to $\GL_{n-(r-1)}^{0}(R)$, which by Example
\ref{example:elementaryk} we can identify with $\EL_{n-(r-1)}(R)$.  This uses 
the fact that $n-(r-1) \geq r$.}
\begin{equation}
\label{eqn:destabilize}
\HH_q(\EL_{n-(r-1)}(R);M(R^{n-(r-1)})) \rightarrow \HH_q(\EL_n(R);M(R^n))
\end{equation}
is surjective.  Theorem \ref{theorem:injectivek1} together with Lemma \ref{lemma:generategl} implies
that the subgroup $\GL_{r-1}(R)$ of $\GL_n(R)$ surjects onto $\KK_1(R)$ under the map
$\GL_n(R) \rightarrow \KK_1(R)$ whose kernel is $\EL_n(R)$.  The group $\GL_{r-1}(R)$ is embedded
in $\GL_n(R)$ using the upper-left-hand matrix embedding, but we can conjugate it by any
element of $\GL_n(R)$ and it will still surject onto $\KK_1(R)$.  We can thus use the lower-right-hand
embedding, which makes $\GL_{r-1}(R)$ commute with $\EL_{n-(r-1)}(R)$.  This implies that
$\KK_1(R)$ acts trivially on the image of \eqref{eqn:destabilize}, and hence on
$\HH_q(\EL_n(R);M(R^n))$.
\end{proof}

By the second Claim, for $q \leq k$ we can rewrite our map of spectral sequences \eqref{eqn:sstoanalyze} as
\begin{equation}
\label{eqn:ssanalyzed}
\HH_p(\cK';\bbk) \otimes \HH_q(\EL_n(R,\alpha);M(R^n)) \rightarrow \HH_p(\cK;\bbk) \otimes \HH_q(\EL_n(R);M(R^n)).
\end{equation}
We assumed that the map $\cK' \rightarrow \cK$ has finite cokernel, and Lemma \ref{lemma:relativek} implies
that it has finite kernel.  Since $\bbk$ is a field of characteristic $0$, this implies that the map
$\HH_p(\cK';\bbk) \rightarrow \HH_p(\cK;\bbk)$ is an isomorphism for all $p$.  Combining this with the first Claim,
we see that \eqref{eqn:ssanalyzed} is an isomorphism for all $p$ and $q$ with $q \leq k$.  By the spectral sequence
comparison theorem, we deduce that the map
\[\HH_k(\GL_n^{\cK'}(R,\alpha);M(R^n)) \rightarrow \HH_k(\GL_n^{\cK}(R);M(R^n))\]
is an isomorphism, as desired.
\end{proof}

\subsection{The general case: removing unipotence}
We now prove the following theorem: 

\begin{theorem}
\label{theorem:xlcongruence}
Let $R$ be a ring satisfying $(\SR_r)$, let $\cK \subset \KK_1(R)$ be a subgroup, let $\bbk$ be a field of characteristic $0$, and let
$M$ be a $\VIC(R)$-module over $\bbk$ that is polynomial of degree $d \geq -1$ starting at $m \geq 0$.
Assume furthermore that $M(R^n)$ is a finite-dimensional vector space over $\bbk$ for all $n \geq 0$.
Then for all two sided ideals $\alpha$ of $R$ such that $|R/\alpha|<\infty$ and all subgroups $\cK' \subset \KK_1(R,\alpha)$ that
map to finite-index subgroups of $\cK$ under the map $\KK_1(R,\alpha) \rightarrow \KK_1(R)$, the map
\begin{equation}
\label{eqn:xlcongruence}
\HH_k(\GL_n^{\cK'}(R,\alpha);M(R^n)) \rightarrow \HH_k(\GL_n^{\cK}(R);M(R^n))
\end{equation}
is an isomorphism for $n \geq \max(m,2k+2d+2r-1)$.
\end{theorem}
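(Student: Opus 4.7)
The plan is a double induction on $d$ (outer) and $m$ (inner), reducing the theorem to the elementary-unipotent case handled by Theorem \ref{theorem:xlcongruenceweak}.

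The base case $d = -1$ is immediate from Definition \ref{definition:polyvic}: $M(R^n) = 0$ for $n \geq m$, so both sides of the map vanish. For $d \geq 0$ and $m \geq 1$, I would use the short exact sequence
\[0 \longrightarrow M(R^n) \longrightarrow \Sigma M(R^n) \longrightarrow DM(R^n) \longrightarrow 0\]
of $\bbk[\GL_n(R)]$-modules, valid for $n \geq m$ and hence also a sequence of $\bbk[\GL_n^{\cK}(R)]$- and $\bbk[\GL_n^{\cK'}(R,\alpha)]$-modules. This induces a ladder of long exact sequences comparing $\GL_n^{\cK'}(R,\alpha)$-homology to $\GL_n^{\cK}(R)$-homology. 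Since $\Sigma M$ is polynomial of degree $d$ starting at $m-1$ (covered by the inner IH) and $DM$ is polynomial of degree $d-1$ starting at $m-1$ (covered by the outer IH), the four verticals neighboring the $M$-column are isomorphisms in the target range, so the five-lemma yields the isomorphism for $M$. This closely parallels the derivation of Theorem \ref{theorem:xlprime} from Theorem \ref{theorem:xl}, and one should carry along a matching surjectivity statement in the induction to supply the outer vertical in the five-lemma.

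The heart of the proof is the inner base case $d \geq 0$, $m = 0$, where the plan is to invoke Theorem \ref{theorem:xlcongruenceweak} directly. That theorem requires in addition that every elementary matrix in $\GL_n(R)$ act unipotently on $M(R^n)$, which is not part of our hypotheses but is supplied by Lemma \ref{lemma:vicunipotent}: there is a threshold $u \geq 0$ above which the required unipotence holds automatically. Applying Theorem \ref{theorem:xlcongruenceweak} with this unipotence gives the desired isomorphism at least for $n \geq \max(u, 2k+2d+2r)$.

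The hardest step will be to propagate this down to the full range $n \geq 2k+2d+2r$, removing the spurious dependence on $u$. My plan is to combine Theorem \ref{theorem:xl}'s absolute stability for $\HH_k(\GL_n^{\cK}(R); M(R^n))$ (which holds for $n \geq 2k+2d+r+1$, hence throughout our stable range) with the known isomorphism at some fixed $N \geq \max(u, 2k+2d+2r)$ to pin down the right-hand side of \eqref{eqn:xlcongruence} as constant in $n$. To conclude, one needs a matching stability statement for $\HH_k(\GL_n^{\cK'}(R,\alpha); M(R^n))$. This can be obtained by re-examining the stability machine of Theorem \ref{theorem:stabilitymachinefi}: condition (h) is only required at $n$ in the stable range, so Lemma \ref{lemma:vicunipotent}'s unipotence for $n \geq u$ suffices to verify it via the affine-group argument of Theorem \ref{theorem:congruenceweak}'s key Claim. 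Once both sides stabilize throughout $n \geq 2k+2d+2r$, the isomorphism at $N$ forces it everywhere.
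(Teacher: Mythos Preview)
Your inductive strategy via the short exact sequence $0 \to M \to \Sigma M \to DM \to 0$ and the five-lemma is exactly the paper's approach, and the base case $d = -1$ is correct. Two points deserve comment.

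First, a simplification: you do not need to carry a separate surjectivity statement through the induction. Since $\GL_n^{\cK'}(R,\alpha)$ has finite index in $\GL_n^{\cK}(R)$ and $\bbk$ has characteristic $0$, the transfer map makes every vertical arrow in your ladder surjective automatically. This supplies the outer vertical in the five-lemma for free.

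Second, and more seriously, your ``hardest step'' --- removing the dependence on the unipotence threshold $u$ in the base case $m=0$ --- is where the plan is shakiest. Theorem \ref{theorem:xlcongruenceweak} assumes unipotence for \emph{all} $n$, not just $n \geq u$, so it cannot be invoked as written. Your proposed workaround, extracting a stability statement for the congruence side from the machinery of Theorem \ref{theorem:stabilitymachinefi}, is not straightforward: that theorem's proof is an induction on $k$ whose inductive hypothesis is used at smaller values of $n$, so verifying condition (h) only for $n \geq u$ does not obviously yield the conclusion for $n$ below $u$. The paper sidesteps this entirely by a neat observation you are missing: the shift $\Sigma$ also decreases $u$ by $1$, since $\Sigma M(R^n) = M(R^{n+1})$ and elementary matrices in $\GL_n(R)$ act on it through their images in $\GL_{n+1}(R)$. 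Hence one can run a \emph{triple} induction on the parameters $(d,m,u)$, with base case $d \geq 0$, $m \leq 0$, $u \leq 0$ handled directly by Theorem \ref{theorem:xlcongruenceweak}. The same short exact sequence and five-lemma argument then drives the induction, since $\Sigma M$ has parameters $(d,m-1,u-1)$ and $DM$ has parameters $(d-1,m-1,u-1)$. No separate propagation step is needed.
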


\begin{remark}
This implies Theorem \ref{maintheorem:congruence}.  Indeed,
let $R$ and $\alpha$ and $\bbk$ and $M$ be as in Theorem \ref{theorem:xlcongruence}.
By Lemma \ref{lemma:relativek}, the cokernel of the map $\KK_1(R,\alpha) \rightarrow \KK_1(R)$ 
is finite.  We can thus apply Theorem \ref{theorem:xlcongruence} with $\cK = \KK_1(R)$ and 
$\cK' = \KK_1(R,\alpha)$.  With these choices, we have by definition that
\[\GL_n^{\cK'}(R,\alpha) = \GL_n(R,\alpha) \quad \text{and} \quad \GL_n^{\cK}(R) = \GL_n(R).\]
Theorem \ref{theorem:xlcongruence} thus says that the map
\[\HH_k(\GL_n(R,\alpha);M(R^n)) \rightarrow \HH_k(\GL_n(R);M(R^n))\]
is an isomorphism for $n \geq \max(m,2k+2d+2r-1)$, as claimed by Theorem \ref{maintheorem:congruence}.
\end{remark}

\begin{proof}
Lemma \ref{lemma:vicunipotent} says that there exists some $u \geq 0 $ such that for $n \geq u$,
all elementary matrices in $\GL_n(R)$ act unipotently on $M(R^n)$.  
The {\em parameters} of $M$ are the triple $(d,m,u)$.  They satisfy $d \geq -1$ and $m,u \geq 0$.
However, to avoid having to treat $m=0$ and $u=0$ separately we will allow $m$ and $u$ to be
arbitrary integers, interpreted in the obvious way (so for instance if $u=-3$ then all elementary
matrices in $\GL_n(R)$ act unipotently on $M(R^n)$ for all $n \geq 0$).

We will prove the theorem by induction on the parameters $(d,m,u)$.  There are two base cases:
\begin{itemize}
\item The first is where $d \geq 0$ and $m,u \leq 0$.  In this case, Theorem \ref{theorem:xlcongruenceweak} says
that \eqref{eqn:xlcongruence} is an isomorphism for
\[n \geq 2k+\max(2d+r-1,0,r)+(r-1).\]
Since $d \geq 0$, we have $2d+r \geq \max(2d+r-1,0,r)$, so this is even better than the bound
\[n \geq 2k+(2d+r) + (r-1) = \max(0,2k+2d+2r-1)\]
we are trying to prove.
\item The second is where $d = -1$ and $m,u \in \Z$ are arbitrary.
In this case, by the definition of a $\VIC(R)$-module
being polynomial of degree $-1$ starting at $m$ we have for $n \geq m$ that $M(R^n) = 0$ and
hence 
\[\HH_k(\GL_n^{\cK'}(R,\alpha);M(R^n)) = \HH_k(\GL_n^{\cK}(R);M(R^n)) = 0.\]
In other words, for $n \geq m$ the domain
and codomain of \eqref{eqn:xlcongruence} are both $0$, so it is trivially an isomorphism.
\end{itemize}
Now consider parameters $(d,m,u)$ that do not lie in our base cases.  They thus satisfy
the following:
\begin{itemize}
\item $d \geq 0$, and either $m \geq 1$ or $u \geq 1$ (possibly both hold: $m \geq 1$ and $u \geq 1$).
\end{itemize}
Moreover, we can assume as an inductive hypothesis that the theorem is true for all $M$ with parameters $(d',m',u')$ satisfying
the following:
\begin{itemize}
\item Either $d' \leq d-1$, or $d' = d$ and both $m' \leq m-1$ and $u' \leq u-1$.
\end{itemize}
Note that it is {\em not} enough to shrink just one of $m$ and $u$ since 
otherwise we might never reach one of our base cases.

As in Definition \ref{definition:derivedvic}, let $\Sigma M$ be the shifted $\VIC(R)$-module and $D M$ be the derived
$\VIC(R)$-module.  For $n \geq m$, we have a short exact sequence
\begin{equation}
\label{eqn:vicshiftseqcong}
0 \longrightarrow M(R^n) \longrightarrow \Sigma M(R^n) \longrightarrow DM(R^n) \longrightarrow 0
\end{equation}
of $\bbk[\GL_n(R)]$-modules.  The $\VIC(R)$-module $\Sigma M$ has parameters $(d,m-1,u-1)$, and
the $\VIC(R)$-module $DM$ has parameters $(d-1,m-1,u-1)$.\footnote{The reason $u-1$ appears in
both $\Sigma M$ and $DM$ here is as follows.  Consider $n \geq u-1$.  We have 
$\Sigma M(R^n) = M(R^{n+1})$.  Since all
elementary matrices in $\GL_{n+1}(R)$ act unipotently on $M(R^{n+1})$, all elementary matrices
in its subgroup $\GL_n(R)$ do as well.  If a linear operator $\phi$ acts unipotently on a vector space $V$
and preserves a subspace $W \subset V$, then $\phi$ also acts unipotently on $V/W$.  It follows that
all elementary matrices in $\GL_n(R)$ also act unipotently on the quotient space $DM(R^n)$ of $M(R^{n+1})$.}

To simplify our notation, for all all $\bbk[\GL_n(R)]$-modules $N$, we will denote 
\begin{itemize}
\item $\HH_k(\GL_n^{\cK}(R);N)$ by $\HH_k(N)$.
\item $\HH_k(\GL_n^{\cK'}(R,\alpha);N)$ by $\HH_k(\alpha,N)$.
\end{itemize}
The long exact sequence in $\GL_n^{\cK'}(R,\alpha)$-homology associated to \eqref{eqn:vicshiftseqcong}
maps to the one in $\GL_n^{\cK}(R)$-homology, so for $n \geq m$ and all $k$ we have a commutative diagram
\begin{center}
\scalebox{0.89}{$\minCDarrowwidth10pt\begin{CD}
\HH_{k+1}(\alpha, \Sigma M(R^n)) @>>> \HH_{k+1}(\alpha, DM(R^n)) @>>> \HH_k(\alpha, M(R^n)) @>>> \HH_k(\alpha, \Sigma M(R^n)) @>>> \HH_k(\alpha, DM(R^n)) \\
@VV{g_1}V                             @VV{g_2}V                       @VV{f_1}V                  @VV{f_2}V                         @VV{f_3}V \\
\HH_{k+1}(\Sigma M(R^n))         @>>> \HH_{k+1}(DM(R^n))         @>>> \HH_k(M(R^n))         @>>> \HH_k(\Sigma M(R^n))         @>>> \HH_k(DM(R^n))
\end{CD}$}
\end{center}
with exact rows.  When $n \geq r$, Lemma \ref{lemma:finiteindex} says that $\GL_n^{\cK'}(R,\alpha)$ is a finite-index subgroup of $\GL_n^{\cK}(R)$.
In that case, since all our coefficients are vector spaces over a field
$\bbk$ of characteristic $0$, the transfer map (see \cite[Chapter III.9]{BrownCohomology}) implies that all the $f_i$ and $g_i$ are surjections.
Our inductive hypothesis says the following about them:
\begin{itemize}
\item Since $\Sigma M$ has parameters $(d,m-1,u-1)$, the map $f_2$ is an isomorphism
for $n \geq \max(m-1,2k+2d+2r-1)$ and the map $g_1$ is an isomorphism for
\[n \geq \max(m-1,2(k+1)+2d+2r-1) = \max(m-1,2k+2d+2r+1).\]
\item Since $DM$ has parameters $(d-1,m-1,u-1)$, the map $f_3$ is an isomorphism for
\[n \geq \max(m-1,2k+2(d-1)+2r-1) = \max(m-1,2k+2d+2r-3)\]
and the map $g_2$ is an isomorphism for
\[n \geq \max(m-1,2(k+1)+2(d-1)+2r-1) = \max(m-1,2k+2d+2r-1).\]
\end{itemize}
For $n \geq \max(m,2k+2d+2r-1)$, the maps $g_2$ and $f_2$ and $f_3$ are isomorphisms and the map $g_1$ is a surjection (remember, it is always
a surjection!), so by the five-lemma the map $f_1$ is an isomorphism, as desired.
\end{proof}

\end{document}